\documentclass[11pt]{article}

\usepackage[english]{babel}
\usepackage[T1]{fontenc}

\usepackage{amsfonts}
\usepackage{amsmath}
\usepackage{amssymb}
\usepackage{amsthm}
\usepackage{mathtools}
\usepackage{stmaryrd}
\usepackage{dsfont}
\usepackage{graphicx}
\usepackage{hyperref}
\usepackage{titlesec}
\usepackage{mathabx}
\usepackage{enumitem,tocloft}
\usepackage[top=2.5cm, bottom=2.5cm, left=3cm, right=3cm]{geometry}
\usepackage{xcolor}
\usepackage{fourier-orns}
\usepackage[utf8]{inputenc}
\usepackage{bm}
\usepackage{bbm}
\usepackage{framed}
\usepackage{supertabular}
\hypersetup{linktocpage,breaklinks=true}
\usepackage{mathrsfs}
\usepackage{csquotes}
\usepackage{bbold}

\SetSymbolFont{stmry}{bold}{U}{stmry}{m}{n}

\usepackage[backend=bibtex, style=alphabetic, maxbibnames=6, sorting=nyt]{biblatex}

\theoremstyle{plain}
\newtheorem{theorem}{Theorem}[section]
\newtheorem{corollary}[theorem]{Corollary}

\newtheorem{theoremletter}{Theorem}

\newtheorem{propositionletter}[theoremletter]{Proposition}

\newtheorem{lemma}[theorem]{Lemma}
\newtheorem{proposition}[theorem]{Proposition}

\theoremstyle{definition}
\newtheorem{definition}[theorem]{Definition}
\newtheorem{example}[theorem]{Example}
\newtheorem{question}[theorem]{Question}
\newtheorem{remark}[theorem]{Remark}

\numberwithin{equation}{section}

\newcommand{\G}{\Gamma}

\newcommand{\g}{\gamma}

\newcommand{\La}{\Lambda}

\newcommand{\field}{\mathbb{f}}

\newcommand{\R}{\mathbb{R}}

\newcommand{\Z}{\mathbb{Z}}	

\newcommand{\N}{\mathbb{N}}	

\newcommand{\fsym}[1]{\mathsf{FSym}(#1)}

\newcommand{\sym}[1]{\mathsf{Sym}(#1)}

\newcommand{\shuf}[1]{\mathsf{Shuffler}(#1)}

\newcommand{\shufn}[2]{\mathsf{Shuffler}^{\circ {#1}}(#2)}

\newcommand{\juggler}[2]{\mathsf{Shuffler}_{#1}(#2)}

\newcommand{\jugglern}[3]{\mathsf{Shuffler}_{#2}^{\circ {#1}}(#3)}

\newcommand{\cloner}[1]{\mathsf{Cloner}_{\field}(#1)}

\newcommand{\clonern}[2]{\mathsf{Cloner}_{\field}^{\circ {#1}}(#2)}

\newcommand{\designer}[1]{\mathsf{Designer}_{F}(#1)}

\newcommand{\act}{\curvearrowright}

\newcommand{\ld}{\mathrm{L}}

\newcommand{\supp}{\mathrm{supp}\ } 

\newcommand{\diam}{\mathrm{diam}\ } 

\newcommand{\prof}[1]{j_{1,#1}}

\newcommand{\halo}{\mathscr{L}}

\makeatletter
\newcommand*{\defeq}{\mathrel{\rlap{%
                     \raisebox{0.3ex}{$\m@th\cdot$}}%
                     \raisebox{-0.3ex}{$\m@th\cdot$}}%
                     =}
\makeatother

\begin{document}

\begin{titlepage}
\setcounter{page}{1}
\title{On quantitative orbit equivalence for lamplighter-like groups}
\author{\small{Corentin Correia and Vincent Dumoncel}}

\date{\today}
\maketitle

\begin{abstract}
We focus on halo products, a class of groups introduced by Genevois and Tessera, and whose geometry mimics lamplighters. Famous examples are lampshufflers. Motivated by their work on the classifications up to quasi-isometry of these groups, we initiate a more quantitative study of their geometry. Indeed, it follows from the work of Delabie, Koivisto, Le Maître and Tessera that quantitative orbit equivalence between amenable groups is closely related to their large scale geometry, such a connection being justified by the use, in their main results, of a well-known quasi-isometry invariant: the isoperimetric profile.\par
Inspired by their work on quantitative orbit equivalence between lamplighters, we prove a stability result for orbit equivalence of permutational halo products, going beyond the framework of standard halo products, using a new notion of orbit equivalence of pairs. Combined with our asymptotics of isoperimetric profiles obtained in an earlier article, we prove that most of these constructions are quantitatively optimal. For instance, we show that $\shuf{\Z^{k+\ell}}$ and $\shuf{\Z^{k}}$ are $\ld^p$ orbit equivalent if and only if $p<\frac{k}{k+\ell}$, thus quantifying how much the geometries of these non-quasi-isometric groups differ. We finally build orbit equivalence couplings using the notion of F\o lner tiling sequences.
\end{abstract}

\smallskip

{
		\small	
		\noindent\textbf{{Keywords:}} Halo products, quantitative orbit equivalence, isoperimetric profile. 
	}
	
	\smallskip
	
	{
		\small	
		\noindent\textbf{{MSC-classification:}}	
		Primary 37A20; Secondary 20F65, 20F69.
	}


\tableofcontents

\section{Introduction}

In this paper, we focus on the following class of groups, introduced in~\cite{GT24a}:

\begin{definition}
Let $X$ be a set. A~\textit{halo of groups $\halo$ over $X$} is the data, for any subset $S\subset X$, of a group $L(S)$ such that:
\begin{itemize}
    \item for all $R,S\subset X$, if $R\subset S$ then $L(R)\leqslant L(S)$;
    \item $L(\emptyset)=\lbrace 1\rbrace$ and $L(X)=\langle L(S) : S\subset X \;\text{finite}\rangle$;
    \item for all $R,S\subset X$, $L(R\cap S)=L(R)\cap L(S)$.
\end{itemize}
\end{definition}

Given an action $H\curvearrowright X$ and a morphism $\alpha\colon H \to \text{Aut}(L(X))$ satisfying $\alpha(h)(L(S))=L(hS)$ for any $S\subset X$ and $h\in H$, the~\textit{permutational halo product} $\halo_{X,\alpha}H$ is the semi-direct product 
\begin{equation*}
    \halo_{X,\alpha}H \defeq L(X)\rtimes_{\alpha}H.
\end{equation*}

Throughout the paper, we will consider permutational halo products where $H$ acts on a quotient $X\defeq H/M$ by left-multiplication, where $M$ is a normal subgroup of $H$. Moreover,~\textit{standard} halo products will refer to the case where $X=H$ with the left-multiplication action $\alpha\colon H\curvearrowright H$, and $\halo_{H,\alpha} H$ is simply denoted by $\halo H$.

\smallskip

Halo products have recently been an interesting class to study up to quasi-isometry and bi-Lipschitz equivalence, as summarized in the two following paragraphs.

\paragraph{Rigidity for standard halo products.} 

\sloppy Examples of halo products include for instance wreath products, but also~\textit{lampshufflers}, defined as 
\begin{equation*}
    \shuf{H}=\fsym{H}\rtimes H
\end{equation*}
and for which $L(S)=\fsym{S}$ is, for any $S\subset H$, the group of finitely supported permutations $S\rightarrow S$. These groups already appeared several times in the literature, in relations with many topics of interest in group theory, see for instance~\cite{Yad09, HO16, BZ19, EZ21, SCZ21, GT24a, Sil24}.

\smallskip

As another instance of this construction, we can also mention~\textit{lampcloners}, of the form
\begin{equation*}
    \cloner{H}=\text{FGL}(H)\rtimes H
\end{equation*}
with a field $\field$, where $\text{FGL}(H)$ is the group of linear automorphisms of the $\field$-vector space $V_{H}$ freely generated by the elements of $H$ fixing all but finitely many elements of the formal basis $\lbrace e_{h}:h\in H\rbrace$ and where, for each $S\subset H$, $L(S)=\text{FGL}(S)$ is the subgroup of $\text{FGL}(H)$ of linear automorphisms that fix $H\setminus S$ and stabilize the subspace $\langle S\rangle$. We refer the reader to Section~\ref{sec:halo} for more examples. 

\smallskip

The motivation of~\cite{GT24a} to introduce such a general framework is that the semi-direct product structure provides a foliation of these spaces that must be, if $H$ satisfies additional mild assumptions, \enquote{quasi-preserved} by quasi-isometries, allowing the authors to show strong rigidity phenomena for quasi-isometries between such spaces, and thus extending the classification already obtained in~\cite{GT24b}. For instance, for lampshufflers, they managed to prove the following. 

\begin{theorem}[{\cite[Corollary~8.9]{GT24a}}]\label{th:QIrigidityShufGT24}
Let $H$ and $K$ be finitely presented one-ended groups. Then $\shuf{H}$ and $\shuf{K}$ are quasi-isometric if and only if $H$ and $K$ are biLipschitz equivalent.
\end{theorem}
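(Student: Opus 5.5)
This result is quoted from~\cite[Corollary~8.9]{GT24a} rather than proved here. If I had to reprove it, I would follow the embedding-theorem strategy of Genevois and Tessera and handle the two implications separately.

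\emph{The easy direction.} Suppose $f\colon H\to K$ is a bijective quasi-isometry, that is, a biLipschitz equivalence. Then $f$ induces a group isomorphism $\fsym{H}\to\fsym{K}$ by $\sigma\mapsto f\sigma f^{-1}$. I would define
\begin{equation*}
\Phi\colon \shuf{H}\to\shuf{K},\qquad (\sigma,h)\mapsto (f\sigma f^{-1}, f(h)).
\end{equation*}
To show $\Phi$ is a quasi-isometry, I would use a word-length estimate for $(\sigma,h)$ in terms of $H$-geometry: up to multiplicative constants, it is the length of a shortest path that starts at $1$, ends at $h$, and, as it passes through the support, realises $\sigma$ by local transpositions. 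Because $f$ is a bijection with bounded distortion, it preserves the support and moves points by controlled amounts. So both $\Phi$ and $\Phi^{-1}$ should be coarsely Lipschitz, which gives the quasi-isometry.

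\emph{The hard direction.} Let $\Psi\colon \shuf{H}\to\shuf{K}$ be a quasi-isometry. For $\sigma\in\fsym{H}$, call the subspace $\{\sigma\}\times H$ a \emph{leaf}. The plan has three steps.
\begin{enumerate}
\item Show that leaves are coarsely preserved: $\Psi$ sends each leaf within bounded Hausdorff distance of a leaf, uniformly in $\sigma$. This is the embedding theorem of \cite{GT24a}. It uses the hypotheses in an essential way. One-endedness and finite presentation make $H$ coarsely simply connected and rule out quasi-isometric embeddings of $H$ into $\shuf{K}$ that cross between leaves. The argument goes through a study of coarse separation, or "aptolic" quasi-isometric embeddings, of $H$.
\item Deduce that $\Psi$ induces quasi-isometries $\psi_\sigma\colon H\to K$, one per leaf, and that these are compatible with the lamp permutations.
\item Upgrade one $\psi_\sigma$ to a biLipschitz equivalence. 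The key is to count lamp configurations: the number of permutations supported in a ball of radius $n$ in $H$ must match the corresponding count in $K$ up to bounded distortion. Since $|\sym{S}|$ grows like $|S|!$, this is sensitive enough to force $\psi$ to be quasi-one-to-one, meaning preimages of points have uniformly bounded cardinality. It also forces $\psi$ to be of "scaling one", i.e.\ measure-scaling with factor $1$. A Hall marriage argument then perturbs $\psi$ at bounded distance into a bijection.
\end{enumerate}
In the wreath-product setting, the analogous step produces scaling factors given by powers related to lamp sizes. For lampshufflers, the factorial growth pins the factor to $1$.

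\emph{The main obstacle.} I expect the hard step to be the leaf-preservation theorem in step 1, together with ruling out nontrivial scaling in step 3. That is where the finite presentability and one-endedness assumptions are used.
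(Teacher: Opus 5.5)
The paper gives no proof of this statement. It is quoted from \cite{GT24a}, and the only part the paper revisits is the ``if'' direction: Remark~\ref{rem:PreservationBiLipHalo} partly recovers it for amenable $H,K$, by combining Shalom's theorem with the $\ld^{\infty}$ case of Theorem~\ref{thm:stabilityofcouplings+quantificationLampjugglers} ($r=1$, trivial subgroups). Your map $\Phi(\sigma,h)=(f\sigma f^{-1},f(h))$ is the deterministic counterpart of the coupling $\theta$ used there, which relabels lamps through the cocycle bijections $K\to H$. It is correct, and it needs no amenability. You also do not need the path description of word length. For $g=(\sigma,h)$ one has $\Phi(g)^{-1}\Phi(g(\mathrm{id},s))=(\mathrm{id},f(h)^{-1}f(hs))$ and $\Phi(g)^{-1}\Phi(g(\tau_{1_H,s},1_H))=(\tau_{1_K,f(h)^{-1}f(hs)},1_K)$, of length at most $4C$ by Lemma~\ref{lem:wordlengthinlampshuffler}. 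The same holds for $\Phi^{-1}$, so $\Phi$ is biLipschitz.

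For the converse, step~1 is the embedding theorem of \cite{GT24a} used as a black box, so everything rests on steps~2--3. Step~3 as written does not go through, for three reasons.

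(i) ``Preimages of points have uniformly bounded cardinality'' holds for every quasi-isometry between bounded-geometry spaces, so it carries no information. What Hall's theorem needs is quasi-one-to-one in the Genevois--Tessera sense: $\bigl||\psi^{-1}(A)|-|A|\bigr|\le C|\partial A|$ for every finite $A$. Equivalently, one needs Hall inequalities $|S|\le|N_D(\psi(S))|$ for all finite $S\subset H$, plus the symmetric ones for a quasi-inverse. ``Scaling one'' is this same condition, not an extra one.

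(ii) The condition only has content when $H$ is amenable, and then balls are the wrong test sets. For amenable $H$ of exponential growth such as $\mathrm{BS}(1,2)$, which satisfies the hypotheses, $|\partial B_n|$ is comparable to $|B_n|$. Moreover, for factorial counts the boundary alone contributes factors of size $|A|^{O(|\partial A|)}$. So the comparison has to be made set by set, with errors controlled by $|\partial A|$, not ``up to bounded distortion''.

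(iii) No count can be run on a single leaf map $\psi_\sigma$, since one leaf carries no lamp information. Step~2 must produce a normal form $\Psi\approx\bigl((c,p)\mapsto(\alpha(c),\beta(p))\bigr)$ with \emph{one} quasi-isometry $\beta$ for all leaves and a bijection $\alpha\colon\fsym{H}\to\fsym{K}$. Showing that the leafwise maps are uniformly close to each other is a genuine step, not a formality. Once this is done, locality of $\alpha$ comes for free: $(c,x)$ and $(c\tau_{x,xs},x)$ are adjacent, so $\alpha(c)^{-1}\alpha(c\tau_{x,xs})$ is supported in a ball of uniform radius about $\beta(x)$.

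With that normal form, the last step becomes exact rather than asymptotic. Chaining local moves gives $\alpha(c\,\fsym{S})\subset\alpha(c)\,\fsym{N_D(\beta(S))}$ for every finite $S\subset H$. When $S$ is not coarsely connected, use that the net change caused by a transposition of two far-apart points of $S$ does not depend on the path along which you realise it. By one-endedness such paths can avoid any ball far from both endpoints, so the support stays near $\beta(S)$. Injectivity of $\alpha$ is forced by the lower quasi-isometry bound, and it gives $|S|!\le|N_D(\beta(S))|!$, i.e.\ $|S|\le|N_D(\beta(S))|$. The same argument for $\Psi^{-1}$ gives the symmetric inequalities. Hall's theorem together with a Schr\"oder--Bernstein argument then yields a bijection at bounded distance from $\beta$. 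This also shows why the factor is pinned to $1$: for lamplighters the same reasoning only gives $|F_1|^{|S|}\le|F_2|^{|N_D(\beta(S))|}$, a harem condition with ratio $\ln|F_2|/\ln|F_1|$.
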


The main tool used in~\cite{GT24a} to establish such results is a new quasi-isometry invariant referred to as the~\textit{thick bigon property}, also introduced in~\cite{GT24a}. Finitely presented one-ended groups are particular examples of groups having this property~\cite[Lemma~3.4]{GT24a}. 

\smallskip

There are also quasi-isometric classification results in some classes where the thick bigon property is unavailable.

\begin{theorem}[{\cite[Theorem~H]{cordum25}}]\label{thm:QIclassificationiteratedlampshufflers}
Let $d_1,d_2,m,n\ge 0$ be positive integers. Let $H$ and $K$ be virtually abelian finitely generated groups with growth degrees $d_1$ and $d_2$ respectively. Then $\shufn{m}{H}$ and $\shufn{n}{K}$ are quasi-isometric if and only if $m=n$ and $d_1=d_2$.
\end{theorem}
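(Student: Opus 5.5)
The plan is to treat the two directions separately: the \enquote{if} direction by lifting a bi-Lipschitz equivalence of the base groups through the lampshuffler construction, and the \enquote{only if} direction with a quasi-isometry invariant, namely the isoperimetric profile $\prof{G}$.

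\emph{Sufficiency.} Suppose $m=n$ and $d_1=d_2=d$. Then $H$ and $K$ both contain $\Z^d$ as a finite index subgroup. I first show that $H$ and $K$ are bi-Lipschitz equivalent. It suffices to show that $\Z^d$ is bi-Lipschitz equivalent to any group containing it with finite index $N$. Such a group is quasi-isometric to $\Z^d\times \Z/N\Z$. Moreover, $\Z^d\times\Z/N\Z$ is bi-Lipschitz equivalent to $\Z^d$: slicing one coordinate direction into blocks of length $N$ gives a bounded-displacement bijection $\Z\times\Z/N\Z\to\Z$, which one crosses with the identity on $\Z^{d-1}$.

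Next I check that a bi-Lipschitz bijection $f\colon H\to K$ induces a quasi-isometry $\shuf{H}\to\shuf{K}$ given by $(\sigma,h)\mapsto(f\sigma f^{-1},f(h))$. The word length of $(\sigma,h)$ is comparable to the length of a shortest path in $H$ that starts at $1$, ends at $h$, and \enquote{carries} every point of $\operatorname{supp}\sigma$ to its image, which is a travelling-salesman type quantity. This quantity is distorted by at most a multiplicative constant under bi-Lipschitz bijections. The map $\sigma\mapsto f\sigma f^{-1}$ is itself a bijection $\fsym{H}\to\fsym{K}$, so the induced map on lampshufflers is again a bi-Lipschitz bijection. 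I can therefore iterate $m$ times.

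\emph{Necessity.} I compute the isoperimetric profile of $\shufn{m}{H}$ when $H$ has growth degree $d$. The expected answer is
\begin{equation*}
  \prof{\shufn{m}{H}}(x)\simeq\Big(\frac{\log^{\circ m}x}{\log^{\circ (m+1)} x}\Big)^{1/d},
\end{equation*}
possibly up to lower-order iterated-log corrections.

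For the upper bound on the profile, I use the F\o lner sets $F_r=\fsym{F'_r}\times F'_r$, where $F'_r$ is the F\o lner set of the previous level, starting from balls of radius $r$ in $H$. Their boundary ratio is of order $1/r$, and the cardinalities satisfy $\log|F_r|\approx|F'_r|\log|F'_r|$. This gives the tower of $m$ exponentials, and inverting it yields the displayed bound.

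For the lower bound, I would use a Coulhon--Saloff-Coste type argument on the semi-direct product structure. Alternatively, and this is what I would try first, I would use the halo-product lower bound mechanism: a set of the given size must either have large $H$-projection or contain a large permutation group supported on a set of $H$-diameter at least $r$.

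Since the profile is a quasi-isometry invariant up to $\simeq$, the number of iterated logarithms recovers $m$, and then the exponent $1/d$ recovers $d$. The main obstacle is the sharp lower bound on the profile for iterated shufflers. I expect this to require an induction on $m$ controlling how F\o lner sets split along the foliation by cosets of $L(X)$.
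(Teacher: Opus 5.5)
Your sufficiency half is essentially right. Two small points. First, the coset map $zg_i\mapsto(z,i)$ from $H$ to $\Z^d\times\Z/N\Z$ is a \emph{bijective} quasi-isometry, hence bi-Lipschitz; you need this, because a mere quasi-isometry would not compose into a bi-Lipschitz equivalence. Second, the lift $(\sigma,h)\mapsto(f\sigma f^{-1},f(h))$ can be checked on generators without any travelling-salesman formula. The swap generator at $h$ is sent to right multiplication by $(\tau_{1,k},1)$ with $k=f(h)^{-1}f(hs)$ and $|k|\le L$, and this element has length at most $4L$ by Lemma~\ref{lem:wordlengthinlampshuffler}. (Also, the statement needs $d_1,d_2\ge 1$: for finite $H$ every iterate is finite.)

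The genuine gap is in the necessity half. To rule out a quasi-isometry you need a lower bound on one profile and an upper bound on the other, and you only have lower bounds. Your directions are swapped. The tiles $\fsym{F'_r}\times F'_r$ give $\prof{\shufn{m}{H}}(x)\succcurlyeq\bigl(\ln^{\circ m}(x)/\ln^{\circ (m+1)}(x)\bigr)^{1/d}$, which is a \emph{lower} bound on the profile (an upper bound on the F\o lner function). Coulhon--Saloff-Coste does bound the profile from above, but only by $\ln x$ once $m\ge 1$, since these groups have exponential growth. That cannot separate $\shuf{\Z}$ from $\shuf{\Z^{2}}$, nor $\shuf{\Z}$ from $\shufn{2}{\Z}$. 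The halo heuristic you describe is not yet an argument.

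The missing isoperimetric inequality is the real content of the necessity direction. It is \cite[Corollary~1.4]{EZ21} for $m=1$ and \cite[Theorem~C]{cordum25} in general (recorded in Section~\ref{sec:PrelIsoProfHalo}). Combined with quasi-isometry invariance of the profile, this is the input the cited result rests on; your overall architecture matches it.

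Sharpness is not needed. The bound $\prof{\shufn{m}{H}}(x)\preccurlyeq(\ln^{\circ m}x)^{1/d}$ together with your tower bound already separates all pairs $(m,d)$, and it can be obtained cheaply:
\begin{enumerate}
\item For any proper subgroup $G'\le G$ and any $t\notin G'$, sending $\delta_g$ to $\tau_{g,gt}$ embeds $\Z/2\Z\wr G'$ into $\shuf{G}$.
\item Take $G=\shufn{(m-1)}{H}$ and $G'$ the preimage of a proper finite index subgroup of $H$.
\item Apply monotonicity under subgroups (Theorem~\ref{thm:profilemonotonuousSubgroup}).
\item Apply the lower half of Erschler's estimate \cite{Ers03}, $\textup{F\o l}_{\Z/2\Z\wr B}(n)\ge\exp\bigl(c\,\textup{F\o l}_{B}(cn)\bigr)$. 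This follows from the edge-isoperimetric inequality in the hypercube applied to lamp configurations with large $B$-fibres, and needs no regularity of $\prof{B}$.
\end{enumerate}
These steps give $\prof{\shufn{m}{H}}(x)\preccurlyeq\prof{\shufn{(m-1)}{H}}(C\ln x)$, and you can induct on $m$.
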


Here, $\shufn{n}{H}$ denotes the $n$-th fold iterated lampshuffler over $H$, defined inductively by $\shufn{0}{H} \defeq H$ and $\shufn{(n+1)}{H} \defeq \shuf{\shufn{n}{H}}$. 

\paragraph{Rigidity for permutational halo products.} For permutational halo products, such as permutational lamplighters $F\wr_{H/M}H$, permutational lampshufflers $\shuf{H, M} \defeq \fsym{H/M}\rtimes H$
or permutational lampcloners $\cloner{H,M}\defeq \text{FGL}(H/M)\rtimes H$, with $M\lhd H$, the literature about quasi-isometric rigidity is less developed. Until now, only the case of permutational lamplighters have been investigated~\cite{Dum24}. As an illustration:

\begin{theorem}[{\cite[Corollary~1.4]{Dum24}}]\label{thm:QIrigidityofPWP}
Let $d_{1},d_{2},k_{1},k_{2}\ge 2$ be integers such that $d_{1}\ge k_{1}\ge 2$, $d_{2}\ge k_{2}\ge 2$. Let $E$, $F$ be non-trivial finite groups. Then $E\wr_{\Z^{k_{1}}}\Z^{d_{1}}$ and $F\wr_{\Z^{k_{2}}}\Z^{d_{2}}$ are quasi-isometric if and only if $d_{1}=d_{2}$, $k_{1}=k_{2}$ and $|E|$ and $|F|$ are powers of a common number. 
\end{theorem}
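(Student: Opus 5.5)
The plan is to treat the two implications separately. Both rest on the coarse embedding / quasi-isometry rigidity framework for permutational wreath products $F\wr_{\Z^k}\Z^d$, where the lamp set is the quotient $\Z^d/M \cong \Z^k$ and $M\cong\Z^{d-k}$ acts trivially on the lamps.

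For the \emph{if} direction, assume $d_1=d_2=d$, $k_1=k_2=k$, and $|E|=n^{a}$, $|F|=n^{b}$. First I would reduce to the case of equal lamp groups. The idea follows the Eskin--Fisher--Whyte picture for $\Z/n\wr\Z$: the groups $F\wr\Z^k$ with $|F|$ powers of $n$ are all quasi-isometric to $\Z/n\wr\Z^k$. Concretely, one regroups each lamp of $E$ as $a$ coordinates in $\Z/n$, which amounts to passing to a finite-index sublattice of $\Z^k$ (a bi-Lipschitz relabelling of the base). Second, I would extend this product-wise to the extra $\Z^{d-k}$ factor. The permutational wreath product is a bundle over $\Z^d$ whose lamp configurations depend only on the $\Z^k$-coordinate, so it should be quasi-isometric to $(F\wr\Z^k)\times\Z^{d-k}$, up to the way the base twists. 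The quasi-isometry is then obtained by taking the product with the identity.

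For the \emph{only if} direction, I would rely on the rigidity theorem of~\cite{Dum24}: every quasi-isometry between such groups is at bounded distance from an ``aptolic'' map. Such a map sends the cosets of the base $\Z^{d_1}$ (the leaves) to leaves, and is induced by a quasi-isometry of the bases that coarsely preserves the fibration $\Z^{d_i}\to\Z^{k_i}$. The argument then proceeds in three steps.
\begin{itemize}
\item Since the bases are quasi-isometric, growth gives $d_1=d_2$.
\item Looking at how the map permutes the ``lamp coordinates'' shows that the induced map on lamp spaces $\Z^{k_1}\to\Z^{k_2}$ is a quasi-isometry, so growth again gives $k_1=k_2$.
\item Finally, the induced map on $\Z^{k}$ must be quasi-$(m)$-to-one, with $|E|=|F|^{m}$ roughly. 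Following the scaling-group argument for lamplighters over amenable groups (Genevois--Tessera), this forces the quasi-isometry to have a ``scaling factor'' $\kappa$ with $|E|^{?}=|F|^{?}$, i.e.\ $\log|E|/\log|F|\in\mathbb Q$. That condition is exactly that $|E|$ and $|F|$ are powers of a common number.
\end{itemize}

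The main obstacle is the third step. One has to show that the measure-scaling of the base quasi-isometry on $\Z^k$ is compatible with counting lamp configurations on large boxes. The strategy is to count, for a large box $B\subset\Z^k$, the number of leaves in a ball through configurations supported on $B$: this is $|E|^{|B|}$ versus $|F|^{|f(B)|}$. One then uses amenability of $\Z^k$ to conclude that $|f(B)|/|B|\to\kappa$, which gives $|E|=|F|^{\kappa}$ with $\kappa$ forced to be rational.
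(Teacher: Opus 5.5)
Your \emph{if} direction is fine, and there is no twisting to worry about. Since $\Z^{d}/M\cong\Z^{k}$ is torsion-free, $M$ is a direct summand, so $E\wr_{\Z^{k}}\Z^{d}\cong(E\wr\Z^{k})\times\Z^{d-k}$ is an honest group isomorphism; the paper uses exactly this in the proof of Corollary~\ref{cor:ClassificationQOEPermutationalZd}. The paper itself gives no proof of the statement: it quotes it from \cite{Dum24}, where it is read off from the general rigidity theorem for permutational wreath products.

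The genuine gap is your third step of the \emph{only if} direction. Counting configurations over large boxes, together with amenability of $\Z^{k}$, only shows that the base map induced by an aptolic quasi-isometry is quasi-$\kappa$-to-one with $\kappa=\ln|E|/\ln|F|$. That pins $\kappa$ down, but says nothing about whether it is rational. No argument that only looks at the base map can do better. For every real $\kappa>0$, the map $(x_{1},\dots,x_{k})\mapsto(\lfloor x_{1}/\kappa\rfloor,x_{2},\dots,x_{k})$ is a quasi-$\kappa$-to-one quasi-isometry of $\Z^{k}$, so at that level nothing excludes, say, $|E|=2$, $|F|=3$. Your intermediate claim that the induced map is quasi-$m$-to-one with $|E|=|F|^{m}$ is also off in general: for $|E|=4$, $|F|=8$, which do give quasi-isometric groups, the factor is $\ln 4/\ln 8=2/3$, not an integer.

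What is missing is an \emph{exact} relation $|E|^{a}=|F|^{b}$ with $a,b$ positive integers. Estimates carrying boundary error terms cannot produce such a relation. It has to be extracted from the lamp component of the aptolic map, i.e.\ from how that map bijectively matches configurations over $\Z^{k_{1}}$ with configurations over $\Z^{k_{2}}$, not from the base map. This is the arithmetic part of Genevois--Tessera's rigidity for lamplighters over one-ended groups \cite{GT24b}, carried over to the permutational setting in \cite{Dum24}. There it is part of the conclusion of the rigidity theorem, not a consequence of the measure-scaling of the base map. If you invoke that theorem in full, your first two steps give $d_{1}=d_{2}$ and $k_{1}=k_{2}$, and the arithmetic condition comes with it. As written, the box-counting step does not prove it.
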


We refer to~\cite[Theorem~1.3]{Dum24} for a more general statement and the corresponding assumptions.

\paragraph{The main question of the paper.}

The starting point of our work is therefore the following, rather \enquote{vague} question.

\begin{question}\label{question:startingquestion}
In the case where two halo products $\halo H$ and $\halo K$ are not quasi-isometric, how much do their geometries differ?
\end{question}

To tackle the question, quantitative orbit equivalence provides a nice framework and allows for a~\textit{measured} comparison between the geometries of the two groups. For instance, Theorem~\ref{thm:optimalityofL^pnessiteratedlampshufflersINTRO} and~\ref{thm:optimalityiteratedlampshufflersINTRO} are more quantitative versions of Theorem~\ref{thm:QIclassificationiteratedlampshufflers}, whereas Theorem~\ref{th:ClassificationQOEPermutationalZd Intro} is a more quantitative version of Theorem~\ref{thm:QIrigidityofPWP}.

\smallskip 

Our statements will focus on the particular examples of (permutational) wreath products, lampjugglers (halo products that extend lampshufflers) and lampcloners. However, our methods for each of these halo products do not seem to be independant of each other. We believe that there is a general underlying pattern that could lead the interested reader to a statement encompassing all particular examples of halo products. We sketch the main steps of the strategies in Sections~\ref{sec:generalmethodstability} and~\ref{sec:generalobservationtilinghalo}. 

\paragraph{Quantitative orbit equivalence.} Measure equivalence has been introduced by Gromov~\cite{Gro93} as a measured analogue of quasi-isometry, and in fact, it follows from the work of Shalom~\cite{Sha04} that quasi-isometric amenable groups are~\textit{$\ld^{\infty}$ measure equivalent}, namely measure equivalent with some strong quantitative assumption on functions called the~\textit{cocycles}.

\smallskip

In the paper, we will not give the definitions of quantitative measure equivalence since we focus on a particular instance, called~\textit{orbit equivalence}, and the historical results will also be stated only in terms of this particular notion.

\begin{definition}
Let $H$ and $K$ be countable groups. An~\textit{orbit equivalence} coupling for $H$ and $K$ is a standard probability space $(X,\mu)$ equipped with essentially free pmp $H$- and $K$-actions that share the same orbits up to a null set, namely for almost every $x\in X$, the $H\cdot x$ and $K\cdot x$ are equal.
\end{definition}

If such an orbit equivalence coupling exists, we say that the groups $H$ and $K$ are~\textit{orbit equivalent}. An orbit equivalence coupling $(X,\mu)$ between two groups $H,K$ provides measurable maps $c_{H,K}\colon H\times X\to K$ and $c_{K,H}\colon K\times X\to H$, called the~\textit{cocycles}, and defined by the equalities
\begin{equation*}
    h\cdot x=c_{H,K}(h,x)\cdot x \;\text{and}\;k\cdot x=c_{K,H}(k,x)\cdot x
\end{equation*}    
for every $h\in H$, $k\in K$ and for almost every $x\in X$.

\smallskip

Let us assume that $H$ and $K$ are finitely generated. Given a non-decreasing map $\varphi\colon\R_{+}\to\R_{+}$, a cocycle $c_{H,K}$ is $\varphi$\textit{-integrable} if for every $h\in H$, there exists a constant $C_{h}>0$ such that
\begin{equation*}
\int_{X}{\varphi\left(\frac{\left|c_{H,K}(h,x)\right|_{S_{K}}}{C_{h}}\right)\mathrm{d}\mu(x)}<\infty,
\end{equation*}
where $S_K$ is a finite generating set of $K$. For instance, given a real number  $p>0$ (for instance $p<1$ is allowed), the cocycle is $\ld^p$ if it satisfies the previous condition with $\varphi(x)=x^p$. Moreover, by an $\ld^{\infty}$ cocycle $c_{H,K}$, we will mean that for every $h\in H$, $c_{H,K}(h,\cdot)\colon X\to K$ essentially takes finitely many values, and this assumption does not ask for finitely generated groups.

\smallskip

Then, we say that an orbit equivalence coupling from $H$ to $K$ is~\textit{$(\varphi,\psi)$-integrable} if the corresponding cocycles $c_{H,K}$ and $c_{K,H}$ are respectively $\varphi$- and $\psi$-integrable. Moreover, we may write for instance $(\ld^p,\psi)$-integrable for any $p\in [0,\infty]$, $(\ld^{<\infty},\psi)$-integrable when the corresponding cocycle $c_{H,K}$ is $\ld^p$ for every $p>0$, or $(\varphi,\ld^0)$-integrable when no requirement is made on the second cocycle $c_{K,H}$, etc.

\smallskip

Note that measure equivalence also provides cocycles and we can similarly ask for quantitative constraints on it. Moreover two groups are $(\varphi,\psi)$-integrably orbit equivalent if and only if they are $(\varphi,\psi)$-integrably measure equivalent and the measure equivalence coupling admits fundamental domains of equal measure.

\smallskip

In contrast to Ornstein-Weiss theorem which states that any countable infinite amenable groups are orbit equivalent, a natural question is the preservation of geometric properties of groups under quantitative forms of orbit or measure equivalence. Historically, the most natural quantitative forms of orbit equivalence (and even measure equivalence) are the $\ld^{\infty}$ and $\ld^1$ cases. Many rigidity results have been uncovered in this context, showing that $\ld^1$ and $\ld^{\infty}$ orbit equivalence (as well as $\ld^1$ and $\ld^{\infty}$ measure equivalence) captures the geometry of groups. Here is the famous rigidity result due to Shalom (see also~\cite{BFS13} and~\cite{Aus16} for some results in the $\ld^1$ case).

\begin{theorem}[\cite{Sha04}]\label{thm:ShalomBilip intro}
Two amenable groups are bi-Lipschitz equivalent if and only if they are $\ld^{\infty}$ orbit equivalent.
\end{theorem}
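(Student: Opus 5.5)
We prove the two implications separately; both go through the cocycle identities and the fact that finitely generated (amenable) groups admit finite generating sets. Let $H,K$ be finitely generated amenable groups with finite generating sets $S_H,S_K$.

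\emph{From $\ld^\infty$ orbit equivalence to a bi-Lipschitz map.} Let $(X,\mu)$ be an $\ld^\infty$ orbit equivalence coupling. Then there is $C>0$ such that $|c_{H,K}(s,x)|_{S_K}\le C$ and $|c_{K,H}(t,x)|_{S_H}\le C$ for all $s\in S_H$, $t\in S_K$ and almost every $x$.
\begin{enumerate}
\item Fix a generic point $x$, which has free orbits and satisfies the cocycle bounds along its whole orbit. This is possible because the orbit is countable and each bound holds almost everywhere.
\item Since the $H$- and $K$-orbits of $x$ coincide and both actions are free, define $f\colon H\to K$ by $h\cdot x=f(h)\cdot x$. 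This $f$ is a bijection.
\item Write $h'=hs$ with $s\in S_H$. Then $h'\cdot x=h\cdot(s\cdot x)$, and the cocycle identity gives $f(h')=f(h)\,c(s,x')$ for a point $x'$ in the orbit. Hence $d(f(h),f(h'))\le C$.
\item The same argument applied to $f^{-1}$ shows that $f$ is a bi-Lipschitz bijection.
\end{enumerate}
The right-versus-left multiplication conventions must be tracked carefully here, but this is routine bookkeeping.

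\emph{From a bi-Lipschitz map to $\ld^\infty$ orbit equivalence.} This is the substantive direction. Let $f\colon H\to K$ be a bi-Lipschitz bijection. We build a coupling from $f$ as follows.
\begin{enumerate}
\item Let $\Omega$ be the space of all bijections $g\colon H\to K$ that are $L$-bi-Lipschitz for the constant $L$ of $f$. With the topology of pointwise convergence, $\Omega$ is compact, since balls are finite.
\item Let $H\times K$ act on $\Omega$ by $(h,k)\cdot g=k\,g(h^{-1}\,\cdot)$. This action is continuous.
\item Using amenability of $H\times K$, choose an invariant probability measure $\nu$ on the closure of the orbit of $f$.
\item Take $X=\{g\in\Omega: g(e)=e\}$. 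This set is a common fundamental domain: it meets every $H$-orbit exactly once and every $K$-orbit exactly once, since $g$ is a bijection. Restricting $\nu$ to $X$ therefore gives an orbit equivalence between the induced $H$- and $K$-actions on $X$.
\item The cocycle for $s\in S_H$ is read off at the point $g$ as $g(s)$ or $g(s^{-1})^{\pm1}$. Its length is at most $L|s|$, so it takes finitely many values, i.e.\ it is $\ld^\infty$.
\end{enumerate}

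\emph{Main obstacle: essential freeness.} The induced actions need not be free, since $\nu$ may charge maps with symmetries. I would first try to fix this by taking the diagonal product with a free pmp action of $H\times K$, for instance a Bernoulli shift. The product measure is still invariant, the fundamental domain becomes $X\times(\text{Bernoulli space})$, the cocycles are unchanged, and freeness holds almost surely. Finally, both restricted measures have total mass $\nu(X)$, so after normalisation they are probability measures, giving a genuine orbit equivalence rather than just a measure equivalence.
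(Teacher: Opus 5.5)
There is a genuine gap in your converse direction, at step 3. The paper gives no proof of this statement; it only cites [Sha04], so I am judging your argument on its own.

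\textbf{The gap.} $\Omega$ is not compact. Only the slices $\{g : g(e)=k\}$ are, because the Lipschitz bound controls $g(h)$ only relative to $g(e)$. Worse, neither $\Omega$ nor the orbit closure of $f$ carries any $H\times K$-invariant probability measure. The map $g\mapsto g(e)$ intertwines the $\{e\}\times K$-action with left multiplication on $K$, so such a measure would push forward to a left-invariant probability measure on the infinite group $K$. Put differently, $K$ acts freely on $\Omega$ with fundamental domain $X$, so any finite invariant $\nu$ satisfies $\nu(\Omega)=\sum_{k\in K}\nu(X)$, i.e.\ $\nu(X)=0$, and your final normalisation by $\nu(X)$ divides by zero. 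The coupling measure on $\Omega$ has to be infinite, with $X$ of finite positive mass. Producing that measure is the actual content of the theorem.

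\textbf{The missing idea.} Use amenability on the compact cross-section $X$ directly. For $g\in X$ and $h\in H$, set $h\ast g\defeq g(h^{-1})^{-1}\,g(h^{-1}\,\cdot\,)$, the unique point of $(\{h\}\times K)\cdot g$ lying in $X$.
\begin{itemize}
\item $X$ is a closed subset of $\prod_{h\in H}B_K(e,L|h|_{S_H})$. Surjectivity survives pointwise limits because the preimages $g_n^{-1}(k)$ stay in the finite ball $B_H(e,L|k|_{S_K})$. So $X$ is a nonempty compact metrizable space.
\item $\ast$ is an action by homeomorphisms, since $g(h^{-1})$ is locally constant in $g$.
\item Amenability of $H$ alone therefore gives an $\ast$-invariant probability measure $\nu_0$ on $X$.
\item The induced $K$-action $\star$ on $X$ (the unique point of $(H\times\{k\})\cdot g$ in $X$) has the same orbits, namely $\bigl((H\times K)\cdot g\bigr)\cap X$. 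Hence it also preserves $\nu_0$: a measure preserved by a countable group is preserved by every Borel bijection whose graph lies in the orbit relation.
\item Equivalently, $\sum_{k\in K}(e,k)_*\nu_0$ is an $H\times K$-invariant Radon measure on $\Omega$ normalised by $X$. This is the measure that Gromov and Shalom obtain by F\o lner averaging.
\end{itemize}
With this measure in place, your steps 4--5 go through, with $c_{H,K}(s,g)=g(s^{-1})^{-1}$ of length at most $L$. Your Bernoulli product trick then correctly restores essential freeness of the induced actions and also removes atoms.

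\textbf{Easy direction.} This is fine up to the convention issue you flagged, but your formula $f(hs)=f(h)\,c(s,x')$ does not follow from the cocycle identity. What it gives is $f(sh)=c_{H,K}(s,h\cdot x)\,f(h)$. So $f$ is bi-Lipschitz for right-invariant word metrics, and $h\mapsto f(h^{-1})^{-1}$ is the bi-Lipschitz bijection for the usual left-invariant ones.
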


Now for non bi-Lipschitz equivalent groups, our interest specifically focuses on the following result of Delabie, Koivisto, Le Maître and Tessera, providing explicit upper bounds on how integrable the cocycles of an orbit equivalence coupling can be.

\begin{theorem}[{\cite[Theorem~1.1]{DKLMT22}}]\label{thm:dklmt intro}
Let $\varphi\colon\R_{+}\rightarrow\R_{+}$ be a non-decreasing function such that $t\mapsto \frac{t}{\varphi(t)}$ is non-decreasing. Let $H$ and $K$ be finitely generated groups. Assume that there exists a $(\varphi,\ld^0)$-integrable orbit equivalence coupling from $H$ to $K$. Then their isoperimetric profiles satisfy the asymptotic inequality
\begin{equation*}
    \varphi\circ\prof{K}(n) \preccurlyeq \prof{H}(n).
\end{equation*}
\end{theorem}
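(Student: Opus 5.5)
Here is how I would prove the cited Theorem~\ref{thm:dklmt intro}, following the strategy of Delabie, Koivisto, Le Ma\^itre and Tessera. Recall that $\prof{G}(n)=\sup\{|A|/|\partial A| : |A|\le n\}$, where $\partial A=\bigcup_{s\in S_G}(A\setminus As)$ (or an equivalent boundary notion). The idea is to transport almost-optimal F\o lner sets of $K$ into $H$ through the orbit equivalence, using the common orbits. A related device is to work with functions rather than sets: the profile can equivalently be computed with $\ell^1$ gradients of finitely supported functions, via co-area. I would use whichever version makes the transport cleanest.

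\textbf{Step 1: transport.} Fix a finite set $A\subset K$ with $|A|\le n$ and $|A|/|\partial_K A|$ close to $\prof{K}(n)$. For a.e.\ $x\in X$, the map $k\mapsto k\cdot x$ is a bijection $K\to K\cdot x=H\cdot x$. Composing with $H\to H\cdot x$ gives a set $A_x\subset H$ with $|A_x|=|A|$. For $s\in S_H$, the boundary $\partial_s A_x$ consists of those $h\in A_x$ whose neighbour $hs$ leaves $A_x$. Translating back into $K$, these correspond to pairs $(a,a')$ with $a\in A$, $a'\notin A$, and $a'a^{-1}$ (up to orientation conventions) equal to the cocycle value $c_{H,K}(s,\cdot)$ evaluated at the appropriate point of the orbit.

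\textbf{Step 2: average over $x$.} I would integrate $|\partial A_x|$ over $x$. By invariance of $\mu$, the expected number of boundary edges is at most $\sum_{s}\sum_{a\in A}\mu\{y : ac_{H,K}(s,y)\notin A\}$. This is bounded by $\sum_s\sum_{k}\mu\{c_{H,K}(s,\cdot)=k\}\,|A\setminus Ak^{-1}|$. Now use the standard estimate $|A\setminus Ak|\le |k|_{S_K}\,|\partial A|$, together with the trivial bound $|A\setminus Ak|\le|A|$. This gives
\[
\mathbb{E}|\partial A_x|\le \sum_s \mathbb{E}\bigl[\min\bigl(|c_{H,K}(s,\cdot)|\,|\partial A|,\;|A|\bigr)\bigr].
\]

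\textbf{Step 3: integrability.} Set $t=|A|/|\partial A|\approx\prof{K}(n)$. We need $\mathbb{E}\min(|c|,t)\lesssim t/\varphi(t)$. Since $u\mapsto u/\varphi(u)$ is non-decreasing, on $\{|c|\le t\}$ we have $|c|\le t\,\varphi(|c|)/\varphi(t)$. On $\{|c|>t\}$ we have $t\le t\,\varphi(|c|)/\varphi(t)$, because $\varphi$ is non-decreasing. Hence
\[
\mathbb{E}\min(|c|,t)\le \frac{t}{\varphi(t)}\,\mathbb{E}\varphi(|c|),
\]
and the right-hand side is finite, after absorbing the constants $C_s$, which costs only $\preccurlyeq$ constants. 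Consequently $\mathbb{E}|\partial A_x|\lesssim |\partial A|\,t/\varphi(t)=|A|/\varphi(t)$. Some $x$ then achieves $|A_x|/|\partial A_x|\gtrsim\varphi(t)$ with $|A_x|\le n$, which gives $\prof{H}(n)\succcurlyeq\varphi(\prof{K}(n))$.

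\textbf{Main obstacle.} The delicate point is Step~2: correctly identifying the boundary of $A_x$ in $H$ with $K$-displacements given by the cocycle evaluated at shifted points $a\cdot x$. One must then use measure preservation, through the mass transport principle, to replace averages over the shifted points $a\cdot x$ by a single integral. I would set this up as a mass-transport identity on the orbit equivalence relation before doing the computation.
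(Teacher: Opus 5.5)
Your proof is correct; the paper gives no proof of its own (it quotes the result from \cite{DKLMT22}), and your argument is essentially theirs, specialized to orbit equivalence couplings rather than general measure subgroup couplings. The shared steps are: transport a near-optimal F\o lner set of $K$ along the common orbits, bound the cost of a displacement $k$ by $\min(|k|_{S_K}|\partial_K A|,|A|)$, and use the monotonicity of $t/\varphi(t)$ to get $\min(u,t)\le\frac{t}{\varphi(t)}\varphi(u)$. The constants $C_s$ are absorbed via $\varphi(t)\le C\varphi(t/C)$ for $C\ge 1$, which also follows from that monotonicity.

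The step you flag as the main obstacle is easier than you expect, and needs no mass transport principle. The Schreier graph of the left $H$-action is the left Cayley graph, which is isomorphic to the right one via $g\mapsto g^{-1}$. So the $H$-boundary of $A\cdot x$ is governed by the condition $c_{H,K}(s,a\cdot x)\,a\notin A$. Invariance of $\mu$ under the single element $a\in K$ then gives $\mu\{x : c_{H,K}(s,a\cdot x)\,a\notin A\}=\mu\{y : c_{H,K}(s,y)\,a\notin A\}$ directly.
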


We refer the reader to Section~\ref{sec:isoprof} for the definition of the isoperimetric profile. For now, the only idea to keep in mind is the fact that this map is only interesting when the group is amenable, and in fact it measures how much amenable the group is: the faster the map goes to infinity, the more the group is amenable. Therefore, in contrast to Ornstein-Weiss, Theorem~\ref{thm:dklmt intro} demonstrates that these quantitative strengthenings of orbit equivalence enable us to capture the geometries of amenable groups.

\smallskip

A consequence of this result is that, given positive integers $d_1>d_2$, there is no $\ld^p$ orbit equivalence between $\Z^{d_1}$ and $\Z^{d_2}$ if $p>\frac{d_2}{d_1}$. Moreover explicit constructions, using the notion of \textit{F\o lner tiling sequences} (see Section~\ref{sec:folnerTiling} for precise definitions), provide an $\ld^p$ orbit equivalence for every $p<\frac{d_2}{d_1}$. Finally the first-named author recently proved that the threshold $p=\frac{d_2}{d_1}$ cannot be achieved~\cite[Theorem~A]{Corr24}. We refer the reader to Section~\ref{sec:orbit equivalence} for more details.

\smallskip

Quantitative orbit and measure equivalences are therefore closely related to geometric group theory. In some sense, the maximal quantification that we can put on the cocycles tells us how much the groups are geometrically distinct.

\paragraph{Constructions of orbit equivalence couplings.}

Coming back to halo products, Delabie, Koivisto, Le Maître and Tessera have already established quantitative orbit equivalence couplings between wreath products~\cite{DKLMT22}. They used two techniques:
\begin{enumerate}[label=(\Alph*)]
    \item\label{item:techniqueDKLMT A} the first one consists in finding stability results, for instance if $H$ and $K$ are $(\varphi,\psi)$-integrably orbit equivalent, then so are $\G\wr H$ and $\G\wr K$ for every group $\G$;
    \item\label{item:techniqueDKLMT B} the second explicit construction of coupling use the notion of F\o lner tiling sequences, that yields natural group actions generalizing $\Z$-odometers.
\end{enumerate}
The goal is to extend techniques~\ref{item:techniqueDKLMT A} and~\ref{item:techniqueDKLMT B} to other examples of halo products, in each specific case: lampshufflers (more generally lampjugglers) and lampcloners. Moreover, regarding technique~\ref{item:techniqueDKLMT A}, we go in fact even further and investigate the case of permutational halo products. For instance, for permutational wreath products, the strategy is to derive an orbit equivalence coupling between $\G\wr_{H/M}H$ and $\G\wr_{K/N}K$, provided a coupling between $H$ and $K$ that takes into account the subgroups $M$ and $N$. This new notion, introduced in a common work with Romain Tessera~\cite{CDT26}, is called~\textit{orbit equivalence of pairs}.

\begin{definition}[{\cite{CDT26}}]
Let $H$ and $K$ be countable groups, with subgroups $M\leq H$, $N\leq K$. We say that the pairs $(H,M)$ and $(K,N)$ are~\textit{orbit equivalent} if there exist essentially free pmp $H$- and $K$-actions on a standard and atomless probability space $(X,\mu)$, satisfying $H\cdot x=K\cdot x$ and $M\cdot x=N\cdot x$ for almost every $x\in X$. The space $(X,\mu)$ is called an~\textit{orbit equivalence coupling} between the pairs $(H,M)$ and $(K,N)$.
\end{definition}

An orbit equivalence coupling $(X,\mu)$ between two pairs $(H,M)$ and $(K,N)$
is $(\varphi, \psi)$-integrable if it is $(\varphi,\psi)$-integrable as a coupling between the ambient groups $H$ and $K$. In the paper, we will always be interested in normal pairs, namely the subgroups $M$ and $N$ are normal in $H$ and $K$ respectively.

In a more quantitative manner, a $(\varphi,\psi)$-orbit equivalence between $\G\wr_{H/M}H$ and $\G\wr_{K/N}K$ will be given by a $(\varphi,\psi)$-integrable coupling of pairs between the base groups $H$ and $K$. Examples of couplings of pairs, with quantitative information, are provided in~\cite{CDT26}, and they turn out to be sharp. Here is an illustrative example for pairs of the form $(\Z^d,\Z^k)$ with $d>k>0$, where $\Z^k$ is understood as the subgroup $\{0\}^{d-k}\times\Z^k$ of $\Z^d$.

\begin{proposition}[{\cite{CDT26}}]
Let $d_{1},d_{2},k_{1},k_{2}$ be positive integers satisfying $0<k_{1}<d_{1}$ and $0<k_{2}<d_{2}$. Then, denoting $P=\min{\left (\frac{k_{2}}{k_{1}},\frac{d_{2}-k_{2}}{d_{1}-k_{1}}\right)}$ and $Q=\min{\left (\frac{k_{1}}{k_{2}},\frac{d_{1}-k_{1}}{d_{2}-k_{2}}\right)}$, there exists an $(\ld^{<P},\ld^{<Q})$-integrable orbit equivalence coupling from $(\Z^{d_{1}},\Z^{k_1})$ to $(\Z^{d_{2}},\Z^{k_2})$.
\end{proposition}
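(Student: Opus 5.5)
We build the coupling as a product. Write $\Z^{d_i}=\Z^{d_i-k_i}\times\Z^{k_i}$, where the second factor is the subgroup $\{0\}^{d_i-k_i}\times\Z^{k_i}$ appearing in the pair. By the results recalled above (F\o lner tiling sequences for $\Z^{d}$ together with \cite{DKLMT22}), we have two ingredients. First, there is an $(\ld^{<k_2/k_1},\ld^{<k_1/k_2})$-integrable orbit equivalence coupling $(Y,\nu)$ between $\Z^{k_1}$ and $\Z^{k_2}$. Second, there is an $(\ld^{<(d_2-k_2)/(d_1-k_1)},\ld^{<(d_1-k_1)/(d_2-k_2)})$-integrable coupling $(Z,\eta)$ between $\Z^{d_1-k_1}$ and $\Z^{d_2-k_2}$. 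If some exponent ratio is $\geq 1$, we interpret it as $\ld^{<\infty}$; indeed, when the dimensions coincide we can take the identity coupling, which is $\ld^\infty$. We then set $X=Z\times Y$ with the product measure. The group $\Z^{d_1}$ acts coordinatewise: the $\Z^{d_1-k_1}$ factor acts on $Z$ and the $\Z^{k_1}$ factor acts on $Y$. The group $\Z^{d_2}$ acts on $X$ in the same way.

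\textbf{Orbit equivalence of pairs.} The actions are pmp and essentially free, since a product of free actions is free, and $X$ is standard and atomless. Orbits of a product action are products of orbits, so $\Z^{d_1}\cdot(z,y)=(\Z^{d_1-k_1}\cdot z)\times(\Z^{k_1}\cdot y)$. The same holds for $\Z^{d_2}$, so the full orbits agree almost everywhere. Moreover, the subgroup $\Z^{k_1}$ acts only on the second coordinate, so $\Z^{k_1}\cdot(z,y)=\{z\}\times \Z^{k_1}\cdot y=\{z\}\times\Z^{k_2}\cdot y=\Z^{k_2}\cdot(z,y)$. This is exactly the condition $M\cdot x=N\cdot x$.

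\textbf{Integrability.} The cocycle splits: for $h=(a,b)\in\Z^{d_1-k_1}\times\Z^{k_1}$ we have $c(h,(z,y))=(c_Z(a,z),c_Y(b,y))$. Using $\ell^1$ word lengths adapted to the product, $|c(h,x)|\le|c_Z(a,z)|+|c_Y(b,y)|$. For $p<P$, both $c_Z(a,\cdot)$ and $c_Y(b,\cdot)$ are $\ld^p$, and the inequality $(s+t)^p\le 2^p(s^p+t^p)$ shows the sum is $\ld^p$. It suffices to check this on generators $h$ of the form $(a,0)$ or $(0,b)$, but the formula handles every $h$ directly. The reverse cocycle is treated symmetrically, giving $\ld^{q}$ for all $q<Q$.

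The main point needing care is the input: we need integrable couplings between $\Z^{m}$ and $\Z^{n}$ at exponents $<\min(1,n/m)$ in one direction and $<\min(1,m/n)$ in the other, with the \emph{same} coupling serving both directions. We obtain this from the F\o lner tiling construction of \cite{DKLMT22}, using tiles that are products of cubes of matched cardinalities. The remaining bookkeeping is handling the exponents $\geq1$ when the truncation at $\ld^{<\infty}$ occurs.
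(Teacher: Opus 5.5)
The paper does not prove this proposition; it quotes it from \cite{CDT26}, so there is no internal proof to compare with. Your product construction is the natural argument and its core is correct. The following steps are all sound:
\begin{itemize}
\item splitting $\Z^{d_i}=\Z^{d_i-k_i}\times\Z^{k_i}$ and letting each factor act on its own coordinate of $X=Z\times Y$;
\item the equality of the full orbits, and of the $\Z^{k_1}$- and $\Z^{k_2}$-orbits;
\item the splitting $c((a,b),(z,y))=(c_Z(a,z),c_Y(b,y))$;
\item the bound $(s+t)^p\le 2^p(s^p+t^p)$.
\end{itemize}
With the inputs exactly as you state them in your first paragraph, you obtain $\ld^{<P}$ and $\ld^{<Q}$.

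What you should delete is the bookkeeping around exponent $1$, which is wrong in both directions.

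\emph{Capping at $1$ loses cases.} $P$ or $Q$ can exceed $1$: for $(d_1,k_1,d_2,k_2)=(2,1,4,2)$ one has $P=2$ and $Q=1/2$. Your last paragraph says the inputs only need exponents $<\min(1,n/m)$ and $<\min(1,m/n)$. With such inputs the product only gives $\ld^{<\min(1,P)}$, which misses for instance $\ld^{3/2}$ in this example.

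\emph{A ratio above $1$ does not give $\ld^{<\infty}$.} Reading a ratio $\ge 1$ as $\ld^{<\infty}$ is unsupported when the ratio is strictly greater than $1$. The identity coupling only handles a ratio of exactly $1$. From $\Z^{m}$ to $\Z^{n}$ with $n>m$, the cited results only give $\ld^{<n/m}$.

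Neither device is needed. Theorem~\ref{thm:ExplicitCouplingZd} (\cite[Theorem~6.12]{DKLMT22}) gives a single coupling that is $\ld^{<d_2/d_1}$ in one direction. In the other direction it is $\ld^{<d_1/d_2}$, with no cap at $1$, via $\psi_\varepsilon(x)=x^{d_1/d_2}/\ln(x)^{1+\varepsilon}$. Reversing this coupling when necessary yields precisely the two inputs of your first paragraph. So there is also no need to redo the tiling construction.

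Your argument uses that the subgroups are coordinate factors, which is the case in this statement. The body version, Proposition~\ref{prop:OEofpairsfreeabeliangroups}, allows arbitrary normal subgroups of rank $k_i$. It would need an additional step: pass to an adapted basis, and take more care when $\Z^{d_i}/N_i$ has torsion.
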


\paragraph{Couplings between halo products: stability results.}The first contribution of our article is the extension to other examples of halo products, even permutational ones, of the stability results established in~\cite{DKLMT22} for wreath products, as mentioned in the technique~\ref{item:techniqueDKLMT A} above. For the case of permutational non-standard halo products, the main tool we require is the notion of~\textit{orbit equivalence of pairs}.

\begin{theoremletter}[see Theorems~\ref{thm:stabilitypropertyforOEbetweenPWP},~\ref{thm:stabilityofcouplings+quantificationLampjugglers} and~\ref{thm:stabilityofcouplings+quantificationLampcloners}]\label{thm:stabilityofcouplings+quantification intro}
Let $H$ and $K$ be two finitely generated groups, with normal subgroups $M\lhd H$ and $N\lhd K$. Let $\varphi,\psi\colon\R_{+}\rightarrow\R_{+}$ be non-decreasing maps. Assume that there exists a $(\varphi,\psi)$-integrable orbit equivalence coupling from $(H,M)$ to $(K,N)$. Then:
\begin{itemize}
    \item given two finitely generated groups $\G$ and $\La$, if there exists a $(\varphi,\psi)$-integrable orbit equivalence coupling from $\G$ to $\La$, then there exists a $(\varphi,\psi)$-integrable orbit equivalence coupling from $\G\wr_{H/M}H$ to $\La\wr_{K/N}K$;
    \item there exists a $(\varphi,\psi)$-integrable orbit equivalence coupling from $\shuf{H,M}$ and $\shuf{K,N}$;
    \item given a finite field $\field$, there exists a $(\varphi,\psi)$-integrable orbit equivalence coupling from $\cloner{H,M}$ to $\cloner{K,N}$. 
\end{itemize}
Finally, for non necessarily finitely generated groups, the results hold true when removing the integrability conditions (namely we only focus on the existence of orbit equivalence couplings), or when replacing \enquote{$(\varphi,\psi)$-integrable} by \enquote{$\ld^{\infty}$}.
\end{theoremletter}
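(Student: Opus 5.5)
We will follow the strategy of the stability result of~\cite{DKLMT22} for wreath products, adapted to halo products and to pairs. Let $(X,\mu)$ be the given coupling between $(H,M)$ and $(K,N)$, with cocycles $c_{H,K}$ and $c_{K,H}$. The key point is that, for almost every $x$, the identification $H\cdot x=K\cdot x$ restricts to identifications $M\cdot x=N\cdot x$. By normality, the same holds for every point of the orbit, so each $H$-orbit carries a canonical partition into $M$-orbits, and this partition coincides with its partition into $N$-orbits. Hence the map $hM\mapsto c_{H,K}(h,x)N$ is a well-defined bijection $\theta_x\colon H/M\to K/N$. It is equivariant in the cocycle sense:
\begin{equation*}
    \theta_{h'x}(hM)=c_{H,K}(h'^{-1},h'x)^{-1}\,\theta_x(h'hM)\ \text{(up to the usual conventions)}.
\end{equation*}

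We build the new space as a fibred product $Y=X\times \Omega$, where $\Omega$ encodes the lamp configuration.
\begin{itemize}
    \item \textbf{Wreath products.} Take $\Omega=Z^{H/M}$ with $Z$ the coupling space of $\G$ and $\La$ (product measure). Then $\G\wr_{H/M}H$ acts by letting $H$ act on $X$ and shift coordinates, and letting $\G^{(H/M)}$ act coordinatewise. $\La\wr_{K/N}K$ acts in the same way after transporting coordinates through $\theta_x$.
    \item \textbf{Lampshufflers.} Take $\Omega=\{*\}$, but enlarge $X$ by a free pmp action of $\fsym{H/M}$. Concretely, let $\Omega$ be a standard probability space carrying a free pmp action of $\fsym{H/M}$ that is compatible with shifts, for instance $[0,1]^{H/M}$, where a finitely supported permutation acts by permuting coordinates. $\fsym{H/M}$ and $\fsym{K/N}$ are then identified via conjugation by $\theta_x$.
    \item \textbf{Lampcloners.} Similarly, use $\Omega=\field^{H/M}$-type spaces with a pmp action of $\text{FGL}(H/M)$, for instance a product of Haar measures on a suitable compact completion. $\text{FGL}$ is transported through the basis bijection induced by $\theta_x$.
\end{itemize}
In each case we check:
\begin{enumerate}
    \item both actions are measure-preserving and essentially free; freeness of the lamp part uses atomlessness, or a product of free actions;
    \item orbits coincide, since $L(H/M)$ is mapped onto $L(K/N)$ by $\theta_x$ and the base groups already share orbits;
    \item the result is a genuine action, which follows from the cocycle identity for $\theta_x$.
\end{enumerate}

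For the integrability, it suffices to check generators, and there are two kinds.
\begin{itemize}
    \item \textbf{Base generators $s\in S_H$.} They act with cocycle value $(1,c_{H,K}(s,x))$, so their lengths are controlled directly.
    \item \textbf{Lamp generators.} These are a generator of $\G$ at the coset $M$, a transposition of $M$ and an adjacent coset, or an elementary matrix. Such a generator becomes, on the $K$ side, the corresponding element at the coset(s) $\theta_x(M)$ and $\theta_x(sM)=c_{H,K}(s,x)N$. Its word length is therefore at most roughly
\begin{equation*}
    2|c_{H,K}(s,x)|_{S_K}+|c_{\G,\La}(\gamma,z)|+C,
\end{equation*}
where the lamp term is present only for wreath products. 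The cost comes from travelling to the lamp and back; for a transposition it is a conjugate of an adjacent transposition by a path of length $|c_{H,K}(s,x)|$, giving word length $O(|c_{H,K}(s,x)|)$.
\end{itemize}
Since $\varphi$ is non-decreasing, we get $\varphi(\cdot/C)$ of this quantity $\le\varphi(\cdot/C')$ of the max of the two terms. Integrability then follows after adjusting constants, using that $\varphi(a+b)\le\varphi(2a)+\varphi(2b)$. The symmetric argument handles $\psi$, and the $\ld^\infty$ case is immediate because finitely many values are preserved.

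The main obstacle I expect is making the identification $\theta_x$ measurable and cocycle-equivariant at the level of cosets. This is exactly where normality of $M,N$ and the pair condition $M\cdot x=N\cdot x$ must be used on the whole orbit, not just at $x$. A second subtle point is choosing $\Omega$ for shufflers and cloners so that the $\fsym{}$ or $\text{FGL}$ part acts freely and measure-preservingly while being equivariant under the base action; I would first try Bernoulli-type spaces indexed by $H/M$ and verify freeness via atomlessness.
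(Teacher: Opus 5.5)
Your architecture matches the paper's: a Bernoulli-type lamp space times $X$, and transport through the coset bijection. Your $\theta_x$ is exactly the pseudo-cocycle $\kappa_{H/M,K/N}(\cdot,x)$, well defined by normality and the pair condition at $hx$. Then come cocycles computed on generators, a linear word-length bound for distant lamp generators, and Remark~\ref{rm:checkongenerators}. For wreath products and lampshufflers this works once the conventions discussed below are fixed. The lampcloner case, however, has a genuine gap.

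\textbf{The lampcloner lamp space.} The space you suggest is the compact group $\field^{H/M}\cong V_{H/M}^{*}$ with its Haar (product) measure, acted on by $\sigma\cdot f=f\circ\sigma^{-1}$. This action is not essentially free. The fixed set of $\sigma\neq\mathrm{id}$ is the annihilator of the nonzero finite-dimensional subspace $\mathrm{Im}(\sigma^{-1}-\mathrm{id})$. That annihilator is a closed subgroup of finite index, hence has positive measure. For instance, the generator $\tau_{1_{H/M},\overline{s}}(\lambda)$ fixes every $f$ with $f(e_{1_{H/M}})=0$, a set of measure $1/|\field|$. Atomlessness does not help here, and a Bernoulli space indexed by $H/M$ carries no natural $\mathrm{FGL}(H/M)$-action.

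The missing idea is to index the Bernoulli space by the vector space itself. Since $\field$ is finite, $V_{H/M}$ is countable. On $[0,1]^{V_{H/M}}$, let $\mathrm{FGL}(H/M)$ and $H$ permute coordinates through their actions on $V_{H/M}$. This action is free: a nontrivial automorphism moves some vector, and almost every point has pairwise distinct coordinates. It also satisfies~\eqref{eq:compatible}. The identification with the $K$-side then goes through the linear bijection $V_{K/N}\to V_{H/M}$ induced by the coset bijection. A countable power $(\field^{H/M})^{\N}$ of your space would also be free.

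You also need the cloner analogue of your transposition bound, namely $|(\tau_{1,\overline{g}}(\lambda),1_H)|=O(|g|)$. This is obtained by iterated conjugation by swap matrices, each of which is a bounded product of generators. ``Travelling to the lamp and back'' alone only produces local moves, so your $2|c_{H,K}(s,x)|+C$ is too optimistic. Any linear bound suffices, though.

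\textbf{The conventions.} These are not cosmetic. The cocycle identity gives
$\theta_{h'x}(\overline{h})=\theta_x(\overline{h}\,\overline{h'})\,\theta_x(\overline{h'})^{-1}$. So $\theta_x$ is equivariant for \emph{right} translations, and your displayed formula is incorrect. Meanwhile, $H$ acts on $\fsym{H/M}$ and $\mathrm{FGL}(H/M)$ through \emph{left} translations of $H/M$. Conjugating by $\theta_x$ itself therefore need not match orbits when $H/M$ is non-abelian.

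You must instead use the left-equivariant bijection $\overline{h}\mapsto\theta_x(\overline{h}^{-1})^{-1}=\kappa_{H/M,K/N}(\overline{h}^{-1},x)^{-1}$, which is what the paper uses for shufflers and cloners. For wreath products, the paper instead indexes coordinates so that $f$ acts on $y_{\overline{g}}$ via $f(\overline{g}^{-1})$. This is a second place where the group structure of $H/M$ and $K/N$, i.e.\ normality, is used.

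With this bijection, $\tau_{1,\overline{s}}$ is sent to the transposition of $1_{K/N}$ and $[c_{H,K}(s^{-1},x)]^{-1}$. The relevant length is therefore $|c_{H,K}(s^{-1},x)|$, which is also integrable. Finally, measurability of $\theta_x$, which you flag as the main obstacle, is automatic, since each map $x\mapsto c_{H,K}(h,x)$ is measurable.
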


As a consequence of Theorem~\ref{thm:stabilityofcouplings+quantification intro}, given $d_1>d_2$, there exists an $\ld^p$ orbit equivalence between $\shuf{\Z^{d_1}}$ and $\shuf{\Z^{d_2}}$ for every $p<\frac{d_2}{d_1}$  (Theorem~\ref{thm:optimalityofL^pnessiteratedlampshufflersINTRO} will assert that this is sharp). By induction, the same holds between $\shufn{n}{\Z^{d_1}}$ and $\shufn{n}{\Z^{d_2}}$, for any $n\ge 1$.

\begin{remark}
Another consequence of Theorem~\ref{thm:stabilityofcouplings+quantification intro} is the stability of bi-Lipschitz equivalence, and the fact that it enables us to recover slightly weaker versions of existing results (see Corollary~\ref{cor:PreservationBiLipHalo} and Remark~\ref{rem:PreservationBiLipHalo}).
\end{remark}

\paragraph{Couplings between halo products using F\o lner tiling sequences.}

Technique~\ref{item:techniqueDKLMT B} that we mentioned earlier relies on~\textit{F\o lner tiling sequences}. In~\cite{DKLMT22}, it was introduced to construct couplings between iterated lamplighters but with different numbers of iterations. The first needed observation towards this construction is to notice that the existence of a F\o lner tiling sequence for a group $H$ naturally provides a F\o lner tiling sequence for the lamplighter $F\wr H$~\cite[Proposition~6.19]{DKLMT22}. Hence, we start by proving a similar property for lampshufflers (and more generally for lampjugglers) and lampcloners, applying the general phenomenon described in Proposition~\ref{prop:HowToBuildFolnerTiling}.

\begin{propositionletter}\label{thm:folnertilingINTRO}
Let $H$ be a finitely generated amenable group. If $H$ has a F\o lner tiling sequence, then so have $\shuf{H}$ and $\cloner{H}$.
\end{propositionletter}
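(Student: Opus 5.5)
We mimic the proof of \cite[Proposition~6.19]{DKLMT22} for lamplighters. Recall that a F\o lner tiling sequence for $H$ is a F\o lner sequence $(T_n)_n$ of finite subsets containing $1$ such that $T_{n+1}$ is a disjoint union of right translates $T_n c$, $c\in C_{n+1}$. We set $T_0=\{1\}$, so that each $T_n=C_1C_2\cdots C_n$ with unique decompositions. Given such a sequence $(T_n)_n$ for $H$, we build tiles in $\shuf{H}=\fsym{H}\rtimes H$ of the form
\begin{equation*}
    \widetilde{T}_n \defeq \{(\sigma,h) : h\in T_n,\ \sigma\in\fsym{T_n}\},
\end{equation*}
and similarly $\widetilde{T}_n\defeq \{(g,h): h\in T_n,\ g\in \mathrm{FGL}(T_n)\}$ in $\cloner{H}$. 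The latter set is finite because $\field$ is a finite field (as in Theorem~\ref{thm:stabilityofcouplings+quantification intro}). The plan is to check the two defining properties, tiling and F\o lner, abstractly for any halo product with $L(S)$ finite for $S$ finite. This is presumably the content of Proposition~\ref{prop:HowToBuildFolnerTiling}.

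\textbf{F\o lner property.} We use the generating set consisting of $S_H$ together with a transposition $\tau=(1\ a)$ for a fixed $a\in S_H$, respectively an elementary matrix supported on $\{1,a\}$. Right multiplication by $s\in S_H$ sends $(\sigma,h)$ to $(\sigma,hs)$, so it stays in $\widetilde{T}_n$ whenever $hs\in T_n$. Right multiplication by $\tau$ gives $(\sigma\cdot h\tau h^{-1},h)$, which stays in $\widetilde{T}_n$ whenever $h,ha\in T_n$. Hence
\begin{equation*}
    |\partial\widetilde{T}_n|\le |L(T_n)|\cdot C\,|\partial T_n|,
\end{equation*}
while $|\widetilde{T}_n|=|L(T_n)|\,|T_n|$, so the ratio tends to $0$.

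\textbf{Tiling property.} This is the main point. Write $T_{n+1}=\bigsqcup_{c\in C_{n+1}}T_nc$. We need a set $\widetilde{C}_{n+1}\subset\shuf{H}$ with $\widetilde{T}_{n+1}=\bigsqcup_{\tilde c}\widetilde{T}_n\tilde c$. Right-translating by $(\rho,c)$ gives
\begin{equation*}
    (\sigma,h)(\rho,c)=(\sigma\cdot h\rho h^{-1},hc).
\end{equation*}
The twist by $h$ is the obstacle: the lamp coordinate is conjugated depending on $h$, so the translates are not products of subsets in an obvious way. To handle this, we use the cosets instead. Left cosets of the subgroup $L(T_n)$ can be read as follows: the fiber of $\widetilde{T}_{n+1}$ over $T_nc$ is $L(T_{n+1})\times T_nc$. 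Since $L(T_n)\le L(T_{n+1})$, this fiber splits into right cosets. For a fixed $h=tc$ with $t\in T_n$, we want $\{\sigma\cdot t\rho t^{-1}\}$ to range over a coset of $L(T_n)$, and as $\rho$ varies these cosets should exhaust $L(T_{n+1})$. Conjugating by $t^{-1}$ turns this into exhausting $t^{-1}L(T_{n+1})t=L(t^{-1}T_{n+1})$ by cosets of $L(t^{-1}T_n)$, which depends on $t$.

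So the plan is to choose $\widetilde{C}_{n+1}=\{(\rho,c)\}$ with a different parametrization. We replace $(\sigma,h)$ by its normal form $h\cdot\sigma'$, where $\sigma'=h^{-1}\sigma h\in L(h^{-1}T_n)$, and rewrite the tiles as $\{h\sigma' : h\in T_n,\ \sigma'\in L(h^{-1}T_n)\}$. Equivalently, we use the right-invariant description in which lamps are written relative to the position. We expect that with tiles $\widetilde{T}_n=\{\sigma h: \sigma\in L(T_n), h\in T_n\}$, products with $(\rho,c)$ for $\rho$ ranging over coset representatives of $L(T_n)$ in $L(T_{n+1})$ act correctly. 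If this fails, we fall back on the DKLMT approach: we directly exhibit $\widetilde{T}_{n+1}$ as the disjoint union, over $c\in C_{n+1}$, of translates of $\widetilde{T}_n$. We then verify disjointness using the halo axiom $L(R\cap S)=L(R)\cap L(S)$ and cardinality counting $|L(T_{n+1})|\,|T_{n+1}|$.
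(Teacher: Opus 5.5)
Your F\o lner estimate is fine; it is Proposition~\ref{prop:quantitativeFolner}. One caveat: $S_H$ together with a single transposition $\tau_{1,a}$ only generates permutations preserving each coset $h\langle a\rangle$. You need $\tau_{1,s}$ for every $s\in S_H$, and also the $\delta_1(\lambda)$ for $\cloner{H}$. The estimate is unchanged by this.

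The genuine gap is the tiling step, which you leave as an expectation plus a fallback. It cannot be closed in the form you set it up, because of the side you chose. You pair a right F\o lner condition (you test right multiplication by generators) with tiling by right translates $T_{n+1}=\bigsqcup_c T_nc$. That is not the paper's definition, and it is not the pairing that makes the odometer work. The action via $x_N\cdots x_0g\in F_{N+1}$ needs right F\o lner together with $F_{n+1}=\Sigma_nF_n$. With right translates, $L(T_n)\times T_n$ really does not tile. If $\widetilde T_n(\rho,c)\subset\widetilde T_{n+1}$, then $h\cdot\rho\in L(T_{n+1})$ for every $h\in T_n$. Take $H=\Z$ and $T_n=\{0,\dots,2^n-1\}$. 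This forces $c\in\{0,\dots,2^n\}$ and $\rho\in\fsym{\{0,\dots,2^n\}}$, so at most $(2^n+1)\cdot(2^n+1)!$ shifts are available. But $|\widetilde T_{n+1}|/|\widetilde T_n|=2\,(2^{n+1})!/(2^n)!$ disjoint translates are needed, and already for $n=1$ this gives $18<24$. Your reparametrizations $\{h\sigma' : h\in T_n,\ \sigma'\in L(h^{-1}T_n)\}$ and $\{\sigma h:\sigma\in L(T_n),\ h\in T_n\}$ are literally the same set as $\widetilde T_n$. The fallback cannot exhibit a tiling that does not exist.

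The missing idea is to tile by left translates, as in the paper ($F_{n+1}=\Sigma_nF_n$ with the $gF_n$ pairwise disjoint). Then the semidirect twist works for you: $(\sigma,h)\bigl(L(F_n)\times F_n\bigr)=\sigma\,L(hF_n)\times hF_n$, since $h\cdot L(F_n)=L(hF_n)$. For $h\in\Sigma_n$ you have $hF_n\subset F_{n+1}$, so $L(hF_n)\le L(F_{n+1})$ by the halo axioms. Hence it suffices to take $\Sigma_{h,n}$ to be a set of left coset representatives of $L(hF_n)$ in the finite group $L(F_{n+1})$. The shifts $\bigcup_{h\in\Sigma_n}\Sigma_{h,n}\times\{h\}$ then exhibit $L(F_{n+1})\times F_{n+1}$ as a disjoint union. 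Translates with distinct $h$ are disjoint in the $H$-coordinate, and for fixed $h$ the cosets are disjoint. This is Proposition~\ref{prop:HowToBuildFolnerTiling}, applied in Theorems~\ref{thm:folnertilingshuffler} and~\ref{thm:folnertilingcloner}. If you insist on right-translate tiling, you must pair it with the left F\o lner condition and use the inverted tiles $\{(\sigma,h): h\in T_n,\ \sigma\in L(hT_n^{-1})\}$. That is your ``lamps relative to the position'' idea, and it is the image of the paper's construction under $g\mapsto g^{-1}$.
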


See Theorems~\ref{thm:folnertilingshuffler} and~\ref{thm:folnertilingcloner} for the detailed statements and the additional properties satisfied by the tilings. Here also we shall notice that the technique is not specific to standard halos, and can be adapted to permutational halo products as well. 

\smallskip

Proposition~\ref{thm:folnertilingINTRO} enables us to build explicit orbit equivalence couplings between $\shufn{m}{\Z^{d_{1}}}$ and $\shufn{n}{\Z^{d_2}}$, where the numbers of iterations $m$ and $n$ are different (contrary to the consequence of Theorem~\ref{thm:stabilityofcouplings+quantification intro} explained at the end of the last paragraph, where $m=n$). Moreover, we are able to prove that these couplings are quantitatively optimal, see Theorem~\ref{thm:optimalityiteratedlampshufflersINTRO}.

\paragraph{Optimality of orbit equivalence couplings.} We now go back to the optimality of the couplings provided by Theorem~\ref{thm:stabilityofcouplings+quantification intro}. The results provided in this paragraph can be seen as measured analogs of rigidity results such as Theorems~\ref{th:QIrigidityShufGT24},~\ref{thm:QIclassificationiteratedlampshufflers} and~\ref{thm:QIrigidityofPWP} that we mentioned earlier. They give a more quantitative comparison between the geometries of non quasi-isometric groups.

\smallskip

To prove that the integrability conditions we obtain when applying Theorem~\ref{thm:stabilityofcouplings+quantification intro} are sharp, we make use of Theorem~\ref{thm:dklmt intro} which requires to know the isoperimetric profile of halo products. For wreath products, the computation of isoperimetric profile is completely known from~\cite{Ers03}. We have more recently found estimates on the profile of more general halo products, such estimates being exact for lampshufflers for instance~\cite{cordum25}. We refer the reader to Section~\ref{sec:PrelIsoProfHalo} for further details.

\smallskip

Notice that, since 
\begin{equation*}
    \varphi\circ \prof{K} \preccurlyeq \prof{H} \Longleftrightarrow \varphi\circ \prof{K}\circ \ln \preccurlyeq \prof{H}\circ \ln 
\end{equation*}
and since we have $\prof{\halo H} \simeq \prof{H}\circ \ln $ in many cases, one deduces that optimal orbit equivalence couplings between $H$ and $K$ provide optimal orbit equivalence couplings between $\halo H$ and $\halo K$. We begin with iterated lampshufflers.

\begin{theoremletter}[see Theorems~\ref{thm:optimalitypolynomialgrowthJugglers} and~\ref{th:ShufflerZdLamplighter}]\label{thm:optimalityofL^pnessiteratedlampshufflersINTRO}
Let $d_{1},d_{2}\ge 1$ be positive integers such that $d_{1}>d_{2}$. Let $n\ge 0$ be an integer. Let $F$ be a non-trivial finite group. Then:
\begin{itemize}
    \item there exists an $(\ld^p,\ld^{<\frac{d_1}{d_2}})$ orbit equivalence coupling from $\shufn{n}{\Z^{d_1}}$ to $\shufn{n}{\Z^{d_2}}$ if and only if $p<\frac{d_2}{d_1}$;
    \item there exists an $(\ln^{p},\exp)$-integrable orbit equivalence coupling from $\shufn{n}{F\wr\Z}$ to $\shufn{n}{\Z}$ if and only if $p<1$.
\end{itemize}
\end{theoremletter}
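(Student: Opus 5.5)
Each bullet splits into an existence part, obtained by iterating Theorem~\ref{thm:stabilityofcouplings+quantification intro} from a base coupling, and an optimality part, obtained from Theorem~\ref{thm:dklmt intro} together with the isoperimetric profiles of iterated lampshufflers.

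\emph{Existence.} For the first bullet, I start from the known coupling between $\Z^{d_1}$ and $\Z^{d_2}$ built with F\o lner tiling sequences (Section~\ref{sec:folnerTiling}). It is $(\ld^{p},\ld^{<\frac{d_1}{d_2}})$-integrable for every $p<\frac{d_2}{d_1}$. I view it as a coupling of pairs with trivial subgroups $M=N=\{1\}$; the condition $M\cdot x=N\cdot x$ is then automatic. The second bullet of Theorem~\ref{thm:stabilityofcouplings+quantification intro} then gives a coupling with the same integrability between $\shuf{\Z^{d_1}}$ and $\shuf{\Z^{d_2}}$. The theorem applies to arbitrary finitely generated groups, so I induct on $n$, using each $\shufn{n}{\cdot}$ as the new base group, and get the couplings for every $n$. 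For the second bullet I need a base coupling from $F\wr\Z$ to $\Z$ that is $(\ln^p,\exp)$-integrable for all $p<1$. This is the coupling of \cite{DKLMT22}, built from the F\o lner tiling sequence of $F\wr\Z$ given by \cite[Proposition~6.19]{DKLMT22} and the tiling of $\Z$ by intervals of matching sizes. The same induction then transfers it to $n$-fold lampshufflers. I also have to check that $\ln^p$ and $\exp$ are non-decreasing, which they are.

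\emph{Optimality.} I apply Theorem~\ref{thm:dklmt intro} with $\varphi(t)=t^p$, or $\varphi=\ln^p$ for the second bullet; in both cases $t/\varphi(t)$ is non-decreasing for $p\le 1$ at large $t$. For $p\ge 1$ in the first bullet, $\ld^p$ implies $\ld^{p'}$ for any $p'<1$ with $p'>\frac{d_2}{d_1}$, so it suffices to rule out $\frac{d_2}{d_1}<p<1$, together with the threshold $p=\frac{d_2}{d_1}$. I need the profiles, which come from \cite{cordum25} (Section~\ref{sec:PrelIsoProfHalo}):
\[
j_{1,\shufn{n}{\Z^d}}(x)\simeq (\ln^{\circ n}x)^{1/d},
\qquad
j_{1,\shufn{n}{F\wr \Z}}\simeq \ln^{\circ (n+1)} .
\]
Here $\ln^{\circ n}$ denotes the $n$-fold iterated logarithm; this is only notation for the proof. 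The first estimate is the case $\prof{\halo H}\simeq \prof{H}\circ\ln$ iterated. The inequality $\varphi\circ \prof{K}\preccurlyeq\prof{H}$ then reads $(\ln^{\circ n})^{p/d_2}\preccurlyeq (\ln^{\circ n})^{1/d_1}$, which forces $p\le d_2/d_1$. For the second bullet it reads $(\ln^{\circ (n+1)})^p\preccurlyeq\ln^{\circ(n+1)}$, which forces $p\le 1$.

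The main obstacle is the endpoint $p=\frac{d_2}{d_1}$, respectively $p=1$, which the profile argument does not exclude. The base case $n=0$ is \cite[Theorem~A]{Corr24}. For $n\ge1$, I would try to adapt that argument, or to find an analogous endpoint result for halo products; I expect this to be the delicate part. A simpler fallback would be to extract from an endpoint coupling of the lampshufflers one between the base groups, but no such reduction is available from the earlier results, so I would follow the Corr24-style argument for the exact threshold.
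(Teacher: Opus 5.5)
The existence half of your proposal and the non-strict bounds $p\le\frac{d_2}{d_1}$ and $p\le 1$ from Theorem~\ref{thm:dklmt} match the paper. For existence, you take the base coupling from the F\o lner tiling construction, view it as a coupling of pairs with trivial subgroups, and iterate the lampshuffler stability theorem.

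\textbf{The gap is the endpoint.} For $n\ge 1$ you leave $p=\frac{d_2}{d_1}$ (resp.\ $p=1$) unresolved and only propose to adapt \cite{Corr24} to halo products. As written, the ``only if'' directions are therefore not proved. No adaptation is needed, because Theorem~\ref{thm:threshold} (\cite[Theorem~B]{Corr24}) is stated for arbitrary finitely generated groups and only asks for regularity of the profiles, which you can read off directly.
\begin{itemize}
\item First bullet: both profiles are powers of one increasing function, $\prof{\shufn{n}{\Z^{d}}}\simeq g^{1/d}$. Take $f_H=g^{1/d_1}$ and $f_K=g^{1/d_2}$. Then $f_H=o(f_K)$ since $d_1>d_2$, and $f_H(Cx)=O(f_H(x))$. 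Moreover $f_H\circ f_K^{-1}(x)=x^{d_2/d_1}$ exactly, which satisfies Assumption~$(\star)$. So there is no $(\ld^{d_2/d_1},\ld^0)$-integrable coupling.
\item Second bullet: $f_H=\ln^{\circ(n+1)}$ and $f_H\circ f_K^{-1}\simeq\ln$. Theorem~\ref{thm:threshold} together with the second item of Remark~\ref{rm:checkongenerators} then excludes $p=1$.
\end{itemize}
This is exactly how the paper closes both cases.

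\textbf{A smaller error in the profiles.} Your formula $\prof{\shufn{n}{\Z^d}}\simeq(\ln^{\circ n})^{1/d}$ is wrong for $n\ge 1$. The lamp groups of a lampshuffler have cardinality $m!$, not exponential in $m$. The correct estimate (\cite[Theorem~C]{cordum25}, and \cite{EZ21} for $n=1$) is $\left(\ln^{\circ n}(x)/\ln^{\circ (n+1)}(x)\right)^{1/d}$. So the heuristic $\prof{\halo H}\simeq\prof{H}\circ\ln$ does not apply to lampshufflers over polynomial growth groups.

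This happens to be harmless here. The common function is $g(x)=x$ for $n=0$ and $g=\ln^{\circ n}/\ln^{\circ(n+1)}$ for $n\ge 1$. In the first bullet both profiles are still powers of this same $g$, and in the second bullet $\ln\circ\prof{\shufn{n}{\Z}}\sim\ln^{\circ(n+1)}$ either way. You should nonetheless use the correct $g$ when checking the hypotheses of Theorem~\ref{thm:threshold}, in particular $f_H=o(f_K)$ and Assumption~$(\star)$ for $f_H\circ f_K^{-1}$.
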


\begin{theoremletter}[see Corollary~\ref{cor:optimalityIteratedLampshuffler}]\label{thm:optimalityiteratedlampshufflersINTRO}
\sloppy Let $d_{1},d_{2},m,n\ge 1$ be positive integers such that $m>n$. There exists a $\left(\left(\frac{\ln^{\circ (m-n)}}{\ln^{\circ (m-n+1)}}\right)^{p},\mathrm{L}^{<\infty}\right)$-integrable orbit equivalence coupling from $\shufn{m}{\Z^{d_{1}}}$ to $\shufn{n}{\Z^{d_{2}}}$ if and only if $p<\frac{1}{d_{1}}$.
\end{theoremletter}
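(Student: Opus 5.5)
The statement has two directions, and I will treat them separately: the \enquote{only if} part by an isoperimetric obstruction, and the \enquote{if} part by an explicit construction built from F\o lner tilings.

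\emph{Only if.} I will use Theorem~\ref{thm:dklmt intro}, which needs the isoperimetric profiles of both groups. By the results of~\cite{cordum25} recalled in Section~\ref{sec:PrelIsoProfHalo}, each lampshuffler iteration composes the profile with $\ln$, so
\begin{equation*}
\prof{\shufn{m}{\Z^{d}}}(x)\simeq \bigl(\ln^{\circ m}(x)\bigr)^{1/d}.
\end{equation*}
Set $\varphi=\bigl(\ln^{\circ(m-n)}/\ln^{\circ(m-n+1)}\bigr)^{p}$; after replacing it by an asymptotically equivalent function, $t/\varphi(t)$ is non-decreasing. Suppose a $\varphi$-integrable coupling from $\shufn{m}{\Z^{d_1}}$ to $\shufn{n}{\Z^{d_2}}$ exists. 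Theorem~\ref{thm:dklmt intro} then gives
\begin{equation*}
\varphi\bigl((\ln^{\circ n}x)^{1/d_2}\bigr)\preccurlyeq (\ln^{\circ m}x)^{1/d_1}.
\end{equation*}
Write $y=\ln^{\circ n}x$. Applying $\ln^{\circ(m-n)}$ to $y^{1/d_2}$ gives $\ln^{\circ(m-n)}(y)$ up to lower-order terms, because $\ln(y^{1/d_2})=\ln(y)/d_2$ and the constant disappears after one more logarithm. Hence the left-hand side is $\simeq\bigl(\ln^{\circ m}x/\ln^{\circ (m+1)}x\bigr)^{p}$. Comparing with $(\ln^{\circ m}x)^{1/d_1}$ forces $p\le 1/d_1$. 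I still have to exclude the endpoint $p=1/d_1$: there the left-hand side is $(\ln^{\circ m}x)^{1/d_1}$ divided by $(\ln^{\circ(m+1)}x)^{1/d_1}$, which is smaller than the right-hand side, so the argument above does not rule it out. For this I will either use a sharper statement from~\cite{cordum25} or~\cite{DKLMT22}, or rely on the fact that the construction yields only the range $p<1/d_1$. Most likely the intended claim is the strict range, and the sharp inequality obstructs every $p>1/d_1$. I would check carefully whether the endpoint needs a separate argument in the style of~\cite{Corr24}.

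\emph{If.} Following~\cite{DKLMT22} for iterated lamplighters, I build the coupling from F\o lner tiling sequences. The groups $\Z^{d_1}$ and $\Z^{d_2}$ have F\o lner tiling sequences of cubes. Iterating Proposition~\ref{thm:folnertilingINTRO} (more precisely Theorem~\ref{thm:folnertilingshuffler}), I get tiling sequences for $\shufn{m}{\Z^{d_1}}$ and $\shufn{n}{\Z^{d_2}}$, with tiles of the form $\fsym{T}\rtimes T$ over lower-level tiles $T$. I then choose the scales so that the $k$-th tiles in the two groups have the same cardinality, or comparable cardinality up to a bounded factor handled as in~\cite{DKLMT22}. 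The tiles of $\shufn{m}{\Z^{d_1}}$ have size roughly $m$-fold iterated factorials of $L^{d_1}$, where $L$ is the side of the cube. Matching them with the $n$-level tiles of the target forces the target scales to grow by iterated exponentials. The general result of~\cite{DKLMT22} on couplings from tilings then bounds the integrability: a cocycle of $\varphi$-type is achieved when $\sum_k \varphi(\mathrm{diam}_{k+1})\,\varepsilon_k<\infty$, where $\varepsilon_k$ is the boundary proportion of the $k$-th tile. Estimating diameters and boundaries of shuffler tiles gives cocycles in $\ld^{<\infty}$ in one direction and $\varphi$ with any $p<1/d_1$ in the other.

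\emph{Main obstacle.} The main obstacle is the bookkeeping of diameters versus boundary ratios for the iterated shuffler tiles, and the choice of scales that makes the series converge exactly when $p<1/d_1$. I will take the diameter of $\fsym{T}\rtimes T$ to be of order $|T|\,\mathrm{diam}(T)$, and its boundary ratio to be of order $\varepsilon_T$ of the base tile. I will then optimise the choice of scales so that this series converges exactly in the range $p<1/d_1$.
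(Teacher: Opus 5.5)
Your \enquote{if} direction fails, and this is the main gap. Iterating Theorem~\ref{thm:folnertilingshuffler} never improves the F\o lner ratio (Proposition~\ref{prop:quantitativeFolner}). So the $k$-th tile of $\shufn{n}{\Z^{d_2}}$ has ratio $\varepsilon'_k\gtrsim |T'_k|^{-1/d_2}$, where $T'_k\subset\Z^{d_2}$ is its base box.

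Theorem~\ref{thm:TilingSufficientConditionQuantitative} needs exact equality of cardinalities, and $N\mapsto N!\,N$ is injective. This forces $|T'_k|=a_{m-n}(k)$, where $a_j(k)$ denotes the size of the $k$-th tile of $\shufn{j}{\Z^{d_1}}$. On the other side, the $(k+1)$-th tile of $\shufn{m}{\Z^{d_1}}$ contains $(\mathrm{id},1)$ and $(\sigma,1)$ with $\sigma$ a full cycle on the previous-level tile. Hence its diameter is at least $a_{m-1}(k+1)-1$.

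For $n\ge 1$, with $\psi(x)=x^q$, the terms of $\sum_k\psi(R_{k+1})\varepsilon'_k$ are therefore $\gtrsim a_{m-1}(k)^{q-1/d_2}$. The series diverges for $q>1/d_2$, and no choice of scales gives an $\ld^{<\infty}$ second cocycle with the diameter criterion you invoke. For $n\ge 2$ the first series fails too: $\varphi(R'_{k+1})$ is of order $a_{n-1}(k+1)^p\ge a_1(k+1)^p$, which is superexponential in $|F_{k+1}|$, while $\varepsilon_k$ is only polynomially small. This is exactly the limitation stated in Remark~\ref{rem:LimitationsTechniques}.

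The missing ingredient is the stability theorem (Theorem~\ref{thm:stabilityofcouplings+quantificationLampjugglers}) combined with composition (Theorem~\ref{thm:CompositionCouplings}). The paper uses tilings only between $\shuf{\Z^{d}}$ and $\Z^{d_2}$ (Theorem~\ref{thm:couplingsZdandshufZk}). There the boxes of $\Z^{d_2}$ have ratio about $|L_k|^{-1/d_2}$, superexponentially small compared with the source diameter $5|F_k|\diam F_k$, which yields $\varphi_{1,d,\varepsilon}$ together with a superpolynomial $\psi$. These couplings are lifted through the iterations by stability. The paper then composes $\shufn{m}{\Z^{d_1}}\to\shufn{(m-1)}{\Z^{d_2}}\to\cdots\to\shufn{n}{\Z^{d_2}}$ to get $\varphi_{m-n,d_1,\varepsilon}$ (Corollary~\ref{cor:couplingsbetweeniteratedshufflers}). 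This function dominates $\left(\ln^{\circ(m-n)}/\ln^{\circ(m-n+1)}\right)^p$ for every $p<1/d_1$.

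In the \enquote{only if} direction you follow the paper's argument, but with a wrong profile, which is why the endpoint stayed open. By \cite[Theorem~C]{cordum25}, recalled in Section~\ref{sec:PrelIsoProfHalo}, $\prof{\shufn{m}{\Z^{d}}}(x)\simeq\left(\ln^{\circ m}(x)/\ln^{\circ(m+1)}(x)\right)^{1/d}$, not $(\ln^{\circ m}(x))^{1/d}$. With the correct profile, Remark~\ref{rem:couplingsbetweeniteratedshufflers} gives $\prof{\shufn{m}{\Z^{d_1}}}\circ\prof{\shufn{n}{\Z^{d_2}}}^{-1}\simeq\left(\ln^{\circ(m-n)}/\ln^{\circ(m-n+1)}\right)^{1/d_1}$. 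So your $\varphi$ at $p=1/d_1$ is exactly this critical function. Theorem~\ref{thm:dklmt} still gives $p\le 1/d_1$, and the endpoint is excluded by Theorem~\ref{thm:threshold}, whose hypotheses are immediate for these iterated-logarithm profiles. Appealing to \enquote{the construction only yields $p<1/d_1$} says nothing about nonexistence, so as written this direction is incomplete.
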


The crucial point of Theorem~\ref{thm:optimalityiteratedlampshufflersINTRO} is that the integrability threshold does not record $d_2$. Understanding quantitative orbit equivalence as a quantitative comparison of the geometries of non quasi-isometric groups, the intuitive interpretation of this phenomenon is that the group $\shufn{n}{\Z^{d_{2}}}$ is so geometrically negligeable compared to $\shufn{m}{\Z^{d_{1}}}$ that the geometry of the latter cannot see the difference when we change the value of the exponent $d_{2}$.

\smallskip

We emphasize here that optimality of many other couplings can be deduced from the computations of isoperimetric profiles. We refer to the end of Section~\ref{sec:folnertilingjugglers} for more details, more precisely Corollary~\ref{cor:boundOfIntegrability} and Remark~\ref{rem:FolnerWreathProduct}.

\smallskip

On the other hand, we do not have precise estimates on the isoperimetric profile of a lampcloner over a polynomial growth group, so we do not know if our couplings are quantitatively optimal. The following statement on lampcloners is thus weaker than our results on lampshufflers. However, when the based groups are $F\wr\Z$ and $\Z$, as in the second point of Theorem~\ref{thm:optimalityofL^pnessiteratedlampshufflersINTRO}, we are able to keep an \enquote{if and only if}.

\begin{theoremletter}[see Theorems~\ref{thm:optimalitypolynomialgrowthCloners} and~\ref{th:ShufflerZdLamplighter}]\label{thm:optimalitypolynomialgrowthCloners intro}
Let $\field$ be a finite field, let $d_{1},d_{2}$ be positive integers such that $d_{1}>d_{2}$ and let $F$ be a non trivial finite group. The following holds for every integer $n\ge 0$:
\begin{itemize}
    \item for every $p<\frac{d_{2}}{d_{1}}$, there exists an $(\ld^{p},\ld^{<\frac{d_{1}}{d_{2}}})$-integrable orbit equivalence coupling from $\clonern{n}{\Z^{d_{1}}}$ to $\clonern{n}{\Z^{d_{2}}}$;
    \item conversely, if there exists an $(\ld^{p},\ld^0)$-integrable orbit equivalence coupling from $\clonern{n}{\Z^{d_{1}}}$ to $\clonern{n}{\Z^{d_{2}}}$, then $p<\frac{2d_{2}}{d_{1}}$;
    \item there exists a $(\ln^p,\exp)$-integrable orbit equivalence from $\clonern{n}{F\wr\Z}$ to $\clonern{n}{\Z}$ if and only if $p<1$.
\end{itemize}
\end{theoremletter}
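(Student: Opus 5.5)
The statement has three parts, each reduced either to the stability theorem (for constructions) or to Theorem~\ref{thm:dklmt intro} combined with isoperimetric profile estimates (for obstructions), by induction on $n$.

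\textbf{First point (existence).} Start from the coupling between $\Z^{d_1}$ and $\Z^{d_2}$ obtained from F\o lner tiling sequences, which is $(\ld^{p},\ld^{<\frac{d_1}{d_2}})$-integrable for every $p<\frac{d_2}{d_1}$. I also need the $\ld^\infty$-ish behaviour of these $\Z^d$-couplings, but only the integrability exponents matter. View it as a coupling of pairs with trivial subgroups $M=N=\{0\}$; the condition $M\cdot x=N\cdot x$ is then automatic. The third bullet of Theorem~\ref{thm:stabilityofcouplings+quantification intro} (lampcloners, with $\varphi(t)=t^p$ and $\psi(t)=t^q$ for every $q<\frac{d_1}{d_2}$) gives a coupling between $\cloner{\Z^{d_1}}$ and $\cloner{\Z^{d_2}}$ with the same integrability. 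Iterating $n$ times, again with trivial subgroups, gives the claim for $\clonern{n}{\cdot}$.

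\textbf{Second point (obstruction).} Apply Theorem~\ref{thm:dklmt intro} with $\varphi(t)=t^p$; it suffices to treat $p\le1$, so that $t/\varphi(t)$ is non-decreasing. We obtain
\[
\prof{\clonern{n}{\Z^{d_2}}}(m)^p\preccurlyeq \prof{\clonern{n}{\Z^{d_1}}}(m).
\]
This requires the bounds on the profiles of lampcloners from Section~\ref{sec:PrelIsoProfHalo} (from \cite{cordum25}). I expect the upper and lower bounds to differ by at most a square-root type factor after composing with $\ln$, that is,
\[
(\ln^{\circ n} m)^{1/(2d)}\preccurlyeq\prof{\clonern{n}{\Z^{d}}}(m)\preccurlyeq(\ln^{\circ n} m)^{1/d}.
\]
Plugging the lower bound for $d_2$ and the upper bound for $d_1$ gives $\frac{p}{2d_2}\le\frac1{d_1}$, so $p\le\frac{2d_2}{d_1}$. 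To get the strict inequality, I would apply the same argument to a slightly larger exponent. The coupling is $\ld^{p}$, hence $\varphi$-integrable for $\varphi(t)=t^p\ln(t)^{\varepsilon}$-type refinements, or equivalently one uses an $\ld^{p}$ improvement argument via \cite[Theorem~A]{Corr24}-style reasoning. If that fails, I would settle for pinning down exactly which profile bounds are available.

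\textbf{Main obstacle.} This step is the main obstacle: getting precise enough profile bounds for iterated lampcloners, and the strictness of the inequality.

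\textbf{Third point.} Existence: there is a coupling between $F\wr\Z$ and $\Z$ that is $(\ln^p,\exp)$-integrable for all $p<1$, from \cite{DKLMT22} via F\o lner tilings. Apply the lampcloner stability theorem $n$ times. Optimality: apply Theorem~\ref{thm:dklmt intro} with $\varphi=\ln^p$, using that the profiles satisfy
\[
\prof{\clonern{n}{F\wr\Z}}\simeq \ln^{\circ(n+1)}, \qquad \prof{\clonern{n}{\Z}}\simeq\ln^{\circ n}.
\]
These are exact here because $\prof{\halo H}\simeq\prof{H}\circ\ln$ for these bases. This gives $(\ln^{\circ (n+1)})^p\preccurlyeq \ln^{\circ(n+1)}$, which forces $p\le1$. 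The case $p=1$ is excluded by the known non-existence of an $(\ln,\cdot)$ coupling between $F\wr\Z$ and $\Z$, transported through the same profile comparison with a $\ln\cdot(\ln\ln)^{-\varepsilon}$ refinement. I would prove the $n=0$ strict case first and argue that the profile identity lets the argument lift verbatim.
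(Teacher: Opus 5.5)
Your first bullet is correct and is essentially the paper's argument. The paper starts from \cite[Theorem~1.6]{DLIT25} rather than the F\o lner-tiling coupling of \cite[Theorem~6.12]{DKLMT22}, but for $\Z^d$ either works. Your plan for the obstructions, Theorem~\ref{thm:dklmt} combined with the estimates of \cite{cordum25}, is also the paper's.

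The first genuine problem is in the third bullet. You claim $\prof{\clonern{n}{\Z}}\simeq\ln^{\circ n}$ ``because $\prof{\halo H}\simeq\prof{H}\circ\ln$ for these bases''. This is not known for $n\ge 1$. The exact formula of \cite[Theorem~G]{cordum25} requires $\prof{H}(\sqrt{x})\simeq\prof{H}(x)$, which holds for $H=F\wr\Z$ but fails for $H=\Z$. For $\Z$ one only has $\sqrt{\ln^{\circ n}(x)}\preccurlyeq\prof{\clonern{n}{\Z}}(x)\preccurlyeq\ln^{\circ n}(x)$. That is the case $d=1$ of the very bounds you quote in your second bullet, so the proposal contradicts itself. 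The missing observation is the only point where the paper's lampcloner argument differs from the lampjuggler one. Theorem~\ref{thm:dklmt} with $\varphi=\ln^{p}$ only involves $\ln\circ\prof{\clonern{n}{\Z}}$, and taking the logarithm erases the factor $\frac12$. Hence $\ln(\prof{\clonern{n}{\Z}}(x))\simeq\ln^{\circ(n+1)}(x)$, and the lower bound alone suffices. With this, your inequality $(\ln^{\circ(n+1)})^{p}\preccurlyeq\ln^{\circ(n+1)}$ and the conclusion $p\le 1$ are justified.

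The second problem is strictness, in both obstruction bullets: none of the routes you list works.
\begin{itemize}
\item An $\ld^{p}$ cocycle need not be $t^{p}\ln(t)^{\varepsilon}$-integrable for any $\varepsilon>0$, nor $\ld^{p'}$ for any $p'>p$.
\item A $\ln\cdot(\ln\ln)^{-\varepsilon}$ refinement is \emph{weaker} than $\ln$-integrability, so it cannot exclude $p=1$.
\item There is no principle transferring the non-existence of an $(\ln,\ld^{0})$ coupling from $F\wr\Z\to\Z$ up to the iterated lampcloners.
\item Theorem~\ref{thm:threshold} as stated needs functions asymptotic to the true profiles, and these are unknown for $\clonern{n}{\Z^{d}}$ when $n\ge1$.
\end{itemize}
What works is the upgrade sketched in the proof of Corollary~\ref{cor:couplingsbetweeniteratedjugglers}. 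Integrability only needs to be checked on finitely many generators. So a de la Vall\'ee-Poussin argument shows that a $\varphi$-integrable coupling (with $\varphi=t^{p_0}$, $p_0<1$, or $\varphi=\ln$) is also $\psi$-integrable for some non-decreasing $\psi$ with $\psi/\varphi\to\infty$ and $t/\psi(t)$ still non-decreasing. Apply Theorem~\ref{thm:dklmt} to $\psi$, using only a lower bound on the target's profile and an upper bound on the source's. This gives a contradiction: for instance, $\psi(c\sqrt{\ln^{\circ n}(x)})\preccurlyeq\ln^{\circ(n+1)}(x)$ is impossible when $\psi/\ln\to\infty$.

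Finally, your reduction ``it suffices to treat $p\le1$'' is valid only when $d_1>2d_2$. If $d_2<d_1\le 2d_2$ and $n\ge1$, one must exclude $p\ge\frac{2d_2}{d_1}\ge1$. There Theorem~\ref{thm:dklmt} can only be used with $\varphi(t)=t$. It yields $\prof{\clonern{n}{\Z^{d_2}}}\preccurlyeq\prof{\clonern{n}{\Z^{d_1}}}$, which the bounds $(\ln^{\circ n})^{1/(2d_2)}$ and $(\ln^{\circ n})^{1/d_1}$ do not contradict. The paper's one-line argument has the same limitation, so that range is covered by neither.
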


\begin{theoremletter}[see Corollary~\ref{cor:couplingsbetweeniteratedjugglers}]\label{thm:couplingsbetweeniteratedjugglers intro}
Let $\field$ be a finite field. Let $m,n\ge 0$ be natural integers such that $m>n$. Let $d_{1},d_{2}\ge 1$ be positive integers. Then:
\begin{itemize}
    \item for every $p<\frac{1}{2d_{1}}$, there exists a $((\ln^{\circ (m-n)})^p,\ld^0)$-integrable orbit equivalence coupling from $\clonern{m}{\Z^{d_{1}}}$ to $\clonern{n}{\Z^{d_{2}}}$;
    \item conversely, if there exists a $((\ln^{\circ (m-n)})^p,\ld^{<\infty})$-integrable orbit equivalence coupling from $\clonern{m}{\Z^{d_{1}}}$ to $\clonern{n}{\Z^{d_{2}}}$, then $p<\frac{1}{d_{1}}$.
\end{itemize}
\end{theoremletter}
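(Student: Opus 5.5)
We prove the two items of Theorem~\ref{thm:couplingsbetweeniteratedjugglers intro} separately: the first by an explicit construction using F\o lner tiling sequences, the second as an obstruction coming from Theorem~\ref{thm:dklmt intro} and isoperimetric profiles.

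\emph{Construction (first item).} We follow technique~\ref{item:techniqueDKLMT B}, as in the iterated lamplighter case of~\cite{DKLMT22}.

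First, we build a F\o lner tiling sequence $(T_k)_k$ of $\clonern{m}{\Z^{d_1}}$. Start from the standard tiling of $\Z^{d_1}$ by cubes of side $2^k$. Then apply Proposition~\ref{thm:folnertilingINTRO}, in its detailed form (Theorem~\ref{thm:folnertilingcloner}), $m$ times. At each step the new tiles have the form $\mathrm{FGL}(S)\cdot S$-type sets over a previous tile $S$. Their cardinality is therefore roughly $|\field|^{|S|^2}$, and their boundary ratio equals that of $S$.

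Second, we build a tiling $(T'_k)_k$ of $\clonern{n}{\Z^{d_2}}$ in the same way, with the tile sizes chosen so that $|T'_k|$ matches $|T_k|$. We may have to pass to subsequences, or adjust the ratios $|T_{k+1}|/|T_k|$ so that they are equal on both sides; this is done by tuning the cube sides on both sides at each level. This equal-ratio requirement is exactly the compatibility hypothesis of the F\o lner tiling construction in Section~\ref{sec:folnerTiling}. That construction then yields an orbit equivalence coupling between the two odometer-like actions.

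Third, we bound the cocycle from $\clonern{m}{\Z^{d_1}}$. By the general integrability criterion for tiling couplings, it is controlled by
\begin{equation*}
\sum_k \varphi(\operatorname{diam} T'_{k+1})\,\frac{|\partial T_k|}{|T_k|}
\end{equation*}
with $\varphi=(\ln^{\circ(m-n)})^p$.

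Here $|\partial T_k|/|T_k|\approx 2^{-k}$, since the boundary ratio is inherited from the cube of side $2^k$. On the other side, $\ln^{\circ(m-n)}\operatorname{diam}T'_{k+1}$ should be of order $(\ln\text{-iterates of }|T_k|)$. The rough estimate, in the exponent bookkeeping, is $\ln^{\circ(m-n)}\operatorname{diam} T'_{k+1}\approx 2^{2d_1 k}$. The factor $2$ comes from the $|\field|^{|S|^2}$ growth of lampcloner tiles: $\ln$ of it is $|S|^2$ rather than $|S|$. The series then converges when $p\cdot 2d_1<1$. The second cocycle is not controlled, which is why the first item only claims $\ld^0$ on that side.

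\emph{Obstruction (second item).} Let $\varphi=(\ln^{\circ(m-n)})^p$; this $\varphi$ satisfies the hypotheses of Theorem~\ref{thm:dklmt intro}. Theorem~\ref{thm:dklmt intro} then gives
\begin{equation*}
\varphi\circ\prof{\clonern{n}{\Z^{d_2}}}\preccurlyeq \prof{\clonern{m}{\Z^{d_1}}}.
\end{equation*}
We insert the profile estimates of Section~\ref{sec:PrelIsoProfHalo} from~\cite{cordum25}. The upper bound $\prof{\clonern{m}{\Z^{d_1}}}\preccurlyeq(\ln^{\circ m})^{1/d_1}$ comes from the general halo upper bound, since $\prof{\halo H}\preccurlyeq \prof{H}\circ\ln$. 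A lower bound $\prof{\clonern{n}{\Z^{d_2}}}\succcurlyeq \ln^{\circ n}$ up to constants and powers is enough. Composing, the inequality forces $(\ln^{\circ m})^p\preccurlyeq(\ln^{\circ m})^{1/d_1}$, i.e.\ $p\le 1/d_1$.

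To get the strict inequality $p<1/d_1$ we use the $\ld^{<\infty}$ hypothesis on the reverse cocycle. Following the strategy of~\cite{Corr24} used in Theorem~\ref{thm:optimalityiteratedlampshufflersINTRO}, the threshold case is excluded by a refined version of Theorem~\ref{thm:dklmt intro} involving both cocycles. Alternatively, one can use an $\varepsilon$ of slack: at $p=1/d_1$, replacing $\varphi$ by $\varphi$ times a slowly growing factor still yields a contradiction.

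\emph{Main obstacle.} The hardest part is the integrability computation in the construction. It requires precise control of the diameters of the iterated lampcloner tiles and of the matching of cardinalities across different numbers of iterations. This is exactly where the factor $2$ appears, and it explains the gap between $\frac{1}{2d_1}$ and $\frac{1}{d_1}$.
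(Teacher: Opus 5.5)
Your argument for the second item is correct and matches the paper's. Theorem~\ref{thm:dklmt}, with $\prof{\clonern{n}{\Z^{d_2}}}\succcurlyeq(\ln^{\circ n})^{\frac{1}{2d_2}}$ and $\prof{\clonern{m}{\Z^{d_1}}}\preccurlyeq(\ln^{\circ m})^{\frac{1}{d_1}}$, gives $p\le\frac{1}{d_1}$. The equality case is then ruled out by the slack argument of~\cite{Corr24}, which is your ``alternatively''. The reverse cocycle plays no role here, so you do not need a refinement ``involving both cocycles''.

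The construction for the first item, however, fails as soon as $n\ge 2$, and this is a genuine gap. If $S$ generates a group, any finite set $T$ satisfies $|T|\le(2|S|+1)^{\mathrm{diam}\,T}$. Hence every tile $T'_{k+1}$ of $\clonern{n}{\Z^{d_2}}$ with $|T'_{k+1}|=|T_{k+1}|$ has $\mathrm{diam}\,T'_{k+1}\ge c\ln|T_{k+1}|$. So $\ln^{\circ(m-n)}(\mathrm{diam}\,T'_{k+1})$ is at least of order $\ln^{\circ(m-n+1)}|T_{k+1}|=\exp^{\circ(n-1)}\big(\ln^{\circ m}|T_{k+1}|\big)$.

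For your iterated cube tiles, $\ln^{\circ m}|T_{k+1}|\asymp 2^{2d_1k}$. Thus for $n\ge2$ the general term $\varphi(\mathrm{diam}\,T'_{k+1})\,2^{-k}$ tends to infinity for every $p>0$. Your estimate $\ln^{\circ(m-n)}\mathrm{diam}\,T'_{k+1}\approx 2^{2d_1k}$ is valid only for $n\le 1$.

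Changing the tiles does not rescue the argument. For any F\o lner tiling of the source, $1/\varepsilon_k\le|S|\,\prof{\clonern{m}{\Z^{d_1}}}(|T_k|)$, which is at most of order $(\ln^{\circ m}|T_k|)^{1/d_1}$. This is negligible against $(\ln^{\circ(m-n+1)}|T_k|)^p$ when $n\ge 2$. So the diameter criterion of Theorem~\ref{thm:TilingSufficientConditionQuantitative} can never certify the claimed integrability for a direct tiling of $\clonern{m}{\Z^{d_1}}$ against $\clonern{n}{\Z^{d_2}}$. This is exactly the limitation recorded in Remark~\ref{rem:LimitationsTechniques}.

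There is also a smaller issue with cardinality matching. For $n\ge1$, the equality $|T'_k|=|T_k|$ between tiles from the iterated construction forces the base tile in $\Z^{d_2}$ to have exactly the cardinality of a tile of $\clonern{(m-n)}{\Z^{d_1}}$. Cubes cannot achieve this; you need boxes, as in the $\ell_i$ factorisation of Theorem~\ref{thm:couplingsZdandclonerZk}.

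The missing ingredient is the stability theorem, which transports couplings up the iteration without loss. The paper only tiles against $\Z^{d_2}$:
\begin{itemize}
\item Theorem~\ref{thm:couplingsZdandclonerZk} gives a coupling from $\cloner{\Z^{d}}$ to $\Z^{d_2}$ that is $\big(\ln(x)^{\frac{1}{2d}}/\ln(\ln(x))^{1+\varepsilon},\ld^{<\infty}\big)$-integrable.
\item Theorem~\ref{thm:stabilityofcouplings+quantificationLampcloners} lifts it to couplings from $\clonern{m}{\Z^{d_1}}$ to $\clonern{(m-1)}{\Z^{d_2}}$, and from $\clonern{(i+1)}{\Z^{d_2}}$ to $\clonern{i}{\Z^{d_2}}$.
\item Theorem~\ref{thm:CompositionCouplings} composes these $m-n$ couplings into one whose first function dominates $(\ln^{\circ(m-n)})^p$ for every $p<\frac{1}{2d_1}$.
\end{itemize}
Alternatively, your own computation does go through for a direct tiling of $\clonern{(m-n)}{\Z^{d_1}}$ against boxes of $\Z^{d_2}$, modulo the cardinality matching. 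Applying the stability theorem $n$ times then yields the first item.
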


Let us finally turn to permutational halo products over free abelian groups. Note that we can also get the isoperimetric profile of permutational halo products in particular cases, see for instance the computations of isoperimetric profiles in the proof of the following result. This statement in the particular case of permutational lamplighters can be viewed as a measured counterpart to Theorem~\ref{thm:QIrigidityofPWP}. 

\begin{theoremletter}[see Corollary~\ref{cor:ClassificationQOEPermutationalZd}]\label{th:ClassificationQOEPermutationalZd Intro}
Let $d_{1},d_{2},k_{1},k_{2}$ be positive integers such that $0\le k_{1}< d_{1}$, $0\le k_{2}<d_{2}$ and $\frac{d_{2}-k_{2}}{d_{1}-k_{1}}<1$, and such that $k_{1}=0$ if and only if $k_{2}=0$. Let $N_{1}$ and $N_{2}$ be normal subgroups of $\Z^{d_{1}}$ and $\Z^{d_{2}}$, with ranks $k_{1}$ and $k_{2}$ respectively. In the case where $k_{1}>0$ and $k_{2}>0$, assume that $\frac{d_{2}-k_{2}}{d_{1}-k_{1}}<\frac{k_{2}}{k_{1}}$. Then
    \begin{itemize}
        \item if $\G$ is either a finite group or a polynomial growth group, then there exists an $(\ld^{p},\ld^{0})$-integrable orbit equivalence from $\G\wr_{\Z^{d_{1}}/M}\Z^{d_{1}}$ to $\G\wr_{\Z^{d_{2}}/N}\Z^{d_{2}}$ if and only if $p<\frac{d_{2}-k_{2}}{d_{1}-k_{1}}$;
        \item there exists an $(\ld^{p},\ld^{0})$-integrable orbit equivalence from $\shuf{\Z^{d_{1}},M}$ to $\shuf{\Z^{d_{2}},N}$ if and only if $p<\frac{d_{2}-k_{2}}{d_{1}-k_{1}}$.
    \end{itemize}
\end{theoremletter}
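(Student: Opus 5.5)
The plan splits into an existence part, which gives the integrability for every $p<\frac{d_2-k_2}{d_1-k_1}$, and an optimality part, which rules out $p\ge\frac{d_2-k_2}{d_1-k_1}$.

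\emph{Existence.} Up to finite index, and using that passing to finite-index subgroups costs nothing at the $\ld^p$ level, assume $M=\{0\}^{d_1-k_1}\times\Z^{k_1}$ and $N=\{0\}^{d_2-k_2}\times\Z^{k_2}$. This reduction needs care: a rank-$k$ subgroup of $\Z^d$ is commensurable to a direct factor, and the permutational halo product over $\Z^d/M$ contains, with finite index, one over $\Z^d/M'$ with $M'$ a direct factor, modulo finite torsion in $\Z^d/M$. The proposition of \cite{CDT26} on pairs $(\Z^{d_1},\Z^{k_1})$, $(\Z^{d_2},\Z^{k_2})$ then gives an $(\ld^{<P},\ld^{<Q})$ coupling of pairs with
\[
P=\min\left(\frac{k_2}{k_1},\frac{d_2-k_2}{d_1-k_1}\right).
\]
By the standing hypothesis $\frac{d_2-k_2}{d_1-k_1}<\frac{k_2}{k_1}$, this minimum equals $\frac{d_2-k_2}{d_1-k_1}$. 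When $k_1=k_2=0$, use instead the classical $\ld^{<d_2/d_1}$ coupling between $\Z^{d_1}$ and $\Z^{d_2}$ recalled before Theorem~\ref{thm:dklmt intro}. For the lamp groups, take $\G=\La$ with the identity coupling, which is $\ld^\infty$. Theorem~\ref{thm:stabilityofcouplings+quantification intro} (first and second items) then yields the required $(\ld^p,\ld^0)$ couplings for the wreath products and the shufflers.

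\emph{Optimality.} By Theorem~\ref{thm:dklmt intro} with $\varphi(t)=t^p$, it suffices to show that
\[
\prof{\G\wr_{\Z^{d}/M}\Z^{d}}(n)\simeq \ln(n)^{1/(d-k)}
\]
for $\G$ finite or of polynomial growth, and that the same holds for $\shuf{\Z^d,M}$. Once this is known, $p\ge\frac{d_2-k_2}{d_1-k_1}$ would force
\[
\ln(n)^{p/(d_2-k_2)}\preccurlyeq \ln(n)^{1/(d_1-k_1)},
\]
and the argument must land on a strict contradiction. The difficulty is the boundary case $p=\frac{d_2-k_2}{d_1-k_1}$, where these two powers coincide. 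There I would invoke the sharper non-attainment argument of \cite{Corr24}, or note that the statement is phrased so that only $p>$ threshold has to be excluded via Theorem~\ref{thm:dklmt intro} while equality is handled in the body of the paper.

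\emph{Profile computation.} This is the main obstacle. For the upper bound on the profile, I would use that the group surjects onto $\Z^d/M\simeq\Z^{d-k}$ modulo finite groups. More precisely, I would use the permutational halo structure and an Erschler-type argument comparing with $\G\wr\Z^{d-k}$, or with $\shuf{\Z^{d-k}}$, whose profiles are $\ln^{1/(d-k)}$ by \cite{Ers03} and \cite{cordum25}. The lamp configurations supported in a ball of $\Z^d/M$ of radius $r$ have size about $\exp(r^{d-k})$, while the fibres in the $M$ direction are only polynomial in $r$. For the lower bound, I would produce explicit Følner sets: lamp configurations supported in a ball of radius $r$ in $\Z^d/M$, times a box in $\Z^d$ of side $r$. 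These have volume $\exp(\Theta(r^{d-k}))$ and boundary ratio $1/r$, which gives the profile $\ln^{1/(d-k)}$.
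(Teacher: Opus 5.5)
Your architecture matches the paper's. For ``if'', you feed the \cite{CDT26} pair coupling into the stability theorem with the identity coupling on $\G$; the hypothesis $\frac{d_2-k_2}{d_1-k_1}<\frac{k_2}{k_1}$ makes $P=\frac{d_2-k_2}{d_1-k_1}$. For ``only if'', you use isoperimetric profiles, Theorem~\ref{thm:dklmt}, and Theorem~\ref{thm:threshold} for the boundary value. Your fallback of leaving equality to ``the body of the paper'' is not an argument: equality must be excluded, and Theorem~\ref{thm:threshold} is the way to do it.

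\textbf{The profile step is false in two of the three cases.} The profile is $\ln(n)^{1/(d-k)}$ only for finite $\G$. For $\shuf{\Z^{d},M}$, and for infinite $\G$ of polynomial growth, it is $\left(\frac{\ln n}{\ln\ln n}\right)^{1/(d-k)}$; see the formulas for $\G\wr\Z^{d-k}$ and $\shuf{\Z^{d-k}}$ recalled in Section~\ref{sec:PrelIsoProfHalo}.
\begin{itemize}
\item Your volume count $\exp(\Theta(r^{d-k}))$ forgets that $\fsym{B}$ has $|B|!=\exp(\Theta(r^{d-k}\ln r))$ elements.
\item For infinite $\G$ your sets are not even finite until you cap the lamp values in balls of $\G$, and this again produces the extra factor $\ln r$. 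So the lower bound $\ln(n)^{1/(d-k)}$ you claim is wrong in these cases.
\item Your upper-bound mechanism is also too weak: a surjection onto $\Z^{d-k}$ can only give a polynomial bound.
\end{itemize}
The final answer survives only because the same inner function appears on both sides. In every case $\prof{G_1}\circ\prof{G_2}^{-1}(x)\simeq x^{\frac{d_2-k_2}{d_1-k_1}}$, where $G_1,G_2$ are the two groups. The hypotheses of Theorem~\ref{thm:threshold} hold because $d_1-k_1>d_2-k_2$, and the growth conditions are immediate for these functions. You need to redo the computation with the correct profiles to see this.

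The paper gets the profiles without hand computation. A bi-Lipschitz self-map of $\Z^{d}$ sending $M$-cosets to cosets of a direct factor $\Z^{k}$, combined with Corollary~\ref{cor:PreservationBiLipHalo}, makes the group bi-Lipschitz to $(\G\wr\Z^{d-k})\times\Z^{k}$, resp.\ $\shuf{\Z^{d-k}}\times\Z^{k}$. The factor $\Z^{k}$ does not change the profile \cite[Lemma~6.8]{cordum25}.

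\textbf{Your finite-index reduction in the existence part is unnecessary and, as stated, does not work.} It is unnecessary because the \cite{CDT26} coupling is available for arbitrary normal subgroups of ranks $k_1,k_2$ (Proposition~\ref{prop:OEofpairsfreeabeliangroups}). The paper applies it directly through Corollary~\ref{cor:stabilityQOEHaloPermZd}.

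It fails for two reasons. First, orbit equivalence, unlike measure equivalence, does not pass freely to finite-index subgroups. A coupling of the subgroups only induces a measure equivalence coupling of the ambient groups, and its fundamental domains have equal measure only when the indices agree.

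Second, the finite-index subgroups are not what you describe. Let $M'$ be the saturation of $M$ and $m=[M':M]$. The finite-index subgroup you obtain in $\shuf{\Z^{d},M}$ is isomorphic to the lampjuggler $\juggler{m}{\Z^{d},\Z^{k}}$, not to a lampshuffler. The one in $\G\wr_{\Z^{d}/M}\Z^{d}$ has lamp group $\G^{m}$. The multiplicity $m>1$ is unavoidable when $M$ is not saturated. If the multiplicities $m_1\neq m_2$ on the two sides differ, Theorem~\ref{thm:stabilityofcouplings+quantification intro} cannot be applied: it only compares lampjugglers with the same $r$, and for finite $\G$ the groups $\G^{m_1}$ and $\G^{m_2}$ are not even orbit equivalent.

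Passing from $M$ to a direct factor is only harmless through coset-preserving bi-Lipschitz maps. The paper uses such a map for the profile computation, not for the existence part.
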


\begin{remark}\label{rem:LimitationsTechniques}
Techniques~\ref{item:techniqueDKLMT A} and~\ref{item:techniqueDKLMT B} are complementary. Results coming from one technique cannot necessarily be recovered by the other technique.\par
On the one hand, with the stability results, we are not able to get couplings between iterated halo products with different numbers of iteration, whereas it can be achieved using F\o lner tiling sequences.\par
On the other hand, for couplings provided by F\o lner tiling sequences, the criterion that we use for a coupling to be $\varphi$-integrable (see~\cite[Proposition~6.9]{DKLMT22} that we recall in Section~\ref{sec:PreliminariesFolnerTiling}, see Theorem~\ref{thm:TilingSufficientConditionQuantitative}) shows its limitations for bigger and bigger groups. Indeed, notice that it relies on crude bounds using the diameters of the tiles. First, it prevents us from having better than integrable couplings (in particular we cannot get an $\ld^{\infty}$ orbit equivalence coupling). To get higher integrability, we must deepen our study of the tiles to get a finer criterion, which is untractable when the geometry of the groups are complicated. Secondly, still in the case of groups with complicated geometries, diameters of the tiles can grow very fast, which can prevent us from getting quantitatively optimal couplings. For instance, between $\shufn{m}{\Z^{d_{1}}}$ and $\shufn{n}{\Z^{d_{2}}}$ with $m>n$ as in Theorem~\ref{thm:optimalityiteratedlampshufflersINTRO}, we cannot directly get optimal couplings using F\o lner tiling sequences. We first must consider the case of $\shufn{(m-n)}{\Z^{d_{1}}}$ and $\Z^{d_{2}}$, where $\Z^{d_{2}}$ has tiles easier to handle, and then apply the stability result $n$ times.\par
Both techniques share common limitations. For instance, the existence of an optimal coupling between halo products of different nature, for example between $(\Z/2\Z)\wr\Z$ and $\shuf{\Z}$, is still an open question.\par
Finally, it is worth noticing that, by works of Genevois and Tessera~\cite{GT24a}, and the second-named author~\cite{Dum24}, the existence of a bi-Lipschitz equivalence between the base groups is not the sufficient and necessary condition for very \enquote{similar} (but different) halo products to be bi-Lipschitz equivalent, but rather the existence of a \textit{scaling quasi-isometry}, with scaling factor different than $1$. By \enquote{similar} halo products, we mean for instance, $\G\wr H$ and $\La\wr K$ when $\G$ and $\La$ are non trivial finite groups with different cardinalities, or lampjugglers $\juggler{r}{H}$ and $\juggler{s}{K}$ with $r\neq s$, or lampcloners $\cloner{H}$ and $\mathsf{Cloner}_{\mathbb{l}}(K)$ for finite fields $\field$ and $\mathbb{l}$ with different cardinalities. Since our ideas always lead us to a stability result for bi-Lipschitz equivalence, via $\ld^{\infty}$ orbit equivalence, this means that we cannot expect stability results like Theorem~\ref{thm:stabilityofcouplings+quantification intro} between \enquote{similar} halo products in full generality.
\end{remark}

\paragraph{Plan of the paper.} After a few preliminaries in Section~\ref{sec:preliminaries}, Section~\ref{sec:stability} is devoted to the construction of orbit equivalence couplings between halo products, by building natural actions of the halo products provided actions of the base groups. In Section~\ref{sec:folnerTiling}, we will use F\o lner tiling sequences to get other orbit equivalence couplings, for instance between $\Z^d$ and $\shuf{\Z^k}$. Finally, Section~\ref{sec:commentsandquestions} records various questions related to the article.

\paragraph{Acknowledgements.}

We are thankful to Juan Paucar for useful discussions about the topic, and to François Le Maître for fruitful suggestions.
\section{Preliminaries}\label{sec:preliminaries}

\subsection{Notations}\label{sec:notations}

Given non-decreasing functions $f,g\colon \R_{+}\to \R_{+}$, we write $f(x)=O(g(x))$ if there exists $C>0$ such that $f(x)\le Cg(x)$ for all $x$ large enough, and $f(x)=o(g(x))$ if $\frac{f(x)}{g(x)}$ goes to $0$ as $x$ goes to $+\infty$. Lastly, we write $f\sim g$, and we say that $f$ and $g$ are~\textit{equivalent}, if $\frac{f(x)}{g(x)}$ goes to $1$ as $x$ goes to $+\infty$.  

\smallskip

A slightly weaker notion is the one of~\textit{asymptotic equivalence}. Namely, if $f,g\colon \R_{+}\to \R_{+}$ are non-decreasing, we say that $f$ is~\textit{dominated} by $g$, and we write $f\preccurlyeq g$, if there is a constant $C>0$ such that $f(x)=O(g(Cx))$. We say that $f$ and $g$ are~\textit{asymptotically equivalent}, written $f\simeq g$, if $f\preccurlyeq g$ and $g\preccurlyeq f$. Note that two equivalent functions are asymptotically equivalent.

\smallskip

\sloppy Throughout the paper, the probability spaces are assumed to be atomless and standard. This implies that they are all isomorphic to $([0,1],\mathrm{Leb})$, where $\mathrm{Leb}$ stands for the Lebesgue measure on $[0,1]$. Given a measurable map $f\colon X\to Y$ between two measure spaces $(X,\mu)$ and $(Y,\nu)$, $f_{*}\mu$ stands for the push-forward of $\mu$ under $f$. The abbreviation pmp stands for~\textit{probability measure-preserving}.

\smallskip

Given a group $G$, we denote $1_{G}$ its neutral element, and if $G$ is generated by a finite set $S$, $\text{Cay}(G,S)$ refers to the Cayley graph of $G$ with respect to $S$, that is the graph whose vertices are elements of $G$ and whose edges are pairs of the form $(g,gs)$ with $g\in G$ and $s\in S\cup S^{-1}\setminus\lbrace 1_{H}\rbrace$, while $|\cdot|_{S}$ denotes the usual length function on $G$ associated to $S$. The notation $V_{G}$ refers to the~\textit{growth function} of $G$, defined by $V_{G}(n)=|\lbrace g\in G : |g|_{S}\le n\rbrace|$. Recall that its asymptotic behaviour is independent of the choice of $S$.

\subsection{Isoperimetric profile}\label{sec:isoprof}

For a finitely generated group $G$ and a finite generating set $S_{G}$, its $\ell^{1}$-\textit{isoperimetric profile}, or simply its \textit{isoperimetric profile}, is the function $\prof{G}\colon \N\to \R_{+}$ given by 
\begin{equation*}
    \prof{G}(n) \defeq \sup_{|A|\le n}\frac{|A|}{|\partial_{G} A|}
\end{equation*}
where $\partial_{G}A \defeq \lbrace g\in G\setminus A : \exists s\in S_{G}, \exists h\in A, g=hs\rbrace$ is the~\textit{boundary} of $A$ in $G$. There exists a more general notion of $\ell^{p}$-isoperimetric profile for $p\ge 1$, but we will only focus on the case $p=1$.

\smallskip

Recall also that the isoperimetric profile of a group $G$ is the generalized inverse of its~\textit{F\o lner function} $\textup{F\o l}_{G}\colon \N\to\R_{+}$, defined as
 \begin{equation*}
     \textup{F\o l}_{G}(n) \defeq \inf\left\lbrace |A| : \frac{|\partial_{G}A|}{|A|} \le \frac{1}{n} \right\rbrace.
 \end{equation*}

\begin{remark}\label{rem:InverseEachOther}
Notice that, given the asymptotic behaviour of the F\o lner function, we can deduce the asymptotic behaviour of the isoperimetric profile, even though they are not real inverses of each other, but only generalized inverses~\textit{a priori}. Indeed, a non-decreasing function $\R_{+}\to \R_{+}$ is always asymptotically equivalent to an increasing function $\R_{+}\to \R_{+}$ (cf.~\cite[Remark~1.2]{Corr24}) and it is not hard to check that $\simeq$ is preserved when passing to generalized inverses. Thus, in the sequel, we can and will assume that the F\o lner function and the isoperimetric profile are injective and then real inverses of each other. Hence, studying the asymptotic behaviour of $\prof{G}$ is the same as studying the asymptotic behaviour of $\textup{F\o l}_{G}$. 
\end{remark}

The asymptotic behaviour of the isoperimetric profile is stable under quasi-isometries, and is in particular independent of the choice of a generating set for $G$. More generally, the isoperimetric profile is monotonuous when passing to finitely generated subgroups:

\begin{theorem}[{\cite[Lemma~4]{Ers03}}]\label{thm:profilemonotonuousSubgroup}
Let $H$ be a finitely generated subgroup of a finitely generated group $G$. Then one has $\prof{G}(n) \preccurlyeq \prof{H}(n)$.
\end{theorem}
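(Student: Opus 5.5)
The plan is to transport almost-Følner sets of $G$ into $H$ by restricting them to cosets of $H$. Fix a finite generating set $S_H$ of $H$ and extend it to a finite generating set $S_G \supseteq S_H$ of $G$. This is allowed because the asymptotic class of $\prof{G}$ does not depend on the generating set. We must show $\prof{G}(n) \le C\,\prof{H}(n)$, up to the usual constants. Equivalently: for every finite $A\subset G$ with $|A|\le n$, we want some finite $B\subset H$ with $|B|\le n$ and
\begin{equation*}
\frac{|B|}{|\partial_H B|} \ge c\,\frac{|A|}{|\partial_G A|}
\end{equation*}
for a uniform constant $c>0$.

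First, decompose $G$ into left cosets $gH$ and set $A_g \defeq g^{-1}(A\cap gH)\subset H$. Only finitely many of these are nonempty, $\sum_g |A_g| = |A|$, and each $|A_g|\le n$. The key point is to compare boundaries. Let $h\in\partial_H A_g$, so $h=as$ with $a\in A_g$, $s\in S_H\subset S_G$, and $h\notin A_g$. Then $gh = (ga)s$ with $ga\in A$, and $gh\in gH$ but $gh\notin A$, so $gh\in\partial_G A$. Distinct cosets give disjoint images, so the injections $h\mapsto gh$ yield
\begin{equation*}
\sum_g |\partial_H A_g| \le |\partial_G A|.
\end{equation*}

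Next, average. Since $\sum_g |A_g| / \sum_g |\partial_H A_g| \ge |A|/|\partial_G A|$, a ratio of sums is bounded by the largest individual ratio (the mediant inequality). Hence some coset $g$ satisfies $|A_g|/|\partial_H A_g| \ge |A|/|\partial_G A|$. Here we discard terms with $A_g=\emptyset$, and if some $\partial_H A_g=\emptyset$ with $A_g\neq\emptyset$ then $H$ is finite. Taking $B=A_g$ gives $\prof{H}(n)\ge |A|/|\partial_G A|$, and taking the supremum over $A$ gives $\prof{G}(n)\le\prof{H}(n)$. This is even a pointwise inequality for these compatible generating sets.

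The only delicate points are bookkeeping ones:
\begin{itemize}
\item The boundary convention $g=hs$ (right multiplication) means we must slice along left cosets $gH$, so that right multiplication by $S_H$ preserves each slice.
\item The degenerate case of finite $H$ is trivial, because then $\prof{H}$ is eventually infinite in the usual convention.
\item Changing generating sets only affects the result up to $\simeq$.
\end{itemize}

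I expect no real obstacle. The main step is the injectivity and disjointness of the boundary comparison.
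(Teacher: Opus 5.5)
Your proof is correct, and it is essentially the standard argument: the paper does not prove this statement but only cites \cite[Lemma~4]{Ers03}, whose content is exactly this restriction of a F\o lner set of $G$ to the cosets of $H$. Your handling of the details is sound, namely slicing along left cosets to match the right-multiplication boundary convention, the inclusion $g\,\partial_H A_g\subseteq gH\cap\partial_G A$, the mediant step, and the degenerate finite cases.
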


This has been widely generalized to general regular maps by Delabie, Koivisto, Le Maître and Tessera, using connections with quantitative measure equivalence.

\begin{theorem}[{\cite[Theorem~1.5]{DKLMT22}}]\label{thm:profilemonotonuous}
Let $G$ and $H$ be finitely generated amenable groups. If there exists a regular map from $G$ to $H$, then $\prof{H}(n) \preccurlyeq \prof{G}(n)$.
\end{theorem}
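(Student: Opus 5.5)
The statement to prove is Theorem~\ref{thm:profilemonotonuous}: a regular map from $G$ to $H$ forces $\prof{H}(n) \preccurlyeq \prof{G}(n)$. Recall that $f\colon G\to H$ is \emph{regular} if it is Lipschitz and the fibres have uniformly bounded cardinality, say $|f^{-1}(h)|\le k$ for all $h$. The authors attribute this to a connection with quantitative measure equivalence, but I would give a direct combinatorial proof that pushes F\o lner sets forward. This is the approach Erschler uses for subgroups (Theorem~\ref{thm:profilemonotonuousSubgroup}), with the injective inclusion replaced by a bounded-to-one map.

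Fix $n$ and a finite set $A\subset G$ with $|A|\le n$ and $|A|/|\partial_G A|$ close to $\prof{G}(n)$. Set $B=f(A)$. Then $|A|/k\le |B|\le |A|\le n$. The problem with bounding $|\partial_H B|$ by a multiple of $|\partial_G A|$ is that points of $\partial_H B$ need not lie near $f(\partial_G A)$: the image of the interior of $A$ can contain holes, and since $f$ is not surjective, $H$ is not coarsely covered by $f(G)$. I would therefore thicken the set and take $B=N_R(f(A))$ for a large constant $R$.

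The main tool for bounding the boundary is the Lipschitz property. An element $h\in\partial_H B$ lies at distance exactly $R+1$ from $f(A)$. The obstacle is that this is still not obviously controlled by $\partial_G A$. So I would reverse the direction of the argument and work with gradients rather than boundaries. I would use the equivalent functional form of the $\ell^1$ profile: $\prof{H}(n)$ is comparable to $\sup \|\phi\|_1/\|\nabla\phi\|_1$ over finitely supported $\phi\ge0$ on $H$ with support of size at most $n$ (coarea formula). On this side a pullback is available. Given a near-optimal $\phi$ on $H$, set $\psi=\phi\circ f$ on $G$. This has support size at most $k n$ and satisfies $\|\psi\|_1\ge \|\phi|_{f(G)}\|_1$. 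Its gradient is bounded as follows: $|\psi(gs)-\psi(g)|=|\phi(f(gs))-\phi(f(g))|$, and $f(gs)$ is within distance $L$ of $f(g)$. Summing telescopically along geodesics of length at most $L$, and using that each $h$ has at most $k$ preimages, gives $\|\nabla\psi\|_1\le C k L\,|S_H|^{L}\|\nabla\phi\|_1$.

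The remaining difficulty is mass that $\phi$ places off $f(G)$. I would handle it by replacing $\phi$ with $\tilde\phi(h)=\max_{d(h',h)\le L}\phi(h')$, or by averaging over an $L$-neighbourhood, before pulling back. This ensures that the values seen along $f(G)$ capture a fixed fraction of $\|\phi\|_1$, and it changes the gradient only by constants. One also needs $f(G)$ to be coarsely dense enough to catch the mass of $\phi$. If it is not, precompose the optimal $\phi$ with a left translation $h\mapsto \phi(h_0h)$, choosing $h_0$ by averaging over translates so that a positive fraction of the mass sits on $N_L(f(G))$.

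This last step is the one I expect to be the real obstacle. Without a density assumption it is not clear that any translate works, and this is presumably why the authors route the argument through measure equivalence, namely a coupling built from the regular map and \cite[Theorem~1.1]{DKLMT22}. If the direct route fails there, I would follow theirs: realise the regular map as an $\ld^\infty$ measure subgroup / measure embedding from $G$ to $H$, and apply the monotonicity of the profile under $\ld^1$ measure subgroups. Under that approach, the case analysis above is replaced by the standard pullback of F\o lner functions through the coupling.

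Either way, the output is $\prof{H}(n)\le C\,\prof{G}(Ckn)$, which is $\prof{H}\preccurlyeq\prof{G}$.
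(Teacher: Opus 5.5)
\textbf{There is a genuine gap, at exactly the step you flag, and the patch you propose cannot work.}

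Your pull-back estimates are fine: $\phi\circ f$ has support of size at most $kn$, and $\|\nabla(\phi\circ f)\|_1\le C\|\nabla\phi\|_1$. But they bound $\|\phi\circ f\|_1$ from below only by the mass $\phi$ puts on $f(G)$. A translate $\phi(h_0\,\cdot)$ carrying a fixed fraction of its mass on $N_L(f(G))$ need not exist. For $\Z\hookrightarrow\Z^2$ and $\phi$ the indicator of an $m\times m$ square, every translate has at most a proportion $(2L+1)/m$ of its points within distance $L$ of the axis. The theorem still holds in this example because mass and gradient drop by comparable factors on the line, and the crude bound $\|\nabla(\phi\circ f)\|_1\le C\|\nabla\phi\|_1$ cannot see this.

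More fundamentally, your direct route never uses that $G$ is amenable, and the statement fails without that hypothesis. The $3$-regular tree embeds isometrically in the Diestel--Leader graph $DL(2,2)$, which is a Cayley graph of $\Z/2\Z\wr\Z$. Hence $F_2$ admits a regular map into this lamplighter, yet $\prof{F_2}$ is bounded while $\prof{\Z/2\Z\wr\Z}\simeq\ln$.

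Two smaller points. Your first idea, pushing near-optimal sets of $G$ forward, aims at the reverse inequality. As for your fallback, the paper gives no proof and simply quotes \cite[Theorem~1.5]{DKLMT22}, where the result comes from quantitative measure equivalence: roughly, amenability of $G$ turns the regular map into an $\ld^{\infty}$ measure-subgroup coupling, to which their monotonicity theorem applies. In your proposal that conversion, which is the whole content, is only named.

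\textbf{The missing idea} is to aim for the mass-to-gradient ratio on average, not a fraction of the mass. Summed over all $h_0\in H$, both the masses and the gradients of $\phi(h_0f(\cdot))$ are infinite. Cutting off with a F\o lner set of $G$ makes the average meaningful, and this is where amenability of $G$ enters.

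Given $\phi\ge 0$ with $|\supp\phi|\le n$, pick a finite $A\subset G$ with $|\partial_G A|\,\|\phi\|_1\le |A|\,\|\nabla\phi\|_1$. For $h_0\in H$ set $\psi_{h_0}(g)\defeq\mathds{1}_A(g)\,\phi(h_0 f(g))$. Since $h_0\mapsto h_0f(g)$ is a bijection of $H$ and $d_H(f(g),f(gs))\le L$, summing edge by edge gives
\[
\sum_{h_0\in H}\|\psi_{h_0}\|_1=|A|\,\|\phi\|_1,\qquad \sum_{h_0\in H}\|\nabla\psi_{h_0}\|_1\le |S_G|\big(L\,|A|\,\|\nabla\phi\|_1+2\,|\partial_G A|\,\|\phi\|_1\big)\le 3L\,|S_G|\,|A|\,\|\nabla\phi\|_1 .
\]
Here each edge inside $A$ contributes at most $L\|\nabla\phi\|_1$, and each edge between $A$ and its complement contributes exactly $\|\phi\|_1$.

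Only finitely many $\psi_{h_0}$ are non-zero. So some non-zero $\psi_{h_0}$ satisfies $\|\psi_{h_0}\|_1\ge \|\nabla\psi_{h_0}\|_1\,\|\phi\|_1/(3L|S_G|\,\|\nabla\phi\|_1)$, and its support satisfies $|\supp\psi_{h_0}|\le |f^{-1}(h_0^{-1}\supp\phi)|\le kn$. The coarea formula then gives $\prof{H}(n)\le C\,\prof{G}(kn)$.

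Neither the local maximum $\tilde\phi$ nor any density of $f(G)$ is needed, and in the $F_2$ example it is precisely the choice of $A$ that is impossible. Completed this way, your route gives a short elementary proof, as an alternative to the measure-equivalence argument behind the cited result.
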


\smallskip

Note that a group is amenable if and only if its isoperimetric profile is unbounded. The idea to keep in mind is that the isoperimetric profile is a measurement of how much amenable a group is. The faster the isoperimetric profile tends to infinity, the more the group is amenable. In particular, the isoperimetric profile is a particularly well suited invariant to distinguish amenable groups with exponential growth up to quasi-isometries or regular maps, and it has now been computed for many classes of groups, among which:
\begin{itemize}
    \item $\prof{G}(n) \simeq n^{\frac{1}{d}}$ if $G$ has polynomial growth of degree $d\ge 1$;
    \item $\prof{G}(n)\simeq \ln(n)$ for solvable Baumslag-Solitar groups and lamplighters $F\wr\Z$, where $F$ is a non-trivial finite group;
    \item $\prof{G}(n) \simeq \ln(n)$ for any polycyclic group with exponential growth~\cite{Pit95, Pit00}, or more generally any exponential growth group within the class GES of Tessera~\cite[Corollary 5]{Tes13};
    \item $\prof{F\wr N}(n)\simeq \ln(n)^{\frac{1}{d}}$ with $F$ finite and non-trivial, and $N$ of growth degree $d\ge 1$~\cite{Ers03}.
\end{itemize}

More generally,~\cite[Theorem 1]{Ers03} provides a general formula for computing F\o lner functions of wreath products. Thus the last example can be extended to iterated wreath products. 

\smallskip

A fundamental result in geometric group theory is the one of Coulhon and Saloff-Coste, who proved in~\cite{CSC93} that for a finitely generated group $G$ and a finite symmetric generating set $S$ of $G$, one has 
\begin{equation*}
    \frac{|\partial_{G}F|}{|F|} \ge \frac{1}{4|S|}\cdot\frac{1}{\Phi_{G}(2|F|)}
\end{equation*}
for any finite set $F\subset G$, where $\Phi_{G}\colon \R_{>0} \to \N$, $\Phi_{G}(t) \defeq \min\lbrace n \ge 0 : V_{G}(n)>t\rbrace$ is the \textit{inverse growth function} of $G$. Since then, it has constantly been improved to thiner inequalities, see for instance~\cite{PS22}. Inverting this inequality and taking the sup, one directly gets the upper bound 
\begin{equation*}
    \prof{G}(n) \preccurlyeq \Phi_{G}(n)
\end{equation*}
on the isoperimetric profile of $G$. This upper bound is optimal when $G$ has polynomial growth, while if it has exponential growth, one only gets $\prof{G}(n) \preccurlyeq \ln(n)$. In fact,~\cite[Theorem~1.1]{BZ21} describes a large class of possible asymptotic behaviours for isoperimetric profiles of finitely generated groups with exponential growth, namely for any non-decreasing function $f$ such that $x\mapsto \frac{x}{f(x)}$ is non-decreasing, there exists a finitely generated group of exponential growth whose profile is $\simeq \frac{\ln(x)}{f(\ln(x))}$.

\smallskip

Isoperimetric profiles are also particularly studied for their relations with return probabilities of random walks on groups, see for instance~\cite{SCZ15, SCZ16, SCZ18, BZ21}.

\subsection{Orbit equivalence}\label{sec:orbit equivalence}

We now recall additional background on orbit equivalence and its quantitative versions. 

\begin{definition}
Let $H$ and $K$ be countable groups. An~\textit{orbit equivalence coupling} between $H$ and $K$ is a standard probability space $(X,\mu)$ equipped with free pmp $H$- and $K$-actions that share the same orbits up to a null set, namely for almost every $x\in X$, the $H$- and $K$-orbits of $x$ coincide.
\end{definition}

If such an orbit equivalence coupling exists, then we say that the groups $H$ and $K$ are~\textit{orbit equivalent}. An orbit equivalence coupling $(X,\mu)$ between two groups $H$ and $K$ provides measurable maps $c_{H,K}\colon H\times X\to K$ and $c_{K,H}\colon K\times X\to H$, called the~\textit{cocycles}, and defined by the equalities
\begin{equation*}
    h\cdot x=c_{H,K}(h,x)\cdot x \;\text{and}\;k\cdot x=c_{K,H}(k,x)\cdot x
\end{equation*}    
for every $h\in H$, $k\in K$ and for almost every $x\in X$. These maps are well-defined by freeness of the actions $H,K\act X$ and satisfy the \textit{cocycle identities}
\begin{equation*}
    c_{H,K}(h'h,x)=c_{H,K}(h',hx)c_{H,K}(h,x)\; \text{and}\;
    c_{K,H}(k'k,x)=c_{K,H}(k',kx)c_{K,H}(k,x)
\end{equation*}
for every $h,h'\in H$, every $k,k'\in K$ and almost every $x\in X$.

\smallskip

By freeness of the actions, two orbit equivalent groups have the same cardinality. It is not hard to prove that two finite groups with same cardinality are orbit equivalent. In the case of infinite groups, Ornstein and Weiss proved the following for amenable groups.

\begin{theorem}[\cite{OW80}]
Any two infinite amenable groups are orbit equivalent.
\end{theorem}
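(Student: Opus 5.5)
The plan is to reduce the statement to two classical facts: every free pmp action of an amenable group generates a hyperfinite equivalence relation, and Dye's theorem that any two ergodic hyperfinite pmp equivalence relations on an atomless standard probability space are isomorphic. Let $H$ and $K$ be infinite amenable groups. I will start from Bernoulli shifts $H\act([0,1]^{H},\mathrm{Leb}^{\otimes H})$ and $K\act([0,1]^{K},\mathrm{Leb}^{\otimes K})$. They have the following properties.
\begin{enumerate}
\item They are essentially free, because the coordinates are i.i.d. with an atomless distribution.
\item They preserve the product measure.
\item They are ergodic, and in fact mixing, because the groups are infinite.
\item Both underlying spaces are standard and atomless.
\end{enumerate}
Write $\mathcal{R}_{H}$ and $\mathcal{R}_{K}$ for the orbit equivalence relations generated by these two actions.

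The main step, and the one I expect to be the real obstacle, is proving that $\mathcal{R}_{H}$ (and likewise $\mathcal{R}_{K}$) is hyperfinite, that is, an increasing union of finite Borel subrelations. I would use the Ornstein--Weiss Rokhlin lemma for amenable groups. For every $\varepsilon>0$ and every finite $S\subset H$ one finds finitely many $(S,\varepsilon)$-F\o lner sets $F_{1},\dots,F_{r}$ and measurable bases $B_{1},\dots,B_{r}$. The translates $fB_{i}$ with $f\in F_{i}$ are pairwise disjoint and cover a set of measure at least $1-\varepsilon$. This is the quasi-tiling argument. The towers define a finite subrelation: points are equivalent when they lie in the same tile. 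This relation captures every $s\in S$-edge outside a set of measure $O(\varepsilon)$, since such an edge is lost only near the boundaries of the tiles or off the towers. Choosing $S_{n}\uparrow H$ and $\varepsilon_{n}$ summable, I then need to make the finite subrelations nested. I would get this by the standard trick of building each new tower family so that it refines unions of earlier tiles, in the style of Connes--Feldman--Weiss. Borel--Cantelli then shows that almost every orbit edge lies in the increasing union. The delicate points are the existence of the $\varepsilon$-disjoint quasi-tilings and the nesting; I would quote Ornstein--Weiss for the first and handle the second by passing to coarser partitions.

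Once both relations are hyperfinite, ergodic and of type $\mathrm{II}_{1}$, Dye's theorem gives a measure-preserving isomorphism $T\colon[0,1]^{K}\to[0,1]^{H}$ with $T(\mathcal{R}_{K})=\mathcal{R}_{H}$ up to null sets. Transport the $K$-action through $T$ by setting $k\cdot x\defeq T(k\cdot T^{-1}x)$ on $X\defeq[0,1]^{H}$. This gives a free pmp $K$-action on $X$ whose orbits coincide almost everywhere with the $H$-orbits, so $X$ is an orbit equivalence coupling between $H$ and $K$. If one wants a self-contained route closer to this paper, one could replace Dye's theorem by building both relations as odometer-like increasing unions of finite relations with matching tile cardinalities, but the Dye reduction above is the plan I would follow.
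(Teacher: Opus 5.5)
The paper gives no proof of this statement; it is quoted as background from \cite{OW80}. Your outline is the classical argument behind it, and it is correct as a proof sketch. You take free ergodic Bernoulli actions, show their orbit relations are hyperfinite via the Ornstein--Weiss Rokhlin lemma, then apply Dye's theorem and transport the $K$-action. This implicitly assumes $H$ and $K$ are countable, so that $[0,1]^{H}$ and $[0,1]^{K}$ are standard.

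One wording point concerns the nesting step. The new partition must be \emph{coarser} than the old one, meaning every new class is a union of old classes, not a refinement of them. A concrete way is to merge the old classes lying entirely inside a new tile and to keep unchanged the old classes that meet a tile boundary. Every old class has the form $A\cdot x$, with $A$ ranging over finitely many finite shapes. So it suffices to take the new F\o lner shapes sufficiently invariant under $S'S_{n+1}$, where $S'$ is the finite set of group elements realising the old classes; this keeps the loss of order $\varepsilon_{n+1}$. Once the subrelations are nested, Borel--Cantelli is not needed: for fixed $h$, the sets $\{x : (x,hx)\notin\mathcal{E}_n\}$ decrease in $n$, and their measure tends to $0$.

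Finally, the ``odometer'' alternative you mention is, at the level of equivalence relations, essentially a proof of Dye's theorem itself. At the level of groups, as in Section~\ref{sec:folnerTiling}, it would require F\o lner tiling sequences with matching cardinalities. One does not know how to produce these for arbitrary pairs of amenable groups, so that route is not a general substitute for Dye's theorem.
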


Thus, in order to get an interesting theory for amenable groups, we need to strengthen the definition of orbit equivalence. To this end, for finitely generated groups, we put constraints on our cocycles by requiring some~\textit{quantification}.

\begin{definition}[\cite{DKLMT22}]\label{def:quantitativeCocycle}
Let $H$ and $K$ be finitely generated groups, with finite generating sets $S_{H}$, $S_{K}$. Let $(X,\mu)$ be an orbit equivalence coupling between $H$ and $K$, and let $\varphi,\psi\colon\R_{+}\to\R_{+}$ be non-decreasing maps. We say that $(X,\mu)$ is a $(\varphi,\psi)$\textit{-integrable orbit equivalence coupling} from $H$ to $K$ if for every $h\in H$ and every $k\in K$, there exist constants $C_{h}>0$ and $C'_{k}>0$ such that
\begin{equation*}
\int_{X}{\varphi\left(\frac{\left|c_{H,K}(h,x)\right|_{S_{K}}}{C_{h}}\right)\mathrm{d}\mu(x)}<\infty\;\text{and}\;\int_{X}{\psi\left (\frac{\left|c_{K,H}(k,x)\right|_{S_{H}}}{C'_{k}}\right)\mathrm{d}\mu(x)}<\infty.
\end{equation*}    
In that case, we (abusively) say that the cocycle $c_{H,K}$ is \textit{$\varphi$-integrable} and that the cocycle $c_{K,H}$ is \textit{$\psi$-integrable}.
\end{definition}

Moreover, for $p>0$, we write $\ld^{p}$ instead of $\varphi$- or $\psi$-integrable if we consider the map $t\mapsto t^{p}$, we write $\ld^{<\infty}$ if the corresponding cocycle is $\ld^p$ for every $p>0$, and we write $\ld^{0}$ when no requirement is made on the cocycle. For instance, the orbit equivalence coupling is $(\varphi,\ld^{p}$)-integrable if $c_{H,K}$ is $\varphi$-integrable and $c_{K,H}$ is in $\ld^{p}(X,\mu)$; it is $(\ld^{p},\ld^0$)-integrable if $c_{G,H}$ is $\ld^{p}$. Finally, if an orbit equivalence coupling from $H$ to $K$ is $(\varphi,\varphi)$-integrable, then we say that it is a $\varphi$\textit{-integrable} orbit equivalence coupling from $H$ to $K$.

\begin{remark}
We will not deal with the more general notion of~\textit{measure equivalence}, and its quantitative forms (see~\cite{DKLMT22}). However, note that the existence of a $(\varphi,\psi)$-integrable orbit equivalence coupling from $H$ to $K$ is equivalent to the existence of a $(\varphi,\psi)$-integrable measure equivalence coupling from $H$ to $K$ with equal fundamental domains.
\end{remark}

\begin{remark}\label{rm:checkongenerators}
The constants $C_{h}$ appearing in the definition of $\varphi$-integrability of the cocycle $c_{H,K}$ are necessary because we need the following properties:
\begin{itemize}
\item this notion of $\varphi$-integrability does not depend on the choice of the finite generating set of $K$, since for any two finite generating sets $S_{K},S'_{K}$ of $K$, there exists a constant $C>0$ such that
\begin{equation*}
    \frac{1}{C}{\left|k\right|}_{S'_{K}}\le {\left|k\right|}_{S_{K}}\le C{\left|k\right|}_{S'_{K}}
\end{equation*}
for every $k\in K$;
\item if $\varphi \simeq \psi$, then $\varphi$-integrability and $\psi$-integrability are equivalent notions;
\item to prove that the cocycle $c_{H,K}\colon H\times X\to K$ is $\varphi$-integrable, it suffices to check the finiteness of 
\begin{equation*}
    \int_{X}{\varphi\left(\frac{\left|c_{H,K}(h,x)\right|_{S_{K}}}{C_{h}}\right)\mathrm{d}\mu(x)}
\end{equation*}
for every $h$ in a finite generating set of $H$. This follows from~\cite[Proposition~2.22]{DKLMT22}.
\end{itemize}
\end{remark}

As mentioned in the introduction, quantitative orbit and measure equivalences capture the geometry of groups, in contrast with Orstein-Weiss theorem. In the appendix of~\cite{Aus16}, Bowen shows the invariance of the growth function under $\ld^1$ orbit equivalence.

\begin{theorem}[{\cite[Theorem~B.2]{Aus16}}]
Let $H$ and $K$ be finitely generated groups. If $H$ and $K$ are $\ld^{1}$ orbit equivalent, then $V_{H}(n)\simeq V_{K}(n)$.
\end{theorem}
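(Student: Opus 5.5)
The statement is Bowen's theorem: if $H$ and $K$ are $\ld^1$ orbit equivalent, then $V_H(n)\simeq V_K(n)$. By symmetry it suffices to prove $V_H\preccurlyeq V_K$ from integrability of $c_{H,K}$ alone. Fix finite symmetric generating sets $S_H$, $S_K$ and a coupling $(X,\mu)$. The idea is to turn the cocycle into a map $H\to K$ which is injective and which, on average over $x$, sends the ball $B_H(n)$ into a ball of radius $O(n)$ in $K$. Injectivity will come from freeness.

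\textbf{Step 1: injectivity.} For a.e.\ $x$, consider $f_x\colon H\to K$, $h\mapsto c_{H,K}(h,x)$. By freeness of the $K$-action, $c(h,x)=c(h',x)$ gives $hx=h'x$. Freeness of the $H$-action then gives $h=h'$. So $f_x$ is injective, and $|f_x(B_H(n))|=V_H(n)$.

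\textbf{Step 2: controlling lengths on average.} Write $h=s_1\cdots s_m$ with $m=|h|_{S_H}\le n$. The cocycle identity gives
\[
|c(h,x)|_{S_K}\le \sum_{i=1}^m |c(s_i,\, s_{i+1}\cdots s_m x)|_{S_K}.
\]
Set $F(y)=\max_{s\in S_H}|c(s,y)|_{S_K}$, which lies in $\ld^1$. Since the action is measure preserving,
\[
\int_X |c(h,x)|_{S_K}\,d\mu(x)\le n\|F\|_1 .
\]

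\textbf{Step 3: counting.} This is the main point. Put $C=2\|F\|_1$. For each $h\in B_H(n)$, Markov's inequality gives $\mu\{x : |c(h,x)|_{S_K}>Cn\}\le \tfrac12$. Integrating over $x$ the number of $h\in B_H(n)$ with $f_x(h)\in B_K(Cn)$ gives at least $V_H(n)/2$, so some $x$ (in fact a set of positive measure) achieves at least $V_H(n)/2$ such $h$. Injectivity of $f_x$ then yields
\[
\tfrac12 V_H(n)\le V_K(Cn),
\]
that is, $V_H\preccurlyeq V_K$. The symmetric argument with $c_{K,H}$ gives the reverse domination.

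The only delicate issue is measurability and the choice of a conull set on which the cocycle identities and freeness hold simultaneously for all group elements. This is handled because $H$ is countable: we intersect countably many conull sets. I expect no serious obstacle; the key idea is the averaging combined with Markov's inequality in Step 3.
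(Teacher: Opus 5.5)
Your proof is correct. The paper gives no proof of its own and only quotes the result from Bowen's appendix to \cite{Aus16}. Your argument is the standard one: injectivity of $h\mapsto c_{H,K}(h,x)$, then the averaged bound $\int_X|c_{H,K}(h,x)|_{S_K}\,d\mu(x)\le |h|_{S_H}\|F\|_1$ obtained from the cocycle identity and invariance of $\mu$, then Markov's inequality and Fubini to find a good $x$. The same scheme, using monotonicity and subadditivity of $\varphi$ in Step~2 instead of the triangle inequality, yields the more general bound $V_H(n)\preccurlyeq V_K(\varphi^{-1}(n))$ of \cite[Theorem~3.1]{DKLMT22}, which the paper recalls right after this statement.

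One cosmetic correction: the implication $c_{H,K}(h,x)=c_{H,K}(h',x)\Rightarrow hx=h'x$ is just the defining identity of the cocycle, not a consequence of freeness. Freeness of the $K$-action is what makes $c_{H,K}$ well defined, and freeness of the $H$-action then gives $h=h'$.
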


It is therefore natural to wonder whether these rigidity results still hold for more general quantifications as defined above. Given $p\ge 1$, the notion of $\ld^p$ orbit equivalence (more generally $\ld^p$ measure equivalence) has been introduced in~\cite{BFS13}, and more generally $(\varphi,\psi)$-integrable orbit equivalence was first defined in~\cite{DKLMT22} to study the weaker notion of $\ld^p$ orbit equivalence for $p<1$. In this wider setup, Delabie, Koivisto, Le Maître and Tessera refined Bowen's result as follows.

\begin{theorem}[{\cite[Theorem~3.1]{DKLMT22}}]
Let $H$ and $K$ be finitely generated groups. Let $\varphi\colon\R_{+}\rightarrow \R_{+}$ be an increasing and subadditive map. If there is a $(\varphi, \ld^{0})$-integrable orbit equivalence from $H$ to $K$, then 
\begin{equation*}
    V_{H}(n) \preccurlyeq V_{K}(\varphi^{-1}(n))
\end{equation*}
where $\varphi^{-1}$ is the inverse function of $\varphi$.
\end{theorem}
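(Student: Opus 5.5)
The plan is to follow Bowen's argument from the $\ld^1$ case, with subadditivity of $\varphi$ playing the role that linearity plays there. Fix a finite symmetric generating set $S_H$ of $H$. By Remark~\ref{rm:checkongenerators} we may choose a single constant $C>0$ that works for every $s\in S_H$, so that $\int_X \varphi(|c_{H,K}(s,x)|_{S_K}/C)\,\mathrm{d}\mu(x)<\infty$ for each $s\in S_H$. Replacing $\varphi$ by $\varphi(\cdot/C)$, which is asymptotically equivalent and still subadditive and increasing, we may assume $C=1$. Set $M\defeq\max_{s\in S_H}\int_X\varphi(|c_{H,K}(s,x)|_{S_K})\,\mathrm{d}\mu(x)$.

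\textbf{Step 1.} We bound the cocycle on a word. For $h=s_1\cdots s_n$ of length $n$, the cocycle identity gives
\begin{equation*}
c_{H,K}(h,x)=\prod_{i=1}^{n} c_{H,K}(s_i, s_{i+1}\cdots s_n x).
\end{equation*}
Hence $|c_{H,K}(h,x)|_{S_K}\le\sum_i |c_{H,K}(s_i,s_{i+1}\cdots s_nx)|_{S_K}$. Since $\varphi$ is increasing and subadditive, we get $\varphi(|c_{H,K}(h,x)|)\le\sum_i\varphi(|c_{H,K}(s_i,\cdot)|)$ evaluated along the path. Integrating and using that the $H$-action is measure preserving yields $\int_X\varphi(|c_{H,K}(h,x)|_{S_K})\,\mathrm{d}\mu\le nM$ for every $h$ in the ball $B_H(n)$.

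\textbf{Step 2.} We count using Markov and freeness. Average over $h\in B_H(n)$:
\begin{equation*}
\int_X \frac{1}{V_H(n)}\sum_{h\in B_H(n)}\varphi(|c_{H,K}(h,x)|)\,\mathrm{d}\mu\le nM.
\end{equation*}
So there is a set of $x$ of positive measure (in fact we just need one point) for which at least half of the $h\in B_H(n)$ satisfy $\varphi(|c_{H,K}(h,x)|)\le 2nM\cdot 2$, say $\le 4Mn$. This follows from Markov's inequality applied to the counting measure on the ball. For such $x$, and using that the actions are free, the map $h\mapsto c_{H,K}(h,x)$ is injective, since $c(h,x)x=hx$ determines $h$. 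Therefore at least $V_H(n)/2$ distinct elements of $K$ lie in the ball of radius $\varphi^{-1}(4Mn)$. This gives $V_H(n)\le 2V_K(\varphi^{-1}(4Mn))$.

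\textbf{Step 3.} We conclude asymptotically. We have $V_H(n)\le 2V_K(\varphi^{-1}(4Mn))$. The factor $2$ is harmless because of the definition of $\preccurlyeq$: we can use $V_H(n)\le V_H$-doubling-free bounds, or simply note $2V_K(r)\le V_K(Cr)$ fails in general but $V_K(r)\le V_K(r)$, so keep the factor $2$ as the $O(\cdot)$ constant. The factor $4M$ inside is absorbed by the constant in $g(Cx)$. Hence $V_H(n)\preccurlyeq V_K(\varphi^{-1}(n))$.

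The main obstacle I expect is Step 1. The issue is making sure a single uniform constant $C$ can be used for all generators, and that subadditivity survives the rescaling by $C$; the latter holds because $\varphi(\cdot/C)$ is again subadditive. If $\varphi$ is only integrable with respect to different constants for different generators, we take the maximum constant, which works because $\varphi$ is increasing.
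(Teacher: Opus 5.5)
Your argument is correct and is essentially the standard proof of this result in \cite{DKLMT22}; the paper only quotes the theorem. The cocycle identity and subadditivity give $\int_X\varphi(|c_{H,K}(h,x)|_{S_K})\,\mathrm{d}\mu(x)\le M|h|_{S_H}$. Averaging over the ball, Markov's inequality and injectivity of $h\mapsto c_{H,K}(h,x)$ then give $V_H(n)\le 2V_K(\varphi^{-1}(4Mn))$, where $2$ is the $O$-constant and $4M$ is the dilation constant in $\preccurlyeq$.

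The one step to tighten is the reduction to $C=1$. Asymptotic equivalence $\varphi\simeq\varphi(\cdot/C)$ is not enough on its own, because it would leave a factor $C$ outside $\varphi^{-1}$ in the conclusion, and that cannot be moved inside in general. Justify it by subadditivity instead: $\varphi(t)\le\lceil C\rceil\,\varphi(t/C)$, so the integrals with constant $1$ are already finite.
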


This inequality provides explicit upper bounds on how integrable the cocycles of an orbit equivalence coupling can be, thus implying that quantitative orbit equivalence is more complex than the trivial relation of orbit equivalence (among finitely generated amenable groups). 

\smallskip

Going further, Delabie, Koivisto, Le Maître and Tessera also proved in~\cite{DKLMT22} an inequality that involves rather the isoperimetric profile, which is more suitable to distinguish amenable groups of exponential growth for instance.

\begin{theorem}[{\cite[Theorem~1.1]{DKLMT22}}]\label{thm:dklmt}
Let $\varphi\colon\R_{+}\to\R_{+}$ be a non-decreasing function such that $t\mapsto \frac{t}{\varphi(t)}$ is non-decreasing. Let $H$ and $K$ be finitely generated groups. Assume that there exists a $(\varphi,\ld^0)$-integrable orbit equivalence coupling from $H$ to $K$. Then their isoperimetric profiles satisfy the asymptotic inequality
\begin{equation*}
    \varphi\circ\prof{K}(n) \preccurlyeq \prof{H}(n).
\end{equation*}
\end{theorem}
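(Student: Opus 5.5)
The strategy is to go from the integrable coupling to a F\o lner sequence of $H$ whose isoperimetric ratio is controlled by that of F\o lner sets of $K$, transported through the orbit equivalence. Fix finite generating sets $S_H$, $S_K$ and an $n$. Choose a finite set $A\subset K$ with $|A|\le n$ and $|A|/|\partial_K A|\ge \tfrac12\prof{K}(n)$. The plan is to produce, for $\mu$-almost every $x$, a finite set $B_x\subset H$ with $|B_x|\le |A|$ and $|\partial_H B_x|$ small. We use the orbit equivalence: $B_x$ is the set of $h\in H$ with $h\cdot x\in A\cdot x$. Since the orbits coincide and the actions are free, $|B_x|=|A|$.

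We estimate boundaries by averaging. For $s\in S_H$, an element $h\in B_x$ has $hs\notin B_x$ only if $c_{H,K}(s,h\cdot x)$ moves the point $a\cdot x$ (with $a\in A$ the element corresponding to $h$) outside $A\cdot x$. Write $k=c_{H,K}(s,\cdot)$ evaluated at $a\cdot x$, so that $ak\notin A$. Integrating over $x$ and using that the $K$-action preserves $\mu$ gives
\begin{equation*}
\int_X |\partial_H B_x|\,d\mu(x)\le \sum_{s\in S_H}\sum_{a\in A}\mu\bigl(\{y : a\,c_{H,K}(s,y)\notin A\}\bigr).
\end{equation*}
The key lemma is the estimate $|\{a\in A: ak\notin A\}|\le |k|_{S_K}\,|\partial_K A|$, which follows by telescoping along a geodesic word for $k$. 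Since $|\{a : ak\notin A\}|\le |A|$ trivially, we obtain
\begin{equation*}
\int|\partial_H B_x|\,d\mu\le \sum_{s}\int \min\bigl(|A|,\,|c_{H,K}(s,y)|\,|\partial_K A|\bigr)\,d\mu(y).
\end{equation*}
This requires a little care, because the cocycle at $y$ is applied to the right of $a$ rather than to the left. The appropriate formulation lets $K$ act on the right of the Cayley graph, which is consistent with how the boundary was defined. I expect this to be the technical heart of the argument.

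Next we bring in $\varphi$. Set $t=|A|/|\partial_K A|\asymp \prof{K}(n)$. Then $\min(|A|, r|\partial_K A|)=|\partial_K A|\min(t,r)$. The hypothesis that $t/\varphi(t)$ is non-decreasing gives $\min(t,r)\le t\,\varphi(r)/\varphi(t)$ when $r\le t$, while for $r\ge t$ we have $\min(t,r)=t\le t\varphi(r)/\varphi(t)$ because $\varphi$ is non-decreasing. Hence
\begin{equation*}
\int|\partial_H B_x|\,d\mu\le \frac{|A|}{\varphi(t)}\sum_{s}\int\varphi(|c_{H,K}(s,y)|)\,d\mu\le C\frac{|A|}{\varphi(t)},
\end{equation*}
after rescaling by the constants $C_s$ using Remark~\ref{rm:checkongenerators}. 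Since $|B_x|=|A|$ for every $x$, some $x$ satisfies $|B_x|/|\partial_H B_x|\ge \varphi(t)/C$. This yields $\prof{H}(n)\ge c\,\varphi(c'\prof{K}(n))$, which is the desired $\varphi\circ\prof{K}\preccurlyeq\prof{H}$.

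The main obstacle is the bookkeeping of left and right actions, which must make the boundary count exactly match the cocycle displacement measured in $|\cdot|_{S_K}$. I would handle it by defining $B_x$ through the orbit map $h\mapsto h\cdot x$, identifying orbits with $K$ via $k\mapsto k^{-1}\cdot x$ if needed, and checking the measure-preservation step on each piece $\{y : c_{H,K}(s,y)=k\}$ separately.
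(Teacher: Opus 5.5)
Your argument is correct. The paper does not prove this statement itself; it quotes \cite[Theorem~1.1]{DKLMT22}, where the inequality is established for measure equivalence couplings (as noted after Theorem~\ref{thm:threshold}).

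You instead give a self-contained proof in the orbit equivalence case. You transport a near-optimal set $A\subset K$ along orbits and bound the escaping points by $|\{a\in A : ak\notin A\}|\le\min(|A|,|k|_{S_K}|\partial_K A|)$. You then convert the minimum via $\min(t,r)\le t\varphi(r)/\varphi(t)$.

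The orbit equivalence hypothesis is what makes your averaging step immediate: $|B_x|=|A|$ for almost every $x$, so only the boundary has to be averaged. For a general measure equivalence coupling the transported sets have random cardinality, which must be controlled at the same time; that is the extra work the more general cited statement has to handle. In return, your version gives the slightly sharper bound $\varphi\circ\prof{K}(n)=O(\prof{H}(n))$, with no rescaling of $n$.

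Two steps should be written out rather than only flagged.

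\textbf{Left/right bookkeeping.} Set $B_x\defeq\{h\in H : h^{-1}\cdot x\in A^{-1}\cdot x\}$. If $h^{-1}\cdot x=a^{-1}\cdot x$, then $(hs)^{-1}\cdot x=s^{-1}\cdot(a^{-1}\cdot x)=(ak)^{-1}\cdot x$ with $k=c_{H,K}(s^{-1},a^{-1}\cdot x)^{-1}$. Hence $hs\notin B_x$ if and only if $ak\notin A$, and $x\mapsto a^{-1}\cdot x$ preserves $\mu$. In the telescoping, letters $u^{-1}$ with $u\in S_K$ are handled by $|Au^{-1}\setminus A|=|Au\setminus A|$.

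\textbf{Final step.} The paper's $\preccurlyeq$ only allows rescaling the argument $n$, so the constants that end up inside $\varphi$ must be removed explicitly. These are the factor $\frac12$ from the choice of $A$, and $C\ge\max(1,\max_{s\in S_H\cup S_H^{-1}}C_s)$. The latter turns your bound into $\int_X|\partial_H B_x|\,d\mu\le M|A|/\varphi(t/C)$, where $M=\sum_{s\in S_H}\int_X\varphi(|c_{H,K}(s^{-1},y)|_{S_K}/C)\,d\mu(y)<\infty$. To absorb them, use $\varphi(\lambda t)\le\lambda\varphi(t)$ for $\lambda\ge1$, which is again the monotonicity of $t\mapsto t/\varphi(t)$. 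This gives $\varphi(\prof{K}(n))\le 2CM\,\prof{H}(n)$.
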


In most cases, this inequality means that $\varphi$ is asymptotically less that $\prof{H}\circ\prof{K}^{-1}$, and it is proved in~\cite{Corr24} that we cannot have an $(\prof{H}\circ\prof{K}^{-1},\ld^0)$-integrable orbit equivalence coupling from $H$ to $K$.

\begin{theorem}[{\cite[Theorem~B]{Corr24}}]\label{thm:threshold}
Let $H$ and $K$ be finitely generated groups. Assume that there exists a non-decreasing function $f_{H}$ and an increasing function $f_{K}$ satisfying $f_{H}(x) \simeq\prof{H}(x)$, $f_{K}(x)\simeq\prof{K}(x)$ and the following as $x\rightarrow+\infty$:
\begin{equation*}
		 f_{H}(x)=o\left(f_{K}(x)\right),
\end{equation*}
\begin{equation*}
\forall C>0,\; f_{H}(Cx)=O\left(f_{H}(x)\right),
\end{equation*}
\begin{equation*}
\forall C>0,\; f_{H} \circ  f_{K}^{-1}(Cx)=O\left(f_{H}\circ f_{K}^{-1}(x)\right).
\end{equation*}
Then there is no $(f_{H}\circ f_{K}^{-1},\ld^0)$-integrable orbit equivalence coupling from $H$ to $K$.
\end{theorem}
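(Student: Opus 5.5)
The statement is Theorem~\ref{thm:threshold}, from \cite{Corr24}, so the plan is to refine the argument behind Theorem~\ref{thm:dklmt}. I would argue by contradiction and assume there is an $(f_H\circ f_K^{-1},\ld^0)$-integrable orbit equivalence coupling $(X,\mu)$ from $H$ to $K$. Set $\varphi=f_H\circ f_K^{-1}$ and fix a finite generating set $S_H$.

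\textbf{Step 1: push F\o lner sets of $K$ to $H$.} For each $n$, take a F\o lner set $A_n\subset K$ with $|A_n|\le n$ and $|A_n|/|\partial A_n|\simeq f_K(n)$. For $x\in X$, set $B_n(x)\defeq\{c_{K,H}(k,x)^{-1}\cdots\}$, or more concretely the set of $h\in H$ with $h\cdot x\in A_n\cdot x$. This set has the same cardinality as $A_n$. The $H$-boundary of $B_n(x)$ is controlled by the pairs $(a,s)\in A_n\times S_H$ for which $c_{H,K}(s,a\cdot x)\,a\notin A_n$. Averaging over $x$ and using the $K$-invariance of $\mu$ turns $\mathbb{E}|\partial_H B_n(x)|$ into a sum over $s\in S_H$ of
\begin{equation*}
\int_X \frac{|\{a\in A_n : c_{H,K}(s,x)a\notin A_n\}|}{1}\,\mathrm{d}\mu(x).
\end{equation*}
Each term is at most $|\partial A_n|\cdot|c_{H,K}(s,x)|_{S_K}$, and it is also trivially at most $|A_n|$.

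\textbf{Step 2: truncate.} Write $r_n\defeq|A_n|/|\partial A_n|\approx f_K(n)$. The bound becomes
\begin{equation*}
\frac{\mathbb{E}|\partial B_n|}{|A_n|}\lesssim \int_X\min\!\left(\frac{|c(s,x)|}{r_n},1\right)\mathrm{d}\mu.
\end{equation*}
In \cite{DKLMT22}, the hypothesis that $t/\varphi(t)$ is non-decreasing gives $\min(t/r,1)\le \varphi(t)/\varphi(r)$, which yields a ratio $\preccurlyeq 1/\varphi(r_n)$, and hence $f_H(n)\succcurlyeq\varphi(f_K(n))$. To obtain strict failure at the threshold, I would sharpen this using integrability. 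Dominated convergence gives $\int \varphi(|c|)\mathbf 1_{|c|>\varepsilon r_n}\to0$. Splitting at $|c|\le\varepsilon r_n$, where I use $\min(t/r,1)\le t/r$, and at $|c|>\varepsilon r_n$, the integral becomes $o(1/\varphi(r_n))$, up to the constants $C_s$. The doubling-type hypothesis $\varphi(Cx)=O(\varphi(x))$ absorbs $C_s$ and $\varepsilon$.

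One caveat: the small-$|c|$ region only gives $\mathbb{E}|c|/r_n$, which is not $o(1/\varphi(r_n))$ unless the cocycle is $\ld^1$. So I would instead bound $\min(t/r,1)\le \varphi(t)/\varphi(r)$ on the whole space and use uniform integrability on the tail. The bulk $|c|\le M$ contributes $O(1/r_n)=o(1/\varphi(r_n))$, because $\varphi(r)=f_H(f_K^{-1}(r))=o(r)$ since $f_H=o(f_K)$. The tail $|c|>M$ contributes $\le \int_{|c|>M}\varphi(|c|)/\varphi(r_n)$, which is small relative to $1/\varphi(r_n)$ once $M$ is large. For the pointwise inequality $\min(t/r,1)\lesssim\varphi(t)/\varphi(r)$ I need $\varphi$ concave-like, which is not assumed. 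This is where I expect the main obstacle. I would handle it by replacing $\varphi$ with an asymptotically equivalent function for which $t/\varphi(t)$ is monotone, using $f_H=o(f_K)$ and the regularity hypotheses; failing that, I would run the comparison only along the scales $t\in[M,r_n]$.

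\textbf{Step 3: conclude.} Some $x$ realises the expectation bound, so $|\partial B_n(x)|/|B_n(x)|=o(1/\varphi(f_K(n)))=o(1/f_H(n))$. Then $j_{1,H}(n)\ge|B_n|/|\partial B_n|$ grows strictly faster than $f_H(n)\simeq j_{1,H}(n)$. Using $f_H(Cn)=O(f_H(n))$ to discard the $\simeq$ constants, this is a contradiction.
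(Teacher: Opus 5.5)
Your Steps 1 and 3 are fine, modulo the routine left/right convention: the pairs $(a,s)$ you describe control the boundary of $B_n(x)$ under left multiplication by $S_H$, so use $B_n(x)^{-1}$ and inverses of right F\o lner sets of $K$. The bulk estimate $M/r_n=o(1/\varphi(r_n))$ is also fine.

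The gap is exactly the one you flagged, and neither fallback closes it. On $\{M<|c|\le r_n\}$ you need $t/r_n\lesssim\varphi(t)/\varphi(r_n)$, i.e.\ that $t/\varphi(t)$ be quasi-non-decreasing, where $\varphi=f_H\circ f_K^{-1}$. The conditions you propose to use ($\varphi$ non-decreasing, $\varphi(t)=o(t)$, $\varphi(Ct)=O(\varphi(t))$) do not imply this. For instance, $\varphi$ may alternate long flat stretches with stretches $[a_k,ka_k]$ on which it is quadratic. This keeps it doubling (with constant $4$) and $o(t)$, while $t/\varphi(t)$ drops by the factor $k\to\infty$ across each quadratic stretch.

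Passing to an equivalent function cannot repair this. If $\tilde\varphi\simeq\varphi$ is non-decreasing with $t/\tilde\varphi(t)$ non-decreasing, then $\tilde\varphi(2t)\le2\tilde\varphi(t)$. By the doubling of $\varphi$ the two functions then agree up to multiplicative constants, so $t/\varphi(t)$ was already quasi-non-decreasing. Restricting to $t\in[M,r_n]$ does not help either, since that is precisely the range where the inequality fails. At such bad scales $r$, $\varphi$-integrability alone does not give $\int\min(|c|/r,1)\,\mathrm{d}\mu=o(1/\varphi(r))$: an integrable law may charge values $t<r$ with $t/\varphi(t)\gg r/\varphi(r)$.

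The missing idea is that you only need a contradiction along one sequence of scales. Indeed, $\prof{H}\preccurlyeq f_H$ together with $f_H(Cx)=O(f_H(x))$ already rules out $\prof{H}(n_k)/f_H(n_k)\to\infty$, so you may choose the scales.

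Since $\varphi(t)/t\to0$, there are arbitrarily large $\rho$ with $\varphi(\rho)/\rho\le2\inf_{1\le t\le\rho}\varphi(t)/t$. At such $\rho$ one has $\min(t/\rho,1)\le2\varphi(t)/\varphi(\rho)$ for every $t\ge1$. Choose $n$ with $r_n\ge\rho$ and $f_H(n)=O(\varphi(\rho))$; the two doubling hypotheses allow this, exactly as in your Step 3. Then bound $\min(|c|/r_n,1)\le\min(|c|/\rho,1)$. Your bulk/tail split now goes through verbatim and yields $\prof{H}(n)/f_H(n)\to\infty$ along these $n$.

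The route the paper attributes to \cite{Corr24} (see the proof of Corollary~\ref{cor:couplingsbetweeniteratedjugglers}) instead keeps Theorem~\ref{thm:dklmt} as a black box. From $\varphi$-integrability one manufactures a non-decreasing $\psi$ with $t/\psi(t)$ non-decreasing, for which the cocycle is still integrable but $\psi\not\preccurlyeq\varphi$. One way to do this:
\begin{itemize}
\item upgrade to $\int\theta(|c|)\varphi(|c|)\,\mathrm{d}\mu<\infty$ with $\theta$ non-decreasing and $\theta\to\infty$ (de la Vall\'ee-Poussin);
\item take $\psi(t)\defeq t\inf_{1\le s\le t}\theta(s)\varphi(s)/s\le\theta(t)\varphi(t)$, which satisfies $\psi/\varphi\to\infty$ along the same record scales.
\end{itemize}
Then Theorem~\ref{thm:dklmt} gives $\psi\circ\prof{K}\preccurlyeq\prof{H}$, hence $\psi\preccurlyeq\varphi$, a contradiction.
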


Actually, Theorems~\ref{thm:dklmt} and~\ref{thm:threshold} have been proved in~\cite{DKLMT22} and~\cite{Corr24} in the more general setting of measure equivalence.

\smallskip

Combined with explicit constructions in~\cite{DKLMT22} (see Section~\ref{sec:PreliminariesFolnerTiling}, more specifically Theorem~\ref{thm:ExplicitCouplingZd}, of the aforementioned paper), a nice consequence of these statements, among others, is the following.

\begin{corollary}
Let $d_{1}>d_{2}$ be positive integers. Then there exists an $(\ld^p,\ld^{<\frac{d_{1}}{d_{2}}})$ orbit equivalence coupling from $\Z^{d_{1}}$ to $\Z^{d_{2}}$ if and only if $p<\frac{d_{2}}{d_{1}}$.
\end{corollary}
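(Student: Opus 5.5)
The corollary has two directions, and I will treat them separately. The ``if'' direction is a construction, quoted from \cite{DKLMT22}. The ``only if'' direction combines Theorem~\ref{thm:dklmt} with Theorem~\ref{thm:threshold}. Throughout, I use that $\prof{\Z^{d}}(n)\simeq n^{1/d}$, recalled in Section~\ref{sec:isoprof}.

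\emph{Existence.} Fix $p<\frac{d_2}{d_1}$. I would invoke the explicit F\o lner tiling construction of \cite{DKLMT22} (Theorem~\ref{thm:ExplicitCouplingZd}). That construction tiles $\Z^{d_1}$ and $\Z^{d_2}$ by boxes of equal cardinality at each scale, and yields an orbit equivalence coupling from $\Z^{d_1}$ to $\Z^{d_2}$ which is $\ld^{q}$ for every $q<\frac{d_2}{d_1}$ in the first cocycle and $\ld^{q'}$ for every $q'<\frac{d_1}{d_2}$ in the second.

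The mechanism behind the exponents is as follows. Suppose the tiles at level $k$ have cardinality $N_k$, growing geometrically with a ratio as close to $1$ as needed. Then their diameters are of order $N_k^{1/d_1}$ in $\Z^{d_1}$ and $N_k^{1/d_2}$ in $\Z^{d_2}$. A point lies on the boundary of a level-$k$ tile with probability about $N_k^{-1/d_1}$. On that event the $\Z^{d_1}\to\Z^{d_2}$ cocycle of a generator is at most about $N_k^{1/d_2}$. Hence $\ld^{q}$-integrability reduces to the convergence of $\sum_k N_k^{q/d_2-1/d_1}$, which holds exactly when $q<\frac{d_2}{d_1}$. The symmetric computation gives $q'<\frac{d_1}{d_2}$ for the other cocycle.

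This is precisely the criterion of Theorem~\ref{thm:TilingSufficientConditionQuantitative}, so the existence part is a direct citation. The only verification needed is that the tiles are genuinely boxes of comparable side lengths, so that $\mathrm{diam}\simeq|\text{tile}|^{1/d}$.

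\emph{Non-existence at and above the threshold.} Suppose there is an $(\ld^p,\ld^0)$-integrable coupling from $\Z^{d_1}$ to $\Z^{d_2}$.

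\begin{itemize}
\item If $p\le1$, the function $\varphi(t)=t^p$ satisfies the hypotheses of Theorem~\ref{thm:dklmt}: it is non-decreasing and $t\mapsto t^{1-p}$ is non-decreasing. The theorem gives $n^{p/d_2}\preccurlyeq n^{1/d_1}$, hence $p\le\frac{d_2}{d_1}$.
\item If $p>1$, the coupling is also $\ld^{1}$, because we are on a probability space. Since $d_2<d_1$, this already contradicts the case $p\le 1$.
\end{itemize}

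It remains to exclude $p=\frac{d_2}{d_1}$ itself, and this is the one delicate step. I would apply Theorem~\ref{thm:threshold} with $H=\Z^{d_1}$, $K=\Z^{d_2}$, $f_H(x)=x^{1/d_1}$ and $f_K(x)=x^{1/d_2}$, which is increasing. The hypotheses to check are the following.
\begin{itemize}
\item $f_H=o(f_K)$ holds since $d_1>d_2$.
\item Both $f_H$ and $f_H\circ f_K^{-1}(x)=x^{d_2/d_1}$ are power functions, so they are doubling-type: $f(Cx)=O(f(x))$ for every $C>0$.
\end{itemize}
Theorem~\ref{thm:threshold} then excludes an $(x^{d_2/d_1},\ld^0)$-integrable coupling, that is, an $\ld^{d_2/d_1}$ one. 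Therefore $p<\frac{d_2}{d_1}$, which proves the converse.
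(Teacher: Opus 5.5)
Your proof is correct and follows the same route as the paper. Existence is read off from the $(\varphi_\varepsilon,\psi_\varepsilon)$-integrable coupling of Theorem~\ref{thm:ExplicitCouplingZd}, and the converse combines Theorem~\ref{thm:dklmt} with Theorem~\ref{thm:threshold}; your reduction of $p>1$ to $p=1$ (needed because Theorem~\ref{thm:dklmt} requires $t\mapsto t/\varphi(t)$ to be non-decreasing) is a detail the paper leaves implicit, and one small inaccuracy in your heuristic is harmless: $|F_{n+1}|/|F_n|=|\Sigma_n|$ is an integer, so the ratio cannot be taken arbitrarily close to $1$, but Theorem~\ref{thm:TilingSufficientConditionQuantitative} pairs $R'_{n+1}$ with $\varepsilon_n$ and a bounded ratio suffices.
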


This has been widely extended to the class of groups of polynomial growth by Delabie, Llosa-Isenrich and Tessera~\cite[Theorem~1.6]{DLIT25}.

\smallskip

In our computations below, we will require the following result of composition of actions, which enables us to have some kind of transitivity of quantitative orbit equivalence among finitely generated amenable groups. This is a reformulation of~\cite[Propositions~2.26,~2.29 and~2.30]{DKLMT22} in terms of orbit equivalence.

\begin{theorem}\label{thm:CompositionCouplings}
Let $G_{1}, G_{2}, G_{3}$ be three finitely generated groups and $\varphi,\varphi',\psi,\psi'\colon\R_{+}\to\R_{+}$ be non-decreasing maps. Let us assume that there exists a $(\varphi,\psi)$-integrable orbit equivalence coupling from $G_{1}$ to $G_{2}$, and a $(\varphi',\psi')$-integrable orbit equivalence coupling from $G_{2}$ to $G_{3}$. Let us also assume that the pairs $(\varphi,\varphi')$ and $(\psi',\psi)$ lie in the set of pairs of functions $(f,g)$ satisfying one of the following two conditions:
\begin{enumerate}[label=(\roman*)]
    \item\label{item:CompositionHyp1} either $f$ and $g$ are subadditive and $g$ is concave;
    \item\label{item:CompositionHyp2} or $f(x)\simeq g(x)\simeq x^p$ for some $p\ge 1$.
\end{enumerate}
Then there exists a $(\varphi'',\psi'')$-integrable orbit equivalence coupling from $G_{1}$ to $G_{3}$, where
\begin{equation*}
\varphi''(x)=\left\{\begin{array}{ll}
        \varphi\circ\varphi'(x) & \text{if }(\varphi,\varphi')\text{ satisfies Assumption~\textit{\ref{item:CompositionHyp1}}}\\
        x^p & \text{if }(\varphi,\varphi')\text{ satisfies Assumption~\textit{\ref{item:CompositionHyp2}} for some }p\ge 1
\end{array}\right.
\end{equation*}
    and
\begin{equation*}
\psi''(x)=\left\{\begin{array}{ll}
        \psi'\circ\psi(x) & \text{if }(\psi',\psi)\text{ satisfies Assumption~\ref{item:CompositionHyp1}}\\
        x^p & \text{if }(\psi',\psi)\text{ satisfies Assumption~\ref{item:CompositionHyp2} for some }p\ge 1
\end{array}\right..  
\end{equation*}
\end{theorem}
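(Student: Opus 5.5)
The statement to prove is Theorem~\ref{thm:CompositionCouplings}, which is a reformulation of~\cite[Propositions~2.26, 2.29 and~2.30]{DKLMT22} in the language of orbit equivalence. The plan is to pass through measure equivalence: compose the two couplings there, and return to orbit equivalence using the equality of fundamental domains.

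\textbf{Step 1: translate to measure equivalence.} A $(\varphi,\psi)$-integrable orbit equivalence coupling from $G_1$ to $G_2$ gives a $(\varphi,\psi)$-integrable measure equivalence coupling $(\Omega_{12},m_{12})$ with fundamental domains $X_{G_1}, X_{G_2}$ of equal measure. Concretely, take $\Omega_{12}=X\times G_2$, or equivalently the induced coupling on $G_1\times G_2\times X$ modulo the diagonal. In the same way, the second orbit equivalence coupling gives $(\Omega_{23},m_{23})$ with equal fundamental domains.

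\textbf{Step 2: compose.} Form the composed coupling $\Omega=\Omega_{12}\times_{G_2}\Omega_{23}$, the quotient of the fibered product by the diagonal $G_2$-action, as in~\cite{DKLMT22}. Its fundamental domains for $G_1$ and $G_3$ can be taken to be $X_{G_1}\times Y_{G_3}$-type products over a common $G_2$-fundamental domain. Their measures are then $\mu(X_{G_1})\mu(Y_{G_2})$ and $\mu(X_{G_2})\mu(Y_{G_3})$, which are equal by the two equalities of fundamental domains. By the remark following Definition~\ref{def:quantitativeCocycle}, a measure equivalence coupling with equal fundamental domains is an orbit equivalence coupling.

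\textbf{Step 3: control the cocycles.} The composed cocycle satisfies
\[
c_{G_1,G_3}(g,x)=c_{G_2,G_3}\bigl(c_{G_1,G_2}(g,x),x'\bigr).
\]
Its length is bounded by a sum of $|c_{G_1,G_2}(g,x)|$ terms, each of the form $|c_{G_2,G_3}(s,\cdot)|$ for $s$ a generator, evaluated along a path.

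\emph{Case (i).} Assume $f,g$ are subadditive and $g$ concave. Subadditivity of $\varphi'$ pulls the sum out, giving $\varphi'(\text{sum})\le\sum\varphi'(\ldots)$. Conditioning on $c_{G_1,G_2}(g,x)=h$ and using measure preservation, each term integrates to at most $|h|$ times a constant. Applying $\varphi$ together with Jensen (this is where concavity enters) yields the integrability of $\varphi\circ\varphi'$.

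\emph{Case (ii).} For $x^p$ with $p\ge1$, use the $\ell^p$ triangle inequality, i.e.\ Minkowski, on the path sum instead.

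\textbf{Main obstacle.} The delicate step is Step~3 in case~(i). One must justify the conditioning, which replaces the random path length by its distribution, and verify that the constants $C_h$ behave correctly. I would follow~\cite[Prop.~2.26]{DKLMT22} for this. The $\psi$-side is handled symmetrically, with the roles of the pairs reversed.
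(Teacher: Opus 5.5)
Your plan is correct and is essentially the paper's route: the paper gives no independent proof. It transports the composition of measure equivalence couplings from \cite[Propositions~2.26, 2.29 and~2.30]{DKLMT22} to orbit equivalence through the correspondence with couplings having equal fundamental domains, which is what your Steps~1--3 do. Two small tidy-ups: since each original coupling has a common fundamental domain, the composed $G_1$- and $G_3$-fundamental domains are literally the same product $X\times Y$ with product measure, which yields an orbit equivalence coupling directly and turns your ``conditioning'' into a plain Fubini argument (here $c_{G_1,G_3}(g,(x,y))=c_{G_2,G_3}(c_{G_1,G_2}(g,x),y)$ with $x,y$ independent); and your Jensen step uses concavity of the outer function ($\varphi$, resp.\ $\psi'$) whereas hypothesis~(i) places it on the inner one, which is harmless because a non-decreasing subadditive $f$ satisfies $f\le\hat f\le 2f$ for its least concave majorant $\hat f$.
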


Finally, let us introduce orbit equivalence for pairs of groups, introduced and studied in a forthcoming paper~\cite{CDT26}.

\begin{definition}[{\cite{CDT26}}]
Let $H$ and $K$ be countable groups and let $M\lhd H$, $N\lhd K$ be normal subgroups. We say that the pairs $(H,M)$ and $(K,N)$ are~\textit{orbit equivalent} if there exists free pmp $H$- and $K$-actions on a standard and atomless probability space $(X,\mu)$, satisfying $H\cdot x=K\cdot x$ and $M\cdot x=N\cdot x$ for almost every $x\in X$. The space $(X,\mu)$ is called an~\textit{orbit equivalence coupling} between the pairs $(H,M)$ and $(K,N)$.
\end{definition}

Throughout the paper, we only consider~\textit{normal} pairs, meaning that $M$ and $N$ are normal subgroups of $H$ and $K$ respectively. A coupling $(X,\mu)$ between two normal pairs $(H,N)$ and $(K,M)$ naturally provides two maps 
\begin{equation*}
    \kappa_{H/M, K/N}\colon H/M \times X \to K/N, \; \kappa_{K/N, H/M}\colon K/N\times X\to H/M.
\end{equation*}
called the~\textit{pseudo-cocycles} of the coupling, defined as
\begin{equation*}
    \kappa_{H/M, K/N}([h]_{H/M},x)\defeq [c_{H,K}(h,x)]_{K/N}\text{ and }\kappa_{K/N,H/M}([k]_{K/N},x)\defeq [c_{K,H}(k,x)]_{H/M}
\end{equation*}
where $[h]_{H/M}$ denotes the class of $h\in H$ modulo $M$.

\smallskip

As cocycles provide random bijections $c_{H,K}(\cdot,x)\colon H\to K$ and $c_{K,H}(\cdot,x)\colon K\to H$ between the ambient groups, orbit equivalence of pairs additionally provide random bijections between the subgroups of the pairs: $c_{M,N}(\cdot,x)\colon M\to N$ and $c_{N,M}(\cdot,x)\colon N\to M$; as well as random bijections between the quotient groups, using the pseudo-cocycles: $\kappa_{H/M,K/N}(\cdot,x)\colon H/M\to K/N$ and $\kappa_{K/N,H/M}(\cdot,x)\colon K/N\to H/M$. Note that all these couples of bijections in fact yield bijections that are inverses of each other. Furthermore, the pseudo-cocycles satisfy the so-called~\textit{pseudo-cocycle identity}, analogous to the cocycle identity, namely:
$$\kappa_{H/M,K/N}([h'h]_{H/M},x)=\kappa_{H/M,K/N}([h']_{H/M},h x)\kappa_{H/M,K/N}([h]_{H/M},x)$$
and
$$\kappa_{K/N,H/M}([k'k]_{K/N},x)=\kappa_{K/N,H/M}([k']_{K/N},k x)\kappa_{K/N,H/M}([k]_{K/N},x)$$
for almost every $x\in X$ and for every $h,h'\in H$, $k,k'\in K$. 

\smallskip

Among other things, in~\cite{CDT26} are provided explicit examples of orbit equivalence couplings between pairs. In the light of our stability results below, we will use the following.

\begin{proposition}[{\cite{CDT26}}]\label{prop:OEofpairsfreeabeliangroups}
Let $d_{1},d_{2},k_{1},k_{2}$ be positive integers satisfying $0<k_{1}<d_{1}$ and $0<k_{2}<d_{2}$. Let $N_{1}$ and $N_{2}$ be normal subgroups of $\Z^{d_1}$ and $\Z^{d_2}$, with rank $k_{1}$ and $k_{2}$ respectively. Then, denoting $P=\min{\left (\frac{k_{2}}{k_{1}},\frac{d_{2}-k_{2}}{d_{1}-k_{1}}\right)}$ and $Q=\min{\left (\frac{k_{1}}{k_{2}},\frac{d_{1}-k_{1}}{d_{2}-k_{2}}\right)}$, there exists an $(\ld^{<P},\ld^{<Q})$-integrable orbit equivalence coupling from $(\Z^{d_{1}},N_{1})$ to $(\Z^{d_{2}},N_{2})$.
\end{proposition}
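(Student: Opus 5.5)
The plan is to build the coupling of pairs as a \emph{skew product} of two couplings: one between the subgroups $N_{1}\simeq\Z^{k_{1}}$ and $N_{2}\simeq\Z^{k_{2}}$, and one between the quotients $Q_{1}\defeq\Z^{d_{1}}/N_{1}$ and $Q_{2}\defeq\Z^{d_{2}}/N_{2}$. The exponents then come from the two ratios $\frac{k_{2}}{k_{1}}$ and $\frac{d_{2}-k_{2}}{d_{1}-k_{1}}$, and since the norm of a pair is controlled by the sum of the norms, the integrability of the product is governed by the worse of the two.

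\textbf{Normalisation.} By Smith normal form, we choose a basis of $\Z^{d_{i}}$ in which $N_{i}=a_{1}\Z\oplus\cdots\oplus a_{k_{i}}\Z$ sits in the last $k_{i}$ coordinates. Changing basis is a bi-Lipschitz automorphism, so it does not affect integrability. Then $Q_{i}\simeq\Z^{d_{i}-k_{i}}\times F_{i}$ with $F_{i}$ finite, so $Q_{i}$ is virtually $\Z^{d_{i}-k_{i}}$. We fix a set-theoretic section $s_{i}\colon Q_{i}\to\Z^{d_{i}}$ taking values in a bounded neighbourhood of the complementary coordinates. We write $\sigma_{i}(q,q')\defeq s_{i}(q)s_{i}(q')s_{i}(qq')^{-1}\in N_{i}$ for the associated $2$-cocycle; it is bounded, say $|\sigma_{i}(q,q')|\le C\,(1+|q'|)$. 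Since $N_{i}$ is a bounded multiple of a coordinate lattice, it is undistorted, and
\[
|s_{i}(q)n|\asymp |q|_{Q_{i}}+|n|_{N_{i}}
\]
for $q\in Q_{i}$, $n\in N_{i}$.

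\textbf{Construction.} Take a free pmp orbit equivalence coupling $(Y,\nu)$ between $N_{1}\simeq\Z^{k_{1}}$ and $N_{2}\simeq\Z^{k_{2}}$ that is $(\ld^{<k_{2}/k_{1}},\ld^{<k_{1}/k_{2}})$-integrable, given by the corollary following Theorem~\ref{thm:threshold}. When $k_{1}=k_{2}$ we instead take an $\ld^{\infty}$ coupling, using Theorem~\ref{thm:ShalomBilip intro}. Take likewise a coupling $(Z,\eta)$ between $Q_{1}$ and $Q_{2}$ that is $(\ld^{<(d_{2}-k_{2})/(d_{1}-k_{1})},\ld^{<(d_{1}-k_{1})/(d_{2}-k_{2})})$-integrable. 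This exists for virtually free abelian groups by~\cite{DLIT25}; alternatively, one couples $\Z^{d_{1}-k_{1}}$ with $\Z^{d_{2}-k_{2}}$ and induces to the finite extensions. On $X=Y\times Z$, let $\Z^{d_{1}}$ act by
\[
s_{1}(q)n\cdot(y,z)\defeq\bigl(\sigma_{1}(q,\cdot)\,n\cdot y,\;q\cdot z\bigr),
\]
where the precise twist $\sigma_{1}(q,\cdot)$ is the one dictated by the group law, so that this is a genuine action. Define the $\Z^{d_{2}}$-action analogously. Because $\Z^{d_{i}}$ is abelian, $N_{i}$ acts by $n\cdot(y,z)=(n\cdot y,z)$, so $N_{1}\cdot(y,z)=N_{1}y\times\{z\}=N_{2}y\times\{z\}=N_{2}\cdot(y,z)$. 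The full orbits are unions of such fibres over $Q_{1}z=Q_{2}z$, hence they coincide. Freeness and measure preservation are inherited from the factors.

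\textbf{Integrability, and the main obstacle.} The cocycle of a generator $s_{1}(q)n$ at $(y,z)$ has the form $s_{2}(q')\,n'$. Here $q'=\kappa(q,z)$ is the $Q$-cocycle, and $n'$ is the $N$-cocycle $c_{N_{1},N_{2}}$ evaluated at $n$ times a cocycle correction built from $\sigma_{1}$ and $\sigma_{2}$ evaluated along the orbit. Its norm is bounded by $|q'|+|c_{N}(n,y)|+C(1+|q'|)$, so an $\ld^{p}$ bound for $p<\min(\frac{k_{2}}{k_{1}},\frac{d_{2}-k_{2}}{d_{1}-k_{1}})=P$ follows, using $(a+b)^{p}\le 2^{p}(a^{p}+b^{p})$. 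The step I expect to be the main obstacle is precisely the correction term: I must check that translating the $N$-part through the $Q$-cocycle does not evaluate $c_{N}$ at an unboundedly large element of $N_{1}$. The danger is that the $2$-cocycle $\sigma_{i}(q,q')$ is only linearly bounded in $q'$. I would first choose the sections $s_{i}$ to be homomorphisms on the $\Z^{d_{i}-k_{i}}$ part, so that $\sigma_{i}$ only takes finitely many values coming from the finite groups $F_{i}$. Failing that, I would pass to finite-index subgroups: replace $\Z^{d_{i}}$ by $\Z^{d_{i}-k_{i}}\times N_{i}$, where $N_{i}$ is a genuine direct factor, build the plain product coupling there, and then induce to $\Z^{d_{i}}$ using that the indices can be matched, after possibly passing to further finite-index subgroups, without changing integrability.
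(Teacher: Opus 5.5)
The paper does not prove this proposition; it is quoted from the forthcoming \cite{CDT26}. There is therefore no in-text argument to compare with, and your proposal is assessed on its own.

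\textbf{The gap in your main construction.} Your skew product breaks down exactly where you say the twist is ``the one dictated by the group law''. Your $\sigma_{1}$ is defined on $Q_{1}\times Q_{1}$, but the twist must be a measurable function of the point $z\in Z$. What you actually need is a cocycle $\alpha_{1}\colon\Z^{d_{1}}\times Z\to N_{1}$ over the action of $\Z^{d_{1}}$ on $Z$ through $Q_{1}$, normalised by $\alpha_{1}(n,z)=n$ for $n\in N_{1}$, and similarly an $\alpha_{2}$. It is this normalisation, not commutativity, that makes $N_{1}$ act by $(y,z)\mapsto(n\cdot y,z)$.

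Building $\alpha_{1}$ amounts to trivialising the extension cocycle $\sigma_{1}$ along the $Q_{1}$-orbits in $Z$. As soon as $Q_{1}$ has torsion, the extension $N_{1}\to\Z^{d_{1}}\to Q_{1}$ does not split; this already happens for $N_{1}=\{0\}\times 2\Z\le\Z^{2}$. So no choice of section removes the twist, including one that is a homomorphism on the free part; that choice only makes $\sigma_{1}$ finitely valued.

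You would have to construct $\alpha_{1}$ by hand, for instance from a measurable fundamental domain for $F_{1}$ acting on $Z$. You would then need to make it compatible with the $\Z^{d_{1}-k_{1}}$-action. Finally you would have to show $|\alpha_{2}(s_{2}(q'),z)|=O(|q'|)$, so that your bound $|n'|\lesssim|q'|+|c_{N}(\cdot,y)|$ actually holds. None of this is in the proposal. For a coupling $Z$ taken off the shelf from \cite{DLIT25}, you have no control over how $F_{i}$-fundamental domains interact with the free part.

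\textbf{Your fallback is a complete proof and should be the argument.}
\begin{enumerate}
\item In Smith coordinates, $N_{i}=\{0\}\times A_{i}$ with $[\Z^{k_{i}}:A_{i}]=|F_{i}|$. Since $d_{i}-k_{i}\ge 1$, set $G_{i}\defeq m_{i}\Z\times\Z^{d_{i}-k_{i}-1}\times A_{i}$ with $m_{1}=|F_{2}|$ and $m_{2}=|F_{1}|$. Both $G_{i}$ then have index $m=|F_{1}||F_{2}|$ in $\Z^{d_{i}}$, and $G_{i}=\Lambda_{i}\times N_{i}$ with $\Lambda_{i}\simeq\Z^{d_{i}-k_{i}}$.
\item Take the product of a coupling of $N_{1},N_{2}$ with a coupling of $\Lambda_{1},\Lambda_{2}$. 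Use Theorem~\ref{thm:ExplicitCouplingZd} or its reverse, or the identity coupling when the ranks agree. This is a coupling of the pairs $(G_{1},N_{1})$ and $(G_{2},N_{2})$ with cocycle $(c_{\Lambda},c_{N})$, hence $(\ld^{<P},\ld^{<Q})$-integrable.
\item Induce to $X\times\{1,\dots,m\}$, matching coset representatives $r_{j}$ of $G_{1}$ and $r'_{j}$ of $G_{2}$ by index. The $\Z^{d_{i}}$-orbit of $(x,j)$ is $G_{i}x\times\{1,\dots,m\}$. Because $N_{i}$ is normal (here central), the $N_{i}$-orbit is $N_{i}x\times\{j\}$, so the pair condition survives. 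Freeness and measure preservation also pass to the induced actions.
\item The induced cocycle of $g$ at $(x,i)$ is $r'_{j}+c(g+r_{i}-r_{j},x)-r'_{i}$. For fixed $g$ this involves finitely many elements of $G_{1}$ and a bounded correction. Since $G_{2}$ is undistorted in $\Z^{d_{2}}$, integrability is preserved.
\end{enumerate}
This route also avoids \cite{DLIT25} and any coupling between virtually abelian quotients.
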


The integrability conditions in this statement are sharp.

\subsection{Halo products}\label{sec:halo}

\subsubsection{Definition and main examples}\label{sec:defHaloExample}

Let us define halo products and give the main examples we are interested in.

\begin{definition}
Let $X$ be a set. A~\textit{halo of groups $\halo$ over $X$} is the data, for any subset $S\subset X$, of a group $L(S)$ such that:
\begin{itemize}
    \item for all $R,S\subset X$, if $R\subset S$ then $L(R)\leqslant L(S)$;
    \item $L(\emptyset)=\lbrace 1\rbrace$ and $L(X)=\langle L(S) : S\subset X \;\text{finite}\rangle$;
    \item for all $R,S\subset X$, $L(R\cap S)=L(R)\cap L(S)$.
\end{itemize}
\end{definition}

Given an action $H\curvearrowright X$ and a morphism $\alpha\colon H \to \text{Aut}(L(X))$ satisfying $\alpha(h)(L(S))=L(hS)$ for any $S\subset X$ and $h\in H$, the~\textit{permutational halo product} $\halo_{X,\alpha}X$ is the semi-direct product 
\begin{equation*}
    \halo_{X,\alpha}H \defeq L(X)\rtimes_{\alpha}H.
\end{equation*}

The definition is motivated by permutational wreath products, which are basic examples of permutational halo products. Indeed, given groups $F,H$ and an action $H\subset X$, set $L(S)\defeq \bigoplus_{S}F$ for any $S\subset X$. Then $\halo_{X,\alpha}H$ coincides with $F\wr_{X}H$, where $\alpha$ is the action of $H$ on $H\curvearrowright\bigoplus_{X}F$ obtained by permuting the coordinates through $H\curvearrowright X$. In particular, for $X=H$ and the left-multiplication action of $H$ on itself, we recover a description of the wreath product $F\wr H$ as a halo product.

\smallskip

In the case of~\textit{standard} halo products, namely $X\defeq H$ and $H$ acts on itself by left-multiplication, we introduce some properties, introduced in~\cite{cordum25} but also used in~\cite{GT24a}, and satisfied by several examples.

\begin{definition}
    Let $H$ be a group and $\halo H$ be a standard halo product over $H$.
    \begin{itemize}
        \item If $H$ is finitely generated, with a finite generating set $S_H$, we say that $\halo H$ is~\textit{naturally generated} if it is generated by
        \begin{equation*}
            \left\{(1_{L(H)},s) : s\in S_H\right\}\cup\bigcup_{s\in S_H}{\left\{(\sigma,1_H) : L(\{1_H,s\})\right\}}.
        \end{equation*}
        \item We say that $\halo H$ has~\textit{consistent blocks} if for any finite subset $S\subset H$, the group $L(S)$ is finite and its cardinality only depends on the cardinality of $S$. In this case, we can define the~\textit{lamp growth function} $\La_{\halo H}\colon\N\to\N$ as
        \begin{equation*}
            \La_{\halo H}(n)\defeq |L(S)|
        \end{equation*}
        for every positive integer $n$ and any subset $S\subset H$ of cardinality $n$.
    \end{itemize}
\end{definition}

For instance, if $F$ is finite and $H$ is finitely generated, then the lamplighter $F\wr H$ is naturally generated, have consistent blocks and the lamp growth function is given by
\begin{equation*}
    \La_{F\wr H}(n)=|F|^n.
\end{equation*}

\paragraph{Lampshufflers.} Let $H$ be a group, and let $\fsym{H}$ the group of finitely supported permutations of $H$, that is the group of bijections $H\to H$ that are the identity outside a finite subset of $H$. The group $H$ acts naturally on $\fsym{H}$, via 
\begin{equation*}
    (h\cdot\sigma)(x) \defeq h\sigma(h^{-1}x), \; x\in H
\end{equation*}
for any $h\in H$ and $\sigma\in\fsym{H}$. Indeed, if $\sigma\colon H\to H$ is a finitely supported bijection and $h\in H$, then so is $h\cdot \sigma$ and $\supp{(h\cdot\sigma)}=h\cdot \supp{(\sigma)}$.

\smallskip

The~\textit{lampshuffler group over $H$}, denoted $\shuf{H}$, is then defined as the semidirect product 
\begin{equation*}
    \shuf{H}\defeq \fsym{H} \rtimes H.
\end{equation*}

It coincides with the halo product $\halo H$ where $L(S)\defeq \fsym{S}$ for any $S\subset H$. Additionally, if $H$ is finitely generated and $S_{H}$ denotes a finite generating set, then $\shuf{H}$ is generated by the finite set 
\begin{equation*}
S_{\shuf{H}}\defeq\left\lbrace(\tau_{1_H,s},1_H) : s\in S_{H}\cup S_{H}^{-1}\right \rbrace\cup\left\lbrace(\text{id},s) : s\in S_{H}\right\rbrace
\end{equation*}
where, given any $x,y\in H$, $\tau_{x,y}\in\fsym{H}$ is the transposition that swaps $x$ and $y$, that is $\tau_{x,y}(x)=y$, $\tau_{x,y}(y)=x$ and $\tau_{x,y}(h)=h$ for any $h\neq x,y$. 

An element $(\sigma,h)\in\shuf{H}$ can be seen as a labelling of the vertices of the Cayley graph $\text{Cay}(H,S_{H})$ (a vertex $p\in H$ carries the label $\sigma(p)$) together with an arrow pointing at some vertex $h\in H$, and there are two types of moves in $\text{Cay}(\shuf{H},S_{\shuf{H}})$ to go from $(\sigma, h)$ to a neighbouring vertex:
\begin{itemize}
    \item either the arrow goes from $h$ to a neighbouring vertex in $H$;
    \item or the arrow stands on the vertex $h\in H$, and swaps its label with the label of one of its neighbours in $H$.
\end{itemize}

Lampshufflers are naturally generated, have consistent blocks, and their lamp growth function is given by
\begin{equation*}
    \La_{\shuf{H}}(n)=n!.
\end{equation*}

\smallskip

Given a normal subgroup $M$, we analogously define the permutational lampshuffler $\shuf{H,M}\defeq \fsym{H/M}\rtimes H$, using the action $H\curvearrowright H/M$ by left translation. A (finite) generating set of this group is derived from a (finite) generating set of $H$ in a similar way.

\paragraph{Lampjugglers.} Lampshufflers are in fact particular instances of a broader family of groups, called~\textit{lampjugglers}. Given a group $H$ and an integer $r\ge 1$, the~\textit{lampjuggler over $H$} is the semi-direct product
\begin{equation*}
    \juggler{r}{H} \defeq \fsym{H\times\lbrace 1,\dots,r\rbrace} \rtimes H
\end{equation*}
where $H$ acts on $\fsym{H\times\lbrace 1,\dots,r\rbrace}$ through its initial action on $H\times\lbrace 1,\dots,r\rbrace$ given by $h\cdot (x,i) \defeq (hx, i)$. It can be described as the halo $\halo H$ where $L(S)\defeq \fsym{S\times\lbrace 1,\dots, r\rbrace}$, $S\subset H$. As for lampshufflers, lampjugglers are finitely generated as soon as so is $H$, and one can check that if $S_{H}$ is a finite generating set for $H$, then the finite set
\begin{equation*}
    \lbrace \tau_{(1_{H},i),(s,j)} : s\in S_{H}\cup S_{H}^{-1}, 1\le i,j\le r\rbrace  \cup \lbrace (\text{id},s) : s\in S_{H}\rbrace 
\end{equation*}
generates $\juggler{r}{H}$. Here, an element $(\sigma, h)\in\juggler{r}{H}$ can be seen as a labelling of the vertices of $\text{Cay}(H,S_{H})\times\lbrace 1,\dots,r\rbrace$ together with an arrow pointing at some vertex $h\in H$. Right-multiplying $(\sigma, h)$ by a generator for the above set amounts either to move the arrow from $h$ to a neighbouring vertex $hs$ in $H$, or to keep the arrow on $h\in H$ and switching the labels of two vertices in $h \times\lbrace 1, \dots, r\rbrace$ and $hs \times \lbrace1, \dots, r\rbrace$ for some neighbour $hs$ of $h$.

\smallskip

As for lampshufflers, lampshufflers are naturally generated, have consistent blocks, and their lamp growth function is given by
\begin{equation*}
    \La_{\juggler{r}{H}}(n)=(rn)!.
\end{equation*}

\smallskip

Given a normal subgroup $M$, we analogously define the permutational lampjuggler $\juggler{r}{H,M}\defeq \fsym{(H/M)\times\{1,\dots,r\}}\rtimes H$, using the action $H\curvearrowright H/M$ by left translation. A (finite) generating set of this group is derived from a (finite) generating set of $H$ in a similar way.

\paragraph{Lampcloners.} Let $H$ be a group and let $\field$ be a field. Denote $V_{H}$ the $\field$-vector space admitting $H$ as a basis, and denote $\lbrace e_{u} : u\in H\rbrace$ a formal basis. Let $\text{FGL}(H)$ be the group of linear automorphisms $V_{H}\to V_{H}$ that fix all but finitely many elements of $H$. This group can also be seen as the group of finitely supported invertibles matrices with coefficients in $k$ whose entries are indexed by $H\times H$. Once again, the action of $H$ on itself naturally yields an action of $H$ on $\text{FGL}(H)$. The~\textit{lampcloner over $H$} is the semi-direct product 
\begin{equation*}
    \cloner{H} \defeq \text{FGL}(H)\rtimes H.
\end{equation*}
It is a halo product, for the collection $L(S)\defeq \text{FGL}(S)$, for every $S\subset H$, where $\text{FGL}(S)$ is thought of as the subgroup of $\text{FGL}(H)$ of linear automorphisms $V_{H}\to V_{H}$ that fix $H\setminus S$ and that stabilise the subspace $\langle S\rangle\subset V_{H}$. 

In addition, if $\field$ is finite and if $H=\langle S_{H}\rangle$ is finitely generated, then the finite set
\begin{equation*}
    S_{\cloner{H}}\defeq\big\lbrace (\delta_{1_{H}}(\lambda), 1_{H}) : \lambda \in \field\setminus\lbrace 0\rbrace\big\rbrace\cup\big\lbrace (\tau_{1_{H},s}(\lambda),1_{H}) : s\in S_{H}\cup S_{H}^{-1}\big\rbrace \cup\lbrace (\text{id}, s) : s\in S_{H}\rbrace
\end{equation*}
generates $\cloner{H}$, where, given $p,q\in H$ and $\lambda\in \field\setminus\lbrace 0\rbrace$, $\delta_{p}(\lambda)$ is the~\textit{diagonal matrice} 
\begin{align*}
    \delta_{p}(\lambda)\colon
    \begin{array}{cll}
    V_{H}&\to & V_{H}\\
    \displaystyle \sum_{h\in H}\mu_{h}e_{h} &\mapsto &\displaystyle \sum_{h\neq p}\mu_{h}e_{h}+\lambda\mu_{p}e_{p}
    \end{array}
\end{align*}
and $\tau_{pq}(\lambda)$ is the \textit{transvection} 
\begin{align*}
    \tau_{pq}(\lambda)\colon
    \begin{array}{cll}
    V_{H}&\to &V_{H} \\
    \displaystyle \sum_{h\in H}\mu_{h}e_{h} &\mapsto &\displaystyle \sum_{h\neq p}\mu_{h}e_{h}+(\mu_{p}+\lambda\mu_{q})e_{p}
    \end{array}.
\end{align*}
Thus, thinking of an element $(\varphi,p)\in\cloner{H}$ as a labelling of $\text{Cay}(H,S_{H})$ (the vertex $h\in H$ has the label $\varphi(e_{h})\in V_{H}$), together with an arrow pointing at $p\in H$, right multiplying $(\varphi,p)$ by a generator from the above amounts either to move the arrow to an adjacent vertex $q$ of $p$ in $H$; or to keep the arrow where it stands and multiply $\varphi(e_{p})$ by a non-trivial element of $k$; or to keep the arrow where it stands and to~\textit{clone} the label $\varphi(e_{p})$ and add it to the label of a neighbour of $p$ after multiplication by an element of $\field\setminus\lbrace 0\rbrace$.

\smallskip

Lampcloners are naturally generated, have consistent blocks, and their lamp growth function is given by
\begin{equation*}
    \La_{\cloner{H}}(n)=\prod_{i=0}^{n-1}\left(|\field|^n-|\field|^i\right).
\end{equation*}
We will use the fact that $\ln{\La_{\cloner{H}}(n)}\sim Cn^2$ for some positive constant $C>0$.

\smallskip

Given a normal subgroup $M$, we analogously define the permutational lampcloner $\cloner{H,M}\defeq \text{FGL}(H/M)\rtimes H$, using the action $H\curvearrowright H/M$ by left translation. A (finite) generating set of this group is derived from a (finite) generating set of $H$ in a similar way.

\paragraph{Lampdesigners.} We finish this series of examples of halo products with lampdesigners, that we will not consider in this paper. Let $F$ and $H$ be two groups. The~\textit{lampdesigner over $H$} is the semi-direct product 
\begin{equation*}
    \designer{H} \defeq (F\wr_{H}\fsym{H})\rtimes H
\end{equation*}
where $H$ acts on $\bigoplus_{H}F$ by permuting the coordinates through its action on itself by left-multiplication and acts on $\fsym{H}$ as described above. It is the halo product $\halo H$ for the collection $L(S)\defeq F\wr_{S}\fsym{S}$, $S\subset H$. 

Lampdesigners are close from lampjuggler groups, and in fact if $F$ is finite, $\designer{H}$ is a subgroup of $\juggler{|F|}{H}$, via the map 
\begin{align*}
    \begin{array}{cll}
    \designer{H} &\to &\juggler{|F|}{H} \\
    ((f,\sigma),h) &\mapsto &(\sigma', h)
    \end{array}
\end{align*}
where, given a pair $(f,\sigma)\in F\wr_{H}\fsym{H}$, $\sigma'$ is the permutation of $H\times F$ given by $\sigma'(h,i)=(\sigma(h), f(h)i)$. Note also that $\designer{H}$ contains $\shuf{H}$ as a subgroup. 

\subsubsection{Isoperimetric profile of halo products}\label{sec:PrelIsoProfHalo}

This section describes what we know about isoperimetric profiles of halo products.

\smallskip

We say that a map $f\colon\R_{+}\to\R_{+}$ satisfies~\textit{Assumption~$(\star)$} if
\begin{equation*}
    \forall C>0,\; f(Cx)=O\left(f(x)\right).
\end{equation*}
We will often deal with finitely generated amenable groups $H$ whose isoperimetric profiles satisfie Assumption~$(\star)$. This condition also appeared in~\cite{Ers03, Corr24} and there is no known example of a group whose isoperimetric profile does not satisfy Assumption~$(\star)$. For instance, it applies for polynomial growth groups, solvable Baumslag-Solitar groups or polycyclic groups of exponential growth.

\smallskip

The following is a description of the F\o lner functions for wreath products, due to Erschler.

\begin{theorem}[{\cite[Theorem~1]{Ers03}}]
Let $\G$ and $H$ be finitely generated groups. Assume that $\G$ is not trivial and that $\prof{H}$ satisfies Assumption~$(\star)$. Then the following holds:
\begin{equation*}
    \textup{F\o l}_{\G\wr H}(n)\simeq \textup{F\o l}_{\G}(n)^{\textup{F\o l}_{H}(n)}.
\end{equation*}
\end{theorem}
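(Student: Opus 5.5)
We prove the two asymptotic inequalities $\textup{F\o l}_{\G\wr H}(n)\preccurlyeq \textup{F\o l}_{\G}(n)^{\textup{F\o l}_{H}(n)}$ and $\textup{F\o l}_{\G}(n)^{\textup{F\o l}_{H}(n)}\preccurlyeq\textup{F\o l}_{\G\wr H}(n)$ separately. We work with the standard generating set of $\G\wr H$, made of the generators of $H$ (moving the lamplighter) and the generators of $\G$ placed at $1_H$ (modifying the current lamp). Throughout, $\textup{F\o l}$ is replaced by an increasing asymptotically equivalent function, as in Remark~\ref{rem:InverseEachOther}.

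\textbf{Upper bound.} We construct Følner sets directly. Let $A\subset H$ with $|\partial_H A|/|A|\le 1/n$ and $|A|\simeq\textup{F\o l}_H(n)$, and let $B\subset\G$ with $|\partial_\G B|/|B|\le 1/n$ and $|B|\simeq\textup{F\o l}_\G(n)$. Set
\begin{equation*}
W\defeq\{(f,h) : \supp f\subset A,\ f(a)\in B \text{ for all } a\in A,\ h\in A\}.
\end{equation*}
Then $|W|=|B|^{|A|}|A|$. We estimate the boundary according to the type of generator.
\begin{itemize}
\item An $H$-generator moves $(f,h)$ out of $W$ only when $h$ leaves $A$, which accounts for a proportion at most $1/n$ of $W$.
\item A $\G$-generator moves $(f,h)$ out of $W$ only when $f(h)\cdot s\notin B$, which accounts for a proportion at most $1/n$ of $W$.
\end{itemize}
Hence $|\partial W|/|W|\preccurlyeq 1/n$, and therefore $\textup{F\o l}_{\G\wr H}(Cn)\le |A|\cdot|B|^{|A|}$. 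Assumption~$(\star)$ on $\prof{H}$, i.e.\ that $\textup{F\o l}_H$ does not jump under rescaling, together with the absorption of the factor $|A|$ into the exponent, gives the upper bound up to $\simeq$.

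\textbf{Lower bound.} This is the main difficulty. Let $W\subset\G\wr H$ be finite with $|\partial W|/|W|\le 1/n$. We want to show $|W|\ge \textup{F\o l}_\G(cn)^{c\,\textup{F\o l}_H(cn)}$. We follow Erschler's strategy.

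First, we decompose $W$ along the fibres over lamp configurations outside a point, i.e.\ along the $\G$-cosets obtained by varying only the lamp at the current position $h$. Counting $\G$-generator edges, a large proportion of the points of $W$ lie in fibres whose $\G$-Følner ratio is $\le 2/n$. Such fibres have size at least $\textup{F\o l}_\G(n/2)$.

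Second, we use the $H$-generator edges. For most points $(f,h)\in W$, the set of positions $h'$ reachable from $h$ inside $W$ while keeping $f$ fixed forms an $H$-set with small boundary. Such a set therefore has at least $\textup{F\o l}_H(cn)$ elements.

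Combining the two steps, for a typical $f$ there are about $\textup{F\o l}_H(cn)$ positions at which the lamp value can be varied within a $\G$-Følner set. An iterative, or entropy-type, counting argument then yields $|W|\ge \textup{F\o l}_\G(cn)^{c\textup{F\o l}_H(cn)}$.

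The delicate point is to make the phrases ``most points'' and ``typical $f$'' hold simultaneously across all positions. I would handle this by a double counting over pairs (configuration, position) and an averaging (Markov) argument, accepting constant losses in both $n$ and the exponent. These losses are harmless because of Assumption~$(\star)$ and the relation $\simeq$.
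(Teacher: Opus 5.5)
The paper does not prove this statement; it quotes it from Erschler [Ers03, Theorem 1]. Your proposal therefore has to stand on its own.

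\textbf{Upper bound.} Your product construction is the standard one and it works. However, you have Assumption~$(\star)$ backwards. It says that $\prof{H}$ does not jump under rescaling. Inverting, it says that for every $C$ there is $K$ with $\textup{F\o l}_H(Kn)\ge C\,\textup{F\o l}_H(n)$. This growth property, together with $|B|\ge 2$ (this is where non-triviality of $\Gamma$ is used), lets you go from $|A|\,|B|^{|A|}\le|B|^{2|A|}$ to a bound of the form $\textup{F\o l}_\Gamma(C'n)^{\textup{F\o l}_H(C'n)}$. The statement ``$\textup{F\o l}_H$ does not jump'' is a different property, and it fails for $H=F\wr\Z$, where the theorem applies. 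Also, your $W$ bounds $\textup{F\o l}_{\Gamma\wr H}(n/C)$, not $\textup{F\o l}_{\Gamma\wr H}(Cn)$.

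\textbf{Lower bound: the gap.} The gap is at exactly the step you defer to ``an iterative, or entropy-type, counting argument''. Write $W_f=\{h:(f,h)\in W\}$. Your Markov steps are statements about points of $W$, so ``typical $f$'' means typical for the weight $|W_f|/|W|$.

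Counting configurations needs something like Han's inequality (chain rule plus the fact that conditioning reduces entropy). For $X$ uniform on a finite set $\mathcal F$ of configurations, it gives $\log|\mathcal F|\ge\sum_h\mathbb{E}\log|L_h(X)\cap\mathcal F|$, where $L_h(f)$ is the set of configurations agreeing with $f$ off $h$. This requires ``many long lines'' on average for the \emph{uniform} measure on $\mathcal F$, and the two measures can be far apart.

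The reason is that each configuration with $W_f\neq\emptyset$ costs at least one, but possibly only $O(1)$, boundary points. So there can be of order $|W|/n$ configurations, while those with $|W_f|\ge\textup{F\o l}_H(cn)$ number at most $|W|/\textup{F\o l}_H(cn)$.

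Here is a concrete example. Take the box F\o lner set of side $N$ in $F\wr\Z^2$. For each box configuration $f$ and each site $h$ adjacent to the box, add all points $(f',h)$ with $f'=f$ off $h$. The set keeps boundary ratio $O(1/N)$, but the uniform average of $|W_f|$ drops to order $N$ instead of $N^2$. Averaging over all configurations then only yields an exponent of order $n$.

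Pruning down to a minimum-degree set does not fix this naively. Deleting a small $\Gamma$-fibre creates $H$-boundary and vice versa, so the deletions cannot be charged to $\partial W$. ``Double counting plus Markov'' alone does not resolve this mismatch, and resolving it is the substantive part of the lower bound.

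\textbf{One way to close the gap.} Set $k=\textup{F\o l}_H(\epsilon n)$, $m=\textup{F\o l}_\Gamma(\epsilon n)$, and $W^{(1)}=\{(f,h)\in W:|W_f|\ge k\}$.
\begin{enumerate}
\item By your $H$-fibre Markov step, $|W\setminus W^{(1)}|\le\epsilon|W|$.
\item The set $\mathcal F^{(1)}$ of configurations of $W^{(1)}$ satisfies $|\mathcal F^{(1)}|\le|W|/k$, since each such $f$ carries at least $k$ points.
\item A third Markov step shows that all but $4\epsilon|W|$ points of $W^{(1)}$ lie in $\Gamma$-fibres $P$ of $W$ with $|P|\ge m$ and $|P\cap W^{(1)}|\ge\frac23|P|$. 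For such a point $(f,h)$ one gets $|L_h(f)\cap\mathcal F^{(1)}|\ge\lceil 2m/3\rceil\ge 2$, since $m\ge 2$.
\item Han's inequality for $X$ uniform on $\mathcal F^{(1)}$ then gives $\log|W|\ge\log|\mathcal F^{(1)}|\ge(1-5\epsilon)\frac{|W|}{|\mathcal F^{(1)}|}\log\lceil 2m/3\rceil\ge(1-5\epsilon)\,k\log\lceil 2m/3\rceil$.
\item Assumption $(\star)$ absorbs the constants.
\end{enumerate}
Your sketch is missing two ideas. The first is restricting to configurations with a large $H$-fibre, which turns the $W$-weighted average into a uniform one via $|\mathcal F^{(1)}|\le|W|/k$. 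The second is checking that $\Gamma$-lines stay long inside the restricted set.
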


Passing to the inverse, we deduce the isoperimetric profiles of wreath products. As examples:
\begin{itemize}
    \item $\prof{\G\wr H}(x)\simeq\prof{H}(\ln(x))$ if $\G$ is a non-trivial finite group;
    \item $\prof{\G\wr H}(x)\simeq\prof{H}\left (\frac{\ln(x)}{\ln(\ln(x))}\right )$ if $\G$ and $H$ have polynomial growth;
\end{itemize}

For lampshufflers over polynomial growth groups, it is proved in~\cite[Corollary~1.4]{EZ21} that
\begin{equation*}
    \prof{\shuf{H}}(x)\simeq\left(\frac{\ln(x)}{\ln(\ln(x))}\right)^{\frac{1}{k}}
\end{equation*}
where $k$ is the growth degree of $H$. For groups $H$ not having polynomial growth, and with mild assumptions, we recently proved the following.

\begin{theorem}[{\cite[Corollary~B]{cordum25}}]\label{thm:profileshufflercordum}
Let $H$ be a finitely generated amenable group with Assumption~$(\star)$. Assume that the isoperimetric profile $\prof{H}$ of $H$ satisfies
\begin{equation*}
   \prof{H}\left(\frac{\ln(x)}{\ln(\ln(x))}\right) \simeq \prof{H}(\ln(x)).
\end{equation*}
Then one has 
\begin{equation*}
    \prof{\shuf{H}}(x) \simeq \prof{H}(\ln(x)).
\end{equation*}
\end{theorem}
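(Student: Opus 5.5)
The proof has two halves, an upper and a lower bound on $\prof{\shuf{H}}$, both measured against $\prof{H}(\ln(x))$. The hypothesis $\prof{H}\left(\frac{\ln x}{\ln\ln x}\right)\simeq\prof{H}(\ln x)$ is there to absorb the factorial nature of the lamp growth: $\La_{\shuf{H}}(n)=n!$, so $\ln \La(n)\sim n\ln n$.

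\emph{Lower bound.} Here I will exhibit explicit Følner sets. Take a near-optimal Følner set $F\subset H$ for $\prof{H}$, of cardinality $n$ with $|\partial F|/|F|\lesssim 1/\prof{H}(n)$, and consider
\begin{equation*}
A_F\defeq\{(\sigma,h): \supp\sigma\subset F,\ h\in F\}=\fsym{F}\times F .
\end{equation*}
Its cardinality is $n!\cdot n$. A generator either moves the arrow, which leaves $A_F$ only if $h\in\partial$-adjacent positions, or swaps labels at $h$ and $hs$, which leaves $A_F$ only if $hs\notin F$. So in both cases $|\partial A_F|/|A_F|\lesssim |\partial F|/|F|$. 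This yields $\prof{\shuf{H}}(n!\,n)\succcurlyeq \prof{H}(n)$. Inverting with $x=n!n$, so that $n\simeq \ln x/\ln\ln x$, and using Assumption~$(\star)$ to handle multiplicative constants, I get $\prof{\shuf{H}}(x)\succcurlyeq\prof{H}(\ln x/\ln\ln x)\simeq\prof{H}(\ln x)$ by hypothesis.

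\emph{Upper bound.} I will follow Erschler's wreath product argument \cite{Ers03}. Given a finite set $A\subset\shuf{H}$ with small boundary ratio $\varepsilon$, I fibre it over the lamp configurations. For fixed $\sigma$, consider the slice $A_\sigma=\{h:(\sigma,h)\in A\}$. Averaging shows that most of the mass of $A$ lies in slices that are $\varepsilon$-Følner in $H$, hence of size at least $\textup{F\o l}_H(c/\varepsilon)$. Next, the swap generators acting at points of such a slice must mostly keep us in $A$. Since a permutation differing from $\sigma$ by a transposition at each arrow position must also lie in $A$, the number of distinct configurations is forced to be exponential in the slice size; this is the main obstacle. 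To get this, I would compare with the lamplighter $(\Z/2\Z)\wr H$: restricting attention to a family of commuting disjoint transpositions gives a copy of $\bigoplus(\Z/2\Z)$ over a suitable net. Alternatively, I would use the monotonicity of the profile under regular maps (Theorem~\ref{thm:profilemonotonuous}) if a regular embedding $(\Z/2\Z)\wr H\to\shuf{H}$ is available, for instance via the embedding $\tau$ on pairs $\{h,hs_0\}$. A cleaner route is to produce an injective Lipschitz map from $(\Z/2\Z)\wr H'$ into $\shuf{H}$, where $H'$ is a finite index subgroup or a quasi-isometric net: send a lamp configuration to the product of transpositions $\tau_{h,hs_0}$ over lit $h$ in a $2$-separated net. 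Then Theorem~\ref{thm:profilemonotonuous} gives $\prof{\shuf{H}}\preccurlyeq\prof{(\Z/2\Z)\wr H}\simeq\prof{H}(\ln x)$, where the last equivalence is Erschler's formula with $(\star)$.

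Combining the two bounds gives $\prof{\shuf{H}}(x)\simeq\prof{H}(\ln x)$. The delicate step is the upper bound's regular map: I need its fibres to be uniformly bounded and the net to be coarsely equivalent to $H$. I would handle this by choosing a maximal $2$-separated subset of $H$ and mapping lamps supported off it trivially, working with configurations up to bounded distance.
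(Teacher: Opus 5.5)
Your lower bound is correct: it is the F\o lner-set computation of Proposition~\ref{prop:quantitativeFolner} applied to $\fsym{F}\times F$, and the hypothesis on $\prof{H}$ is used exactly where it is needed. The plan for the upper bound is also sound: a regular map $(\Z/2\Z)\wr H\to\shuf{H}$, followed by Theorem~\ref{thm:profilemonotonuous} and Erschler's formula (which needs $(\star)$). You abandon the Erschler-type fibration argument exactly at the step that needs proof, so the whole upper bound rests on that regular map. The map you describe is not regular. You send lit lamps on a separated net $N$ to the disjoint transpositions $\tau_{h,hs_0}$ and lamps off $N$ trivially. Then two configurations that differ only off $N$ have the same image, so the fibres are infinite. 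Replacing the source by a ``lamplighter over the net'' does not help: that is not a group, so neither Theorem~\ref{thm:profilemonotonuous} nor Erschler's formula applies to it. The finite-index variant works only if $H$ has a proper finite-index subgroup $H'$. In that case $\fsym{H}\rtimes H'\cong\juggler{k}{H'}$ with $k=[H:H']\ge 2$, which contains $(\Z/2\Z)\wr H'$ as a subgroup. But finitely generated amenable groups need not have such a subgroup (e.g.\ infinite simple ones). Nor can you give each $h\in H$ its own transposition near $h$ with pairwise disjoint supports. That would be a bounded-displacement injection $H\times\{1,2\}\to H$, which is impossible for amenable $H$: apply it to a F\o lner set $F$ with $|FB(R)|<2|F|$. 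So ``one transposition per lamp'' cannot work.

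The missing idea is to encode lamps blockwise, using $2^k\le k!$ for $k\ge 4$. Assume $H$ is infinite (the finite case is trivial). Partition $H$ into the Voronoi cells $Q$ of a maximal $R$-separated subset. For $R$ large, each cell satisfies $4\le |Q|\le |B(R)|$ and has diameter at most $2R$. Fix injections $\iota_Q\colon\{0,1\}^Q\to\sym{Q}$ with $\iota_Q(0)=\mathrm{id}$. Put $\Phi(f,h)=(\sigma_f,h)$ with $\sigma_f=\prod_Q\iota_Q(f|_Q)$; the factors commute and only finitely many are non-trivial. Then:
\begin{itemize}
\item $\Phi$ is injective;
\item $\Phi(f,hs)=\Phi(f,h)(\mathrm{id},s)$;
\item $\Phi(f+\delta_h,h)=\Phi(f,h)\bigl(h^{-1}\cdot(\sigma_f^{-1}\sigma_{f+\delta_h}),1_H\bigr)$, where, for the cell $Q\ni h$, this permutation is supported in $h^{-1}Q\ni 1_H$, a set of bounded size and diameter. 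By Lemma~\ref{lem:diamsymjuggler} with $r=1$, it has uniformly bounded word length.
\end{itemize}
Hence $\Phi$ is an injective Lipschitz map, so it is regular. Theorem~\ref{thm:profilemonotonuous} then gives $\prof{\shuf{H}}(x)\preccurlyeq\prof{(\Z/2\Z)\wr H}(x)\simeq\prof{H}(\ln x)$, which closes your argument.
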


We also derived the isoperimetric profile of iterated lampshufflers.

\begin{theorem}[{\cite[Theorem~C]{cordum25}}]
Let $H$ be a finitely generated amenable group.
\begin{itemize}
    \item If $H$ has polynomial growth of degree $d\ge 1$, then one has
    \begin{equation*}
    \prof{\shufn{n}{H}}(x) \simeq \left(\frac{\ln^{\circ n}(x)}{\ln^{\circ (n+1)}(x)}\right)^{\frac{1}{d}}
\end{equation*}
for any $n\ge 1$.
    \item If its isoperimetric profile satisfies Assumption~$(\star)$ and $\prof{H}\left(\frac{x}{\ln(x)}\right) \simeq \prof{H}(x)$, then one has 
    \begin{equation*}
    \prof{\shufn{n}{H}}(x) \simeq \prof{H}(\ln^{\circ n}(x))
    \end{equation*}
for any $n\ge 1$.
\end{itemize}
\end{theorem}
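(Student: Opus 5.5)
We argue by induction on $n$, using at each step Theorem~\ref{thm:profileshufflercordum} applied to the group $G\defeq\shufn{(n-1)}{H}$. This group is finitely generated and amenable: $\fsym{\cdot}$ of a set is locally finite, and both properties pass to the semi-direct product. So the only work is to check, at each stage, the two hypotheses of Theorem~\ref{thm:profileshufflercordum} for $\prof{G}$, namely Assumption~$(\star)$ and
\begin{equation*}
\prof{G}\left(\frac{\ln(x)}{\ln(\ln(x))}\right)\simeq\prof{G}(\ln(x)).
\end{equation*}
Once these hold, the theorem gives $\prof{\shufn{n}{H}}(x)\simeq\prof{G}(\ln(x))$, and substituting the induction hypothesis for $\prof{G}$ gives the claimed formula.

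\emph{Polynomial growth case.} The base case $n=1$ is exactly the estimate of~\cite[Corollary~1.4]{EZ21} recalled above. For $n\ge 2$, the induction hypothesis gives $\prof{G}(x)\simeq f(x)\defeq\left(\ln^{\circ (n-1)}(x)/\ln^{\circ n}(x)\right)^{1/d}$.
\begin{itemize}
\item Assumption~$(\star)$: it holds for $f$ because $\ln(Cx)\sim\ln(x)$, so every iterated logarithm satisfies $\ln^{\circ j}(Cx)\sim\ln^{\circ j}(x)$.
\item The second hypothesis: we have $\ln\big(\ln(x)/\ln(\ln(x))\big)=\ln^{\circ 2}(x)-\ln^{\circ 3}(x)\sim\ln^{\circ 2}(x)$. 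Iterating, $\ln^{\circ j}\big(\ln(x)/\ln(\ln(x))\big)\sim\ln^{\circ (j+1)}(x)$ for every $j\ge 1$. Hence $f\big(\ln(x)/\ln(\ln(x))\big)\sim f(\ln(x))$.
\end{itemize}
Since $\prof{G}\simeq f$ and $f$ satisfies $(\star)$, these relations transfer to $\prof{G}$ up to $\simeq$. Finally $f(\ln(x))=\left(\ln^{\circ n}(x)/\ln^{\circ (n+1)}(x)\right)^{1/d}$, which closes the induction.

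\emph{Second case.} Here the base case $n=1$ is Theorem~\ref{thm:profileshufflercordum} applied to $H$ itself. Its second hypothesis follows by substituting $\ln(x)$ for $x$ in the assumption $\prof{H}(x/\ln(x))\simeq\prof{H}(x)$. For $n\ge 2$ we have $\prof{G}\simeq\prof{H}\circ\ln^{\circ (n-1)}$.
\begin{itemize}
\item Assumption~$(\star)$: eventually $\ln^{\circ (n-1)}(Cx)\le 2\ln^{\circ (n-1)}(x)$. Monotonicity and $(\star)$ for $\prof{H}$ then give $\prof{H}(\ln^{\circ (n-1)}(Cx))=O(\prof{H}(\ln^{\circ (n-1)}(x)))$.
\item The second hypothesis: as computed above, $\ln^{\circ (n-1)}\big(\ln(x)/\ln(\ln(x))\big)$ lies between $\tfrac12\ln^{\circ n}(x)$ and $\ln^{\circ n}(x)$ for $x$ large. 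Monotonicity and $(\star)$ for $\prof{H}$ give the required $\simeq$.
\end{itemize}
Theorem~\ref{thm:profileshufflercordum} then yields $\prof{\shufn{n}{H}}(x)\simeq\prof{H}(\ln^{\circ n}(x))$.

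\emph{Main obstacle.} The delicate point is the bookkeeping with $\simeq$, which allows multiplicative constants inside the argument. We must make sure that composing with $\ln$ and the substitutions above are compatible with $\simeq$. The natural approach is to fix non-decreasing representatives, as allowed by Remark~\ref{rem:InverseEachOther}. Then the observation that $\ln$ turns multiplicative constants into additive ones, together with Assumption~$(\star)$, absorbs them at every step.
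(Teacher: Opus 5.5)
Your proof is correct. This paper does not prove the statement itself; it only imports it from \cite{cordum25}. Your induction is nonetheless a sound derivation from results the paper does quote, and it is the natural route. The base case $n=1$ comes from \cite[Corollary~1.4]{EZ21} in the polynomial case, and from Theorem~\ref{thm:profileshufflercordum} applied to $H$ in the second case. Each later step applies Theorem~\ref{thm:profileshufflercordum} to $\shufn{(n-1)}{H}$. You verify its two hypotheses correctly: Assumption~$(\star)$ transfers along $\simeq$, and the condition on $\ln(x)/\ln(\ln(x))$ follows from $\ln^{\circ j}\big(\ln(x)/\ln(\ln(x))\big)\sim\ln^{\circ (j+1)}(x)$ for $j\ge 1$.

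Two small remarks. First, finite generation of $\shufn{(n-1)}{H}$ comes from the explicit finite generating set of lampshufflers, not from local finiteness of $\fsym{\cdot}$; local finiteness only gives amenability. Second, your argument shows that in the second case the hypothesis $\prof{H}(x/\ln(x))\simeq\prof{H}(x)$ is needed only at $n=1$. After that step, Assumption~$(\star)$ alone absorbs the $\ln\ln$ corrections.
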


Actually, we found estimates of the isoperimetric profile of more general halo products in a unified manner, including lampshufflers (and more generally lampjugglers), lampcloners or lampdesigners (see~\cite[Corollary~E]{cordum25} for the general statement).

\smallskip

For groups $H$ for which we know the isoperimetric profile of $\shuf{H}$, namely $H$ satisfies the assumptions of Theorem~\ref{thm:profileshufflercordum}, we also know the isoperimetric profile of the lampjugglers $\juggler{s}{H}$, $s\geq 1$. Indeed~\cite[Corollary~F]{cordum25} asserts that 
\begin{equation*}
    \prof{\juggler{s}{H}}(n)\simeq\prof{\shuf{H}}(n).
\end{equation*}

\smallskip

For lampcloners, the estimates are the following.

\begin{theorem}[{\cite[Theorem~G]{cordum25}}]
Let $n\geq 0$ be an integer. Let $H$ be a finitely generated amenable group. Let $\field$ be a field.
\begin{itemize}
    \item If $H$ has polynomial growth of degree $d\geq 1$, then
    \begin{equation}\label{eq:ClonerPolynomial}
        \left (\ln^{\circ n}(x)\right)^{\frac{1}{2d}}\preccurlyeq\prof{\clonern{n}{H}}(x)\preccurlyeq\left (\ln^{\circ n}(x)\right )^{\frac{1}{d}}.
    \end{equation}
    \item If the isoperimetric profile $\prof{H}$ of $H$ satisfies Assumption~$(\star)$ and $\prof{H}\left(\sqrt{x}\right)\simeq \prof{H}(x)$, then
    \begin{equation*}
        \prof{\clonern{n}{H}}(x)\simeq\prof{H}(\ln^{\circ n}(x)).
    \end{equation*}
\end{itemize}
\end{theorem}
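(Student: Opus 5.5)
The plan is to argue by induction on $n$, the case $n=0$ being trivial. Everything reduces to a two-sided estimate for a single application of the lampcloner construction, stated in terms of the F\o lner function of the base group $G$ (which will be $\clonern{(n-1)}{H}$ in the induction step). The target is
\begin{equation*}
    \exp\left(c\,\textup{F\o l}_{G}(cn)\right)\preccurlyeq\textup{F\o l}_{\cloner{G}}(n)\preccurlyeq \La_{\cloner{G}}\left(\textup{F\o l}_{G}(Cn)\right)\cdot\textup{F\o l}_{G}(Cn).
\end{equation*}
Since $\ln\La_{\cloner{G}}(m)\sim Cm^{2}$, inverting this as in Remark~\ref{rem:InverseEachOther} gives
\begin{equation*}
    \prof{G}\left(\sqrt{\ln x}\right)\preccurlyeq\prof{\cloner{G}}(x)\preccurlyeq\prof{G}(\ln x),
\end{equation*}
up to multiplicative constants inside the arguments, which Assumption~$(\star)$ absorbs.

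For the upper bound on the F\o lner function (the lower bound on the profile), I will use the natural \enquote{box} sets. Fix a F\o lner set $F\subset G$ with $|\partial F|/|F|\le 1/n$ and set
\begin{equation*}
    A_{F}\defeq\{(\varphi,g) : \varphi\in\text{FGL}(F),\ g\in F\}.
\end{equation*}
Right multiplication by a lamp generator $(\delta_{1}(\lambda),1)$ or $(\tau_{1,s}(\lambda),1)$ keeps $(\varphi,g)$ inside $A_F$ unless $gs\notin F$. Right multiplication by $(\mathrm{id},s)$ exits $A_F$ only when $gs\notin F$. Hence $|\partial A_{F}|/|A_{F}|\le C|\partial F|/|F|$ and $|A_F|=\La(|F|)\,|F|$, which is exactly the claimed upper bound; this step uses natural generation and consistent blocks.

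For the lower bound on the F\o lner function (the upper bound on the profile), I will adapt Erschler's argument for wreath products. Take $A$ with $|\partial A|/|A|\le 1/n$ and look at its fibers along cosets of the lamp group. By averaging, a positive proportion of $A$ consists of points $(\varphi,g)$ whose position-fiber is itself F\o lner in $G$ at scale comparable to $n$. Along such a fiber the lamp configuration must be able to take many independent values at the $\gtrsim\textup{F\o l}_{G}(cn)$ sites visited, since moving the arrow and applying a diagonal or transvection generator at each site stays inside $A$ with positive proportion. This should force $|A|\ge 2^{c\,\textup{F\o l}_{G}(cn)}$. An alternative that avoids this combinatorics is to find a subgroup of $\cloner{G}$ quasi-isometric to a lamplighter over $G$ and invoke Theorem~\ref{thm:profilemonotonuousSubgroup}. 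The diagonal matrices give $\field^{*}\wr G$, but this is trivial when $\field=\mathbb{F}_2$, and $\shuf{G}$ only yields the weaker $\ln/\ln\ln$ bound. So I expect to need the direct counting argument, and this is the step I expect to be the main obstacle.

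Then I will run the induction. In the polynomial growth case, $\prof{H}(x)\simeq x^{1/d}$, so the sandwich gives
\begin{equation*}
    (\ln x)^{1/(2d)}\preccurlyeq\prof{\cloner{H}}(x)\preccurlyeq(\ln x)^{1/d},
\end{equation*}
and both bounds compose with $\ln$ at each further step, giving~\eqref{eq:ClonerPolynomial}. For this composition I have to check that the one-step estimate only needs Assumption~$(\star)$ for the inner upper and lower envelopes, and that this property is preserved by functions of the form $x\mapsto(\ln^{\circ n}x)^{a}$. In the second case, the hypothesis $\prof{H}(\sqrt{x})\simeq\prof{H}(x)$ makes the two sides of the sandwich coincide, so $\prof{\cloner{H}}(x)\simeq\prof{H}(\ln x)$. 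To iterate I must verify that $f\defeq\prof{H}\circ\ln$ again satisfies Assumption~$(\star)$ and $f(\sqrt{x})\simeq f(x)$. Both follow from $\ln\sqrt{x}=\tfrac12\ln x$ together with Assumption~$(\star)$ for $\prof{H}$, and then induction yields $\prof{\clonern{n}{H}}(x)\simeq\prof{H}(\ln^{\circ n}x)$.
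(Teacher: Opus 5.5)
The paper only quotes this result from \cite{cordum25}, so there is no internal proof to compare with; I am judging your argument on its own. Your lower bounds on the profile are fine: the box sets $\mathrm{FGL}(F)\times F$, and the inductive bookkeeping with Assumption~$(\star)$, work as you describe. The gap is the upper bound on the profile. It rests entirely on the one-step estimate $\textup{F\o l}_{\cloner{G}}(m)\succcurlyeq\exp(c\,\textup{F\o l}_{G}(cm))$ for an arbitrary base $G$, including $G=\clonern{(n-1)}{H}$, whose profile you do not know exactly. You give only a heuristic for this, and you flag it yourself. As described, the heuristic does not work. The averaged condition $|\partial A|/|A|\le 1/m$ only guarantees that a chain of about $m$ successive moves stays inside $A$ for a positive proportion of starting points. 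So ``toggling at each site with positive proportion'' certifies roughly $2^{O(m)}$ configurations, whereas you need $2^{c\,\textup{F\o l}_G(cm)}$. In the relevant cases $\textup{F\o l}_G(cm)$ is much larger than $m$: about $m^d$ for $\Z^d$ with $d\ge 2$, and at least exponential in $m$ once $G$ is itself a lampcloner. Making this rigorous would require a global, hypercube-type isoperimetric argument in the spirit of Erschler's proof. When $|\field|=2$ it would also require a family of pairwise commuting transvections to replace the trivial diagonal lamps. None of this is supplied.

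You can bypass this step entirely, because your reason for discarding $\shuf{G}$ has the inequality backwards. Permutation matrices give a $G$-equivariant embedding $\fsym{G}\hookrightarrow\mathrm{FGL}(G)$, so $\shuf{G}\le\cloner{G}$. Since also $\shuf{K}\le\shuf{G}$ whenever $K\le G$, induction gives $\shufn{n}{H}\le\clonern{n}{H}$. Theorem~\ref{thm:profilemonotonuousSubgroup} then yields $\prof{\clonern{n}{H}}\preccurlyeq\prof{\shufn{n}{H}}$. A subgroup bounds the profile from above, so a smaller profile for the subgroup is a stronger conclusion, not a weaker one.

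In polynomial growth of degree $d$, \cite[Theorem~C]{cordum25} gives $\left(\ln^{\circ n}(x)/\ln^{\circ(n+1)}(x)\right)^{1/d}$ as this upper bound, which is even better than the required $(\ln^{\circ n}(x))^{1/d}$. In the second case, the inequalities $\sqrt{x}\le x/\ln(x)\le x$ and monotonicity turn $\prof{H}(\sqrt{x})\simeq\prof{H}(x)$ into $\prof{H}(x/\ln(x))\simeq\prof{H}(x)$. The same theorem then gives $\prof{\shufn{n}{H}}(x)\simeq\prof{H}(\ln^{\circ n}(x))$, which is the required upper bound. This works for every finite field. Combined with your box-set lower bounds, it completes the proof without any new counting argument and without a one-step estimate over an unknown base.
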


\bigskip
\section{Quantitative orbit equivalence: stability results under iterations of halo products}\label{sec:stability}

\bigskip

\subsection{Orbit equivalence coupling between wreath products, by~\cite{DKLMT22}}\label{sec:generalmethodstability}

Let us start by explaining a construction of orbit equivalence between the first examples of halo products: lamplighter groups. Delabie, Koivisto, Le Maître and Tessera proved the following:

\begin{theorem}[{\cite[Corollary~7.3]{DKLMT22}}]\label{thm:dklmtWreath}
Let $\G$, $\La$, $H$ and $K$ be finitely generated groups and $\varphi,\psi\colon\R_{+}\to\R_{+}$ be increasing maps. Assume that there exist $(\varphi,\psi)$-integrable orbit equivalence couplings from $H$ to $K$, and from $\G$ to $\La$. Then there exists a $(\varphi,\psi)$-integrable orbit equivalence coupling from $\G\wr H$ to $\La\wr K$.
\end{theorem}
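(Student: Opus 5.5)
The plan is to build the coupling for the wreath products directly from the two given couplings, as a product space carrying a random configuration of points of the $\G/\La$ coupling indexed by the base group. Let $(X,\mu)$ be the $(\varphi,\psi)$-integrable coupling from $H$ to $K$ and $(Y,\nu)$ the one from $\G$ to $\La$. Set $Z\defeq X\times Y^{H}$ with the measure $\mu\otimes\nu^{\otimes H}$; a point is a pair $(x,(y_h)_{h\in H})$. The $\G\wr H$-action is the natural one. The base element $h'\in H$ acts by $h'\cdot(x,(y_h))=(h'x,(y_{hh'})_h)$, which is a Bernoulli-type shift twisted by $X$. A lamp $f\in\bigoplus_H\G$ acts coordinatewise by $f\cdot(x,(y_h))=(x,(f(h)y_h)_h)$. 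One checks the semidirect product relations, freeness (freeness of $H\act X$ together with freeness of $\G\act Y$ on the finitely many modified coordinates), and measure preservation.

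To define the $\La\wr K$-action on the same space, I would transport the $Y^H$-coordinates through the random bijection $c_{K,H}(\cdot,x)\colon K\to H$. I reindex by $z_k\defeq y_{c_{K,H}(k,x)}$, so $Z\cong X\times Y^{K}$ fiberwise. On this space $k'\in K$ acts by $(x,(z_k))\mapsto(k'x,(z_{kk'}))$; the cocycle identity makes this consistent with the $H$-action, in the sense that it moves $x$ inside its $H$-orbit while permuting coordinates compatibly. Lamps $g\in\bigoplus_K\La$ act via the $\La$-action on $Y$ at coordinate $c_{K,H}(k,x)$. The map $(x,(y_h))\mapsto(x,(z_k))$ preserves the product measure because for fixed $x$ it is a coordinate permutation.

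Next I would verify that orbits coincide. Since $H x=Kx$, the base parts generate the same moves. Since $\G y=\La y$ at each coordinate, any lamp change at a coordinate by $\G$ is realised by $\La$ at the corresponding coordinate. Hence, mod null sets, the orbits agree.

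For integrability, I would compute the cocycle of a generator. For $s\in S_H$, $c(s,\cdot)=c_{H,K}(s,x)\in K$ with trivial lamp, so its length is $|c_{H,K}(s,x)|_{S_K}$ and its $\varphi$-integrability is inherited. For a lamp generator $\gamma\in S_\G$ at $1_H$, the cocycle is $(\delta,1)$ conjugated to the position $c_{H,K}(1,x)=1$, with lamp $c_{\G,\La}(\gamma,y_{1})$. Its length in $\La\wr K$ is bounded by $|c_{\G,\La}(\gamma,y_1)|_{S_\La}$, up to the choice of identification of basepoints, and it is $\varphi$-integrable since $y_1$ is $\nu$-distributed. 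By Remark~\ref{rm:checkongenerators} checking generators suffices. The symmetric computation gives $\psi$ for the other cocycle.

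The main obstacle is making the reindexing well defined and equivariant. The $K$-action must send the coordinate attached to $c_{K,H}(k,x)$ to the correct new coordinate after $x$ moves to $k'x$, and this needs $c_{K,H}(kk',x)=c_{K,H}(k,k'x)c_{K,H}(k',x)$ with the right-versus-left conventions matching the wreath product's right-multiplication geometry. I would handle this first by working with the random bijection $x\mapsto$ the orbit identification $H x\to K x$ as a map between orbits rather than groups, so that the coordinates are indexed by orbit points and equivariance becomes tautological.
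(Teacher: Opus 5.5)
Your construction is the paper's own: the space $Y^{H}\times X$ with $h'$ acting by $(y_{hh'})_{h}$ and $h'x$, the reindexing $(y_h)_{h}\mapsto (y_{c_{K,H}(k,x)})_{k\in K}$, and the cocycle identity $c_{K,H}(k,sx)\,s=c_{K,H}(k\,c_{H,K}(s,x),x)$, which intertwines the base moves. The generator cocycles are $(\mathds{1},c_{H,K}(s,x))$ and $(\delta^{K}_{c_{\G,\La}(\gamma,y_{1_H})},1_K)$, and Remark~\ref{rm:checkongenerators} finishes the argument; the one fix is a convention: with the standard left-shift wreath product, lamps must act by $f(h^{-1})\cdot y_h$ as in the paper to be compatible with your right-translation of coordinates, whereas your $f(h)\cdot y_h$ only works for the (isomorphic) right-shift model of $\G\wr H$.
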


Here, we directly explain the construction they use within the appropriate formalism for our purposes, and we refer the reader to~\cite[Section~7.1]{DKLMT22} where the more general notion of wreath product of equivalence relations is introduced to prove Theorem~\ref{thm:dklmtWreath}. 

\smallskip

Given free pmp actions $H\curvearrowright (X,\mu)$ and $\G\curvearrowright (Y,\nu)$, we can define a pmp action of the group $\G\wr H$ on $(Y^{H}\times X,{\nu}^{\otimes H}\otimes \mu)$ in the following way. Let us denote by $\mathds{1}\colon H\to\G$ the constant map equal to $1_{\G}$. Then, for every $h\in H$, every finitely supported $f\colon H\to\G$ and every $((y_{g})_{g\in H},x)\in Y^{H}\times X$, we set
\begin{equation*}
    (\mathds{1},h)\cdot ((y_{g})_{g\in H},x)=((y_{gh})_{g\in H},h\cdot x)
\end{equation*}
and
\begin{equation*}
    (f,1_{H})\cdot ((y_{g})_{g\in H},x)=((f(g^{-1})\cdot y_{g})_{g\in H},x).
\end{equation*}
A direct computation shows that this provides a pmp action $\Lambda\wr H\curvearrowright (\Lambda^{H}\times X,{m_{\Lambda}}^{\otimes H}\otimes \mu)$.

\smallskip

Now, if $(X,\mu)$ and $(Y,\nu)$ are $(\varphi,\psi)$-integrable orbit equivalence couplings from $H$ to $K$ and from $\G$ to $\La$ respectively, and if $c_{H,K}\colon H\times X\to  K$, $c_{K,H}\colon K\times X\to  H$, $c_{\G,\La}\colon \G\times X\to  \La$, $c_{\La,\G}\colon \La\times X\to  \G$ are the associated cocycles, then the standard probability spaces $(Y^H\times X,\nu^{\otimes H}\otimes \mu)$ and $(Y^K\times X,\nu^{\otimes K}\otimes \mu)$ provide an orbit equivalence coupling between $\G\wr H$ and $\La\wr K$, via the identification
\begin{equation*}
    \begin{array}{ccl}
    Y^{H}\times X &\longrightarrow &Y^K\times X\\
    ((y_{h})_{h\in H},x)&\longmapsto & ((y_{c_{K,H}(k,x)})_{k\in K},x).
\end{array}
\end{equation*}
One can check directly that the latter is a measured isomorphism.

\smallskip

Additionally, we can describe the associated cocycle $c_{\G\wr H,\La\wr K}\colon (\G\wr H)\times (Y^H\times X)\to \La\wr K$ (and similarly $c_{\La\wr K,\G\wr H}\colon(\La\wr K)\times(Y^K\times X)\to \G\wr H$) on the generating set 
\begin{equation*}
    S_{\G\wr H}\defeq\lbrace(\delta^{H}_{\g},1_{H}) : \g\in\G\rbrace\cup\lbrace(\mathds{1},h) : h\in H\rbrace
\end{equation*}
of $\G\wr H$, where $\delta^{H}_{\g}\colon H\to\G$ is the map which sends $1_{H}$ to $\g$ and any $h\neq 1_{H}$ to $1_{\G}$. For every $\g\in\G$, we have
\begin{equation*}
    c_{\G\wr H,\La\wr K}\left((\delta^{H}_{\g},1_{H}),((y_{h})_{h\in H},x)\right)=\big(\delta^{K}_{c_{\G,\La}(\g,y_{1_H})},1_{K}\big)
\end{equation*}
and, for every $h\in H$,
\begin{equation*}
    c_{\G\wr H,\Lambda\wr K}\big((\mathds{1},h),((y_h)_{h\in H},x)\big)=\big(\mathds{1},c_{H,K}(h,x)\big).
\end{equation*}
This enables us to prove that this orbit equivalence coupling from $\G\wr H$ to $\La\wr K$ is $(\varphi,\psi)$-integrable.

\subsection{A general method}

Inspired by this construction of orbit equivalence couplings between lamplighters, we can build such couplings between some examples of halo products. Before presenting the applications for permutational lampjugglers and permutational lampcloners, let us explain informally the two main steps of the strategy. We fix normal subgroups $M\lhd H$ and $N\lhd K$.

\begin{enumerate}[label=(\arabic*)]
    \item\label{step1} First, given a free pmp action $H\curvearrowright (X,\mu)$, we want to build a standard probability space $(X_{H/M},\mu_{H/M})$ and a free pmp action $\halo_{H/M} H\curvearrowright (X_{H/M},\mu_{H/M})$;
    \item\label{step2} If now $(X,\mu)$ is an orbit equivalence coupling between the pairs $(H,M)$ and $(K,N)$, we want a natural identification between $(X_{H/M},\mu_{H/M})$ and $(X_{K/N},\mu_{K/N})$, using the pseudo-cocycles, in order to get an orbit equivalence coupling between $\halo_{H/M} H$ and $\halo_{K/N} K$.
\end{enumerate}

For step~\ref{step1}, to build a free pmp action $\halo_{H/M} H\curvearrowright (X_{H/M},\mu_{H/M})$ from a free pmp action $H\curvearrowright (X,\mu)$, we first need to find a free p.m.p action of $L(H/M)$ on some probability space $Z_{H/M}$ and then to define $X_{H/M}\defeq Z_{H/M}\times X$. Informally, $H/M$ must \enquote{appear} in this set $Z_{H/M}$. Let us be more explicit on this point:
\begin{itemize}
    \item First, we want the action of $H$ on $H/M$ to induce a free pmp $\halo_{H/M} H$-action on $Z_{H/M}$. For instance, we will choose $Z_{H/M}=Y^{H/M}$ for $\halo_{H/M} H=\G\wr_{H/M} H$ with the same notations as above. We will choose $Z_{H/M}=[0,1]^{H/M}$ for $\halo_{H/M} H=\shuf{H,M}$; $Z_{H/M}=[0,1]^{V_{H/M}}$ for $\halo_{H/M} H=\cloner{H,M}$ where the action of $H$ on $[0,1]^{V_{H/M}}$ is induced by the action of $H$ on $V_{H/M}$, which is itself induced by the actions of $H$ on $H/M$ by left-multiplication. We thus have an $H$- and a $L(H/M)$-action on $X_{H/M}\defeq Z_{H/M}\times X$, which motivate the following formula to deduce a $\halo_{H/M} H$-action:
    \begin{equation*}
        (\sigma,h)\cdot (z,x)=(\sigma\cdot (h\cdot z),h\cdot x)
    \end{equation*}
    for every $(\sigma,h)\in\halo_{H/M} H$ and every $(z,x)\in X_{H/M}$. For this formula to provide a well-defined $\halo_{H/M} H$-action, the $H$- and $L(H/M)$-actions on $Z_{H/M}$ must be compatible in a certain way, namely they must satisfy
    \begin{equation}\label{eq:compatible}
        h\cdot (\sigma\cdot z)=(h\cdot\sigma)\cdot (h\cdot z)
    \end{equation}
    for every $h\in H$, $\sigma\in L(H/M)$ and $z\in Z_{H/M}$. 
    \item The second reason for $H/M$ to \enquote{appear} in $Z_{H/M}$ is for step~\ref{step2}. To define a measured isomorphism between $Z_{H/M}\times X$ and $Z_{K/N}\times X$, we use the fact that for every $x\in X$, the pseudo-cocycle $\kappa_{K/N,H/M}(\cdot,x)\colon K/N\to H/M$ is a bijection and, in some sense, it allows us to go from $Z_{H/M}$ to $Z_{K/N}$, with $x$ being the second coordinate of an element of $Z_{H/M}\times X$. For instance, for the wreath products $\G\wr_{H/M} H$ and $\Lambda\wr_{K/N} K$, we define
    \begin{equation*}
        \begin{array}{ccl}
    Y^{H/M}\times X &\longrightarrow & Y^{K/N}\times X\\
    ((y_{\overline{h}})_{\overline{h}\in H/M},x)&\longmapsto & \big((y_{\kappa_{K/N,H/M}(\overline{k},x)})_{\overline{k}\in K/N},x\big)
    \end{array}
    \end{equation*}
    where again $(Y,\nu)$ denotes a coupling between $\G$ and $\La$ as described earlier. For lampshufflers, where $Z_{H/M}=[0,1]^{H/M}$, we will rather use the bijection $\kappa_{K/N,H/M}((\cdot)^{-1},x)^{-1}\colon K/N\to H/M$ and define
    \begin{equation*}
    \begin{array}{ccl}
    [0,1]^{H/M}\times X &\longrightarrow &[0,1]^{K/N}\times X\\
    ((\varepsilon_{\overline{h}})_{\overline{h}\in H/M},x)&\longmapsto & \big((\varepsilon_{\kappa_{K/N,H/M}(\overline{k}^{-1},x)^{-1}})_{\overline{k}\in K/N},x\big).
    \end{array}
    \end{equation*}
    More subtle will be the case of lampcloners, where $Z_{H/M}=[0,1]^{V_{H/M}}$. In this case, we will set
    \begin{equation*}
    \begin{array}{ccl}
    [0,1]^{V_{H/M}}\times X &\longrightarrow &[0,1]^{V_{K/N}}\times X\\
    ((\varepsilon_v)_{v\in V_{H/M}},x)&\longmapsto & \big((\varepsilon_{\varphi_{K/N,H/M}(w,x)})_{w\in V_{K/N}},x\big)
    \end{array}
    \end{equation*}
where $\varphi_{K/N,H/M}\colon V_{K/N}\to V_{H/M}$ is a bijection mapping $\sum_{\overline{k}\in K/N}{\mu_{\overline{k}}e_{\overline{k}}}\in V_{K/N}$ to $\sum_{\overline{h}\in H/M}{\mu_{\kappa_{H/M,K/N}(\overline{h}^{-1},x)^{-1}}e_{\overline{h}}}$ (again we use the bijection $H/M\to K/N$ provided by the pseudo-cocycle $\kappa_{H/M,K/N}$).
\end{itemize}

Finally, steps~\ref{step1} and~\ref{step2} being achieved, it will remain to check, case by case, that we indeed get an orbit equivalence coupling between $\halo_{H/M} H$ and $\halo_{K/N} K$, and to quantify the cocycles. This strategy will then provide similar statements as the one of Theorem~\ref{thm:dklmtWreath} for permutational lamplighters, lampjugglers and lampcloners.

\subsection{Permutational wreath products}

Similarly to Theorem~\ref{thm:dklmtWreath}, we prove the following.

\begin{theorem}\label{thm:stabilitypropertyforOEbetweenPWP}
Let $\G, \La,H,K$ be groups, with normal subgroups $M\lhd H$, $N\lhd K$. if there exist an orbit equivalence (resp.~$\ld^{\infty}$ orbit equivalence) coupling between the normal pairs $(H,M)$ and $(K,N)$, and an orbit equivalence (resp.~$\ld^{\infty}$ orbit equivalence) coupling between $\G$ and $\La$, then there exists an orbit equivalence (resp.~$\ld^{\infty}$ orbit equivalence) coupling between $\G\wr_{H/M}H$ and $\La\wr_{K/N}K$.

\smallskip

Moreover, given non-decreasing maps $\varphi,\psi\colon\R_+\to\R_+$, and assuming that $\G, \La, H$ and $K$ are finitely generated, if there exist a $(\varphi,\psi)$-integrable orbit equivalence coupling between the normal pairs $(H,M)$ and $(K,N)$, and a $(\varphi,\psi)$-integrable orbit equivalence coupling between $\G$ and $\La$, then there exists a $(\varphi,\psi)$-integrable orbit equivalence coupling between $\G\wr_{H/M}H$ and $\La\wr_{K/N}K$.
\end{theorem}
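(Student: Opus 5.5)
We follow the two-step method described above, now with $Z_{H/M}\defeq Y^{H/M}$, where $(Y,\nu)$ is the coupling between $\G$ and $\La$ and $(X,\mu)$ is the coupling between the pairs $(H,M)$ and $(K,N)$.

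\textbf{The action.} We let $\G\wr_{H/M}H$ act on $(Y^{H/M}\times X,\nu^{\otimes H/M}\otimes\mu)$ by
\begin{equation*}
    (\mathds{1},h)\cdot((y_{\overline{g}})_{\overline{g}},x)=((y_{h^{-1}\overline{g}})_{\overline{g}},h\cdot x),\qquad (f,1_H)\cdot((y_{\overline{g}})_{\overline{g}},x)=((f(\overline{g})\cdot y_{\overline{g}})_{\overline{g}},x).
\end{equation*}
The indexing conventions (left versus right translates, inverses) will be adjusted so that the compatibility condition~\eqref{eq:compatible} holds. With that, these formulas define a pmp action of the semidirect product. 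It is essentially free because $H\act X$ is free and $\G\act Y$ is free: if $(f,h)$ fixes a point then $h=1_H$, and then $f(\overline g)$ fixes $y_{\overline g}$ for every $\overline g$, which almost surely forces $f=\mathds{1}$ since $f$ has finite support.

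\textbf{The identification.} The analogous space $Y^{K/N}\times X$ carries the $\La\wr_{K/N}K$-action. We transport it through
\begin{equation*}
    \Phi((y_{\overline h})_{\overline h},x)=\big((y_{\kappa_{K/N,H/M}(\overline k,x)})_{\overline k\in K/N},x\big),
\end{equation*}
with inverses inserted as needed to match the conventions. For almost every $x$, $\kappa_{K/N,H/M}(\cdot,x)$ is a bijection $K/N\to H/M$. Conditionally on $x$, $\Phi$ is therefore a coordinate permutation, so it preserves the product measure, and $\Phi$ is a measure isomorphism with inverse given by $\kappa_{H/M,K/N}$. This uses $M\cdot x=N\cdot x$: it is what makes the pseudo-cocycles well defined and mutually inverse. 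We then replace each $y_{\overline k}\in Y$ via the $\G/\La$ coupling, using the $\La$-action on each coordinate.

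\textbf{Orbits and cocycles.} We compute the cocycle on the generators. For $h\in H$, we expect $(\mathds{1},h)$ to act as $(\mathds{1},c_{H,K}(h,x))$; this follows from the pseudo-cocycle identity, which shows that $\Phi$ intertwines the two shifts. For $\g\in\G$, $(\delta_\g,1_H)$ modifies only the coordinate $\overline{1}$. Its image modifies only coordinate $\kappa_{H/M,K/N}(\overline 1,x)=\overline{1}$, by $c_{\G,\La}(\g,y_{\overline 1})$, so it equals $(\delta_{c_{\G,\La}(\g,y_{\overline1})},1_K)$. Hence each generator of $\G\wr_{H/M}H$ maps into $\La\wr_{K/N}K$ orbits, and symmetrically. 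Both groups are generated by these elements, so the orbits coincide.

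\textbf{Integrability.} The generators $(\mathds1,s)$ have cocycle lengths bounded by $|c_{H,K}(s,x)|_{S_K}$, and the lamp generators have lengths $|c_{\G,\La}(\g,y_{\overline1})|_{S_\La}$, whose law is that of the $\G/\La$ cocycle. This gives $\varphi$-integrability. It also gives $\ld^\infty$ when both couplings are $\ld^\infty$, since then only finitely many values occur. By Remark~\ref{rm:checkongenerators} this suffices; the reverse direction is symmetric.

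\textbf{Main obstacle.} The main difficulty is the bookkeeping of conventions: choosing left or right, and with or without inverses, in the action on $Y^{H/M}$ and in $\Phi$, so that the compatibility~\eqref{eq:compatible} holds and the pseudo-cocycle identity exactly yields the intertwining of the $H$-shift with the $K$-shift. We would first check this on the identity coupling.
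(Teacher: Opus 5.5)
Your proposal is correct and follows essentially the same route as the paper: the same space $Y^{H/M}\times X$, the same identification through the pseudo-cocycle $\kappa_{K/N,H/M}(\cdot,x)$, the same cocycle formulas on the generators, and integrability deduced via Remark~\ref{rm:checkongenerators}. On the convention you leave open, the paper uses the right shift $(y_{\overline{gh}})_{\overline{g}}$ with lamps acting by $f(\overline{g}^{-1})$, so that the untwisted index $\kappa_{K/N,H/M}(\overline{k},x)$ works thanks to $\kappa_{K/N,H/M}(\overline{k},s\cdot x)\,\overline{s}=\kappa_{K/N,H/M}(\overline{k}\,\kappa_{H/M,K/N}(\overline{s},x),x)$; with your left shift you must instead index by $\kappa_{K/N,H/M}(\overline{k}^{-1},x)^{-1}$, as in the paper's lampjuggler proof (and checking on the identity coupling cannot detect a wrong choice, since there $\kappa$ is the identity).
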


Recall from Section~\ref{sec:defHaloExample} that, if $S_H$ and $S_{\G}$ denote generating sets of $H$ and $\G$, then
\begin{equation*}
    S_{\G\wr_{H/M}H} \defeq \left\lbrace(\iota_{H/M}(t),1_{H}) : t\in S_{\G}\right\rbrace\cup\left\lbrace(\mathds{1},s) : s\in S_{H}\right\rbrace
\end{equation*}
is a generating set for $\G\wr_{H/M}H$, where for every $\g\in\G$, the map $\iota_{H/M}(\g)$ is defined by
\begin{equation*}
\begin{array}{llcl}
\iota_{H/M}(\g)\colon& H/M&\longrightarrow&\G\\
&\overline{h} &\longmapsto &\left\{
        \begin{array}{ll}
            \g&\text{ if }\overline{h}=1_{H/M}\\
            e_{\G}&\text{ otherwise}
        \end{array}
\right.
\end{array}.
\end{equation*}
Note that $S_{\G\wr_{H/M}H}$ is finite if and only if $S_{H}$ and $S_{\G}$ are finite.

\begin{proof}[Proof of Theorem~\ref{thm:stabilitypropertyforOEbetweenPWP}]
We must first define free pmp $\G\wr_{H/M}H$- and $\La\wr_{K/N}K$-actions, provided the $H$- and $K$-actions coming with a coupling $(X,\mu)$ between the pairs $(H,M)$ and $(K,N)$, and the $\G$- and $\La$-actions coming with a coupling $(Y,\nu)$. Denote $G_1=\G\wr_{H/M}H$ and $G_2=\La\wr_{K/N}K$. Following the techniques we explained in Section~\ref{sec:generalmethodstability}, we first define pmp $H$- and $\bigoplus_{H/M}{\G}$-actions on $\left(Y^{H/M},\nu^{\otimes H/M}\right)$ by
\begin{equation*}
    h\cdot (y_{\overline{g}})_{\overline{g}\in H/M}\defeq (y_{\overline{gh}})_{g\in H/M}
\end{equation*}
and
\begin{equation*}
    f\cdot(y_{\overline{g}})_{\overline{g}\in H/M}\defeq (f(\overline{g}^{-1})\cdot y_{\overline{g}})_{\overline{g}\in H/M}
\end{equation*}
for every $h\in H$, $f\in\bigoplus_{H/M}{\G}$ and $(\varepsilon_{\overline{g}})_{\overline{g}\in H/M}\in Y^{H/M}$. The compatibility condition~\eqref{eq:compatible} is satisfied, so the formula 
\begin{equation*}
(f,h)\cdot \left((y_{\overline{g}})_{\overline{g}\in H/M},x\right)\defeq \left ((f(g^{-1})\cdot y_{\overline{gh}})_{\overline{g}\in H/M},h\cdot x\right )
\end{equation*}
defines a free pmp $G_{1}$-action on $X_{H/M}\defeq Y^{H/M}\times X$, endowed with the probability measure $\mu_{H/M}\defeq\nu^{\otimes H/M}\otimes\mu$. We similarly define a free pmp $G_{2}$-action on 
\begin{equation*}
  (X_{K/N},\mu_{K/N})\defeq (Y^{K/N}\times X,\nu^{\otimes K/N)}\otimes\mu).
\end{equation*}
Now assume that $(X,\mu)$ is an orbit equivalence coupling between the normal pairs $(H,M)$ and $(K,N)$. We want to prove that $(X_{H/M},\mu_{H/M})$ and $(X_{K/N},\mu_{K/N})$ are orbit equivalent couplings between $G_{1}$ and $G_{2}$, via the identification
\begin{equation*}
    \theta\colon\begin{array}{ccl}
    Y^{H/M}\times X &\longrightarrow &Y^{K/N}\times X\\
    ((y_{\overline{h}})_{\overline{h}\in H/M},x)&\longmapsto & \big((y_{\kappa_{K/N,H/M}(\overline{k},x)})_{\overline{k}\in K/N},x\big)
\end{array}.
\end{equation*}
Given generating sets $S_{H}$ and $S_{K}$ of $H$ and $K$ (not necessarily finite), we get generating sets $S_{G_{1}}$ and $S_{G_{2}}$ of $G_{1}$ and $G_{2}$ respectively, as described after Theorem~\ref{thm:stabilitypropertyforOEbetweenPWP} (note that these sets are not necessarily finite for now). Let $((y_{\overline{h}})_{\overline{h}\in H/M},x)\in X_{H/M}$. Given $s\in S_{H}$, we have
\begin{align*}
    \theta\left((\mathds{1},s)\cdot ((y_{\overline{h}})_{\overline{h}\in H/M},x)\right)&=\theta\left((y_{\overline{hs}})_{\overline{h}\in H/M},s\cdot x\right)\\
    &=\big((y_{\kappa_{K/N,H/M}(\overline{k},s\cdot x)\overline{s}})_{\overline{k}\in K/N},s\cdot x\big).
\end{align*}
Notice that, by definition,
\begin{equation*}
    c_{K,H}(k,s\cdot x)s\cdot x=ks\cdot x=k\ c_{H,K}(s,x)\cdot x=c_{K,H}(k\ c_{H,K}(s,x),x)\cdot x
\end{equation*}
so $c_{K,H}(k,s\cdot x)s=c_{K,H}(k\ c_{H,K}(s,x),x)$ and in $H/M$:
\begin{equation*}
    \kappa_{K/N,H/M}(\overline{k},s\cdot x)\overline{s}=\kappa_{K/N,H/M}(\overline{k}\ \kappa_{H/M,K/N}(\overline{s},x),x)
\end{equation*}
where we use the fact that $\kappa_{H/M,K/N}(\overline{s},x)$ is the class of $c_{H,K}(s,x)$ modulo $N$, and the same fact for $\kappa_{K/N,H/M}$ (modulo $M$). This gives
\begin{align*}
    &\theta\left((\mathds{1},s)\cdot ((y_{\overline{h}})_{\overline{h}\in H/M},x)\right)\\
    &=\big((y_{\kappa_{K/N,H/M}(\overline{k}\ \kappa_{H/M,K/N}(\overline{s},x),x)})_{\overline{k}\in K/N},s\cdot x\big)\\
    &=\big((y_{\kappa_{K/N,H/M}(\overline{k}\ \overline{c_{H,K}(s,x)},x)})_{\overline{k}\in K/N},c_{H,K}(s,x)\cdot x\big)\\
    &=(\mathds{1},c_{H,K}(s,x))\cdot \big((y_{\kappa_{K/N,H/M}(\overline{k},x)})_{\overline{k}\in K/N},x\big)\\
    &=(\mathds{1},c_{H,K}(s,x))\cdot\theta\left((y_{\overline{h}})_{\overline{h}\in H/M},x\right).
\end{align*}

Moreover, given $t\in S_{\G_1}$, we have
\begin{align*}
    &\theta\left((\iota_{H/M}(t),1_{H})\cdot ((y_{\overline{h}})_{\overline{h}\in H/M},x)\right)\\
    &=\theta\left(((\iota_{H/M}(t)(\overline{h}^{-1})\cdot y_{\overline{h}})_{\overline{h}\in H/M},x)\right)\\
    &=\left((\iota_{H/M}(t)(\kappa_{K/N,H/M}(\overline{k},x)^{-1})\cdot y_{\kappa_{K/N,H/M}(\overline{k},x)})_{\overline{k}\in K/N},x\right).
\end{align*}
Depending on whether $\overline{k}=1_{K/N}$ or not, and using the fact that $\kappa_{K/N,H/M}(\overline{k},x)$ is equal to $1_{H/M}$ if and only if $\overline{k}=1_{K/N}$, we can prove that
\begin{equation*}
    \iota_{H/M}(t)(\kappa_{K/N,H/M}(\overline{k},x)^{-1})\cdot y_{\kappa_{K/N,H/M}(\overline{k},x)}=\iota_{K/N}(c_{\G,\La}(t,y_{1_{H/M}}))(\overline{k}^{-1})\cdot y_{\kappa_{K/N,H/M}(\overline{k},x)}.
\end{equation*}
This finally gives
\begin{align*}
    &\theta\left((\iota_{H/M}(t),1_{H})\cdot ((y_{\overline{h}})_{\overline{h}\in H/M},x)\right)\\
    &=\left((\iota_{K/N}(c_{\G,\La}(t,y_{1_{H/M}}))(\overline{k}^{-1})\cdot y_{\kappa_{K/N,H/M}(\overline{k},x)})_{\overline{k}\in K/N},x\right)\\
    &=\big(\iota_{K/N}(c_{\G,\La}(t,y_{1_{H/M}})),1_{K}\big)\cdot \left((y_{\kappa_{K/N,H/M}(\overline{k},x)})_{\overline{k}\in K/N},x\right)\\
    &=\big(\iota_{K/N}(c_{\G,\La}(t,y_{1_{H/M}})),1_{K}\big)\cdot\theta\left(((y_{\overline{h}})_{\overline{h}\in H/M},x)\right)
\end{align*}

\smallskip

Thus, we have proved $\theta(S_{G_1}\cdot z)\subset G_2\cdot\theta(z)$ for every $z\in X_{H/M}$ and we similarly get $S_{G_2}\cdot \theta(z)\subset \theta(G_1\cdot z)$, which is enough to prove the orbit equality for the $G_1$- and $G_2$-actions. The last computations in fact provide the cocycles on the generators: for every $s\in S_{H}$, one has
\begin{equation*}
    c_{G_1,G_2}\big((\mathrm{id},s),(y_{\overline{h}})_{\overline{h}\in H/M},x)\big)=(\mathrm{id},c_{H,K}(s,x))
\end{equation*}
and for every $t\in S_{\G}$,
\begin{equation*}
    c_{G_1,G_2}\big((\iota_{H/M}(t),1_H),(y_{\overline{h}})_{\overline{h}\in H/M},x)\big)=\big(\iota_{K/N}(c_{\G,\La}(t,y_{1_{H/M}})),1_{K}\big)
\end{equation*}
and similarly for the cocycle $c_{G_2,G_1}$. From this, we deduce that the coupling is $\ld^{\infty}$ if $(X,\mu)$ is an $\ld^{\infty}$ orbit equivalence coupling.

\smallskip

Let us finally assume that $H$ and $K$ are finitely generated (i.e. $S_{H}$ and $S_{K}$ are finite) and that $(X,\mu)$ is a $(\varphi,\psi)$-integrable orbit equivalence coupling from $(H,M)$ to $(K,N)$, and $(Y,\nu)$ a $(\varphi,\psi)$-integrable orbit equivalence coupling from $\G$ to $\La$. First, we easily get
\begin{equation*}
    \big|c_{G_1,G_2}\big((\mathrm{id},s),(y_{\overline{h}})_{\overline{h}\in H/M},x)\big)\big|_{G_2}=|c_{H,K}(s,x)|_{S_{K}}
\end{equation*}
and
\begin{equation*}
    \big|c_{G_1,G_2}\big((\iota_{H/M}(t),1_H),(y_{\overline{h}})_{\overline{h}},x)\big)\big|_{G_2}\le |c_{\G,\La}(t,y_{1_{H/M}})|_{S_{K}}
\end{equation*}
and similarly for $c_{G_2,G_1}$. Notice that the pushforward of $\mu_{H/M}$ by $((y_{\overline{h}})_{\overline{h}\in H/M},x)\mapsto x$ (resp.~$((y_{\overline{h}})_{\overline{h}\in H/M},x)\mapsto y_{1_{H/M}}$) is $\mu$ (resp.~$\nu$). Thus the $\varphi$- and $\psi$-integrabilities of $c_{H,K}$ and $c_{K,H}$, and for $c_{\G,\La}$ and $c_{\La,\G}$, directly imply the same properties for $c_{G_1,G_2}$ and $c_{G_2,G_1}$ on generators, and thus on all elements of the groups by Remark~\ref{rm:checkongenerators}. This proves that we get a $(\varphi,\psi)$-integrable orbit equivalence coupling from $\G\wr_{H/M}H$ to $\La\wr_{K/N}K$.
\end{proof}

\subsection{Permutational lampshufflers and lampjugglers}

Analogously to permutational wreath products, we prove the following.

\begin{theorem}\label{thm:stabilityofcouplings+quantificationLampjugglers}
Let $H$ and $K$ be groups, with normal subgroups $M\lhd H$ and $N\lhd K$, and let $r\ge 1$ be an integer. If the pairs $(H,M)$ and $(K,N)$ are orbit equivalent (resp.~$\ld^{\infty}$ orbit equivalent), then $\juggler{r}{H,M}$ and $\juggler{r}{K,N}$ are orbit equivalent (resp.~$\ld^{\infty}$ orbit equivalent).

\smallskip

Furthermore, given non-decreasing maps $\varphi,\psi\colon\R_{+}\to\R_{+}$, and assuming $H$ and $K$ are finitely generated, if there exists a $(\varphi,\psi)$-integrable orbit equivalence coupling from $(H,M)$ to $(K,N)$, then the same holds from $\juggler{r}{H,M}$ to $\juggler{r}{K,N}$.
\end{theorem}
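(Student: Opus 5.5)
We follow the two-step strategy of the general method, in the form already used for Theorem~\ref{thm:stabilitypropertyforOEbetweenPWP}. Write $Q_H\defeq (H/M)\times\{1,\dots,r\}$ and $Q_K\defeq (K/N)\times\{1,\dots,r\}$, and let $G_1=\juggler{r}{H,M}$, $G_2=\juggler{r}{K,N}$. Given a coupling $(X,\mu)$ between the pairs, set $X_{H/M}\defeq [0,1]^{Q_H}\times X$ with measure $\mathrm{Leb}^{\otimes Q_H}\otimes\mu$. The group $H$ acts on $[0,1]^{Q_H}$ by translating indices, $h\cdot(\varepsilon_{(\overline g,i)})=(\varepsilon_{(\overline{gh},i)})$. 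A finitely supported permutation $\sigma$ of $Q_H$ acts by permuting coordinates, with the index twisted by inversion on the $H/M$-factor so as to be compatible with the right-translation convention. Concretely, if $\check\sigma$ denotes the conjugate of $\sigma$ under $(\overline g,i)\mapsto(\overline g^{-1},i)$, then $\sigma\cdot\varepsilon\defeq(\varepsilon_{\check\sigma^{-1}(q)})_q$.

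First step: check that $\sigma$ acts by a genuine group action, and that the compatibility condition~\eqref{eq:compatible} holds. The $H$-action on $Q_H$ corresponds, after inversion, to right multiplication, and this commutes with the left action $(h\cdot\sigma)(q)=h\sigma(h^{-1}q)$ up to the required twist. Granting this, $(\sigma,h)\cdot(\varepsilon,x)=(\sigma\cdot(h\cdot\varepsilon),h\cdot x)$ defines a pmp $G_1$-action. For freeness, note that almost every $\varepsilon$ has pairwise distinct coordinates and that $H\curvearrowright X$ is free. If $(\sigma,h)$ fixes $(\varepsilon,x)$, then $h=1$, and $\sigma$ fixes a point with distinct coordinates, hence $\sigma=\mathrm{id}$. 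The $G_2$-action on $X_{K/N}$ is defined in the same way.

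Second step: identify the two spaces by
\begin{equation*}
\theta\bigl((\varepsilon_{(\overline h,i)}),x\bigr)\defeq\Bigl(\bigl(\varepsilon_{(\kappa_{K/N,H/M}(\overline k^{-1},x)^{-1},i)}\bigr)_{(\overline k,i)\in Q_K},x\Bigr).
\end{equation*}
This map is a measure isomorphism, because for each $x$ the map $\overline k\mapsto\kappa_{K/N,H/M}(\overline k^{-1},x)^{-1}$ is a bijection $K/N\to H/M$. We then test $\theta$ on the generators of $G_1$.

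For $(\mathrm{id},s)$ with $s\in S_H$, the same pseudo-cocycle manipulation as in the proof of Theorem~\ref{thm:stabilitypropertyforOEbetweenPWP}, now with inverses inserted, should give
\begin{equation*}
\theta\bigl((\mathrm{id},s)\cdot z\bigr)=(\mathrm{id},c_{H,K}(s,x))\cdot\theta(z).
\end{equation*}

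For a transposition $\tau_{(1,i),(\overline s,j)}$, the coordinates swapped are indexed by $(1_{H/M},i)$ and $(\overline s^{\,-1},j)$ (or $(\overline s,j)$, depending on the twist). Under $\theta$, the coordinate $1_{H/M}$ corresponds to $1_{K/N}$, since $\kappa$ maps $1$ to $1$. The coordinate $\overline s^{\pm1}$ corresponds to some $\overline k=\kappa_{H/M,K/N}(\overline s,x)^{\pm1}$, which equals the class of $c_{H,K}(s,x)^{\pm 1}$. So the image is the action of the transposition $\tau_{(1,i),(\overline{c_{H,K}(s,x)},j)}\in\fsym{Q_K}$, possibly with an inverse on the second index.

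The symmetric computation for $\theta^{-1}$ and the generators of $G_2$ then yields equality of orbits.

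Quantification: for the $(\mathrm{id},s)$ generators, the image cocycle has length $|c_{H,K}(s,x)|_{S_K}$. For the transposition generators, we write the transposition between $(1,i)$ and $(\overline k,j)$ as $\rho\,\tau_{(1,i),(1,j')}\,\rho^{-1}$, where $\rho$ moves the arrow along a geodesic word for $k=c_{H,K}(s,x)$ in $K$ while carrying the label by successive neighbouring swaps. Its word length is therefore at most $C(|c_{H,K}(s,x)|_{S_K}+1)$ for a constant $C$ depending only on $r$. Since $\varphi$ is non-decreasing, integrability with constants $C_h$ passes from $c_{H,K}$ to $c_{G_1,G_2}$ on generators, hence to all of $G_1$ by Remark~\ref{rm:checkongenerators}. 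The same argument handles $c_{G_2,G_1}$ and $\psi$. If the pair coupling is $\ld^\infty$, the cocycle takes finitely many values, so $c_{G_1,G_2}$ does too.

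I expect the main obstacle to be the bookkeeping of inverses in the twisted actions. The conventions must be chosen so that both~\eqref{eq:compatible} holds and $\theta$ intertwines the transposition generators. I would first fix the twist by checking~\eqref{eq:compatible}, and only then verify the pseudo-cocycle identity in its inverted form, $\kappa(\overline k^{-1},s x)^{-1}$ versus $\kappa(\overline k^{-1}\kappa(\overline s,x)^{-1}\ldots)$.
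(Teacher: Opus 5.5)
Your plan is the paper's proof: the same space $[0,1]^{(H/M)\times\{1,\dots,r\}}\times X$, an identification $\theta$ built from the pseudo-cocycle, a check on generators, and a linear word-length bound for transpositions (Lemma~\ref{lem:wordlengthinlampshuffler}). However, you apply the inversion twice, so the identity $\theta((\mathrm{id},s)\cdot z)=(\mathrm{id},c_{H,K}(s,x))\cdot\theta(z)$ is false as you have set it up.

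With your right-translation action $h\cdot\varepsilon=(\varepsilon_{(\overline{gh},i)})$, the available relation is $\kappa_{K/N,H/M}(\overline k,sx)\,\overline s=\kappa_{K/N,H/M}(\overline k\,\kappa_{H/M,K/N}(\overline s,x),x)$. This relation intertwines the \emph{untwisted} map $\varepsilon\mapsto(\varepsilon_{(\kappa_{K/N,H/M}(\overline k,x),i)})_{(\overline k,i)}$, exactly as for wreath products. With your twisted $\theta$, write $\gamma_x(\overline k)=\kappa_{K/N,H/M}(\overline k^{-1},x)^{-1}$, $\overline c=\kappa_{H/M,K/N}(\overline s,x)$ and $z=(\varepsilon,x)$. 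Then $\theta((\mathrm{id},s)\cdot z)$ reads $\varepsilon$ at $\overline s\,\gamma_x(\overline c^{-1}\overline k)\,\overline s$ in coordinate $(\overline k,i)$. By contrast, $(\mathrm{id},c_{H,K}(s,x))\cdot\theta(z)$ reads it at $\gamma_x(\overline k\,\overline c)$, and the two disagree for a general coupling. The paper puts the inversion only into $\theta$: $H$ acts by left translation $\varepsilon_{(\overline{h^{-1}g},i)}$, and $\sigma$ acts plainly by $\varepsilon_{\sigma^{-1}(q)}$. Your two actions are conjugate to the paper's via $\varepsilon\mapsto\varepsilon\circ\iota$, where $\iota(\overline g,i)=(\overline g^{-1},i)$. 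So either setup works, as long as $\theta$ is matched to it.

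Second, in either consistent setup the transposition generator $\tau_{(1,i),(\overline s,j)}$ is sent to $\bigl(\tau_{(1_{K/N},i),(\kappa_{H/M,K/N}(\overline s^{-1},x)^{-1},j)},1_K\bigr)$. Concretely, with your actions and the untwisted $\theta$, the swapped coordinate $(\overline s^{\,-1},j)$ corresponds to $(\kappa_{H/M,K/N}(\overline s^{-1},x),j)$. This is the class of $c_{H,K}(s^{-1},x)=c_{H,K}(s,s^{-1}x)^{-1}$, not of $c_{H,K}(s,x)^{\pm1}$ as you claim. The error is harmless, for two reasons:
\begin{itemize}
\item Orbit inclusion only needs that a swap of two coordinates goes to a swap of two coordinates.
\item The image has length at most $C(|c_{H,K}(s^{-1},x)|_{S_K}+1)$. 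This is $\varphi$-integrable because the hypothesis covers every element of $H$, or alternatively because $x\mapsto s^{-1}x$ preserves $\mu$.
\end{itemize}
With these two corrections your argument is complete. Your freeness argument via pairwise distinct coordinates supplies a detail the paper leaves implicit.
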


For the proof of the second part of this theorem, we will need the following intermediate observation on word lengths of elements of $\juggler{r}{H}$. Recall from Section~\ref{sec:defHaloExample} that, if $S_{H}$ denotes a generating set, then
\begin{equation*}
    S_{\juggler{r}{H,M}} \defeq \left\lbrace(\tau_{(1_{H/M},i),(\overline{s},j)},1_{H}) : s\in S_{H}\cup S_{H}^{-1}, 1\le i,j\le r\right \rbrace\cup\left\lbrace(\text{id}_{H/M\times\lbrace1,\dots,r\rbrace},s) : s\in S_{H}\right\rbrace
\end{equation*}
is a generating set for $\juggler{r}{H}$, where $\overline{s}$ is the class of $s\in H$ modulo $M$. Note that $S_{H}$ is finite if and only if $S_{\juggler{r}{H}}$ is finite.

\begin{lemma}\label{lem:wordlengthinlampshuffler}
Let $H$ be a finitely generated group, with a finite generating set $S_{H}$. Let $M\lhd H$ be a normal subgroup. For every $g\in H$ and every integers $1\le i,j\le r$, we have
\begin{equation*}
    \big|(\tau_{(1_{H/M},i),(\overline{g},j)},1_{H})\big|_{S_{\juggler{r}{H,M}}}\le 4|g|_{S_{H}}.
\end{equation*}
\end{lemma}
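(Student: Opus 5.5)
We argue by induction on $n=|g|_{S_H}$, expressing the long transposition as a conjugate of a short one by a lamp-free element. Write $g=s_1\cdots s_n$ with $s_\ell\in S_H\cup S_H^{-1}$. The basic tool is the conjugation formula in the semidirect product: for $h\in H$ and $\sigma\in\fsym{H/M\times\{1,\dots,r\}}$,
\begin{equation*}
(\mathrm{id},h)(\sigma,1_H)(\mathrm{id},h)^{-1}=(h\cdot\sigma,1_H),\qquad h\cdot\tau_{a,b}=\tau_{ha,hb}.
\end{equation*}
Conjugation of a transposition by a transposition will also be used: $\tau_{a,b}\tau_{b,c}\tau_{a,b}=\tau_{a,c}$ for distinct $a,b,c$.

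For the inductive step, set $g'=s_1\cdots s_{n-1}$, so $|g'|_{S_H}\le n-1$. Take the point $b=(\overline{g'},j)$, and handle separately the degenerate cases where two of the three points coincide, for which the conclusion is immediate or follows from a shorter word. The transposition $\tau_{(1_{H/M},i),(\overline{g},j)}$ equals $\tau_{a,b}\tau_{b,c}\tau_{a,b}$ with $a=(1_{H/M},i)$ and $c=(\overline{g},j)$. Here $\tau_{b,c}=g'\cdot\tau_{(1_{H/M},j),(\overline{s_n},j)}$, so it is the conjugate of a generator by $(\mathrm{id},g')$ and has length at most $2(n-1)+1$. Naively, $\tau_{a,b}$ has length at most $4(n-1)$ by induction, and combining these gives roughly $8n$. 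That is too much, so this naive recursion would give an exponential-type blow-up and must be avoided.

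The sharper plan is to move a single label along the path directly. Let $p_\ell=\overline{s_1\cdots s_\ell}$. Then
\begin{equation*}
\tau_{(p_0,i),(p_n,j)}=\sigma^{-1}\,\tau_{(p_{n-1},j),(p_n,j)}\,\sigma,\qquad \sigma=\tau_{(p_{n-2},j),(p_{n-1},j)}\cdots\tau_{(p_0,i),(p_1,j)}.
\end{equation*}
This is the classical identity $(x_0\,x_n)=\sigma^{-1}(x_{n-1}\,x_n)\sigma$ with $\sigma$ a product of the adjacent transpositions $(x_0x_1),\dots,(x_{n-2}x_{n-1})$, valid after first reducing to a geodesic word so that the points are distinct, or else pruning repeated points. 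Each adjacent transposition $\tau_{(p_{\ell-1},\cdot),(p_\ell,\cdot)}$ is $s_1\cdots s_{\ell-1}$ applied to a generator.

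The key step is to realise the whole word as the arrow walking along the path. Follow it with a cursor in $H$: the element
\begin{equation*}
(\mathrm{id},s_1)\,t_1\,(\mathrm{id},s_2)\,t_2\cdots
\end{equation*}
with $t_\ell$ generators applies the successive transpositions at the cursor positions. One pass out of length $n$ performs $\sigma$, the swap at the end is one more letter, the return pass of length $n$ performs $\sigma^{-1}$, and the cursor finally comes back to $1_H$. This costs about $n+(n-1)$ letters out, $1$ letter at the far end, and $n+(n-1)$ letters back, for a total of at most $4n$.

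The main obstacle is the bookkeeping. One has to check that the order of the transpositions in this product matches the conjugation identity, since left versus right multiplication in the semidirect product decides whether the out pass yields $\sigma$ or $\sigma^{-1}$. One also has to handle the case where $s_\ell\in S_H^{-1}$, using $(\mathrm{id},s)^{-1}$, together with the degenerate cases where the $p_\ell$ repeat modulo $M$. These repeats are handled by choosing $g$ as a geodesic and, if needed, shortening the path to distinct classes, which only decreases length.
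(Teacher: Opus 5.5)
Your final argument is the paper's proof in unrolled form. The paper inducts on $|g|_{S_H}$ via $(\tau^{(\ell)},1_H)=(\sigma^{(\ell)},1_H)(\mathrm{id},h_\ell)(\tau^{(\ell+1)},1_H)(\mathrm{id},h_\ell^{-1})(\sigma^{(\ell)},1_H)$ at four letters per step, and iterating this gives exactly your out-and-back cursor word $W\,t_n\,W^{-1}$ of length $4n-3$. The one difference is that the paper carries the label $i$ along the path and switches to $j$ only at the last swap, while you switch at the first. The order check you flag does work out: the lamp part of the outward pass is $\sigma^{-1}$, so the conjugate is $\sigma^{-1}\tau_{(p_{n-1},j),(p_n,j)}\sigma=\tau_{(p_0,i),(p_n,j)}$ as required. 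Your pruning of repeated classes, which is legitimate by normality of $M$, is if anything more careful about degenerate coincidences than the paper, which ignores them.
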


\begin{proof}
We write $g=h_{1}\dots h_{n}$ with $n=|g|_{S_H}$ and $h_{1},\dots,h_{n}\in S_{H}\cup (S_{H})^{-1}$. We also set $\tau^{(\ell)}\defeq\tau_{(1_{H/M},i),(\overline{h_{\ell}\dots h_{n}},j)}$ and $\sigma^{(\ell)}\defeq\tau_{(1_{H/M},i),(\overline{h_{\ell}},i)}$, for any $1\le \ell\le n$. Then we have
\begin{equation*}
h_{\ell}\cdot\tau^{(\ell+1)}=\tau_{(\overline{h_{\ell}},i),(\overline{h_{\ell}\dots h_{n}},j)}
\end{equation*}
and
\begin{equation*}
    \sigma^{(\ell)}(h^{-1}_{\ell}\cdot\tau^{(\ell+1)})\sigma^{(\ell)}=\tau^{(\ell)},
\end{equation*}
which implies 
\begin{equation*}
    (\sigma^{(\ell)},1_{H})(\mathrm{id}_{H/M\times\{1,\dots, r\}},h_{\ell})(\tau^{(\ell+1)},1_H)(\mathrm{id}_{H/M\times\{1,\dots, r\}},h_{\ell}^{-1})(\sigma^{(\ell)},1_{H})=(\tau^{(\ell)},1_{H}).
\end{equation*}
Thus it follows that $|(\tau^{(\ell)},1_{H})|_{S_{\juggler{r}{H,M}}} \le 4+|(\tau^{(\ell+1)}, 1_{H})|_{S_{\juggler{r}{H,M}}}$, so by induction we get 
\begin{equation*}
\big|(\tau_{(1_{H/M},i),(\overline{g},j)},1_{H})\big|_{S_{\juggler{r}{H,M}}}=|(\tau^{(1)},1_{H})|_{S_{\juggler{r}{H,M}}}\le 4n=4|g|_{S_{H}}
\end{equation*}
and we are done.
\end{proof}

\begin{proof}[Proof of Theorem~\ref{thm:stabilityofcouplings+quantificationLampjugglers}]
We first define free pmp $\juggler{r}{H,M}$- and $\juggler{r}{K,N}$-actions, provided the $H$- and $K$-actions coming from a coupling $(X,\mu)$ between the pairs $(H,M)$ and $(K,N)$. Denote $G_{1}=\juggler{r}{H,M}$ and $G_{2}=\juggler{r}{K,N}$. Following the techniques we explained in Section~\ref{sec:generalmethodstability}, we first define pmp $H$- and $\fsym{H/M\times\lbrace1,\dots,r\rbrace}$-actions on $\left([0,1]^{H/M\times\lbrace 1,\ldots,r\rbrace},\mathrm{Leb}^{\otimes (H/M\times\lbrace 1,\ldots,r\rbrace)}\right)$ by
\begin{equation*}
    h\cdot (\varepsilon_{\overline{g},i})_{(\overline{g},i)\in H/M\times\lbrace 1,\dots,r\rbrace}\defeq (\varepsilon_{\overline{h^{-1}g},i})_{(g,i)\in H/M\times\lbrace1,\dots,r\rbrace}
\end{equation*}
and
\begin{equation*}
    \sigma\cdot(\varepsilon_{\overline{g},i})_{(\overline{g},i)\in H/M\times\lbrace 1,\dots,r\rbrace}\defeq (\varepsilon_{\sigma^{-1}(\overline{g},i)})_{(\overline{g},i)\in H/M\times\lbrace1,\dots,r\rbrace}
\end{equation*}
for every $h\in H$, $\sigma\in\fsym{H/M\times\lbrace 1,\dots,r\rbrace}$ and $(\varepsilon_{\overline{g},i})_{(\overline{g},i)\in H/M\times\lbrace1,\dots,r\rbrace}\in [0,1]^{H/M\times\lbrace1,\dots,r\rbrace}$. The compatibility condition~\eqref{eq:compatible} in Section~\ref{sec:generalmethodstability} is satisfied, so the formula 
\begin{equation*}
(\sigma,h)\cdot\left((\varepsilon_{\overline{g},i})_{(\overline{g},i)\in H/M\times\lbrace 1,\ldots,r\rbrace},x\right)\defeq \left((\varepsilon_{\overline{h}^{-1}\sigma^{-1}(\overline{g},i)})_{(\overline{g},i)\in H/M\times\lbrace1,\dots,r\rbrace},h\cdot x\right),
\end{equation*}
where $\overline{h}^{-1}\sigma^{-1}(\overline{g},i)=(\overline{h^{-1}g'},i')$ if $\sigma^{-1}(\overline{g},i)=(\overline{g'},i')$ (action of $H$ on $H/M\times\lbrace1,\dots,r\rbrace$), defines a free pmp $G_1$-action on $X_{H/M}\defeq [0,1]^{H/M}\times X$, endowed with the probability measure $\mu_{H/M}\defeq\mathrm{Leb}^{\otimes (H/M\times\{1,\ldots,r\})}\otimes\mu$. We similarly define a free pmp $G_{2}$-action on 
\begin{equation*}
  (X_{K/N},\mu_{K/N})\defeq \big([0,1]^{K/N\times\lbrace 1,\dots,r\rbrace}\times X,\mathrm{Leb}^{\otimes (K/N\times\lbrace 1,\ldots,r\rbrace)}\otimes\mu\big).
\end{equation*}
Now assume that $(X,\mu)$ is an orbit equivalence coupling between the normal pairs $(H,M)$ and $(K,N)$. We want to prove that $(X_{H/M},\mu_{H/M})$ and $(X_{K/N},\mu_{K/N})$ are orbit equivalent couplings between $G_{1}$ and $G_{2}$, via the identification
\begin{equation*}
    \theta\colon\begin{array}{ccl}
    [0,1]^{H/M\times\{1,\ldots,r\}}\times X &\longrightarrow &[0,1]^{K/N\times\lbrace1,\dots,r\rbrace}\times X\\
    \big((\varepsilon_{\overline{h},i})_{(\overline{h},i)\in H/M\times\lbrace1,\dots,r\rbrace},x\big)&\longmapsto & \big((\varepsilon_{\kappa_{K/N,H/M}(\overline{k}^{-1},x)^{-1},i})_{(\overline{k},i)\in K/N\times\lbrace1,\dots,r\rbrace},x\big)
\end{array}.
\end{equation*}
Given generating sets $S_{H}$ and $S_{K}$ of $H$ and $K$ (not necessarily finite), we get generating sets $S_{G_{1}}$ and $S_{G_{2}}$ of $G_{1}$ and $G_{2}$ respectively, as described before Lemma~\ref{lem:wordlengthinlampshuffler} (note that these sets are not necessarily finite for now). Let $((\varepsilon_{\overline{h},i})_{(\overline{h},i)\in H/M\times\lbrace1,\dots,r\rbrace},x)\in X_{H/M}$. Given $s\in S_{H}$, we have
\begin{align*}
    \theta\left((\mathrm{id},s)\cdot ((\varepsilon_{\overline{h},i})_{(\overline{h},i)\in H/M\times\lbrace1,\dots,r\rbrace},x)\right)&=\theta\left((\varepsilon_{\overline{s}^{-1}\overline{h},i})_{(\overline{h},i)\in H/M\times\lbrace1,\dots,r\rbrace},s\cdot x\right)\\
    &=\big((\varepsilon_{\overline{s}^{-1}\kappa_{K/N,H/M}(\overline{k}^{-1},s\cdot x)^{-1},i})_{(\overline{k},i)\in K/N\times\lbrace1,\dots,r\rbrace},s\cdot x\big).
\end{align*}
The indices are equal to
\begin{align*}
    \overline{s}^{-1}\kappa_{K/N,H/M}(\overline{k}^{-1},s\cdot x)^{-1}&=\left (\kappa_{K/N,H/M}(\overline{k}^{-1},s\cdot x)\overline{s}\right )^{-1}\\
    &=\kappa_{K/N,H/M}\big(\overline{k}^{-1}\kappa_{H/M,K/N}(\overline{s},x),x\big)^{-1}\\
    &=\kappa_{K/N,H/M}\big((\kappa_{H/M,K/N}(\overline{s},x)^{-1}\overline{k})^{-1},x\big)^{-1}
\end{align*}
where we use the fact that $\kappa_{H/M,K/N}(\overline{s},x)$ is the class of $c_{H,K}(s,x)$ modulo $N$. This gives
\begin{align*}
    &\theta\left((\mathrm{id},s)\cdot ((\varepsilon_{\overline{h},i})_{(\overline{h},i)\in H/M\times\lbrace1,\dots,r\rbrace},x)\right)\\
    &=\big((\varepsilon_{\kappa_{K/N,H/M}(\overline{c_{H,K}(s,x)}^{-1}\overline{k})^{-1},x)^{-1},i}\big)_{(\overline{k},i)\in K/N\times\lbrace1,\dots,r\rbrace},c_{H,K}(s,x)\cdot x)\\
    &=(\mathrm{id},c_{H,K}(s,x))\cdot \big((\varepsilon_{\kappa_{H/M,K/N}(\overline{k}^{-1},x)^{-1},i})_{(\overline{k},i)\in K/N\times\lbrace1,\dots,r\rbrace},x\big)\\
    &=(\mathrm{id},c_{H,K}(s,x))\cdot\theta \left((\varepsilon_{\overline{h},i})_{(\overline{h},i)\in H/M\times\lbrace1,\dots,r\rbrace},x\right).
\end{align*}
For the sequel, we set the following notations: $\kappa_{K/N,H/M}((\overline{k},i),x)\defeq (\kappa_{K/N,H/M}(\overline{k},x),i)$ and $(\overline{k},i)^{-1}\defeq (\overline{k}^{-1},i)$. Then, for every $s\in S_{H}\cup S_{H}^{-1}$ and $j,\ell\in\lbrace1,\ldots,r\rbrace$, denoting $\tau=\tau_{(1_{H/M},j),(\overline{s},\ell)}$ (satisfying $\tau^{-1}=\tau$), we have
\begin{align*}
    &\theta\left((\tau,1_{H})\cdot ((\varepsilon_{\overline{h},i})_{(\overline{h},i)\in H/M\times\lbrace1,\dots,r\rbrace},x)\right)\\
    &=\theta\left( (\varepsilon_{\tau(\overline{h},i)})_{(\overline{h},i)\in H/M\times\lbrace1,\dots,r\rbrace},x\right)\\
    &=\big((\varepsilon_{\tau(\kappa_{K/N,H/M}(\overline{k}^{-1},x)^{-1},i)})_{(\overline{k},i)\in K/N\times\lbrace1,\dots,r\rbrace},x\big).
\end{align*}
Depending on whether or not $(\overline{k},i)$ lies in $\lbrace(1_{K/N},j),(\kappa_{H/M,K/N}(\overline{s}^{-1},x)^{-1},\ell)\rbrace$, we can prove that
\begin{equation*}
    \tau\big(\kappa_{K/N,H/M}(\overline{k}^{-1},x)^{-1},i\big)=\kappa_{K/N,H/M}\big([\tau_{(1_{K},j),(\kappa_{H/M,K/N}(\overline{s}^{-1},x)^{-1},\ell)}(\overline{k},i)]^{-1},x\big)^{-1}.
\end{equation*}
This finally gives
\begin{align*}
    &\theta\left((\tau,1_{H})\cdot ((\varepsilon_{\overline{h},i})_{(\overline{h},i)\in H/M\times\lbrace1,\dots,r\rbrace},x)\right)\\
    &=(\tau_{(1_{K/N},j),(\kappa_{H/M,K/N}(\overline{s}^{-1},x)^{-1},\ell)},1_K)\cdot \big((\varepsilon_{\kappa_{K/N,H/M}(\overline{k}^{-1},x)^{-1},i})_{(\overline{k},i)\in K/N\times\lbrace1,\dots,r\rbrace},x\big)\\
    &=(\tau_{(1_{K/N},j),(\kappa_{H/M,K/N}(\overline{s}^{-1},x)^{-1},\ell)},1_K)\cdot \theta\left ((\varepsilon_{\overline{h},i})_{(\overline{h},i)\in H/M\times\lbrace1,\dots,r\rbrace},x\right).
\end{align*}

\smallskip

Thus, we have proved $\theta(S_{G_{1}}\cdot z)\subset G_{2}\cdot\theta(z)$ for every $z\in X_{H/M}$ and we similarly get $S_{G_{2}}\cdot \theta(z)\subset \theta(G_1\cdot z)$, which is enough to prove the orbit equality for the $G_{1}$- and $G_{2}$-actions. The last computations in fact provide the cocycles on the generators: for every $s\in S_{H}\cup S_{H}^{-1}$, one has
\begin{equation*}
    c_{G_{1},G_{2}}\big((\mathrm{id},s),((\varepsilon_{\overline{h},i})_{(\overline{h},i)\in H/M\times\{1,\ldots,r\}},x)\big)=(\mathrm{id},c_{H,K}(s,x))
\end{equation*}
and
\begin{equation*}
    c_{G_{1},G_{2}}\big((\tau_{(1_{H/M},j),(\overline{s},\ell)},1_H),((\varepsilon_{\overline{h},i})_{(\overline{h},i)\in H/M\times\{1,\ldots,r\}},x)\big)=\big(\tau_{(1_{K/N},j),(\kappa_{H/M,K/N}(\overline{s}^{-1},x)^{-1},\ell)},1_{K}\big)
\end{equation*}
and similarly for the cocycle $c_{G_{2},G_{1}}$. From this, we deduce that the coupling is $\ld^{\infty}$ if $(X,\mu)$ is an $\ld^{\infty}$ orbit equivalence coupling.

\smallskip

Let us finally assume that $H$ and $K$ are finitely generated (i.e. $S_{H}$ and $S_{K}$ are finite) and that $(X,\mu)$ is a $(\varphi,\psi)$-integrable orbit equivalence coupling from $(H,M)$ to $(K,N)$. First, we easily get
\begin{equation*}
    \big|c_{G_{1},G_{2}}\big((\mathrm{id},s),((\varepsilon_{\overline{h},i})_{(\overline{h},i)\in H/M\times\{1,\ldots,r\}},x)\big)\big|_{G_{2}}=|c_{H,K}(s,x)|_{S_{K}}
\end{equation*}
and
\begin{equation*}
    \big|c_{G_{1},G_{2}}\big((\tau_{(1_{H/M},j),(\overline{s},\ell)},1_H),((\varepsilon_{\overline{h},i})_{(\overline{h},i)\in H/M\times\{1,\ldots,r\}},x)\big)\big|_{G_{2}}\le  4|c_{H,K}(s^{-1},x)|_{S_{K}},
\end{equation*}
using Lemma~\ref{lem:wordlengthinlampshuffler}, and similarly for $c_{G_{2},G_{1}}$. Thus the $\varphi$- and $\psi$-integrabilities of $c_{H,K}$ and $c_{K,H}$ directly imply the same properties for $c_{G_{1},G_{2}}$ and $c_{G_{2},G_{1}}$ on generators, and thus on all elements of the groups by Remark~\ref{rm:checkongenerators}, so we get a $(\varphi,\psi)$-integrable orbit equivalence coupling from $\juggler{r}{H,M}$ to $\juggler{r}{K,N}$.
\end{proof}

\subsection{Permutational lampcloners}

Analogously to permutational wreath products and lampjugglers, we prove the following.

\begin{theorem}\label{thm:stabilityofcouplings+quantificationLampcloners}
Let $H$ and $K$ be groups, with normal subgroups $M\lhd H$ and $N\lhd K$, and let $\field$ be a finite field. If the normal pairs $(H,M)$ and $(K,N)$ are orbit equivalent (resp.~$\ld^{\infty}$ orbit equivalent), then $\cloner{H,M}$ and $\cloner{K,N}$ are orbit equivalent (resp.~$\ld^{\infty}$ orbit equivalent).

\smallskip

Furthermore, given non-decreasing maps $\varphi,\psi\colon\R_{+}\to\R_{+}$, and assuming $H$ and $K$ are finitely generated, if there exists a $(\varphi,\psi)$-integrable orbit equivalence coupling from $(H,M)$ to $(K,N)$, then the same holds from $\cloner{H,M}$ to $\cloner{K,N}$.
\end{theorem}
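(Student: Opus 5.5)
I will follow the two-step scheme of Section~\ref{sec:generalmethodstability}, exactly as in the proofs of Theorems~\ref{thm:stabilitypropertyforOEbetweenPWP} and~\ref{thm:stabilityofcouplings+quantificationLampjugglers}. Write $G_1=\cloner{H,M}$, $G_2=\cloner{K,N}$, and let $(X,\mu)$ be a coupling between the normal pairs. Since $\field$ is finite, $V_{H/M}$ (finitely supported vectors) is countable. Set $Z_{H/M}=[0,1]^{V_{H/M}}$ with the product Lebesgue measure.

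\textbf{Step 1: the $G_1$-action.} The group $H$ acts linearly on $V_{H/M}$ by $h\cdot e_{\overline g}=e_{\overline{hg}}$, and $\mathrm{FGL}(H/M)$ acts on $V_{H/M}$ tautologically. Both act on $Z_{H/M}$ by precomposition with the inverse:
\begin{equation*}
(A\cdot\varepsilon)_v=\varepsilon_{A^{-1}v}.
\end{equation*}
The compatibility condition~\eqref{eq:compatible} holds because $h\cdot A$ is conjugation of $A$ by the linear map induced by $h$. Setting
\begin{equation*}
(A,h)\cdot(\varepsilon,x)=(A\cdot(h\cdot\varepsilon),hx)
\end{equation*}
then defines a pmp action of $G_1$ on $Z_{H/M}\times X$. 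It is free because the $H$-action on $X$ is free, and a nontrivial $A$ moves some nonzero vector $v$ to $Av\neq v$, so $\{\varepsilon:\varepsilon_{A^{-1}v}=\varepsilon_v\}$ is null. The same construction gives a free pmp action of $G_2$ on $Z_{K/N}\times X$.

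\textbf{Step 2: the identification.} For a.e.\ $x$ the map
\begin{equation*}
\beta_x\colon H/M\to K/N,\qquad \overline h\mapsto\kappa_{H/M,K/N}(\overline h^{-1},x)^{-1}
\end{equation*}
is a bijection. It extends linearly to an isomorphism $\Phi_x\colon V_{H/M}\to V_{K/N}$ sending $e_{\overline h}$ to $e_{\beta_x(\overline h)}$; this is the inverse of the map $\varphi_{K/N,H/M}(\cdot,x)$ described in Section~\ref{sec:generalmethodstability}. Define
\begin{equation*}
\theta(\varepsilon,x)=\big((\varepsilon_{\Phi_x^{-1}w})_{w\in V_{K/N}},x\big).
\end{equation*}
Fibrewise this is a coordinate permutation, so $\theta$ is a measure-preserving bijection. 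I then compute $\theta$ on the three types of generators.

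For $(\mathrm{id},s)$, the pseudo-cocycle identity (used exactly as in the lampjuggler proof) gives $\Phi_{sx}\circ s=c_{H,K}(s,x)\circ\Phi_x$ on $V_{H/M}$. Hence
\begin{equation*}
\theta((\mathrm{id},s)\cdot z)=(\mathrm{id},c_{H,K}(s,x))\cdot\theta(z).
\end{equation*}
For any $A\in\mathrm{FGL}(H/M)$, one has $\theta((A,1)\cdot z)=(\Phi_xA\Phi_x^{-1},1)\cdot\theta(z)$, where $\Phi_xA\Phi_x^{-1}\in\mathrm{FGL}(K/N)$. For a diagonal generator $\delta_{1}(\lambda)$, its conjugate is $\delta_{1}(\lambda)$ itself, since $\beta_x(1_{H/M})=1_{K/N}$. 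For a transvection $\tau_{1,\overline s}(\lambda)$, the conjugate is $\tau_{1,\beta_x(\overline s)}(\lambda)$ with
\begin{equation*}
\beta_x(\overline s)=\kappa_{H/M,K/N}(\overline{s}^{-1},x)^{-1},
\end{equation*}
the same element that appeared in the lampjuggler proof. Together with the symmetric computation for $\theta^{-1}$, this proves orbit equality and yields the cocycle on generators. The $\ld^\infty$ statement follows immediately.

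\textbf{Step 3: integrability, the main obstacle.} I need an analogue of Lemma~\ref{lem:wordlengthinlampshuffler}:
\begin{equation*}
\big|(\tau_{1_{H/M},\overline g}(\lambda),1_H)\big|_{S_{\cloner{H,M}}}\le C|g|_{S_H}.
\end{equation*}
The first approach I would try is induction along a word $g=h_1\cdots h_n$. Conjugating by $(\mathrm{id},h_\ell)$ transports $\tau_{1,\overline{h_{\ell+1}\cdots h_n}}(\lambda)$ to $\tau_{\overline{h_\ell},\overline{h_\ell\cdots h_n}}(\lambda)$. To move the first index back to $1$, I would use the commutator identity $[\tau_{pq}(a),\tau_{qr}(b)]=\tau_{pr}(ab)$ for distinct $p,q,r$: take $p=1$, $q=\overline{h_\ell}$ with a neighbouring transvection $\tau_{1,\overline{h_\ell}}(1)$ (length $1$), and $r=\overline{g}$. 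Each step then costs a bounded number of letters, giving $C|g|$ with $C$ an absolute constant (roughly $4$ to $8$). Degenerate cases where indices coincide need separate treatment. In particular, if $\overline g$ lands on $1$ or $\overline{h_\ell}$, the transvection is trivial or already a generator, and I would handle this by possibly skipping a letter. With this lemma, integrability follows exactly as before: the cocycle of each generator has length at most $C|c_{H,K}(s^{\pm1},x)|_{S_K}$, the constant $C$ is absorbed in $C_h$, and Remark~\ref{rm:checkongenerators} concludes.
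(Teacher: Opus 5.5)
\textbf{There is a genuine gap in Step 3.} Your Steps 1 and 2 are the paper's construction: your $\Phi_x$ is the inverse of the paper's $\varphi_{K/N,H/M}(\cdot,x)$, and the generator computations agree. So the orbit equivalence and $\ld^{\infty}$ parts are fine.

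The gap is that the commutator recursion does not give a linear bound. Write
\[
\tau_{1,r}(\lambda)=\tau_{1,\overline{h_\ell}}(1)\,\tau_{\overline{h_\ell},r}(\lambda)\,\tau_{1,\overline{h_\ell}}(-1)\,\tau_{\overline{h_\ell},r}(-\lambda),\qquad r=\overline{h_\ell\cdots h_n}.
\]
Here the recursively obtained element $\tau_{\overline{h_\ell},r}(\pm\lambda)=h_\ell\cdot\tau_{1,\overline{h_{\ell+1}\cdots h_n}}(\pm\lambda)$ appears twice. Let $T_\ell$ denote the length of $(\tau_{1,\overline{h_\ell\cdots h_n}}(\lambda),1_H)$. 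Then you only get $T_\ell\le 2T_{\ell+1}+6$, i.e.\ $T_1\le 7\cdot 2^{n-1}-6$, an exponential bound rather than $C|g|_{S_H}$. The doubling is intrinsic to a commutator, since both factors and their inverses occur. Even a balanced divide-and-conquer use of the same identity only gives $O(|g|^2)$.

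A superlinear bound does not suffice. You would need integrability of $\varphi\bigl(2^{|c_{H,K}(s^{-1},x)|_{S_K}}\bigr)$, or of $\varphi\bigl(|c_{H,K}(s^{-1},x)|_{S_K}^2\bigr)$. Neither follows from the hypothesis for general $\varphi$; $\varphi(t)=t^p$ already fails.

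\textbf{The fix.} Move the first index by conjugating with a bounded-length element that permutes basis vectors, so the long element occurs only once per step. The paper takes
\[
\sigma^{(\ell)}=\delta_{1}(-1)\,\tau_{1,\overline{h_\ell}}(-1)\,\tau_{\overline{h_\ell},1}(1)\,\tau_{1,\overline{h_\ell}}(-1).
\]
This swaps $e_{1_{H/M}}$ and $e_{\overline{h_\ell}}$ and fixes the other basis vectors. Since $\tau_{\overline{h_\ell},1}(1)=h_\ell\cdot\tau_{1,\overline{h_\ell}^{-1}}(1)$, it has length at most $6$. Then
\[
\sigma^{(\ell)}\,\bigl(h_\ell\cdot\tau_{1,\overline{h_{\ell+1}\cdots h_n}}(\lambda)\bigr)\,\sigma^{(\ell)}=\tau_{1,\overline{h_\ell\cdots h_n}}(\lambda)
\]
as soon as $\overline{h_\ell\cdots h_n}\notin\{1_{H/M},\overline{h_\ell}\}$. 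This gives $T_\ell\le T_{\ell+1}+14$ and $T_1\le 14|g|_{S_H}$, which is Lemma~\ref{lem:wordlengthinlampcloner}.

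Your degenerate cases disappear if $h_1\cdots h_n$ lifts a geodesic word for $\overline g$ in $H/M$. Then $n\le|g|_{S_H}$ and no nonempty suffix is trivial modulo $M$. With this linear bound, the rest of your Step 3 goes through as you describe.
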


For the proof, we will need the following intermediate observation on word lengths of elements of $\cloner{H,M}$. Recall from Section~\ref{sec:defHaloExample} that, if $S_{H}$ denotes a generating set, then
\begin{equation*}
    S_{\cloner{H,M}}\defeq\lbrace (\delta_{1_{H/M}}(\lambda), 1_{H}) : \lambda \in \field\setminus\lbrace 0\rbrace\rbrace\cup\lbrace (\tau_{1_{H/M},\overline{s}}(\lambda),1_{H}) : s\in S_{H}\cup S_{H}^{-1}\rbrace \cup\lbrace (\text{id}, s) : s\in S_{H}\rbrace
\end{equation*}
is a generating set for $\cloner{H,M}$, constructed from a finite generating set $S_{H}$ of $H$. Note that $S_{H}$ is finite if and only if $S_{\cloner{H,M}}$ is finite.

\begin{lemma}\label{lem:wordlengthinlampcloner}
Let $H$ be a finitely generated group. For every $g\in H$ and every $\lambda\in\field$, we have
\begin{equation*}
    \big|(\tau_{1_{H/M},\overline{g}}(\lambda),1_{H})\big|_{S_{\cloner{H}}}\le 14|g|_{S_{H}}.
\end{equation*}
\end{lemma}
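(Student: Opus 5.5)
We will mimic the proof of Lemma~\ref{lem:wordlengthinlampshuffler}: argue by induction on $n=|g|_{S_H}$, writing $g=h_{1}\cdots h_{n}$ with $h_{\ell}\in S_{H}\cup S_{H}^{-1}$. We show that each transvection $\tau_{1_{H/M},\overline{h_{\ell}\cdots h_{n}}}(\lambda)$ is obtained from $\tau_{1_{H/M},\overline{h_{\ell+1}\cdots h_{n}}}(\lambda')$, for a suitable $\lambda'\in\field\setminus\{0\}$, by a conjugation and a commutator that together cost at most $14$ generators. The case $\lambda=0$ is trivial, since the element is then the identity. We may also assume $\overline{g}\neq 1_{H/M}$, otherwise the transvection is not defined (or is a diagonal matrix, of length at most $1$).

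The key identity is the standard commutator relation for elementary matrices. For pairwise distinct $p,q,r$ we have
\begin{equation*}
[\tau_{pq}(\alpha),\tau_{qr}(\beta)]=\tau_{pr}(\alpha\beta).
\end{equation*}
Translation is the other tool. Conjugating $(\tau_{pq}(\lambda),1_H)$ by $(\mathrm{id},h)$ gives $(\tau_{\overline{h}p,\overline{h}q}(\lambda),1_H)$, as in the shuffler case.

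Set $p=1_{H/M}$, $q=\overline{h_{\ell}}$ and $r=\overline{h_{\ell}\cdots h_{n}}$. The element $(\tau_{qr}(1),1_H)$ equals the conjugate of $(\tau_{1_{H/M},\overline{h_{\ell+1}\cdots h_n}}(1),1_H)$ by $(\mathrm{id},h_{\ell})$, which costs $2$ extra letters. The element $(\tau_{pq}(\lambda),1_H)$ is a single generator. The commutator then expresses $\tau_{pr}(\lambda)$ using two copies of each factor. Writing $L_{\ell}$ for the maximum over $\lambda$ of the length of $\tau_{1_{H/M},\overline{h_{\ell}\cdots h_n}}(\lambda)$, we get
\begin{equation*}
L_{\ell}\le 2\cdot 1+2(L_{\ell+1}+2)=2L_{\ell+1}+6.
\end{equation*}
This recursion is exponential, which is far too weak.

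So the main obstacle is to avoid the doubling. To fix this, we instead use the permutation-like move, as in Lemma~\ref{lem:wordlengthinlampshuffler}. Over any field, a "swap up to sign" of basis vectors $e_p,e_q$ is the product
\begin{equation*}
w_{pq}=\tau_{pq}(1)\tau_{qp}(-1)\tau_{pq}(1),
\end{equation*}
which costs $3$ generators when $q$ is a neighbour of $p$; here $\tau_{qp}(-1)$ is obtained by conjugating a generator by $(\mathrm{id},h_\ell)$, so it costs $3$ letters. Conjugating $\tau_{qr}(\mu)$ by $w_{pq}$ yields $\tau_{pr}(\pm\mu)$ when $r\notin\{p,q\}$. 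Hence
\begin{equation*}
L_{\ell}\le 2|w_{pq}|+L_{\ell+1}+2,
\end{equation*}
where the final $+2$ comes from the translation by $h_{\ell}$. We have $|w_{pq}|\le 1+3+1=5$, giving $L_\ell\le L_{\ell+1}+12$. Adjusting $\mu=\pm\lambda$ handles the sign.

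Degenerate cases remain. When $r$ coincides with $p$ or $q$ (the path revisits a coset), we either shorten the word, since a geodesic in $H$ may still project to a non-injective path in $H/M$, or handle that step with an extra diagonal generator $\delta(-1)$. This accounts for the slack from $12$ up to $14$. The base case $n=1$ is a single generator. Induction then yields the bound $14|g|_{S_H}$.
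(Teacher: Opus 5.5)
Your final argument is essentially the paper's: induct along a word $g=h_1\cdots h_n$ and obtain $\tau_{1_{H/M},\overline{h_\ell\cdots h_n}}(\lambda)$ from the $h_\ell$-translate of the previous transvection by conjugating with a short swap supported on $\{1_{H/M},\overline{h_\ell}\}$; the paper uses the exact swap $\delta_{1_{H/M}}(-1)\tau_{1_{H/M},\overline{h_\ell}}(-1)\tau_{\overline{h_\ell},1_{H/M}}(1)\tau_{1_{H/M},\overline{h_\ell}}(-1)$ of length $6$ (hence $14$ per step), while your signed swap of length $5$, with the sign absorbed by maximising over $\lambda$, gives $12$. Your paragraph on degenerate cases is vaguer than necessary and does not need $\delta(-1)$: if $\overline{h_{\ell+1}\cdots h_n}=1_{H/M}$, the $\ell$-th transvection is itself a generator, and if $\overline{h_\ell}=1_{H/M}$, the step costs nothing; the paper does not address these cases at all.
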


\begin{proof}
We write $g=h_{1}\dots h_{n}$ with $n=|g|_{S_H}$ and $h_{1},\dots,h_{n}\in S_{H}\cup S_{H}^{-1}$. We also set, for any $1\le \ell\le n$, $\tau^{(\ell)}\defeq\tau_{1_{H/M},\overline{h_{\ell}\dots h_{n}}}(\lambda)$ and $\sigma^{(\ell)}$ the linear automorphism which acts on the canonical basis of $V_{H}$ in the following way: it swaps $e_{1_{H/M}}$ and $e_{\overline{h_{\ell}}}$, and it fixes $e_{\overline{h}}$ for any other $\overline{h}\in (H/M)\setminus\lbrace 1_{H/M},\overline{h_{\ell}}\rbrace$. Then we have
\begin{equation*}
h_{\ell}\cdot\tau^{(\ell+1)}=\tau_{\overline{h_{\ell}},\overline{h_{\ell}\dots h_{n}}}(\lambda)
\end{equation*}
and
\begin{equation*}
    \sigma^{(\ell)}(h_{\ell}\cdot\tau^{(\ell+1)})\sigma^{(\ell)}=\tau^{(\ell+1)}
\end{equation*}
which implies
\begin{equation*}
    (\sigma^{(\ell)},1_{H})(\mathrm{id}_{H/M},h_{\ell})(\tau^{(\ell+1)},1_{H})(\mathrm{id}_{H/M},h_{\ell}^{-1})(\sigma^{(\ell)},1_{H})=(\tau^{(\ell+1)},1_{H}).
\end{equation*}
Moreover, we have $|(\sigma^{(\ell)},1_{H})|_{\cloner{H,M}}\le 6$. Indeed, $\sigma^{(\ell)}$ can be written as $\delta_{1_{H/M}}(-1)\tau_{1_{H/M},\overline{h_{\ell}}}(-1)\tau_{\overline{h_{\ell}},1_{H/M}}(1)\tau_{1_{H/M},\overline{h_{\ell}}}(-1)$, with $\tau_{\overline{h_{\ell}},1_{H/M}}(1)=h_{\ell}\cdot\tau_{1_{H/M},\overline{h_{\ell}}^{-1}}(1)$, so we get
\begin{equation*}
    (\sigma^{(\ell)},1_{H})=(\delta_{1_{H/M}}(-1),1_{H})(\tau_{1_{H/M},\overline{h_{\ell}}}(-1),1_{H})(\mathrm{id},h_{\ell})(\tau_{1_{H/M},\overline{h_{\ell}}^{-1}}(1)(\mathrm{id},h_{\ell}^{-1})(\tau_{1_{H/M},\overline{h_{\ell}}}(-1),1_{H})
\end{equation*}
and it has length $\le 6$.

\smallskip

Thus it follows that $|(\tau^{(\ell)},1_{H})|_{S_{\cloner{H,M}}} \le 14+|(\tau^{(\ell+1)}, 1_{H})|_{S_{\cloner{H,M}}}$, and by induction we get \begin{equation*}
\big|(\tau_{1_{H/M},\overline{g}}(\lambda), 1_{H})\big|_{S_{\cloner{H,M}}}=|(\tau^{(1)},1_{H})|_{S_{\cloner{H,M}}}\le 14n=14|g|_{S_{H}}
\end{equation*}
as claimed.
\end{proof}

\begin{proof}[Proof of Theorem~\ref{thm:stabilityofcouplings+quantificationLampcloners}]
Given generating sets $S_{H}$ and $S_{K}$ (not necessarily finite) of $H$ and $K$, we get generating subsets $S_{\cloner{H,M}}$ and $S_{\cloner{K,N}}$ of $\cloner{H,M}$ and $\cloner{K,N}$, as described before Lemma~\ref{lem:wordlengthinlampcloner}. We denote $G_{1}=\cloner{H,M}$ and $G_{2}=\cloner{H,N}$.

\smallskip
    
We first define  pmp $G_{1}$- and $G_{2}$-actions provided the $H$- and $K$-actions on $(X,\mu)$ coming with a coupling $(X,\mu)$ between the pairs $(H,M)$ and $(K,N)$. As in the proof of Therem~\ref{thm:stabilityofcouplings+quantificationLampjugglers}, we follow the techniques we explained in Section~\ref{sec:generalmethodstability}. Let us set
\begin{equation*}
    (X_{H/M},\mu_{H/M})\defeq ([0,1]^{V_{H/M}}\times X,\mathrm{Leb}^{\otimes V_{H/M}}\otimes\mu).
\end{equation*}
Then $H$ and $\mathrm{FGL}(H/M)$ act on $[0,1]^{V_{H/M}}$ in the following way:
\begin{equation*}
    \sigma\cdot(\varepsilon_v)_{v\in V_{H/M}}\defeq (\varepsilon_{\sigma^{-1}(v)})_{v\in V_{H/M}}
\end{equation*}
and
\begin{equation*}
        h\cdot (\varepsilon_v)_{v\in V_{H/M}}\defeq (\varepsilon_{h\cdot v})_{v\in V_{H/M}}
\end{equation*}
for every $\sigma\in\mathrm{FGL}(H/M)$, $h\in H$ and $(\varepsilon_v)_{v\in V_{H/M}}\in [0,1]^{V_{H/M}}$, using the action of $H$ on $V_{H/M}$ given by
\begin{equation*}
    h\cdot\sum_{\overline{g}\in H/M}{\mu_{\overline{g}} e_{\overline{g}}}\defeq\sum_{\overline{g}\in H/M}{\mu_{\overline{g}}e_{\overline{h^{-1}g}}}.
\end{equation*}
These $H$- and $\mathrm{FGL}(H/M)$-actions are compatible in the sense of~\eqref{eq:compatible} (see Section~\ref{sec:generalmethodstability}), and we get a free pmp $G_{1}$-action on $(X_{H/M},\mu_{H/M})$ defined by
\begin{equation*}
    (\sigma,h)\cdot ((\varepsilon_v)_{v\in V_{H/M}},x)\defeq \big((\varepsilon_{h\cdot\sigma^{-1}(v)})_{v\in V_{H/M}},h\cdot x\big).
\end{equation*}
We similarly define a free pmp $G_{2}$-action on $(X_{K/N},\mu_{K/N})\defeq ([0,1]^{V_{K/N}}\times X,\mathrm{Leb}^{\otimes V_{K/N}}\otimes \mu)$. Now assume that $(X,\mu)$ is an orbit equivalence coupling between the normal pairs $(H,M)$ and $(K,N)$. We want to prove that $(X_{H/M},\mu_{H/M})$ and $(X_{K/N},\mu_{K/N})$ are orbit equivalent couplings between $G_{1}$ and $G_{2}$, via the identification 
\begin{equation*}
    \theta\colon\begin{array}{ccl}
    [0,1]^{V_{H/M}}\times X &\longrightarrow &[0,1]^{V_{K/N}}\times X\\
    ((\varepsilon_{v})_{v\in V_{H/M}},x)&\longmapsto & \big((\varepsilon_{\varphi_{K/M,H/N}(w,x)})_{w\in V_{K/N}},x\big)
\end{array},
\end{equation*}
where $\varphi_{K/M,H/N}\colon V_{K/N}\to V_{H/M}$ is a bijection mapping $\sum_{\overline{k}\in K/N}{\mu_{\overline{k}}e_{\overline{k}}}\in V_{K/N}$ to $\sum_{\overline{h}\in H/M}{\mu_{\kappa_{H/M,K/N}(\overline{h}^{-1},x)^{-1}}e_{\overline{h}}}$. For every $s\in S_{H}\cup S_{H}^{-1}$, one has
\begin{equation*}
    \theta\left((\mathrm{id},s)\cdot ((\varepsilon_{v})_{v\in V_{H/M}},x)\right)=(\mathrm{id},c_{H,K}(s,x))\cdot\theta ((\varepsilon_{v})_{v\in V_{H/M}},x)
\end{equation*}
as well as
\begin{equation*}
    \theta\left((\tau_{1_{H/M},\overline{s}}(\lambda),1_{H})\cdot ((\varepsilon_{v})_{v\in V_{H/M}},x)\right)=(\tau_{1_{K/N},\kappa_{H/M,K/N}(\overline{s}^{-1},x)^{-1}},1_{K})\cdot \theta\left(((\varepsilon_{v})_{v\in V_{H/M}},x)\right)
\end{equation*}
and
\begin{equation*}
    \theta\left((\delta_{1_{H/M}}(\lambda),1_{H})\cdot ((\varepsilon_{v})_{v\in V_{H/M}},x)\right)=(\delta_{1_{K/N}}(\lambda),1_{K})\cdot \theta\left(((\varepsilon_{v})_{v\in V_{H/M}},x)\right).
\end{equation*}

\smallskip

We thus have proved $\theta(S_{G_{1}}\cdot z)\subset G_{2}\cdot\theta(z)$ for every $z\in X_{H/M}$ and we similarly get $S_{G_{2}}\cdot \theta(z)\subset \theta(G_{1}\cdot z)$, so we have built an orbit equivalence coupling. We have also identified the cocycles $c_{G_{1},G_{2}}$, $c_{G_{2},G_{1}}$ on the generators, and the quantitative refinments of the statement are proved similarly to proof of Theorem~\ref{thm:stabilityofcouplings+quantificationLampjugglers}, using this time Lemma~\ref{lem:wordlengthinlampcloner}.
\end{proof}

\subsection{Applications to quantitatively optimal orbit equivalence couplings}

We now move on to applications of Theorems~\ref{thm:stabilitypropertyforOEbetweenPWP},~\ref{thm:stabilityofcouplings+quantificationLampjugglers} and~\ref{thm:stabilityofcouplings+quantificationLampcloners}.

\smallskip

We first focus on the $\ld^{\infty}$ condition. Recall that, by Shalom~\cite{Sha04}, two amenable groups are $\ld^{\infty}$ orbit equivalent if and only if they are bi-Lipschitz equivalent. For pairs, $\ld^{\infty}$ orbit equivalence boils down to the existence of a bi-Lipschitz equivalence between the ambiant groups, mapping cosets to cosets~\cite{CDT26}. We thus deduce:

\begin{corollary}\label{cor:PreservationBiLipHalo}
Let $K$ and $H$ be amenable groups, with normal subgroups $M\lhd H$ and $N\lhd K$. Assume that there exists a bi-Lipschitz equivalence between $K$ and $H$, mapping $M$-cosets to $N$-cosets. Then:
    \begin{itemize}
        \item if $\G$ and $\La$ are amenable groups that are bi-Lipschitz equivalent, then $\G\wr_{H/M}H$ and $\La\wr_{K/N}K$ are bi-Lipschitz equivalent;
        \item for every $r\ge 1$, $\juggler{r}{H,M}$ and $\juggler{r}{K,N}$ are bi-Lipschitz equivalent;
        \item for every finite field $\field$, $\cloner{H,M}$ and $\cloner{K,N}$ are bi-Lipschitz equivalent.
    \end{itemize}
\end{corollary}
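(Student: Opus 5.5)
The corollary will follow by passing through $\ld^{\infty}$ orbit equivalence in both directions. In the first direction we use the cited facts to turn the bi-Lipschitz hypotheses into $\ld^{\infty}$ couplings. The bi-Lipschitz equivalence between $H$ and $K$ sending $M$-cosets to $N$-cosets yields, by the result of~\cite{CDT26} recalled just before the statement, an $\ld^{\infty}$ orbit equivalence coupling between the normal pairs $(H,M)$ and $(K,N)$. For the first item we additionally need an $\ld^{\infty}$ coupling between $\G$ and $\La$; this is Shalom's Theorem~\ref{thm:ShalomBilip intro}, since $\G$ and $\La$ are amenable and bi-Lipschitz equivalent. Note that the $\ld^{\infty}$ notion in the paper only requires each cocycle $c(h,\cdot)$ to take finitely many values, so no finite generation is needed at this stage.

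Next we apply the $\ld^{\infty}$ parts of the stability theorems. Theorem~\ref{thm:stabilitypropertyforOEbetweenPWP} gives an $\ld^{\infty}$ orbit equivalence coupling between $\G\wr_{H/M}H$ and $\La\wr_{K/N}K$. Theorem~\ref{thm:stabilityofcouplings+quantificationLampjugglers} does the same for $\juggler{r}{H,M}$ and $\juggler{r}{K,N}$, and Theorem~\ref{thm:stabilityofcouplings+quantificationLampcloners} for $\cloner{H,M}$ and $\cloner{K,N}$. Each of these halo products is amenable, being an extension of an amenable group ($H$ or $K$) by an amenable normal subgroup. Indeed, $\bigoplus_{H/M}\G$ is a directed union of finite direct products of copies of the amenable group $\G$. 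Likewise, $\fsym{H/M\times\{1,\dots,r\}}$ and $\mathrm{FGL}(H/M)$ over a finite field are locally finite. So the converse direction of Shalom's Theorem~\ref{thm:ShalomBilip intro} turns each $\ld^{\infty}$ orbit equivalence into a bi-Lipschitz equivalence, which proves all three items.

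The main point requiring care is the range of validity of Shalom's theorem and of the pair result from~\cite{CDT26}. These are usually stated for finitely generated (or at least countable) groups, since bi-Lipschitz equivalence refers to word metrics. If the corollary is meant for finitely generated groups, everything above is consistent. The halo products are then finitely generated too, because finite generating sets were exhibited in Section~\ref{sec:defHaloExample} from those of $H$, $K$, $\G$ and $\La$. Otherwise, one would interpret bi-Lipschitz equivalence with respect to proper left-invariant metrics and rely on the versions of these results in that setting; I would flag this, as the paper's Remark~\ref{rem:PreservationBiLipHalo} presumably does, rather than reprove it. Since the statement itself only asserts bi-Lipschitz equivalence, the corollary is then a direct chaining of these results with no computations.
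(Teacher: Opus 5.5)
Your proposal is correct and follows the paper's own route: use \cite{CDT26} and Shalom's Theorem~\ref{thm:ShalomBilip intro} to turn the bi-Lipschitz hypotheses into $\ld^{\infty}$ couplings, then apply the $\ld^{\infty}$ parts of Theorems~\ref{thm:stabilitypropertyforOEbetweenPWP}, \ref{thm:stabilityofcouplings+quantificationLampjugglers} and~\ref{thm:stabilityofcouplings+quantificationLampcloners}, and convert back with Shalom. The paper leaves implicit that the halo products are amenable, and you check this correctly. One small correction: Remark~\ref{rem:PreservationBiLipHalo} does not actually address the finite-generation issue you flag; it only discusses partly recovering the results of Genevois--Tessera and \cite{Dum24}.
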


\begin{remark}\label{rem:PreservationBiLipHalo}
Applying Corollary~\ref{cor:PreservationBiLipHalo} to $M=\lbrace 1_{H}\rbrace$ and $N=\lbrace 1_{K}\rbrace$ (namely the classical notion of orbit equivalence), we know that if $H$ and $K$ are bi-Lipschitz equivalent, then so are $\G\wr H$ and $\La\wr K$ (if $\G$ and $\La$ are also bi-Lipschitz equivalent), $\juggler{r}{H}$ and $\juggler{r}{K}$, and $\cloner{H}$ and $\cloner{K}$. Then we can partly recover the results of Genevois and Tessera (see e.g.~Theorem~\ref{th:QIrigidityShufGT24} at the beginning of the introduction).\par
For pairs with non-trivial normal subgroups, a bi-Lipschitz equivalence between $H$ and $K$, mapping $M$-cosets to $N$-cosets is a particular instance of~\textit{quasi-isometry of pairs} studied by the second-named author to get a sufficient condition for permutational wreath products to be bi-Lipschitz equivalent, see~\cite[Proposition~6.1]{Dum24}. Corollary~\ref{cor:PreservationBiLipHalo} thus enables us to partly recover this classification result.
\end{remark}

For the part of our statements about $(\varphi,\psi)$-integrability, we give applications when $\G_{1}$ and $\G_{2}$ are free abelian groups.

\begin{corollary}\label{cor:stabilityQOEHaloPermZd}
Let $d_{1},d_{2},k_{1},k_{2}$ be positive integers such that $0\le k_{1}\le d_{1}$ and $0\le k_{2}\le d_{2}$. Let $M$ and $N$ be normal subgroups of $\Z^{d_{1}}$ and $\Z^{d_{2}}$, with ranks $k_{1}$ and $k_{2}$ respectively. We assume that $k_{1}=0$ if and only if $k_{2}=0$, and that $k_{1}=d_{1}$ if and only if $k_{2}=d_{2}$, and if the latter holds, then $|\G_{1}/N_{1}|=|\G_{2}/N_{2}|$.
    
\smallskip

Let $P\defeq\min{\left (\frac{k_{2}}{k_{1}},\frac{d_{2}-k_{2}}{d_{1}-k_{1}}\right)}$ and $Q\defeq\min{\left (\frac{k_{1}}{k_{2}},\frac{d_{1}-k_{1}}{d_{2}-k_{2}}\right)}$ if $0<k_{1}<d_{1}$ and $0<k_{2}<d_{2}$, and $P\defeq\frac{d_{2}}{d_{1}}$ and $Q\defeq\frac{d_{1}}{d_{2}}$ if $k_{1}=k_{2}=0$ or $d_{1}-k_{1}=d_{2}-k_{2}=0$. Then:
\begin{itemize}
    \item for every finitely generated groups $\G$ and $\La$, if there exists an $(\ld^{<P},\ld^{<Q})$-integrable orbit equivalence from $\G$ to $\La$, then there exists an $(\ld^{<P},\ld^{<Q})$-integrable orbit equivalence from $\G\wr_{\Z^{d_{1}}/M}\Z^{d_{1}}$ to $\La\wr_{\Z^{d_{2}}/N}\Z^{d_{2}}$;
    \item for every $r\ge 1$, there exists an $(\ld^{<P},\ld^{<Q})$-integrable orbit equivalence from $\juggler{r}{\Z^{d_{1}},M}$ to $\juggler{r}{\Z^{d_{2}},N}$;
    \item for every finite field $\field$, there exists an $(\ld^{<P},\ld^{<Q})$-integrable orbit equivalence from $\cloner{\Z^{d_{1}},M}$ to $\cloner{\Z^{d_{2}},N}$.
\end{itemize}
\end{corollary}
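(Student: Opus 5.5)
The plan is to feed a suitable coupling of pairs into the stability results, Theorems~\ref{thm:stabilitypropertyforOEbetweenPWP}, \ref{thm:stabilityofcouplings+quantificationLampjugglers} and~\ref{thm:stabilityofcouplings+quantificationLampcloners}. The only real work is to produce an $(\ld^{<P},\ld^{<Q})$-integrable orbit equivalence coupling from the pair $(\Z^{d_1},M)$ to the pair $(\Z^{d_2},N)$ in each of the three regimes allowed by the hypotheses. Once such a coupling exists, the three bullets follow by applying the corresponding stability theorem with $\varphi(t)=t^{p}$ and $\psi(t)=t^{q}$ for arbitrary $p<P$ and $q<Q$. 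In the wreath product case we also use the assumed $(\ld^{<P},\ld^{<Q})$-coupling between $\G$ and $\La$, which is $(\ld^p,\ld^q)$ for those same exponents. Letting $p,q$ range gives $\ld^{<P}$ and $\ld^{<Q}$, since a single coupling works for all $p<P$ and $q<Q$ simultaneously.

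\emph{Generic case $0<k_i<d_i$.} This is exactly Proposition~\ref{prop:OEofpairsfreeabeliangroups}, which provides the required coupling of pairs with the stated exponents $P$ and $Q$.

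\emph{Case $k_1=k_2=0$.} A normal subgroup of rank $0$ in $\Z^{d}$ is trivial, since $\Z^d$ is torsion-free. So $M=\{0\}$, $N=\{0\}$, and a coupling of pairs is just an ordinary orbit equivalence coupling. By the explicit coupling of~\cite{DKLMT22} between $\Z^{d_1}$ and $\Z^{d_2}$ recalled before Theorem~\ref{thm:CompositionCouplings} (the corollary following Theorem~\ref{thm:threshold}), there is an $(\ld^{<d_2/d_1},\ld^{<d_1/d_2})$-integrable coupling. This matches $P=d_2/d_1$ and $Q=d_1/d_2$. If $d_1\le d_2$, swap the roles; the constructions from Folner tilings are symmetric in this respect.

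\emph{Case $k_i=d_i$.} Here $M$ and $N$ have finite index with $|\Z^{d_1}/M|=|\Z^{d_2}/N|=:m$. We need a coupling in which the $M$- and $N$-orbits coincide. The approach is to take an $(\ld^{<d_2/d_1},\ld^{<d_1/d_2})$-coupling $(Y,\nu)$ between $M\cong\Z^{d_1}$ and $N\cong\Z^{d_2}$, and to induce it up. Set $X=Y\times\{1,\dots,m\}$ with normalized measure. Let $\Z^{d_1}$ act by the induced action, using a set of coset representatives and the cocycle $\Z^{d_1}\times(\Z^{d_1}/M)\to M$; define the $\Z^{d_2}$-action similarly with the same index set $\{1,\dots,m\}$, after identifying both quotients with $\{1,\dots,m\}$. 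Then the $M$-orbit of $(y,i)$ is $M$ acting through the representative's conjugate on $Y\times\{i\}$. Since $\Z^{d_1}$ is abelian, this is just the $M$-orbit in the $i$-th copy, which equals the $N$-orbit, and so the full orbits agree as well.

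For integrability, the generators of $\Z^{d_1}$ move points by bounded elements of $M$ composed with the coset permutation. Hence the cocycle is controlled by finitely many $M\to N$ cocycle values, and these lie in $\ld^{p}$ since $M$ and $N$ are undistorted in the ambient groups. This last case is the step needing the most care: one must check that the induced actions are free and pmp, and that the identification of the two coset spaces can be made consistently. Finally, the mixed conventions (for instance $P=d_2/d_1$ with $d_1-k_1=0$) must be reconciled with what Theorem~\ref{thm:ShalomBilip intro}-type bounds permit; I expect this to go through because $M$ and $N$ are quasi-isometric to the ambient groups.
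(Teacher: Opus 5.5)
Your proposal is correct. For $0<k_i<d_i$ and for $k_1=k_2=0$ it is exactly the paper's argument: feed Proposition~\ref{prop:OEofpairsfreeabeliangroups}, resp.\ the single coupling of \cite[Theorem~6.12]{DKLMT22} (roles swapped if $d_1<d_2$, identity if $d_1=d_2$), into the three stability theorems, whose output coupling does not depend on $(\varphi,\psi)$.

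The genuine difference is the finite-index case $k_i=d_i$. The paper's one-line proof attributes all non-trivial $M,N$ to \cite{CDT26}, although Proposition~\ref{prop:OEofpairsfreeabeliangroups} as quoted only covers $0<k_i<d_i$. You instead give a self-contained reduction: induce a coupling of $M\cong\Z^{d_1}$ and $N\cong\Z^{d_2}$ up to $Y\times\{1,\dots,m\}$. This works:
\begin{itemize}
\item the $M$-orbit of $(y,i)$ is $(M\cdot y)\times\{i\}$, and the $\Z^{d_1}$-orbit is $(M\cdot y)\times\{1,\dots,m\}$;
\item freeness and measure preservation are inherited from $Y$;
\item with coset representatives $r_i$ of $\Z^{d_1}/M$ and $s_i$ of $\Z^{d_2}/N$, one has explicitly $c(h,(y,i))=c_{M,N}(h+r_i-r_j,y)+s_j-s_i$.
\end{itemize}
So on each generator the cocycle is one of finitely many $M\to N$ cocycles plus a bounded term, and finite-index subgroups are undistorted.

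Your route buys transparency. It shows why the exponents in this case are $d_2/d_1$ and $d_1/d_2$, and that the equal-index hypothesis, which is also necessary, is all that is needed. The paper's route buys brevity and a uniform citation.

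Consequently, your closing worry about reconciling conventions with Shalom-type bounds is unnecessary, since the exponents come straight out of the construction. Likewise, no compatibility between the two coset identifications is required: any bijection $\Z^{d_1}/M\to\Z^{d_2}/N$ works.
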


\begin{proof}
We combine Theorems~\ref{thm:stabilitypropertyforOEbetweenPWP},~\ref{thm:stabilityofcouplings+quantificationLampjugglers} and~\ref{thm:stabilityofcouplings+quantificationLampcloners} with either~\cite[Theorem~6.12]{DKLMT22} (for trivial subgroups $M$ and $N$), or the classification in Proposition~\ref{prop:OEofpairsfreeabeliangroups}, carried out in~\cite{CDT26}, for pairs when $M$ and $N$ are not trivial.
\end{proof}

We can moreover prove that the integrability conditions of the above corollary are sharp in many cases. As examples:

\begin{corollary}\label{cor:ClassificationQOEPermutationalZd}
Let $d_{1},d_{2},k_{1},k_{2}$ be positive integers such that $0\le k_{1}< d_{1}$, $0\le k_{2}<d_{2}$ and $\frac{d_{2}-k_{2}}{d_{1}-k_{1}}<1$, and such that $k_{1}=0$ if and only if $k_{2}=0$. Let $N_{1}$ and $N_{2}$ be normal subgroups of $\Z^{d_{1}}$ and $\Z^{d_{2}}$, with rank $k_{1}$ and $k_{2}$ respectively. In the case where $k_{1}>0$ and $k_{2}>0$, assume that $\frac{d_{2}-k_{2}}{d_{1}-k_{1}}<\frac{k_{2}}{k_{1}}$. Then
    \begin{itemize}
        \item if $\G$ is either a finite group or a polynomial growth group, then there exists an $(\ld^{p},\ld^{0})$-integrable orbit equivalence from $\G\wr_{\Z^{d_{1}}/M}\Z^{d_{1}}$ to $\G\wr_{\Z^{d_{2}}/N}\Z^{d_{2}}$ if and only if $p<\frac{d_{2}-k_{2}}{d_{1}-k_{1}}$;
        \item for every $r\ge 1$, there exists an $(\ld^{p},\ld^{0})$-integrable orbit equivalence from $\juggler{r}{\Z^{d_{1}},M}$ to $\juggler{r}{\Z^{d_{2}},N}$ if and only if $p<\frac{d_{2}-k_{2}}{d_{1}-k_{1}}$.
    \end{itemize}
\end{corollary}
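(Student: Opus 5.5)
The plan splits into an existence direction and an obstruction direction.

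For existence, when $p<\frac{d_{2}-k_{2}}{d_{1}-k_{1}}$, I would apply Corollary~\ref{cor:stabilityQOEHaloPermZd}. The hypothesis $\frac{d_{2}-k_{2}}{d_{1}-k_{1}}<\frac{k_{2}}{k_{1}}$ (when $k_1,k_2>0$) gives $P=\frac{d_{2}-k_{2}}{d_{1}-k_{1}}$. When $k_1=k_2=0$ we have $P=\frac{d_2}{d_1}$, which is the same quantity. For the wreath product case with $\G=\La$, I feed in the identity coupling of $\G$, which is $\ld^\infty$ and in particular $(\ld^{<P},\ld^{<Q})$-integrable. This gives an $(\ld^{<P},\ld^{0})$ coupling, hence an $(\ld^p,\ld^0)$ one for every $p<P$.

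For the converse, I would invoke Theorem~\ref{thm:dklmt} with $\varphi(t)=t^{p}$; we may assume $p\le 1$, so $t/\varphi(t)$ is non-decreasing. The task is then to compute the isoperimetric profiles of $G_1=\G\wr_{\Z^{d_1}/M}\Z^{d_1}$ and $G_2$. The key observation is that $G_1$ contains the finite-index-like structure $L(\Z^{d_1}/M)\rtimes \Z^{d_1}$, where $M\cong\Z^{k_1}$ acts trivially on $\Z^{d_1}/M$. Up to finite index, $\Z^{d_1}/M$ is $\Z^{d_1-k_1}$ times a finite group, so $G_1$ is commensurable-up-to-direct-factor with $(\G\wr_{F}\Z^{d_1-k_1}\text{-type})\times\Z^{k_1}$. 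More precisely, I would pass to a finite-index subgroup $\Z^{d_1-k_1}\times M\le\Z^{d_1}$ complementing $M$, giving the finite-index subgroup $(L(\Z^{d_1}/M)\rtimes \Z^{d_1-k_1})\times \Z^{k_1}$. The first factor is a standard halo over $\Z^{d_1-k_1}$ with finite-group-enlarged lamps ($\G^{c}$ or $r c$ juggled points per site, with $c=[\Z^{d_1}/M:\text{image of }\Z^{d_1-k_1}]$). Its profile is known: it is $\ln(x)^{1/(d_1-k_1)}$ for finite $\G$ (Erschler), $(\ln x/\ln\ln x)^{1/(d_1-k_1)}$ for polynomial growth $\G$ (Erschler) and for jugglers (\cite{EZ21} together with \cite[Corollary~F]{cordum25}).

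Next I need that the profile of a direct product with $\Z^{k_1}$ equals the profile of the halo factor, because the $\Z^{k}$ factor is far more amenable (profile $x^{1/k}$). This should follow from the standard product estimate $\prof{A\times B}\simeq\min(\prof{A},\prof{B})$ up to the usual adjustments. Its upper bound comes from Theorem~\ref{thm:profilemonotonuousSubgroup}, and its lower bound from product Følner sets $F_A\times F_B$ with $|F_B|$ polynomial, which costs only a constant in $\ln$. Invariance under finite index then transfers the profile to $G_1$.

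With these profiles, $\prof{G_i}(x)\simeq f(x)^{1/(d_i-k_i)}$ for a common $f$, namely $\ln$ or $\ln/\ln\ln$. Theorem~\ref{thm:dklmt} then gives $f^{p/(d_2-k_2)}\preccurlyeq f^{1/(d_1-k_1)}$, so $p\le\frac{d_2-k_2}{d_1-k_1}$. To exclude equality I would use Theorem~\ref{thm:threshold} with $f_{G_1}=f^{1/(d_1-k_1)}$ and $f_{G_2}=f^{1/(d_2-k_2)}$. Here $f_{G_1}=o(f_{G_2})$ because $d_2-k_2<d_1-k_1$, and Assumption~$(\star)$ and the composition condition clearly hold for powers of $\ln$ or $\ln/\ln\ln$ (a constant multiple inside only changes lower-order terms). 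This yields strict inequality.

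The main obstacle is the profile computation for the permutational products, especially the reduction to a product with $\Z^{k_i}$ and the lower bound for that product.
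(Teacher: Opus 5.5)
Your proposal is correct. Its skeleton is the paper's: existence comes from Corollary~\ref{cor:stabilityQOEHaloPermZd}, where the hypothesis $\frac{d_2-k_2}{d_1-k_1}<\frac{k_2}{k_1}$ gives $P=\frac{d_2-k_2}{d_1-k_1}$, and the obstruction comes from Theorems~\ref{thm:dklmt} and~\ref{thm:threshold} once the profiles are known. The genuine difference is how you compute the profiles of the permutational products.

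The paper uses its own stability machinery. By Corollary~\ref{cor:PreservationBiLipHalo}, which rests on the characterization in \cite{CDT26} of $\ld^{\infty}$ orbit equivalence of pairs as coset-preserving bi-Lipschitz equivalence, $\G\wr_{\Z^{d_1}/M}\Z^{d_1}$ is bi-Lipschitz equivalent to $\G\wr_{\Z^{d_1-k_1}}\Z^{d_1}\cong(\G\wr\Z^{d_1-k_1})\times\Z^{k_1}$, and likewise for lampjugglers. So the torsion of $\Z^{d_1}/M$ disappears and the lamps are unchanged, and the product estimate is then quoted from \cite[Lemma~6.8]{cordum25}.

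You instead take a rational complement $A\cong\Z^{d_1-k_1}$ of $M$. Since $M$ acts trivially on $\Z^{d_1}/M$ and $A$ acts freely with $c$ orbits, you get a finite-index subgroup isomorphic to $(\G^{c}\wr\Z^{d_1-k_1})\times\Z^{k_1}$ or $\juggler{rc}{\Z^{d_1-k_1}}\times\Z^{k_1}$. This needs only quasi-isometry invariance of the isoperimetric profile and no input from \cite{CDT26}. The price is enlarging the lamps, which does not change the asymptotics.

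Your sketch of the product lower bound is what the cited lemma supplies: it uses F\o lner sets $F_A\times F_B$ with $|F_B|$ polylogarithmic in $|F_A|$, together with $\prof{A}(\sqrt{x})\simeq\prof{A}(x)$. Your observation that $f_{G_1}\circ f_{G_2}^{-1}(x)=x^{\frac{d_2-k_2}{d_1-k_1}}$ also makes explicit the verification of the hypotheses of Theorem~\ref{thm:threshold}, which the paper leaves implicit.

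One small citation point. Corollary~F of \cite{cordum25}, as recalled in Section~\ref{sec:PrelIsoProfHalo}, is stated under the hypotheses of Theorem~\ref{thm:profileshufflercordum}, which $\Z^{d}$ does not satisfy. The estimate $\prof{\juggler{s}{\Z^{d}}}(x)\simeq(\ln x/\ln\ln x)^{1/d}$ you need still follows directly. The upper bound comes from the subgroup $\shuf{\Z^{d}}$ together with \cite{EZ21}. The lower bound comes from the sets $\sym{F\times\{1,\dots,s\}}\times F$ with $F$ a box.
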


\begin{proof}
The \enquote{if} parts follow from Corollary~\ref{cor:stabilityQOEHaloPermZd}.
    
For \enquote{only if} part, we first need to know the isoperimetric profiles of the groups appearing in the statement we want to prove. In the extreme case where $k_{1}=k_{2}=0$, we directly use~\cite{Ers03} for wreath products and~\cite{cordum25} for lampjugglers. In the case $0<k_{1}<d_{1}$, Corollary~\ref{cor:PreservationBiLipHalo} implies that $\G\wr_{\Z^{d_{1}}/M}\Z^{d_{1}}$ (resp.~$\juggler{r}{\Z^{d_{1}},M}$) has the same isoperimetric profile as $\G\wr_{\Z^{d_{1}-k_{1}}}\Z^{d_{1}}\simeq (\G\wr\Z^{d_{1}-k_{1}})\times\Z^{k_{1}}$ (resp.~$\juggler{r}{\Z^{d_{1}},\Z^{k_{1}}}\simeq\juggler{r}{\Z^{d_{1}-k_{1}}}\times \Z^{k_{1}}$), so it has the same isoperimetric profile as $\G\wr\Z^{d_{1}-k_{1}}$ (resp.~$\juggler{r}{\Z^{d_{1}-k_{1}}}$), see e.g.~\cite[Lemma~6.8]{cordum25}.
    
Once we know the isoperimetric profiles, the obstructions provided by Delabie, Koivisto, Le Maître and Tessera, and by the first-named author, see Theorems~\ref{thm:dklmt} and~\ref{thm:threshold} in Section~\ref{sec:orbit equivalence} immediately give the desired bound on $p$.
\end{proof}

There are also consequences for iterated halo products with the same number of iterations. We simply focus on standard halo products, rather than permutational ones. Here is an example for standard lampjugglers (for iterated standard wreath products, an analogous statement is more or less what has already been done in~\cite[Corollary~7.5]{DKLMT22}).

\begin{theorem}\label{thm:optimalitypolynomialgrowthJugglers}
Let $d_{1},d_{2},r\ge 1$ be integers such that $d_{1}>d_{2}$. Let $n\ge 0$ be an integer. If $H$ and $K$ are polynomial growth groups of degrees $d_{1}$ and $d_{2}$ respectively, then there exists an $(\ld^p,\ld^{<\frac{d_{1}}{d_{2}}})$ orbit equivalence from $\jugglern{n}{r}{H}$ to $\jugglern{n}{r}{K}$ if and only if $p<\frac{d_{2}}{d_{1}}$.
\end{theorem}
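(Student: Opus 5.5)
The proof splits into two directions: construct couplings for $p<\frac{d_2}{d_1}$, then obstruct $p\ge\frac{d_2}{d_1}$ using isoperimetric profiles.

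\emph{Existence.} For groups $H$ and $K$ of polynomial growth of degrees $d_1>d_2$, I would first invoke~\cite[Theorem~1.6]{DLIT25}. It generalises the $\Z^{d}$ case (\cite[Theorem~6.12]{DKLMT22}) and should yield, for every $p<\frac{d_2}{d_1}$, an $(\ld^{p},\ld^{<\frac{d_1}{d_2}})$-integrable orbit equivalence coupling from $H$ to $K$. If that reference only gives couplings for $\Z^{d}$, I would fall back on Shalom's Theorem~\ref{thm:ShalomBilip intro} combined with the composition result Theorem~\ref{thm:CompositionCouplings}. The reduction is as follows.
\begin{itemize}
\item A polynomial growth group is $\ld^\infty$ orbit equivalent to a nilpotent group.
\item For $\Z^{d}$ itself, couplings are provided directly by~\cite[Theorem~6.12]{DKLMT22}.
\end{itemize}
Next, I would view this coupling as a coupling of the normal pairs $(H,\{1\})$ and $(K,\{1\})$. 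This is legitimate, since the trivial subgroups automatically have equal orbits. Applying Theorem~\ref{thm:stabilityofcouplings+quantificationLampjugglers} with $M=N=\{1\}$ then gives a coupling with the same integrability between $\juggler{r}{H}$ and $\juggler{r}{K}$.

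To handle $n$ iterations, I would induct on $n$. The groups $\jugglern{m}{r}{H}$ are again finitely generated. The coupling obtained at step $m$ is again a coupling of pairs with trivial subgroups, so Theorem~\ref{thm:stabilityofcouplings+quantificationLampjugglers} applies again and preserves $(\varphi,\psi)=(x^p,x^q)$ for each $q<\frac{d_1}{d_2}$. Since $\ld^{<\frac{d_1}{d_2}}$ means $\ld^q$ for every $q<\frac{d_1}{d_2}$, I would run the induction for each fixed $q$ separately, or simply note that the constructed coupling is the same for all $q$.

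\emph{Obstruction.} Suppose there is an $(\ld^p,\ld^0)$ coupling with $p\ge\frac{d_2}{d_1}$. Since $\ld^p$ implies $\ld^{p'}$ for $p'\le p$ on a probability space (up to the constants), it suffices to rule out $p=\frac{d_2}{d_1}$ itself. For this I need the profiles. Using~\cite[Corollary~F]{cordum25} together with the iterated-lampshuffler formula (\cite[Theorem~C]{cordum25}), I would establish
\begin{equation*}
\prof{\jugglern{n}{r}{H}}(x)\simeq \left(\frac{\ln^{\circ n}(x)}{\ln^{\circ (n+1)}(x)}\right)^{\frac{1}{d_1}}
\end{equation*}
for $n\ge1$, and the analogue with $d_2$ for $K$. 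This step is the main obstacle. Corollary~F is stated for a single lampjuggler over $H$, so for iterations I must check either that the unified estimate~\cite[Corollary~E]{cordum25} covers iterated lampjugglers, or that Corollary~F's hypothesis holds at each stage. The hypothesis to check is that the base profile $f(x)=\left(\frac{\ln^{\circ m}x}{\ln^{\circ(m+1)}x}\right)^{1/d}$ satisfies the condition of Theorem~\ref{thm:profileshufflercordum}. It does, because iterated logarithms absorb the $\ln\ln$ correction.

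Set $f_H$ and $f_K$ to be these profiles, with $u=\frac{\ln^{\circ n}}{\ln^{\circ(n+1)}}$. Then $f_H\circ f_K^{-1}(x)\simeq x^{d_2/d_1}$, because $f_K^{-1}(x)=u^{-1}(x^{d_2})$ and hence $f_H(f_K^{-1}(x))=x^{d_2/d_1}$. Theorem~\ref{thm:threshold} then applies: we have $f_H=o(f_K)$ as $d_1>d_2$, and Assumption~$(\star)$ holds for both $f_H$ and the power function. Hence no $(\ld^{d_2/d_1},\ld^0)$ coupling exists. For $p>\frac{d_2}{d_1}$, Theorem~\ref{thm:dklmt} gives the obstruction even more directly. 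The case $n=0$ is the known polynomial growth statement.
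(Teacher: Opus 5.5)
Your proposal is correct and follows the paper's proof. The \enquote{if} part is \cite[Theorem~1.6]{DLIT25} followed by $n$ applications of Theorem~\ref{thm:stabilityofcouplings+quantificationLampjugglers} with trivial subgroups. The \enquote{only if} part combines Theorems~\ref{thm:dklmt} and~\ref{thm:threshold} with the profiles $\bigl(\ln^{\circ n}(x)/\ln^{\circ (n+1)}(x)\bigr)^{1/d_i}$ from \cite{cordum25}; your reduction to the single exponent $p=\frac{d_2}{d_1}$ just reorganizes the paper's \enquote{first $p\le\frac{d_2}{d_1}$, then $p\neq\frac{d_2}{d_1}$}.

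One correction to your side justification of those profiles. The first stage $\juggler{r}{H}$ cannot be obtained through Theorem~\ref{thm:profileshufflercordum} and \cite[Corollary~F]{cordum25}. The reason is that $\prof{H}(x)\simeq x^{1/d_1}$ violates the hypothesis $\prof{H}\bigl(\ln(x)/\ln(\ln(x))\bigr)\simeq\prof{H}(\ln(x))$. That case has to be quoted directly from \cite{cordum25}, as the paper does. Only from the second stage on do the iterated logarithms absorb the correction, as you claim.
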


\begin{proof} From~\cite[Theorem~1.6]{DLIT25}, we know that there exists an $(\ld^{<\frac{d_{2}}{d_{1}}},\ld^{<\frac{d_{1}}{d_{2}}})$ orbit equivalence from $H$ to $K$, so the \enquote{if} part follows from Theorem~\ref{thm:stabilityofcouplings+quantificationLampjugglers}.

\smallskip

Conversely, assume that there exists an $(\ld^p,\ld^{0})$ orbit equivalence from $\jugglern{n}{r}{H}$ to $\jugglern{n}{r}{K}$ for $p\in \left]0,1\right]$. 
Then Theorem~\ref{thm:dklmt} implies that 
\begin{equation*}
    \left (\prof{\jugglern{n}{r}{K}}(x)\right )^{p} \preccurlyeq \prof{\jugglern{n}{r}{H}}(x)
\end{equation*}
and thus, from~\cite[Theorem~C]{cordum25}, we get 
\begin{equation*}
    \left(\frac{\ln^{\circ n}(x)}{\ln^{\circ (n+1)}(x)}\right)^{\frac{p}{d_{2}}} \preccurlyeq \left(\frac{\ln^{\circ n}(x)}{\ln^{\circ (n+1)}(x)}\right)^{\frac{1}{d_{1}}}.
\end{equation*}
This domination forces $p\le \frac{d_{2}}{d_{1}}$. Lastly, from Theorem~\ref{thm:threshold} we deduce that $p\neq\frac{d_{2}}{d_{1}}$.
\end{proof}

However, since we do not have precise estimates on isoperimetric profiles of lampcloners over free abelian groups, the "if and only if" of the last statement fails for this class of groups.

\begin{theorem}\label{thm:optimalitypolynomialgrowthCloners}
Let $\field$ be a finite field and let $d_{1},d_{2}\ge 1$ be integers such that $d_{1}>d_{2}$. If $H$ and $K$ are polynomial growth groups of degrees $d_{1}$ and $d_{2}$ respectively, then there exists an $(\ld^{<\frac{d_{2}}{d_{1}}},\ld^{<\frac{d_{1}}{d_{2}}})$ orbit equivalence from $\clonern{n}{H}$ to $\clonern{n}{K}$. Conversely, given $p>0$, if there exists an $(\ld^{p},\ld^{<\frac{d_{1}}{d_{2}}})$ orbit equivalence from $\clonern{n}{H}$ to $\clonern{n}{K}$, then $p<\frac{2d_{2}}{d_{1}}$.
\end{theorem}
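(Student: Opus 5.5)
The plan mirrors the proof of Theorem~\ref{thm:optimalitypolynomialgrowthJugglers}. For the existence part, \cite[Theorem~1.6]{DLIT25} gives an $(\ld^{<\frac{d_{2}}{d_{1}}},\ld^{<\frac{d_{1}}{d_{2}}})$-integrable orbit equivalence coupling from $H$ to $K$. We view this as a coupling of the pairs $(H,\{1_H\})$ and $(K,\{1_K\})$. For each fixed $p<\frac{d_2}{d_1}$ and $q<\frac{d_1}{d_2}$ it is $(\ld^p,\ld^q)$-integrable, so Theorem~\ref{thm:stabilityofcouplings+quantificationLampcloners} applies with $\varphi(t)=t^p$ and $\psi(t)=t^q$. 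Since the coupling it produces is built from the same coupling of $H$ and $K$ independently of $p,q$, it is $(\ld^p,\ld^q)$-integrable for all such $p,q$ simultaneously. To get the $n$-fold iterates, we induct on $n$. We apply Theorem~\ref{thm:stabilityofcouplings+quantificationLampcloners} to $\clonern{(n-1)}{H}$ and $\clonern{(n-1)}{K}$, both finitely generated since $\field$ is finite, again with trivial normal subgroups. The integrability conditions are carried over unchanged at each step.

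For the converse, suppose there is an $(\ld^{p},\ld^{<\frac{d_{1}}{d_{2}}})$ coupling. If $p>1$ it is also $\ld^1$, so we may assume $p\le 1$; then $\varphi(t)=t^p$ satisfies the hypotheses of Theorem~\ref{thm:dklmt}. That theorem gives
\begin{equation*}
\prof{\clonern{n}{K}}(x)^{p}\preccurlyeq \prof{\clonern{n}{H}}(x).
\end{equation*}
We then insert the estimates~\eqref{eq:ClonerPolynomial}: the lower bound for $K$ and the upper bound for $H$. This yields
\begin{equation*}
(\ln^{\circ n}(x))^{\frac{p}{2d_{2}}}\preccurlyeq (\ln^{\circ n}(x))^{\frac{1}{d_{1}}}.
\end{equation*}
Since $\ln^{\circ n}(Cx)\sim\ln^{\circ n}(x)$ for $n\ge1$ (and the case $n=0$ is just polynomial growth), the rescaling constant in $\preccurlyeq$ is harmless, and we get $p\le\frac{2d_2}{d_1}$.

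The main obstacle is excluding the equality $p=\frac{2d_{2}}{d_{1}}$. Theorem~\ref{thm:threshold} needs the exact profiles, but we only have the two-sided bounds~\eqref{eq:ClonerPolynomial}. The first thing to try is applying Theorem~\ref{thm:threshold} with $f_H(x)=(\ln^{\circ n}x)^{1/d_1}$, an upper bound for $\prof{H}$, and $f_K(x)=(\ln^{\circ n}x)^{1/(2d_2)}$, a lower bound for $\prof{K}$. Here we would check that Corthier's argument in~\cite{Corr24} uses only an upper bound on $\prof{H}$ and a lower bound on $\prof{K}$, and that Assumption~$(\star)$ holds for these functions. Its hypothesis $f_H=o(f_K)$ holds precisely because $d_1>2d_2$ is not assumed; when $\frac{1}{d_1}<\frac{1}{2d_2}$ we are fine. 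Otherwise $\frac{2d_2}{d_1}\ge1$, and the inequality $p<\frac{2d_2}{d_1}$ is immediate from the reduction $p\le1$, except in the borderline case $d_1=2d_2$, which needs the same threshold argument.

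If that route fails, there is a fallback using the backward cocycle. That cocycle is $\ld^{q}$ for some $q$ slightly above $1$, since $\frac{d_1}{d_2}>1$. We would compose it with a known coupling via Theorem~\ref{thm:CompositionCouplings}, perturbing the exponent slightly to obtain strictness, and apply Theorem~\ref{thm:dklmt} to the composite coupling.
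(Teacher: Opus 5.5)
Your existence part and the core of your converse are the paper's proof. You use \cite[Theorem~1.6]{DLIT25} followed by $n$ applications of Theorem~\ref{thm:stabilityofcouplings+quantificationLampcloners} with trivial subgroups. You then apply Theorem~\ref{thm:dklmt} with the upper bound of \eqref{eq:ClonerPolynomial} for $H$ and the lower bound for $K$. Finally you use a version of Theorem~\ref{thm:threshold} that needs only an upper bound on $\prof{\clonern{n}{H}}$ and a lower bound on $\prof{\clonern{n}{K}}$; this is exactly the ``slight improvement'' the paper invokes in the proof of Corollary~\ref{cor:couplingsbetweeniteratedjugglers}. That argument is sound when $d_1>2d_2$ (and for $n=0$, where the profiles are exact). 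Your treatment of $d_1\le 2d_2$ with $n\ge 1$, however, has two genuine gaps.

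\textbf{First gap: the reduction to $p\le 1$.} ``If $p>1$ the coupling is also $\ld^1$, so we may assume $p\le 1$'' is only legitimate when the target bound $\frac{2d_2}{d_1}$ is at most $1$. When $d_2<d_1<2d_2$, the statement requires you to exclude every $p\ge\frac{2d_2}{d_1}>1$. For instance, you must rule out an $(\ld^{3/2},\ld^{<3/2})$ coupling when $d_1=3$, $d_2=2$. Degrading such a coupling to $\ld^1$ and deriving $1<\frac{2d_2}{d_1}$ tells you nothing about the original exponent. So ``$p<\frac{2d_2}{d_1}$ is immediate from the reduction $p\le 1$'' is a non sequitur. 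This cannot be patched with your tools. Theorem~\ref{thm:dklmt} only applies to $\varphi$ with $t/\varphi(t)$ non-decreasing, so it never sees beyond $\ld^1$. Moreover, an $\ld^1$ coupling is compatible with \eqref{eq:ClonerPolynomial} as soon as $\frac{1}{2d_2}\le\frac{1}{d_1}$.

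\textbf{Second gap: the borderline case $d_1=2d_2$.} Here the claim is that no $\ld^1$ coupling exists, and ``the same threshold argument'' is not available. Your $f_H=(\ln^{\circ n})^{1/d_1}$ and $f_K=(\ln^{\circ n})^{1/(2d_2)}$ coincide, so $f_H=o(f_K)$ fails and $f_H\circ f_K^{-1}(t)=t$. The threshold mechanism, as used in Corollary~\ref{cor:couplingsbetweeniteratedjugglers}, would need some $\psi$ with three properties: $t/\psi(t)$ non-decreasing, $\psi\not\preccurlyeq f_H\circ f_K^{-1}$, and the coupling still $\psi$-integrable. When $f_H\circ f_K^{-1}(t)=t$ no such $\psi$ exists, since monotonicity of $t/\psi(t)$ already forces $\psi(t)=O(t)$. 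An $\ld^1$ coupling would only force $\prof{\clonern{n}{H}}\simeq\prof{\clonern{n}{K}}\simeq(\ln^{\circ n})^{1/(2d_2)}$, and this contradicts none of the available estimates.

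Your fallback does not help. Whatever you compose with, the last step is again Theorem~\ref{thm:dklmt}, which cannot see beyond $\ld^1$. Applied to a composite coupling, it yields nothing that does not already follow from applying it to the factors. Separately, $f_H=o(f_K)$ holds if and only if $d_1>2d_2$; your sentence about this is garbled.

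Note that the paper's one-line converse (``Theorems~\ref{thm:dklmt} and~\ref{thm:threshold} force $p<\frac{2d_2}{d_1}$'') does not address these two cases either. It implicitly needs $p\le1$ for Theorem~\ref{thm:dklmt} and $f_H=o(f_K)$ for Theorem~\ref{thm:threshold}. With these tools the converse is established only when $d_1>2d_2$. For $d_1\le 2d_2$ the assertion has content only for $p\ge\frac{2d_2}{d_1}\ge1$, which is precisely the range neither argument reaches.
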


\begin{proof}
From~\cite[Theorem~1.6]{DLIT25}, we know that there exists an $(\ld^{<\frac{d_{2}}{d_{1}}},\ld^{<\frac{d_{1}}{d_{2}}})$ orbit equivalence from $H$ to $K$, so the same holds from $\clonern{n}{H}$ to $\clonern{n}{K}$ by Theorem~\ref{thm:stabilityofcouplings+quantificationLampcloners}.
For the converse, we know from~\cite[Theorem~G]{cordum25} that
\begin{equation*}
    \prof{\cloner{\Z^{d_{1}}}}(x)\preccurlyeq \ln(x)^{\frac{1}{d_{1}}}
\end{equation*}
and
\begin{equation*}
    \prof{\cloner{\Z^{d_{2}}}}(x)\succcurlyeq \ln(x)^{\frac{1}{2d_{2}}}
\end{equation*}
so Theorems~\ref{thm:dklmt} and~\ref{thm:threshold} force $p<\frac{2d_{2}}{d_{1}}$.
\end{proof}

Other examples of quantitatively optimal orbit equivalence couplings have been found in~\cite{DKLMT22}, for instance between $\Z$ and $F\wr\Z$, where $F$ is a non-trivial finite group. Combined with our stability result, we get the following equivalences. This time, we can get an \enquote{if and only if} for lampcloners.

\begin{theorem}\label{th:ShufflerZdLamplighter}
Let $F$ be a non-trivial finite group, let $n$ be an integer and let $p>0$. Then:
\begin{itemize}
    \item for every integer $r>1$, there is an $(\ln^p,\exp)$-integrable orbit equivalence coupling from $\jugglern{n}{r}{F\wr\Z}$ to $\jugglern{n}{r}{\Z}$ if and only if $p<1$;
    \item for every finite field $\field$, there is an $(\ln^p,\exp)$-integrable orbit equivalence coupling from $\clonern{n}{F\wr\Z}$ to $\clonern{n}{\Z}$ if and only if $p<1$.
\end{itemize}
\end{theorem}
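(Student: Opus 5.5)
The plan is to combine an existing optimal coupling from \cite{DKLMT22} between $F\wr\Z$ and $\Z$ with the stability results, and to get the converse from the isoperimetric obstructions. For the ``if'' part, \cite{DKLMT22} provides, for every $p<1$, an $(\ln^p,\exp)$-integrable orbit equivalence coupling from $F\wr\Z$ to $\Z$. We then apply Theorem~\ref{thm:stabilityofcouplings+quantificationLampjugglers} with trivial subgroups $M$, $N$, iterated $n$ times. At each stage the base groups are finitely generated, since lampjugglers and lampcloners over finitely generated groups are finitely generated. The maps $\varphi=\ln^p$ and $\psi=\exp$ are non-decreasing, so the theorem applies verbatim. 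This yields an $(\ln^p,\exp)$-integrable coupling from $\jugglern{n}{r}{F\wr\Z}$ to $\jugglern{n}{r}{\Z}$. The same argument with Theorem~\ref{thm:stabilityofcouplings+quantificationLampcloners} handles $\clonern{n}{\cdot}$ for a finite field $\field$. One technical point: $\ln^p$ is not defined near $0$, so we replace it by an asymptotically equivalent non-decreasing function, which is harmless by Remark~\ref{rm:checkongenerators}.

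For the ``only if'' part, suppose an $(\ln^p,\exp)$-integrable coupling exists, so in particular an $(\ln^p,\ld^0)$ one. We apply Theorem~\ref{thm:dklmt} with $\varphi=\ln^p$, modified near $0$ so that $t/\varphi(t)$ is non-decreasing. This gives
\[
\ln^p\bigl(\prof{G_2}(x)\bigr)\preccurlyeq \prof{G_1}(x),
\]
where $G_1$ and $G_2$ are the $n$-fold iterates over $F\wr\Z$ and over $\Z$ respectively. We then need the profiles of these iterates.
\begin{itemize}
\item For $\Z$: by \cite[Theorem~C]{cordum25}, $\prof{\shufn{n}{\Z}}\simeq \ln^{\circ n}/\ln^{\circ(n+1)}$, and \cite[Corollary~F]{cordum25} transfers this to lampjugglers. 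For lampcloners, \eqref{eq:ClonerPolynomial} only gives $(\ln^{\circ n})^{1/2}\preccurlyeq\prof{G_2}\preccurlyeq \ln^{\circ n}$.
\item For $F\wr\Z$: $\prof{F\wr\Z}(x)\simeq\ln x$ satisfies Assumption~$(\star)$, $\ln(x/\ln x)\simeq\ln x$, and $\ln\sqrt{x}\simeq \ln x$. The second items of Theorem~C and Theorem~G of \cite{cordum25} (with Corollary~F for jugglers) then give $\prof{G_1}(x)\simeq\ln^{\circ(n+1)}(x)$.
\end{itemize}

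In both cases $\ln\prof{G_2}(x)\simeq \ln^{\circ(n+1)}(x)$. This holds even with only the crude lampcloner bounds, because a power or a $\ln\ln$ correction disappears after taking the logarithm. Hence the inequality becomes $(\ln^{\circ(n+1)})^p\preccurlyeq\ln^{\circ(n+1)}$, which forces $p\le1$. This is precisely why the lampcloner case also yields an ``if and only if'' here.

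To exclude $p=1$, we apply Theorem~\ref{thm:threshold} with $f_{G_1}=\ln^{\circ(n+1)}$ and $f_{G_2}$ an increasing version of $\prof{G_2}$. This step is the main obstacle. It requires checking $f_{G_1}=o(f_{G_2})$, which is clear. It requires Assumption~$(\star)$ for $f_{G_1}$, which is clear. Finally, it requires $f_{G_1}\circ f_{G_2}^{-1}(Cx)=O(f_{G_1}\circ f_{G_2}^{-1}(x))$, which should hold since $f_{G_1}\circ f_{G_2}^{-1}\simeq\ln$ up to constants. Theorem~\ref{thm:threshold} then gives that no $(f_{G_1}\circ f_{G_2}^{-1},\ld^0)$ coupling exists. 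For lampcloners, where $\prof{G_2}$ is only known up to the square-root gap, I would choose $f_{G_2}$ as a true asymptotic representative. I would then verify that $f_{G_1}\circ f_{G_2}^{-1}\simeq\ln$ using $\ln f_{G_2}\simeq\ln^{\circ(n+1)}$, taking care that the constants from $\simeq$ do not spoil the equivalence. Since $\ln$-integrability is exactly the case $p=1$, this rules out $p=1$ and completes the equivalence.
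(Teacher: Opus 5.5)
Your proposal is correct and follows essentially the same route as the paper: the ``if'' direction pushes the coupling of \cite{DKLMT22} between $F\wr\Z$ and $\Z$ through the stability theorems, and the converse uses Theorem~\ref{thm:dklmt} and Theorem~\ref{thm:threshold}, relying on the fact that the logarithm of the profile of the iterate over $\Z$ is $\simeq \ln^{\circ(n+1)}$ even when only the crude lampcloner bounds are available. Your write-up is slightly more careful than the paper's in two respects: you record the juggler profile over $\Z$ as $\ln^{\circ n}/\ln^{\circ(n+1)}$ (the paper writes $\ln^{\circ n}$, which is harmless after taking logarithms), and you explicitly check the hypotheses of Theorem~\ref{thm:threshold} in the lampcloner case, where the paper only says to conclude as for lampjugglers.
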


\begin{proof}
From~\cite[Proposition~6.20]{DKLMT22}, we know that there exists an $(\ln^p,\exp)$-integrable orbit equivalence coupling from $F\wr\Z$ to $\Z$. Hence Theorems~\ref{thm:stabilityofcouplings+quantificationLampjugglers} and~\ref{thm:stabilityofcouplings+quantificationLampcloners} provide, for every $p<1$, an $(\ln^p,\exp)$-integrable orbit equivalence coupling from $\jugglern{n}{r}{F\wr\Z}$ to $\jugglern{n}{r}{\Z}$, and another one from $\clonern{n}{F\wr\Z}$ to $\clonern{n}{\Z}$.

\smallskip

Now let us assume the existence of an $(\ln^p,\ld^0)$-integrable orbit equivalence coupling from $\jugglern{n}{r}{F\wr\Z}$ to $\jugglern{n}{r}{\Z}$ and let us prove that $p<1$. By Theorem~\ref{thm:dklmt}, we get
\begin{equation}\label{eq:1}
        \left(\ln(\prof{\jugglern{n}{r}{\Z}}(x))\right)^{p}\preccurlyeq\prof{\jugglern{n}{r}{F\wr\Z}}(x)
\end{equation}
Using~\cite[Theorem~C]{cordum25}, we have $\prof{\jugglern{n}{r}{\Z}}(x)\simeq\ln^{\circ n}(x)$ and $\prof{\jugglern{n}{r}{F\wr\Z}}(x)\simeq\ln^{\circ (n+1)}(x)$. The inequality~\eqref{eq:1} thus becomes 
\begin{equation*}
    \left (\ln^{\circ (n+1)}(x)\right )^p\preccurlyeq\ln^{\circ (n+1)}(x)
\end{equation*}
which forces $p\le 1$. Lastly, an orbit equivalence in the case $p=1$ is excluded by Theorem~\ref{thm:threshold}.

\smallskip

For the analogous result about lampcloners, we know from~\cite[Theorem~G]{cordum25} that $\prof{\clonern{n}{F\wr\Z}}(x)\simeq\ln^{\circ (n+1)}(x)$. The only difference with lampjugglers is that we do not have a precise estimate on $\prof{\clonern{n}{\Z}}(x)$, but we can have an exact one for $\ln{(\prof{\clonern{n}{\Z}}(x))}$. Indeed, by~\cite[Theorem~G]{cordum25}, we know that
\begin{equation*}
    \sqrt{\ln^{\circ n}{(x)}}\preccurlyeq\prof{\clonern{n}{\Z}}(x)\preccurlyeq\ln^{\circ n}{(x)}
\end{equation*}
and applying the logarithm, the upper and lower bounds asymptotically become the same and we get $\ln{(\prof{\clonern{n}{\Z}}(x))}\simeq\ln^{\circ (n+1)}(x)$. We finally conclude as for lampjugglers.
\end{proof}

\bigskip
\section{Construction of orbit equivalence couplings using F\o lner tiling sequences}\label{sec:folnerTiling}

In this section, given a halo product $\halo$ (for instance a lampjuggler or a lampcloner), we construct orbit equivalence couplings between $\Z^{d_1}$ and $\halo \Z^{d_2}$. A powerful tool to construct such equivalences is provided by~\textit{F\o lner tiling sequences}, introduced first in~\cite{DKLMT22} and that we present now. Combined with our stability results, we also construct couplings between iterated halo products. These applications are presented in the subsequent subsections.

\subsection{Preliminaries}\label{sec:PreliminariesFolnerTiling}

Given an amenable group $ G$, a (right) F\o lner tiling sequence $(F_{n})_{n\ge 0}$ is a (right) F\o lner sequence of $G$ satisfying the~\textit{tiling condition}: for every $n\ge 0$, there exists a finite subset $\Sigma_{n}$ of $G$ such that
\begin{itemize}
    \item the translates $g F_{n}$, for $g\in\Sigma_{n}$, are pairwise disjoint;
    \item $F_{n+1}=\Sigma_{n} F_{n}$.
\end{itemize}
The set $F_{n}$ is a~\textit{tile}, $\Sigma_{n}$ is a set of~\textit{shifts}, and the sequence $(\Sigma_{n})_{n\ge 0}$ is the~\textit{F\o lner tiling shift} associated to the F\o lner tiling sequence $(F_{n})_{n\ge 0}$. We can define analogously left F\o lner tiling sequences, but we will only work with right F\o lner tiling sequences in this paper. For convenience, we also assume $F_{0}=\lbrace 1_{G} \rbrace$, so that any element of $F_{n+1}$ can be uniquely written as $x_{n}x_{n-1}\dots x_{0}$ with $x_{i}\in\Sigma_{i}$.

\begin{example}
Given integer $m\ge 2$ and $d\ge 1$, the sequence $(F_{n})_{n\ge 0}$ defined by
\begin{equation*}
        F_{n}\defeq \lbrace 0,1,\ldots,m^{n}-1\rbrace^{d}
\end{equation*}
is a F\o lner tiling sequence of $\Z^{d}$, with F\o lner tiling shifts $(\Sigma_{n})_{n\ge 0}$ given by
\begin{equation*}
        \Sigma_{n}\defeq \lbrace 0,m^n,2m^n,\dots,(m-1)m^n\rbrace^{d}.
\end{equation*}
\end{example}

A F\o lner tiling sequence gives rise to a probability measure-preserving $G$-action on the Cantor set
\begin{equation*}
    X_{F}\defeq\prod_{n\ge 0}{\Sigma_{n}}
\end{equation*}
endowed with the product measure of uniform distributions on each $\Sigma_{n}$. It is defined in the following way: the F\o lner condition implies that, for almost every $(x_{n})_{n\ge 0}$, for every $g\in G$, we have
\begin{equation}\label{eq:defactionfolnertiling}
    x_{N}x_{N-1}\dots x_{0} g\in F_{N+1},
\end{equation}
for a large enough integer $N$, so it can uniquely be written as $x'_{N}\dots x'_{0}$ with $x'_{i}\in\Sigma_{i}$, and we define
\begin{equation*}
g (x_{n})_{n\ge 0}\defeq (x'_{n})_{n\ge 0},
\end{equation*}
with $x'_{i}\defeq x_{i}$ for every $i\ge N+1$. This definition does not depend on the integer $N$ for which Equation~\eqref{eq:defactionfolnertiling} occurs.

\smallskip

The main interest of this action is that the equivalence relation it generates is (up to a null set) the cofinite equivalence relation: for almost every $\bm{x}=(x_{n})_{n\ge 0}\in X_F$, for every $g\in G$, we have $x_{n}=(g\bm{x})_{n}$ for large enough integers $n$. Moreover, if $(F'_{n})_{n\ge 0}$ is a F\o lner tiling sequence of another finitely generated group $ H$ satisfying $|F_{n}|=|F'_{n}|$ for every $n\ge 0$, then using a bijection $\Sigma_{n}\to\Sigma_{n}'$ for each $n\ge 0$ , we get a measure isomorphism between $X_{F}$ and $X_{F'}$. This is actually an orbit equivalence between the underlying actions since they generate the cofinite equivalence relation. We now present a criteria, proved in~\cite{DKLMT22}, to quantify this orbit equivalence.

\begin{definition}
Let $ G$ be a finitely generated group with a finite generating set $S_{G}$. Given sequences $R=(R_{n})_{n\ge 0}$ and $\varepsilon=(\varepsilon_{n})_{n\ge 0}$ of positive real numbers, we say that the sequence $(F_{n})_{n\ge 0}$ is an $(R,\varepsilon)$-F\o lner tiling sequence of $G$ if
\begin{itemize}
    \item $(F_{n})_{n\ge 0}$ is F\o lner tiling sequence of $ G$;
    \item for every $n\ge 0$, $\diam{F_{n}}\le R_{n}$;
    \item for every $n\ge 0$ and every $s\in S_{G}$, $\frac{|F_{n}s\setminus F_{n}|}{|F_{n}|}\le\varepsilon_{n}$,
\end{itemize}
where $\diam{F}$ denotes the diameter of a finite subset $F$ of $G$, defined as 
\begin{equation*}
    \diam{F}\defeq\sup_{f,g\in F}{|f^{-1}g|_{S_{G}}}.
\end{equation*}
\end{definition}

Here is then a sufficient condition to check on the tilings to ensure that the couplings constructed above have the desired level of integrability.

\begin{theorem}[{\cite[Proposition~6.9]{DKLMT22}}]\label{thm:TilingSufficientConditionQuantitative}
Let $ G$ and $H$ be two finitely generated groups, and $\varphi,\psi\colon\R_{+}\rightarrow\R_{+}$ be non-decreasing maps. Assume that there exist sequences $R=(R_{n})_{n\ge 0}$, $R'=(R'_{n})_{n\ge 0}$, $\varepsilon=(\varepsilon_{n})_{n\ge 0}$ and $\varepsilon'=(\varepsilon'_{n})_{n\ge 0}$ of positive real numbers, an $(R,\varepsilon)$-F\o lner tiling sequence $(F_{n})_{n\in\N}$ of $G$ and an $(R',\varepsilon')$-F\o lner tiling sequence $(F_{n}')_{n\in\N}$ of $H$ such that $|F_{n}|=|F_{n}'|$ for any $n\in\N$, and assume that the series
\begin{equation*}
    \sum_{n\ge 0} \varphi(R_{n+1}')\varepsilon_{n}\; \text{and}\; \sum_{n\ge 0} \psi(R_{n+1})\varepsilon_{n}'
\end{equation*}
converge. Then the orbit equivalence coupling built above from $G$ to $H$ is $(\varphi,\psi)$-integrable.
\end{theorem}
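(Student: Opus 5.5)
We plan to bound the cocycles of the orbit equivalence between $X_F$ and $X_{F'}$ on a finite generating set only, which is enough by Remark~\ref{rm:checkongenerators}. Fix the bijections $\beta_n\colon\Sigma_n\to\Sigma'_n$ that define the measure isomorphism $\Phi\colon X_F\to X_{F'}$. Fix also $s\in S_G$ and a point $\bm{x}=(x_n)_{n\ge0}$. We introduce the \emph{level}
\begin{equation*}
    N(\bm{x},s)\defeq\min\lbrace N\ge0 : x_{N-1}\cdots x_0 s\in F_N\rbrace,
\end{equation*}
with the convention that the empty product is $1_G$. By the definition of the action, $s\bm{x}$ and $\bm{x}$ agree on all coordinates of index $\ge N$. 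Their images under $\Phi$ therefore agree from index $N$ on, so they are of the form $y'_{N-1}\cdots y'_0\cdot(\text{tail})$ and $y_{N-1}\cdots y_0\cdot(\text{tail})$ with both heads in $F'_N$. Hence $c_{G,H}(s,\bm{x})=(y_{N-1}\cdots y_0)^{-1}y'_{N-1}\cdots y'_0$, and this element has length at most $\diam F'_N\le R'_N$. We get $|c_{G,H}(s,\bm{x})|_{S_H}\le R'_{N(\bm{x},s)}$.

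Next we estimate the distribution of $N$. The product $x_{n-1}\cdots x_0$ is uniformly distributed on $F_n$, because $\prod_{i<n}\Sigma_i$ is in bijection with $F_n$ and carries the product of uniform measures. So $N(\bm{x},s)>n$ exactly when $x_{n-1}\cdots x_0\in F_n\setminus F_ns^{-1}$, and
\begin{equation*}
    \mu(N>n)=\frac{|F_n\setminus F_ns^{-1}|}{|F_n|}=\frac{|F_ns\setminus F_n|}{|F_n|}\le\varepsilon_n.
\end{equation*}
We may assume $S_G$ is symmetric, or else apply the same bound to $s^{-1}$. In particular $N<\infty$ almost surely, since $\varepsilon_n\to0$ by the F\o lner property.

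Finally we sum over the levels. Using that $\varphi$ is non-decreasing and that the event $\lbrace N=n+1\rbrace$ is contained in $\lbrace N>n\rbrace$,
\begin{equation*}
    \int\varphi(|c_{G,H}(s,\bm{x})|)\,d\mu\le\varphi(R'_0)+\sum_{n\ge0}\varphi(R'_{n+1})\,\mu(N=n+1)\le\varphi(R'_0)+\sum_{n\ge0}\varphi(R'_{n+1})\varepsilon_n<\infty.
\end{equation*}
The constant $C_s$ can be taken equal to $1$. The symmetric argument, which uses the $\varepsilon'_n$ and the diameters $R_n$ of the tiles of $G$, handles $c_{H,K}$ with $\psi$.

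The main obstacle is the identification of the cocycle in the first step. We have to check carefully that the cocycle really only modifies the first $N$ coordinates. We also have to check that the modified coordinates still encode an element of $F'_N$, so that the diameter bound applies. Both facts follow from the tiling property $F_{n+1}=\Sigma_nF_n$ with disjoint translates, which gives the unique decomposition of elements of $F_{N}$. The remaining steps are routine.
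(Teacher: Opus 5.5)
Your proof is correct and is essentially the argument of \cite[Proposition~6.9]{DKLMT22}, which the paper quotes without reproof. On a generator $s$ you bound the cocycle by $R'_N$, where $N$ is the first level at which $\bm{x}$ and $s\bm{x}$ lie in the same tile, and you control $\mu(N>n)=|F_ns\setminus F_n|/|F_n|\le\varepsilon_n$ using that $x_{n-1}\cdots x_0$ is uniform on $F_n$. This implicitly uses that the condition $x_{n-1}\cdots x_0s\in F_n$ is monotone in $n$, because $\Sigma_nF_n=F_{n+1}$; only the inclusion $\{N>n\}\subseteq\{x_{n-1}\cdots x_0s\notin F_n\}$ is actually needed.

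The remaining fixes are cosmetic. Almost sure finiteness of $N$ comes from $(F_n)$ being F\o lner, not from $\varepsilon_n\to0$, since the $\varepsilon_n$ are only upper bounds. Also, the reverse cocycle should be written $c_{H,G}$, not $c_{H,K}$.
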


Note that, when one wants to get quantitative orbit equivalences between two groups $G$ and $H$, F\o lner tiling sequences are particularly useful if one of the two groups is $\Z$. Indeed, if we find a F\o lner tiling sequence of $G$, then a natural F\o lner tiling sequence $(F'_{n})_{n\ge 0}$ for $ H=\Z$, satisfying $|F_{n}|=|F_{n}'|$, is
\begin{equation*}
    F_{n}'\defeq \lbrace 0,1,\ldots, |F_{n}|-1\rbrace.
\end{equation*}
However the tiling condition and the condition on the cardinalities make the problem harder when $H$ is bigger than $\Z$, even for $ H=\Z^2$. A clever choice of F\o lner tiling sequence of free abelian groups yields the following.

\begin{theorem}[{\cite[Theorem~6.12]{DKLMT22}}]\label{thm:ExplicitCouplingZd}
Let $d_{1}>d_{2}$ be two positive integers. Then there exists an orbit equivalence coupling from $\Z^{d_{1}}$ to $\Z^{d_{2}}$ which is $(\varphi_{\varepsilon},\psi_{\varepsilon})$-integrable for every $\varepsilon>0$, where
\begin{equation*}
        \varphi_{\varepsilon}(x)=\frac{x^{\frac{d_2}{d_1}}}{\ln{(x)}^{1+\varepsilon}}\; \text{and}\; \psi_{\varepsilon}(x)=\frac{x^{\frac{d_{1}}{d_{2}}}}{\ln(x)^{1+\varepsilon}}.
\end{equation*}
\end{theorem}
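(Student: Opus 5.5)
Theorem~\ref{thm:ExplicitCouplingZd} is quoted from \cite[Theorem~6.12]{DKLMT22}. The plan follows the F\o lner tiling strategy: build $(R,\varepsilon)$-F\o lner tiling sequences of $\Z^{d_1}$ and $\Z^{d_2}$ whose tiles have equal cardinalities, then apply Theorem~\ref{thm:TilingSufficientConditionQuantitative}.

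\emph{Choice of tiles.} Pick integers $a,b\ge 2$ with $a^{d_1}=b^{d_2}$. For instance, set $g=\gcd(d_1,d_2)$ and take $a=m^{d_2/g}$, $b=m^{d_1/g}$. The obvious cubes $\lbrace 0,\dots,a^n-1\rbrace^{d_1}$ and $\lbrace 0,\dots,b^n-1\rbrace^{d_2}$ then have equal cardinalities. These cubes satisfy
\begin{equation*}
R_n\asymp a^n,\qquad \varepsilon_n\asymp a^{-n}
\end{equation*}
on the $\Z^{d_1}$ side, and $R'_n\asymp b^n$, $\varepsilon'_n\asymp b^{-n}$ on the $\Z^{d_2}$ side.

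\emph{Why pure cubes are not enough.} The first series in Theorem~\ref{thm:TilingSufficientConditionQuantitative} becomes
\begin{equation*}
\sum_n \varphi(b^{n+1})a^{-n}.
\end{equation*}
Since $b^{n}=a^{nd_1/d_2}$, we get $\varphi_\varepsilon(b^{n+1})\asymp a^{n}/n^{1+\varepsilon}$. The sum is then $\sum_n n^{-1-\varepsilon}<\infty$, which is exactly the claimed gauge, and the $\psi$ series is symmetric. The catch is that a factor $b^{d_1/d_2}$ is lost at each step and must not accumulate. The tile diameter $R'_{n+1}$ must be comparable to $b^{n+1}$, not to the larger $b^{n+1}\cdot$const$^n$. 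With genuine cubes this holds, so this choice works once $a^{d_1}=b^{d_2}$.

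\emph{Subtlety: the shifts.} When $d_1/d_2$ is irrational in the sense that no such $a,b$ with equal scaling per step exist, one has to use \textbf{boxes} (rectangles) instead of cubes. These are rectangles whose side lengths are multiplied, at each step, by a prime factor along one coordinate only, cycling through the coordinates. At each step the tile cardinalities on both sides are multiplied by the same prime $p_n$. The aspect ratio stays bounded, so diameters remain comparable to $|F_n|^{1/d}$ and boundary ratios comparable to $|F_n|^{-1/d}$, up to constants.

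\emph{Main obstacle: verifying the bounds after each multiplication.} Write $N=|F_n|=|F'_n|$. One needs $R'_{n+1}\lesssim N^{1/d_2}$ and $\varepsilon_n\lesssim N^{-1/d_1}$. Then the $\varphi_\varepsilon$ series reduces to $\sum_n \varphi_\varepsilon(N^{1/d_2})N^{-1/d_1}\asymp \sum_n (\ln N)^{-1-\varepsilon}$. Because $N$ grows geometrically, this is $\sum_n n^{-1-\varepsilon}<\infty$. The $\psi_\varepsilon$ series is handled identically, with the roles of $d_1$ and $d_2$ swapped. Finally, since one fixed pair of tiling sequences is used, the single coupling is $(\varphi_\varepsilon,\psi_\varepsilon)$-integrable for every $\varepsilon>0$ at once.
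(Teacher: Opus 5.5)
Your cube construction with $a^{d_1}=b^{d_2}$, fed into Theorem~\ref{thm:TilingSufficientConditionQuantitative}, is correct: both series reduce, up to constants, to $\sum_n n^{-1-\varepsilon}$, and this is essentially the F\o lner tiling argument the paper attributes to \cite{DKLMT22}. The ``boxes'' paragraph is superfluous, since $d_1/d_2$ is always rational and you have already exhibited suitable $a,b$ for every pair $d_1>d_2$, so the cube argument alone is a complete proof.
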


Note that, if $d_{1}>d_{2}$ and $p\ge\frac{d_{2}}{d_{1}}$, Theorems~\ref{thm:dklmt} and~\ref{thm:threshold} imply that we cannot have an $(\ld^p,\ld^0)$ orbit equivalence coupling from 
$\Z^{d_{1}}$ to $\Z^{d_{2}}$. Therefore the orbit equivalence coupling built in the last statement is optimal.

\smallskip

It is also possible to find a nice F\o lner tiling sequence for the lamplighter group $\Z/2\Z\wr\Z$, producing a quantitatively optimal coupling with $\Z$ (see~\cite[Proposition~6.20]{DKLMT22}). Using these techniques, Escalier also built in~\cite{Esc24} optimal couplings between $\Z$ and Brieussel-Zheng's groups, which have prescribed isoperimetric profiles~\cite{BZ21} as mentioned at the end of Section~\ref{sec:isoprof}.

\subsection{F\o lner tiling sequences of halo products}

\subsubsection{A general observation}\label{sec:generalobservationtilinghalo}

By~\cite[Proposition~6.19]{DKLMT22}, we know a F\o lner tiling $(L_n)_{n\ge 1}$ of the lamplighter $F\wr\Z$, where $F$ is a finite group. It is defined by
\begin{equation}\label{eq:FolnerTilingLamplighter}
    L_n\defeq \lbrace (f,h)\in F\wr\Z\mid\supp{f}\subset\lbrace 0,1,\dots,2^n-1\rbrace, h\in\lbrace 0,1,\dots,2^n-1\rbrace\rbrace.
\end{equation}

In this section, we describe a general phenomenon that will be useful to construct F\o lner tiling sequences for our favorite halo products.

\begin{proposition}\label{prop:HowToBuildFolnerTiling}
Let $H$ be an amenable group and $\mathcal{L}H$ be a halo product over $H$. Assume that $(F_{n})_{n\in\N}$ is a F\o lner tiling sequence of $H$, with an associated shift sequence $(\Sigma_{n})_{n\in\N}$, and that $L(F_{n})_{n\in\N}$ is finite. Assume furthermore that for every $h\in\Sigma_{n}$, there exists a finite subset $\Sigma_{h,n}$ of $L(F_{n+1})$ such that we have
\begin{equation*}
        L(F_{n+1})=\bigsqcup_{\sigma\in\Sigma_{h,n}}{\sigma L(hF_{n})}.
\end{equation*}
Then $(L(F_{n})\times F_{n})_{n\in\N}$ satisfies the tiling condition (as defined in the beginning of Section~\ref{sec:PreliminariesFolnerTiling}), with shift sequence $\left(\bigcup_{h\in \Sigma_{n}}{(\Sigma_{h,n}\times\lbrace h\rbrace)}\right)_{n\in\N}$.
\end{proposition}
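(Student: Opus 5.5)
We show directly that the translates of $L(F_n)\times F_n$ by the proposed shifts $T_n\defeq\bigcup_{h\in\Sigma_n}(\Sigma_{h,n}\times\{h\})$ are pairwise disjoint and cover $L(F_{n+1})\times F_{n+1}$. Throughout we use the halo product multiplication $(\sigma,h)(\tau,g)=(\sigma\,\alpha(h)(\tau),hg)$ together with the axiom $\alpha(h)(L(S))=L(hS)$. For the proposal I read $L(F_n)\times F_n$ as the set $\{(\tau,g):\tau\in L(F_n),\,g\in F_n\}\subset \halo H$, and I read the hypothesis as saying that each $L(F_n)$ is finite.

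\textbf{Computing a single translate.} Fix $h\in\Sigma_n$ and $\sigma\in\Sigma_{h,n}$. Then
\begin{equation*}
(\sigma,h)\cdot\big(L(F_n)\times F_n\big)=\{(\sigma\,\alpha(h)(\tau),hg):\tau\in L(F_n),\,g\in F_n\}=\sigma L(hF_n)\times hF_n ,
\end{equation*}
because $\alpha(h)$ maps $L(F_n)$ bijectively onto $L(hF_n)$. So the translate is a product set: its first coordinate ranges over the coset $\sigma L(hF_n)$ and its second over the tile translate $hF_n$.

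\textbf{Disjointness.} Take two shifts $(\sigma,h)$ and $(\sigma',h')$ in $T_n$.
\begin{itemize}
\item If $h\neq h'$, then $hF_n\cap h'F_n=\emptyset$ by the tiling property of $(F_n)$ in $H$. The second coordinates of the two translates therefore never agree, so the translates are disjoint.
\item If $h=h'$ but $\sigma\neq\sigma'$, then $\sigma L(hF_n)$ and $\sigma'L(hF_n)$ are distinct pieces of the decomposition $L(F_{n+1})=\bigsqcup_{\sigma\in\Sigma_{h,n}}\sigma L(hF_n)$, hence disjoint.
\end{itemize}
The same two observations show that distinct elements of $T_n$ produce distinct translates, so the union is indexed injectively.

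\textbf{Covering.} Take $(\rho,g)\in L(F_{n+1})\times F_{n+1}$. Since $F_{n+1}=\Sigma_nF_n$, there are $h\in\Sigma_n$ and $g_0\in F_n$ with $g=hg_0$. By the decomposition of $L(F_{n+1})$ attached to this $h$, there are $\sigma\in\Sigma_{h,n}$ and $\tau'\in L(hF_n)$ with $\rho=\sigma\tau'$. Setting $\tau=\alpha(h)^{-1}(\tau')\in L(F_n)$, we get $(\rho,g)=(\sigma,h)(\tau,g_0)$. Conversely, every translate lies in $L(F_{n+1})\times F_{n+1}$: its second coordinate $hg_0$ is in $F_{n+1}$, and since $hF_n\subset F_{n+1}$, monotonicity of the halo gives $L(hF_n)\le L(F_{n+1})$, so $\sigma L(hF_n)\subset L(F_{n+1})$ because $\sigma\in L(F_{n+1})$. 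This establishes the tiling condition with the stated shift sequence.

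\textbf{Main obstacle.} The only delicate point is the order of coordinates in the semidirect product. One must check that the left translate by $(\sigma,h)$ acts on the $L$-coordinate exactly as $\tau\mapsto\sigma\,\alpha(h)(\tau)$; with the opposite convention $(\tau,g)(\sigma,h)$ the coset structure would not match. With the convention of Section~\ref{sec:halo}, this is precisely what the hypothesis on $\Sigma_{h,n}$ is designed for. The F\o lner property of these sets is not part of this proposition and is treated later, case by case.
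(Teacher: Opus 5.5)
Your proof is correct and follows the paper's argument. The paper likewise uses the composition law to compute each translate as $\sigma L(hF_n)\times hF_n$, takes the union over $\sigma\in\Sigma_{h,n}$ to get $L(F_{n+1})\times hF_n$ and then over $h\in\Sigma_n$ to get $L(F_{n+1})\times F_{n+1}$, and simply calls disjointness straightforward. Your two-case disjointness check (distinct $h$ via the tiling of $H$, equal $h$ via the coset decomposition) supplies exactly the step the paper leaves implicit.
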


For instance, for the F\o lner tiling sequence described in~\eqref{eq:FolnerTilingLamplighter}, we start from the F\o lner tiling sequence $(\lbrace 0,1,\dots,2^n-1\rbrace)_{n\ge 0}$ of $\Z$, with shift sequence $(\lbrace 0,2^n\rbrace)_{n\ge 1}$, and we get $L_{n}=L(\lbrace 0,1,\dots,2^n-1\rbrace)$. Given $h\in\lbrace 0,2^n\rbrace$, $L(\lbrace h,h+1,\dots,h+2^n-1\rbrace)$ is the set of functions $\Z\to F$ of support in $\lbrace h,h+1,\dots,h+2^n-1\rbrace$. Thus, to get $L(\lbrace 0,1,\dots,2^{n+1}-1\rbrace)$ from $L(\lbrace h,h+1,\dots,h+2^n-1\rbrace)$, we have to translate it by all functions supported in $\lbrace 0,1,\dots,2^{n+1}\rbrace\setminus\lbrace h,h+1,\dots,h+2^{n}-1\rbrace$, this is exactly $\Sigma_{h,n}$. We will show that this proposition also applies to lampjugglers and lampcloners (see Sections~\ref{sec:tilingShuffler} and~\ref{sec:tilingCloner}).

\begin{proof}[Proof of Proposition~\ref{prop:HowToBuildFolnerTiling}]
Using the composition law, we see that
\begin{align*}
    \bigcup_{h\in\Sigma_{n}}{\bigcup_{\sigma\in\Sigma_{h,n}}{(\sigma,h)(L(F_{n})\times F_{n})}}&=\bigcup_{h\in\Sigma_{n}}{\bigcup_{\sigma\in\Sigma_{h,n}}{(\sigma L(hF_{n})\times hF_{n})}}\\
    &=\bigcup_{h\in\Sigma_n}{\left (\left (\bigcup_{\sigma\in\Sigma_{h,n}}{\sigma L(hF_{n})}\right)\times hF_{n}\right)}\\
    &=\bigcup_{h\in\Sigma_{n}}{(L(F_{n+1})\times hF_{n})}\\
    &=L(F_{n+1})\times F_{n+1},
\end{align*}
and it is straightforward to prove that the unions are disjoint.
\end{proof}

For the sequel, we will need to check that the sequence $(L(F_{n})\times F_{n})_{n\in\N}$ provided by the previous proposition is in fact a F\o lner sequence, and, in view of Theorem~\ref{thm:TilingSufficientConditionQuantitative}, we will actually need to quantify this F\o lner sequence, namely finding sequences $R=(R_{n})_{n\ge 0}$ and $\varepsilon=(\varepsilon_{n})_{n\ge 0}$ such that $(L(F_{n})\times F_{n})_{n\ge 0}$ is an $(R,\varepsilon)$-F\o lner tiling sequence. For this we have the following general result.

\begin{proposition}\label{prop:quantitativeFolner}
Let $H$ be a finitely generated amenable group, with a finite generating set $S_H$, and let $\halo H$ be a halo product over $H$. Let $(F_{n})_{n\in \N}$ be a sequence of non empty subsets of $H$ such that $L(F_{n})$ is finite. For any $n\ge 0$, let $L_n=L(F_{n})\times F_{n}$. Then, for any $s\in S_{H}$, one has
\begin{equation*}
    \frac{|L_{n}\cdot (1_{L(H)},s)\setminus L_{n}|}{|L_{n}|}=\frac{|F_{n}s\setminus F_{n}|}{|F_{n}|},
\end{equation*}
as well as 
\begin{equation*}
        \frac{|L_{n}\cdot (\sigma_{s},1_{H})\setminus L_{n}|}{|L_{n}|}\leq\frac{|F_{n}s\setminus F_{n}|}{|F_{n}|}
\end{equation*}
for any $\sigma_{s}\in L(\lbrace 1_{H},s\rbrace)$. 
\end{proposition}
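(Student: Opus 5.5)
Both claims come from computing right multiplication in the semidirect product $L(H)\rtimes H$, with the convention $(\sigma,h)(\sigma',h')=(\sigma\,(h\cdot\sigma'),hh')$. Here $h\cdot\sigma'=\alpha(h)(\sigma')$ and $\alpha(h)(L(S))=L(hS)$.

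For the first equality, I would compute $(\sigma,h)(1_{L(H)},s)=(\sigma,hs)$. This gives $L_n\cdot(1,s)=L(F_n)\times F_ns$, and therefore
\begin{equation*}
L_n\cdot(1,s)\setminus L_n=L(F_n)\times(F_ns\setminus F_n).
\end{equation*}
Dividing by $|L_n|=|L(F_n)|\,|F_n|$ gives exactly the ratio $|F_ns\setminus F_n|/|F_n|$. This uses only that $L(F_n)$ is finite and that the product set has cardinality equal to the product of cardinalities.

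For the inequality, take $\sigma_s\in L(\{1_H,s\})$. I would compute $(\sigma,h)(\sigma_s,1_H)=(\sigma\,(h\cdot\sigma_s),h)$, where $h\cdot\sigma_s\in L(\{h,hs\})$. The key observation is this: if $h\in F_n$ and $hs\in F_n$, then $\{h,hs\}\subset F_n$. The first axiom of halos then gives $h\cdot\sigma_s\in L(F_n)$, so $\sigma\,(h\cdot\sigma_s)\in L(F_n)$ and the product stays in $L_n$. Consequently the only elements $(\sigma,h)\in L_n$ whose image leaves $L_n$ have $h\in F_n$ with $hs\notin F_n$, that is, $h\in F_ns^{-1}\setminus F_n$ after translation. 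Right multiplication by a fixed group element is injective, so
\begin{equation*}
|L_n\cdot(\sigma_s,1)\setminus L_n|\le |L(F_n)|\cdot|\{h\in F_n : hs\notin F_n\}|.
\end{equation*}
Finally, the map $h\mapsto hs$ is a bijection from $\{h\in F_n: hs\notin F_n\}$ onto $F_ns\setminus F_n$, so this count equals $|L(F_n)|\,|F_ns\setminus F_n|$. Dividing by $|L_n|$ gives the claim.

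I do not expect a serious obstacle here. The only point needing care is the semidirect product convention, namely checking that the twisting $h\cdot\sigma_s$ lands in $L(\{h,hs\})$ rather than in $L(\{h,sh\})$. This follows from $\alpha(h)(L(S))=L(hS)$ applied to $S=\{1_H,s\}$, which is consistent with the left-multiplication action of $H$ on itself.
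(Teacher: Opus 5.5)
Your proof is correct and is essentially the paper's argument: compute $(\sigma,h)(1_{L(H)},s)=(\sigma,hs)$ and $(\sigma,h)(\sigma_s,1_H)=(\sigma\,(h\cdot\sigma_s),h)$, use $h\cdot\sigma_s\in L(\{h,hs\})\subseteq L(F_n)$ whenever $hs\in F_n$, and count. One small slip: your parenthetical ``$h\in F_ns^{-1}\setminus F_n$'' should read $h\in F_n\setminus F_ns^{-1}$, but your bijection $h\mapsto hs$ onto $F_ns\setminus F_n$ already works with the correct set $\{h\in F_n : hs\notin F_n\}$, so the conclusion is unaffected.
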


\begin{proof}
Let $(\sigma,h)\in L_{n}$ and let $s\in S_{H}$. The composition law of $\halo H$ directly implies that $(\sigma,h)(1_{L(H)},s)=(\sigma,hs)$ and $(\sigma,h)(\sigma_s,1_H)=(\sigma(h\cdot\sigma_s),h)$. Therefore $(\sigma,h)(1_{L(H)},s)$ lies in $L_n$ if and only if $hs$ lies in $F_{n}$, and since $h\cdot\sigma_s$ lies in $L(\lbrace h,hs\rbrace)$, we know that $(\sigma,h)(\sigma_s,1_H)$ lies in $L_{n}$ if $hs$ lies in $F_{n}$. We thus get
\begin{equation*}
    \frac{|L_{n}\cdot(1_{L(H)},s)\setminus L_{n}|}{|L_{n}|}=\frac{|L_{n}|\cdot|F_{n}s\setminus F_{n}|}{|L_{n}|\cdot |F_n|}=\frac{|F_{n}s\setminus F_{n}|}{|F_{n}|}
\end{equation*}
and similarly
\begin{equation*}
    \frac{|L_{n}\cdot(\sigma_{s},1_{H})\setminus L_{n}|}{|L_{n}|}\leq\frac{|L_{n}|\cdot|F_{n}s\setminus F_{n}|}{|L_{n}|\cdot |F_{n}|}=\frac{|F_{n}s\setminus F_{n}|}{|F_{n}|}.
\end{equation*}
This completes the proof.
\end{proof}

In the case of a halo products $\mathcal{L}H$ generated by
\begin{equation*}
    S_{\halo H}\defeq\left\lbrace (1_{L(H)},s) : s\in S_H\right\rbrace\cup\bigcup_{s\in S_H}{\left\lbrace(\sigma,1_{H}) : \sigma\in L(\lbrace 1_{H},s\rbrace)\right\rbrace}
\end{equation*}
what we call a~\textit{naturally generated} halo product in~\cite{cordum25}, Proposition~\ref{prop:quantitativeFolner} asserts the following: if $(F_{n})_{n\in\N}$ is a F\o lner sequence of $H$ and if $L(F_{n})$ is finite for every $n\in\N$, then $(L(F_{n})\times F_{n})_{n\in\N}$ is an $(R,\epsilon)$-F\o lner sequence with the same $\varepsilon=(\varepsilon_n)_{n\in\N}$ as $(F_{n})_{n\in\N}$. This applies for instance to lamplighters, as was done in~\cite[Proposition~6.19]{DKLMT22}, and also to lampjugglers and lampcloners. For the latters, it remains to compute the diameters $R_{n}$ with the generating set $S_{\halo H}$ provided by this last result. Furthermore, our main examples of halo products satisfy the following property: the cardinality of $L(F)$ only depends on the cardinality of $F$, and is denoted by $\La_{\halo H}(|F|)$. This is what we called halo products with~\textit{consistent blocks} in~\cite{cordum25}, and $\La_{\halo H}\colon\N\rightarrow\N$ is the~\textit{lamp growth sequence} of $\halo H$. For these halo products, the sets of the F\o lner tiling sequences that we build have cardinality $\Lambda_{\halo H}(|F_{n}|)\cdot|F_{n}|$.

\smallskip

In the sequel, we illustrate these results for lampjugglers and lampcloners. Note that, for simplicity, we will only consider F\o lner tiling sequences $(F_{n})_{n\in\N}$ for the base group $H$ satisfying $1_{H}\in F_{n}$ for every $n\in\N$. This will be the case in our applications, when picking $H=\Z^d$.

\subsubsection{F\o lner tiling sequences of lampshufflers and lampjugglers}\label{sec:tilingShuffler}

Let us recall that if $S_{H}$ is a finite generating subset of $H$, then a finite generating set of $\juggler{r}{H}$ is 
\begin{equation*}
    S_{\halo H}=\left\lbrace(\tau_{(1_{H},i),(s,j)},1_{H}) : s\in S_{H}, 1\le i,j\le r\right\rbrace\cup\left\lbrace(\mathrm{id},s) : s\in S_{H}\right\rbrace
\end{equation*}
where $\tau_{(g,i),(h,j)}\colon H\times\lbrace 1,\dots,r\rbrace\to H\times\lbrace 1,\ldots,r\rbrace$ denotes the transposition that swaps $(g,i)$ and $(h,j)$. In particular, this halo product is naturally generated.

\smallskip

Let us now apply Propositions~\ref{prop:HowToBuildFolnerTiling} and~\ref{prop:quantitativeFolner} to get a F\o lner tiling sequence of $\juggler{r}{H}$ from a tiling sequence of $H$, with the associated quantitative informations.

\begin{theorem}\label{thm:folnertilingshuffler}
Let $H$ be a finitely generated amenable group. If $(F_{n})_{n\in\N}$ is a right F\o lner tiling sequence of $H$ satisfying
\begin{equation*}
        \frac{|F_{n}s\setminus F_{n}|}{|F_{n}|}\le\varepsilon_n
\end{equation*}
for all $s\in S_{H}$ and some sequence $(\varepsilon_{n})_{n\in\N}$ tending to $0$,
then there exists a right F\o lner tiling sequence $(L_{n})_{n\in\N}$ of $\juggler{r}{H}$ with the following properties:
\begin{enumerate}[label=(\roman*)]
        \item\label{item:1Folnertilingjuggler} for any $n\in\N$, $|L_{n}|=(r|F_{n}|)!\cdot|F_{n}|$;
        \item\label{item:2Folnertilingjuggler} for any $n\in\N$, $\diam{L_{n}}\le 5\cdot|F_{n}|\cdot\diam{F_{n}}$;
        \item\label{item:3Folnertilingjuggler} for all $\varphi\in S_{\juggler{r}{H}}$, $\frac{|L_{n}\varphi\setminus L_{n}|}{|L_{n}|}\le\varepsilon_{n}$.
\end{enumerate}
\end{theorem}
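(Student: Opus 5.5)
We take $L_{n}\defeq L(F_{n})\times F_{n}$ with $L(F_{n})=\fsym{F_{n}\times\lbrace 1,\dots,r\rbrace}$, and obtain the tiling condition from Proposition~\ref{prop:HowToBuildFolnerTiling}. For $h\in\Sigma_{n}$, $L(hF_{n})=\fsym{hF_{n}\times\lbrace1,\dots,r\rbrace}$ is a subgroup of the finite group $L(F_{n+1})=\fsym{F_{n+1}\times\lbrace1,\dots,r\rbrace}$, because $hF_{n}\subset F_{n+1}$. Choosing a set $\Sigma_{h,n}$ of left coset representatives therefore gives the decomposition $L(F_{n+1})=\bigsqcup_{\sigma\in\Sigma_{h,n}}\sigma L(hF_{n})$, so the proposition applies. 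This is simpler than for lamplighters, since any subgroup yields a coset decomposition. Property~\ref{item:1Folnertilingjuggler} follows from $|L(F_{n})|=(r|F_{n}|)!$.

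Property~\ref{item:3Folnertilingjuggler} follows from Proposition~\ref{prop:quantitativeFolner}. Each generator is either $(\mathrm{id},s)$ or $(\tau_{(1_{H},i),(s,j)},1_{H})$ with $\tau_{(1_H,i),(s,j)}\in L(\lbrace 1_{H},s\rbrace)$, so the proposition bounds the boundary ratio by $|F_{n}s\setminus F_{n}|/|F_{n}|\le\varepsilon_{n}$. Since $\varepsilon_{n}\to0$, $(L_{n})$ is also a Følner sequence. Hence $(L_{n})$ is a right Følner tiling sequence, and we may assume $F_{0}=\lbrace 1_{H}\rbrace$ together with $1_{H}\in F_{n}$ as stipulated.

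The main work is the diameter bound~\ref{item:2Folnertilingjuggler}. Since $\diam L_{n}=\sup|g_{1}^{-1}g_{2}|$ over $g_{1},g_{2}\in L_{n}$, and $g_{1}^{-1}g_{2}=(h_{1}^{-1}\cdot(\sigma_{1}^{-1}\sigma_{2}),h_{1}^{-1}h_{2})$, we will bound the length of $(\sigma,h)$ with $\sigma\in\fsym{h_{1}^{-1}F_{n}\times\lbrace1,\dots,r\rbrace}$ and $h\in h_{1}^{-1}F_{n}$, where $1_{H}\in h_{1}^{-1}F_{n}$. Set $F\defeq h_{1}^{-1}F_{n}$ and $D\defeq\diam F_{n}$, so that every element of $F$ has length at most $D$.

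We write $\sigma$ as a product of at most $r|F|-1$ transpositions and must then realise these efficiently. The naive estimate from Lemma~\ref{lem:wordlengthinlampshuffler} (with $M$ trivial) gives $4D$ per transposition, which leads to a factor $r$ that the claimed bound does not contain. Instead we plan a lamplighter-style tour with $O(D)$ moves per point of $F$. We order $F=\lbrace f_{1},\dots,f_{m}\rbrace$ and sort the fibre $f_{k}\times\lbrace1,\dots,r\rbrace$ when the arrow visits $f_{k}$. This uses transpositions between $(f_{k},\cdot)$ and a neighbouring fibre, and the arrow returns to $1_{H}$ after each visit, for at most $2D$ moves per point and about $|F|$ trips.

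The main obstacle is making this sorting work. The difficulty is that a single transposition sends each label only to an adjacent fibre, so we must organise the permutation as $|F|$ stages, each costing $O(D)$. The plan is to decompose $\sigma$ into pieces supported on $\lbrace f_{k}\rbrace\times\lbrace1,\dots,r\rbrace$ together with one fibre adjacent to it, using the conjugation trick from Lemma~\ref{lem:wordlengthinlampshuffler}: a transposition between $(1_{H},i)$ and $(g,j)$ is obtained by conjugating along a path, costing $4|g|$. We expect to need to carry a whole fibre at once rather than single labels. If this only gives a constant depending on $r$, we will settle for that. Accounting $4|F|D$ for the permutation part and $D$ for moving the arrow to $h$ gives at most $5|F_{n}|\diam F_{n}$.
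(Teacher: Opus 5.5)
Your tiling argument, (i) and (iii) are correct and match the paper; the paper's $\tau^{A,\mathbf{x}}$ are just left coset representatives of $\fsym{hF_{n}\times\{1,\dots,r\}}$ in $\fsym{F_{n+1}\times\{1,\dots,r\}}$, as in your argument. The gap is (ii). Your tour only permutes labels inside a fibre and an adjacent one, so nothing in the plan moves labels between distant fibres. The figure ``$4|F|D$ for the permutation part'' is therefore unsupported.

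More seriously, no refinement can succeed, because an $r$-free bound is false. For $\sigma\in\fsym{H\times\{1,\dots,r\}}$ set $\pi(g,i)\defeq g$ and
$$D(\sigma)\defeq\sum_{x}|\pi(x)^{-1}\pi(\sigma(x))|_{S_{H}}.$$
Right multiplication by $(\mathrm{id},s)$ does not change $\sigma$. Right multiplication by a swap generator while the arrow is at $h$ replaces $\sigma$ by $\sigma\circ\tau_{(h,i),(hs,j)}$, which raises $D$ by at most $2$. Hence $|(\sigma,1_{H})|_{S_{\juggler{r}{H}}}\ge D(\sigma)/2$.

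Now take $H=\Z$, $S_{H}=\{1\}$, $F_{n}=\{0,\dots,2^{n}-1\}$ and the reversal $\sigma(f,i)=(2^{n}-1-f,i)$. Both $(\mathrm{id},0)$ and $(\sigma,0)$ lie in $L_{n}$, and $D(\sigma)=r4^{n}/2$. So $\diam L_{n}\ge r4^{n}/4>5\cdot 2^{n}(2^{n}-1)=5|F_{n}|\diam F_{n}$ as soon as $r\ge 20$ and $n\ge1$. Every generator moves only two labels by one step, so ``carrying a whole fibre at once'' is not available.

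What is true is (ii) with a factor $r$, and it comes from the naive estimate you set aside. Write $\sigma$ as at most $r|F|-1$ transpositions. Realise each as
$$(\tau_{(a,i),(b,j)},1_{H})=(\mathrm{id},a)(\tau_{(1_{H},i),(a^{-1}b,j)},1_{H})(\mathrm{id},a^{-1}),$$
and telescope the translations between consecutive factors. By Lemma~\ref{lem:wordlengthinlampshuffler}, each middle factor costs at most $4\diam F_{n}$. A swap inside one fibre costs $3$, since $\tau_{(1_{H},i),(1_{H},j)}=\tau_{(1_{H},i),(s,k)}\tau_{(1_{H},j),(s,k)}\tau_{(1_{H},i),(s,k)}$, and $3\le 4\diam F_{n}$ once $|F_{n}|\ge 2$. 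Each translation costs at most $\diam F_{n}$. This gives $\diam L_{n}\le(5r|F_{n}|-3)\diam F_{n}\le 5r|F_{n}|\diam F_{n}$.

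This is precisely the paper's route. Lemma~\ref{lem:diamsymjuggler} decomposes $\varphi$ into cycles on $F\times\{1,\dots,r\}$ but then uses $\sum_{i}\ell_{i}\le|F|$, whereas only $\sum_{i}\ell_{i}\le r|F|$ holds. That slip is where the $r$ disappears in the stated bound. Separately, for $n=0$ with $F_{0}=\{1_{H}\}$ and $r\ge2$, (ii) fails for any constant, since $\diam F_{0}=0$ while $L_{0}$ has $r!$ elements.

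So you should prove (ii) in the form $\diam L_{n}\le 5r|F_{n}|\diam F_{n}$ for $|F_{n}|\ge 2$. Since $r$ is fixed, the extra factor is absorbed by the constants in the integrability criterion. Theorem~\ref{thm:couplingsZdandshufZk} and its consequences are unaffected.
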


To prove the upper bound on the diameter, we will need the following claim.

\begin{lemma}\label{lem:diamsymjuggler}
Let $F$ be a finite subset of $H$. Then, for every $\varphi\in\sym{F\times\lbrace 1,\dots,r\rbrace}$, we have
\begin{equation*}
     \left|(\varphi,1_{H})\right|_{\juggler{r}{H}}\le 2K+(5|F|-6)\diam{F},
\end{equation*}
where $K\defeq\max_{h\in F}{|h|_{S_H}}$.
\end{lemma}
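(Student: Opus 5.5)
Any permutation of the finite set $F\times\lbrace 1,\dots,r\rbrace$ is a product of at most $r|F|-1$ transpositions. We will use a more structured decomposition, since each transposition has to be reached by moving the arrow. The plan is to write $\varphi$ as a product of transpositions of the form $\tau_{(h,i),(h',j)}$ with $h,h'\in F$, and to realise the whole product by a single walk of the arrow. The arrow starts at $1_H$, travels to some point of $F$ (cost at most $K$), visits $F$ performing swaps, and comes back to $1_H$ (cost at most $K$). This accounts for the term $2K$.

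\textbf{Key step: sorting along a path.} Enumerate $F=\lbrace h_{1},\dots,h_{m}\rbrace$ with $m=|F|$. Consecutive points satisfy $|h_{k}^{-1}h_{k+1}|_{S_H}\le\diam F$. We sort the labels one fibre $h_k\times\lbrace 1,\dots,r\rbrace$ at a time, in the spirit of a selection sort. To bring the correct labels to fibre $h_k$, we carry a label from some $h_\ell$ to $h_k$ through a chain of generator swaps along a geodesic from $h_\ell$ to $h_k$. By the proof of Lemma~\ref{lem:wordlengthinlampshuffler} (with $M$ trivial), a transposition $\tau_{(h,i),(hg,j)}$ costs, apart from arrow displacement, at most $4|g|_{S_H}$ when the arrow sits at $h$. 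More precisely, the conjugation formula there shows that the arrow goes from $h$ to $hg$ and back, performing two generator swaps per step. The unit cost is therefore roughly $5$ per unit of distance: $1$ for moving the arrow and $4$ for the swap chain. Summing over the $m-1$ fibres to be fixed (the last one is then forced) gives about $5(m-1)\diam F$. A careful count of the final returns, which are shared with the arrow's tour, reduces this to $(5m-6)\diam F$.

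\textbf{Bookkeeping plan.} Write $\varphi=\varphi_{m-1}\cdots\varphi_{1}$, where $\varphi_k$ fixes fibres $h_1,\dots,h_{k-1}$ and corrects fibre $h_k$ using only points in $\lbrace h_k,\dots,h_m\rbrace$. In $\juggler{r}{H}$, implement $(\varphi_k,1_H)$ as $(\mathrm{id},g)(\psi_k,1_H)(\mathrm{id},g^{-1})$. We then telescope the arrow displacements $(\mathrm{id},h_{k}^{-1}h_{k+1})$ between consecutive stages. This yields the total
\[
|(\mathrm{id},h_1)|+\sum_{k}\bigl(\text{cost of }\psi_k\bigr)+\sum_k|h_k^{-1}h_{k+1}|+|(\mathrm{id},h_m^{-1})|,
\]
which is bounded by $2K$ plus a multiple of $(m-1)\diam F$.

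\textbf{Main obstacle.} The main difficulty is getting the exact constant $5|F|-6$ rather than something like $(4r+1)|F|\diam F$. In a fibre of size $r$, several labels may need to be imported, and a naive count multiplies the cost by $r$. I would first try to handle all $r$ slots of a fibre in one trip: walk from $h_k$ to the farthest needed source and back, carrying labels through the chain of swaps. This uses the fact that the generators $\tau_{(1_H,i),(s,j)}$ allow any pair of slots $i,j$ in adjacent fibres. Each fibre then costs $O(\diam F)$ independently of $r$. If the exact constant turns out to be hard to match, I would accept a bound of the form $2K+C|F|\diam F$. This weaker bound still suffices for Theorem~\ref{thm:folnertilingshuffler}\ref{item:2Folnertilingjuggler}, since $K\le\diam F$ when $1_H\in F$.
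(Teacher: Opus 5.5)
Your plan breaks down at the step you flag as the main obstacle, and it cannot be repaired there. No argument can make the cost of a fibre independent of $r$, because the inequality with the constant $5|F|-6$ fails for suitable $F$ once $r\ge 7$. Indeed, the permutation component of a product of $L$ generators of $\juggler{r}{H}$ is a product of at most $L$ transpositions: a swap generator, translated by the current arrow position, is one transposition, and arrow moves contribute nothing. So that permutation moves at most $2L$ points. Take $s\in S_H\setminus\{1_H\}$, $F=\{1_H,s\}$ and $\varphi=\prod_{i=1}^{r}\tau_{(1_H,i),(s,i)}$. Then $K=\diam F=1$ and the right-hand side equals $6$, but $|(\varphi,1_H)|_{S_{\juggler{r}{H}}}=r$. (For $|F|=1$ and $r\ge 2$ the right-hand side $2K$ is too small as well.) So the ``one trip per fibre'' idea is impossible, and the ``careful count of the final returns'' producing $5m-6$ only makes sense for $r=1$.

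The paper argues differently from you. It splits $\varphi$ into disjoint cycles of lengths $\ell_i$, writes each cycle as $\ell_i-1$ transpositions, and conjugates each transposition to the arrow position. It then bounds each by $4\diam F$ through Lemma~\ref{lem:wordlengthinlampshuffler} and telescopes the arrow moves. It contains the same slip as your target: it uses $\sum_i\ell_i\le|F|$, but the cycles live on $F\times\{1,\dots,r\}$, so only $\sum_i\ell_i\le r|F|$ holds. The argument therefore gives $2K+(5r|F|-6)\diam F$ for $|F|\ge 2$. Here a swap inside one fibre costs at most $3\le 4\diam F$; Lemma~\ref{lem:wordlengthinlampshuffler} mishandles this case, since it would give $0$ for $g=1_H$.

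Your fallback is therefore the right statement, provided $C$ may depend on $r$, and your ``naive'' count already proves it. Order the points fibre by fibre; selection sort then writes $\varphi$ as a product of at most $r|F|-1$ transpositions $\tau_{(h_k,i),y}$ with $y$ in the fibres of $h_k,\dots,h_m$. Conjugating each by the arrow at $h_k$ costs at most $4\diam F$, and the arrow travels at most $2K+(|F|-1)\diam F$ in total. This gives $2K+(4r+1)|F|\diam F$. Since $r$ is fixed, the extra factor $r$ in the diameter bound of Theorem~\ref{thm:folnertilingshuffler}\ref{item:2Folnertilingjuggler} is harmless for Theorem~\ref{thm:couplingsZdandshufZk}.
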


\begin{proof}
Let us decompose $\varphi$ as a product of cycles $\varphi=c_{1}c_{2}\dots c_{k}$ on $F\times\lbrace 1,\dots,r\rbrace$ with disjoint supports. For $1\le i \le k$, $c_{i}$ is an $\ell_{i}$-cycle $(x_{i,1},\ldots,x_{i,\ell_{i}})=\left((h_{i,1},j_{i,1}),\dots,(h_{i,\ell_{i}},j_{i,\ell_{i}})\right)$, which can itself be written as a product of transpositions
\begin{equation*}
c_{i}=\tau_{x_{i,1},x_{i,2}}\tau_{x_{i,2},x_{i,3}}\dots\tau_{x_{i,\ell_i-1},x_{i,\ell_i}}.
\end{equation*}
For any $1\le m\le \ell_{i}$, the transposition $\tau_{x_{i,m},x_{i,m+1}}$ can be written as
\begin{equation*}
\tau_{x_{i,m},x_{i,m+1}}=h_{i,m}\cdot\tau_{(1_H,j_{i,m}),(h_{i,m}^{-1}h_{i,m+1},j_{i,m+1})}
\end{equation*}
with the convention that $h_{i,\ell_{i}+1}\defeq h_{i,1}$ and $j_{i,\ell_{i}+1}\defeq j_{i,1}$, so that we have 
\begin{equation*}
(\tau_{x_{i,m},x_{i,m+1}},1_H)=(\mathrm{id},h_{i,m})(\tau_{(1_H,j_{i,m}),(h_{i,m}^{-1}h_{i,m+1},j_{i,m+1})},1_H)(\mathrm{id},h_{i,m}^{-1}).
\end{equation*}
Setting $y_{i,m}\defeq (1_H,j_{i,m})$ and $z_{i,m}\defeq (h_{i,m}^{-1}h_{i,m+1},j_{i,m+1})$, the element $(c_{i},1_{H})$ is thus equal to
\begin{equation*}
    (\mathrm{id},h_{i,1})(\tau_{y_{i,1},z_{i,1}},1_{H})(\mathrm{id},h_{i,1}^{-1}h_{i,2})(\tau_{y_{i,2},z_{i,2}},1_{H})\dots (\mathrm{id},h_{i,\ell_i-2}^{-1}h_{i,\ell_i-1}) (\tau_{y_{i,\ell_i-1},z_{i,\ell_i-1}},1_H)(\mathrm{id},h_{i,\ell_i-1}^{-1}),
\end{equation*}
namely $(c_{i},1_{H})=(\mathrm{id},h_{i,1}) \kappa_{i} (\mathrm{id},h_{i,\ell_i-1}^{-1})$, where 
\begin{equation*}
    \left|\kappa_{i}\right|_{\juggler{r}{H}}\le \left(4(\ell_{i}-1)+(\ell_{i}-2)\right)\diam{F}=(5\ell_{i}-6)\diam{F}
\end{equation*}
using Lemma~\ref{lem:wordlengthinlampshuffler}. Since we have
\begin{equation*}
        (\varphi,1_{H})=(\mathrm{id},h_{1,1})\kappa_1(\mathrm{id},h_{1,\ell_1-1}^{-1}h_{2,1})\kappa_2(\mathrm{id},h_{2,\ell_2-1}^{-1}h_{3,1})\ldots (\mathrm{id},h_{k-1,\ell_{k-1}-1}^{-1}h_{k,1})\kappa_k(\mathrm{id},h_{k,\ell_k-1}^{-1}),
\end{equation*}
we finally get
\begin{align*}
    \left|(\varphi,1_{H})\right|_{\juggler{r}{H}}&\le 2K+(k-1)\diam{F}+\sum_{i=1}^{k}{\left(5\ell_i-6\right)\diam{F}}\\
    &\le 2K+\left(k-1+(5|F|-6k)\right)\diam{F}\\
    &\le 2K+(5|F|-6)\diam{F}
\end{align*}
which concludes the proof.
\end{proof}

\begin{proof}[Proof of Theorem~\ref{thm:folnertilingshuffler}]
For $n\in\N$, let us set
\begin{equation*}
L_{n}\defeq\left\lbrace(\sigma,h)\in \juggler{r}{H} : \sigma\in \sym{F_{n}\times\lbrace 1,\dots ,r\rbrace}, h\in F_{n}\right\rbrace
\end{equation*}
where $\sym{F\times\lbrace 1,\dots ,r\rbrace}$ denotes the set of permutations supported in a finite subset $F\times\lbrace 1,\dots ,r\rbrace$ of $H$. Clearly,~\textit{\ref{item:1Folnertilingjuggler}} holds, as well as~\textit{\ref{item:3Folnertilingjuggler}} by Proposition~\ref{prop:quantitativeFolner}.

\smallskip

Given $(\sigma,h)$ and $(\sigma',h')$ in $L_{n}$, let us find an upper bound for $\left|(\sigma,h)^{-1}(\sigma',h')\right|_{S_{\juggler{r}{H}}}$. We first have
\begin{align*}
    (\sigma,h)^{-1}(\sigma',h')&=(h^{-1}\cdot\sigma^{-1},h^{-1})(\sigma',h')  \\
    &=\left([h^{-1}\cdot\sigma^{-1}]\circ [h^{-1}\cdot\sigma'],h^{-1}h'\right)  \\
    &=\left([h^{-1}\cdot\sigma^{-1}]\circ [h^{-1}\cdot\sigma'],1_H\right)\left(\mathrm{id},h^{-1}h'\right)
\end{align*}
so that
\begin{align*}
    \left|(\sigma,h)^{-1}(\sigma',h')\right|_{S_{\juggler{r}{H}}} &\le \left|([h^{-1}\cdot\sigma^{-1}]\circ [h^{-1}\cdot\sigma'],1_H)\right|_{S_{\juggler{r}{H}}}+\left|(\mathrm{id},h^{-1}h')\right|_{S_{\juggler{r}{H}}}  \\
    &\le \left|([h^{-1}\cdot\sigma^{-1}]\circ [h^{-1}\cdot\sigma'],1_{H})\right|_{S_{\juggler{r}{H}}}+\diam{F_{n}}
\end{align*}
and it remains to find an upper bound for $\left|([h^{-1}\cdot\sigma^{-1}]\circ [h^{-1}\cdot\sigma'],1_H)\right|_{S_{\juggler{r}{H}}}$. But the permutation $[h^{-1}\cdot\sigma^{-1}]\circ [h^{-1}\cdot\sigma']$ lies in $\sym{(h^{-1}F_{n})\times\lbrace 1,\dots,r\rbrace}$, so using Lemma~\ref{lem:diamsymjuggler}, it follows that
\begin{align*}
    \left|([h^{-1}\cdot\sigma^{-1}]\circ [h^{-1}\cdot\sigma'],1_H)\right|_{S_{\juggler{r}{H}}}&\le 2K+(5|h^{-1}F_n|-6)\diam{h^{-1}F_{n}},
\end{align*}
with $K\defeq\max_{k\in F_{n}}{|h^{-1}k|_{S_{H}}}\leq\diam{F_{n}}$, so that we have
\begin{align*}
    \left|([h^{-1}\cdot\sigma^{-1}]\circ [h^{-1}\cdot\sigma'],1_H)\right|_{S_{\juggler{r}{H}}}&\le (5|F_{n}|-4)\diam{F_{n}}.
\end{align*}
Hence, we have 
\begin{equation*}
    \left|(\sigma,h)^{-1}(\sigma',h')\right|_{S_{\juggler{r}{H}}}\le 5|F_{n}|\diam{F_{n}}
\end{equation*}
and we are done for the proof of~\textit{\ref{item:2Folnertilingjuggler}}.

\smallskip
    
Lastly, we show the tiling condition, using Proposition~\ref{prop:HowToBuildFolnerTiling}. Let $(\Sigma_{n})_{n\in\N}$ be a sequence of F\o lner shifts for $(F_{n})_{n\in\N}$, satisfying $F_{n+1}=\Sigma_{n} F_{n}$. Let us fix $h\in\Sigma_n$ and let us find a finite subset $\Sigma_{h,n}$ of $L(F_{n+1})$ such that we have the following disjoint union:
\begin{equation}\label{eq:DesiredDisjointUnionjuggler}
    \sym{F_{n+1}}=\bigsqcup_{\sigma\in\Sigma_{h,n}}{\sigma\  \sym{hF_n}}.
\end{equation}
For every $A\subset F_{n+1}\times\lbrace 1,\dots,r\rbrace$ of cardinality $r|F_{n}|$ and every family $\textbf{$x$}=(x_{g,i})_{(g,i)\in (F_{n+1}\setminus hF_n)\times\{1,\ldots,r\}}$ satisfying 
\begin{equation}\label{eq:CompatibleJuggler}
    A\sqcup\lbrace x_{g,i} : (g,i)\in (F_{n+1}\setminus hF_{n})\times\lbrace 1,\dots,r\rbrace\rbrace=F_{n+1}\times\lbrace 1,\dots,r\rbrace,
\end{equation}
let us choose one permutation $\tau^{A,\textbf{$x$}}\in\sym{F_{n+1}}$ satisfying
\begin{equation*}
        \tau^{A,\textbf{$x$}}((hF_{n})\times\lbrace 1,\dots,r\rbrace)=A \;\text{and} \; \forall (g,i)\in (F_{n+1}\setminus hF_{n})\times\lbrace 1,\dots,r\rbrace, \tau^{A,\textbf{$x$}}(g,i)=x_{g,i},
\end{equation*}
then the set
\begin{equation*}
    \left\lbrace\tau^{A,\textbf{$x$}}\circ\sigma : \sigma\in\sym{hF_{n}\times\lbrace 1,\dots,r\rbrace}\right\rbrace
\end{equation*}
describes all permutations $\rho\in\sym{F_{n+1}\times\lbrace 1,\dots,r\rbrace}$ satisfying
\begin{equation*}
\rho(hF_{n}\times\lbrace 1,\dots,r\rbrace)=A\;\text{and}\; \forall (g,i)\in (F_{n+1}\setminus hF_{n})\times\lbrace 1,\dots,r\rbrace, \rho(g,i)=x_{g,i}.
\end{equation*}
It remains to define $\Sigma_{h,n}$ as the set of all chosen permutations $\tau^{A,\textbf{$x$}}$ for every subset $A\subset F_{n+1}\times\lbrace 1,\dots,r\rbrace$ of cardinality $r|F_{n}|$ and every family $\textbf{$x$}=(x_g)_{g\in (F_{n+1}\setminus hF_{n})\times\lbrace 1,\dots r\rbrace}$ such that~\eqref{eq:CompatibleJuggler} holds, and we get~\eqref{eq:DesiredDisjointUnionjuggler}. So the F\o lner sequence $(L_{n})$ satisfies the tiling condition by Proposition~\ref{prop:HowToBuildFolnerTiling}.
\end{proof}

\subsubsection{F\o lner tiling sequences of lampcloners}\label{sec:tilingCloner}

Let us recall that if $S_{H}$ is a finite generating subset of $H$, then a finite generating set of $\cloner{H}$ is 
\begin{equation*}
    S_{\cloner{H}}\defeq\lbrace (\delta_{1_{H}}(\lambda), 1_{H}) : \lambda \in \field\setminus\lbrace 0\rbrace\rbrace\cup\lbrace (\tau_{1_{H},s}(\lambda),1_{H}) : s\in S_{H}\rbrace \cup\lbrace (\text{id}, h) : s\in S_{H}\rbrace
\end{equation*}
where the elements $\delta_{p}(\lambda)$ are diagonal matrices and elements $\tau_{pq}(\lambda)$ are transvections. In particular, this halo product is naturally generated.

\smallskip

Let us now apply Propositions~\ref{prop:HowToBuildFolnerTiling} and~\ref{prop:quantitativeFolner} to get a F\o lner tiling sequence of $\cloner{H}$ from a tiling sequence of $H$, with the associated quantitative information on it.

\begin{theorem}\label{thm:folnertilingcloner}
Let $H$ be a finitely generated amenable group. If $(F_{n})_{n\in\N}$ is a right F\o lner tiling sequence of $H$ satisfying
\begin{equation*}
        \frac{|F_{n}s\setminus F_{n}|}{|F_{n}|}\le\varepsilon_{n}
\end{equation*}
for all $s\in S_{H}$ and some sequence $(\varepsilon_{n})_{n\in\N}$ tending to $0$,
then there exists a right F\o lner tiling sequence $(L_{n})_{n\in\N}$ of $\cloner{H}$ with the following properties:
\begin{enumerate}[label=(\roman*)]
        \item\label{item:1Folnertilingcloner} for any $n\in\N$, $|L_{n}|=\Lambda_{\cloner{H}}(|F_n|)\cdot|F_{n}|$;
        \item\label{item:2Folnertilingcloner} there exists a constant $C>0$ such that for any $n\in\N$, $\diam{L_{n}}\le C|F_{n}|^3\diam{F_{n}}$;
        \item\label{item:3Folnertilingcloner} for all $\varphi\in S_{\cloner{H}}$, $\frac{|L_{n}\varphi\setminus L_{n}|}{|L_{n}|}\le\varepsilon_{n}$.
\end{enumerate}
\end{theorem}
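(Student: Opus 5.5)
We mirror the proof of Theorem~\ref{thm:folnertilingshuffler} and set
\begin{equation*}
L_{n}\defeq\left\lbrace(\sigma,h)\in\cloner{H} : \sigma\in\mathrm{FGL}(F_{n}),\ h\in F_{n}\right\rbrace=L(F_{n})\times F_{n}.
\end{equation*}
Property~\textit{\ref{item:1Folnertilingcloner}} is then the definition of the lamp growth function, since $\mathrm{FGL}(F_n)\cong\mathrm{GL}_{|F_n|}(\field)$. Property~\textit{\ref{item:3Folnertilingcloner}} follows from Proposition~\ref{prop:quantitativeFolner}. Indeed, every generator of $S_{\cloner{H}}$ has one of two forms. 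It is either $(\mathrm{id},s)$, or it is $(\sigma_s,1_H)$ with $\sigma_s\in L(\lbrace 1_H,s\rbrace)$: the diagonal matrices $\delta_{1_H}(\lambda)$ lie in $L(\lbrace 1_H\rbrace)\subset L(\lbrace 1_H,s\rbrace)$, and the transvections $\tau_{1_H,s}(\lambda)$ lie in $L(\lbrace 1_H,s\rbrace)$.

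For the tiling condition we apply Proposition~\ref{prop:HowToBuildFolnerTiling}. Fix $h\in\Sigma_n$. The subgroup $L(hF_n)$ of the finite group $L(F_{n+1})$ has left cosets. Choosing $\Sigma_{h,n}$ to be any set of left coset representatives gives the required disjoint decomposition $L(F_{n+1})=\bigsqcup_{\sigma\in\Sigma_{h,n}}\sigma L(hF_n)$. This is simpler than the explicit description used for jugglers. The only check is that $L(hF_n)\le L(F_{n+1})$, which holds because $hF_n\subset F_{n+1}$ and the halo is monotone.

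The main work is~\textit{\ref{item:2Folnertilingcloner}}. As before, we write
\begin{equation*}
(\sigma,h)^{-1}(\sigma',h')=\big([h^{-1}\cdot\sigma^{-1}][h^{-1}\cdot\sigma'],1_H\big)(\mathrm{id},h^{-1}h').
\end{equation*}
The second factor has length at most $\diam{F_n}$. The first factor lies in $L(F)$ with $F=h^{-1}F_n$, a set containing $1_H$ of diameter $\diam{F_n}$. So it suffices to prove a cloner analogue of Lemma~\ref{lem:diamsymjuggler}: every $\varphi\in\mathrm{FGL}(F)$ satisfies $|(\varphi,1_H)|\le C|F|^3\diam{F}$.

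We prove this by Gaussian elimination. Any matrix in $\mathrm{GL}_{|F|}(\field)$ is a product of at most $O(|F|^2)$ elementary transvections $\tau_{p,q}(\lambda)$ with $p,q\in F$, together with $O(|F|)$ diagonal matrices $\delta_p(\lambda)$. This count uses that $\field$ is a field, so row reduction applies.

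Each factor is then a conjugate of a generator-type element. We have $\tau_{p,q}(\lambda)=p\cdot\tau_{1_H,p^{-1}q}(\lambda)$, which gives
\begin{equation*}
(\tau_{p,q}(\lambda),1_H)=(\mathrm{id},p)(\tau_{1_H,p^{-1}q}(\lambda),1_H)(\mathrm{id},p^{-1}).
\end{equation*}
By Lemma~\ref{lem:wordlengthinlampcloner} (with $M$ trivial), this has length at most $2|p|_{S_H}+14|p^{-1}q|_{S_H}\le 16\diam{F}$, using $1_H\in F$. Similarly, $(\delta_p(\lambda),1_H)$ has length at most $2\diam{F}+1$.

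Summing $O(|F|^2)$ terms of size $O(\diam{F})$ already gives $C|F|^2\diam{F}$, which is better than the stated $C|F|^3\diam{F}$. The stated cubic bound therefore leaves slack, for instance for a cruder elimination count. We would present the $O(|F|^2)$ elimination argument, or, to be safe, the crude count of $O(|F|^3)$ elementary operations, since either suffices.

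The step needing the most care is this elimination bound together with the conjugation length estimate. In particular, the diagonal part must be handled so that the constant $C$ depends only on $\field$ and $S_H$, and not on $n$. Since each $\delta_p(\lambda)$ has length $O(\diam{F})$ and at most $|F|$ of them are needed, this works out.
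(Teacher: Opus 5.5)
Your proof is correct and follows the paper's route: the same tiles $L_n=\mathrm{FGL}(F_n)\times F_n$, Proposition~\ref{prop:quantitativeFolner} for (iii), Proposition~\ref{prop:HowToBuildFolnerTiling} for the tiling, and Gaussian elimination in which each elementary factor is conjugated into a generator-type element and bounded via Lemma~\ref{lem:wordlengthinlampcloner}. Your left coset representatives are exactly the paper's explicitly parametrized $\tau^{A,x}$, since a coset $\rho\,\mathrm{FGL}(hF_n)$ is determined by $\rho(V_{hF_n})$ and by $\rho$ on $V_{F_{n+1}\setminus hF_n}$; your count of $O(|F|^2)$ elementary matrices is sharper than the paper's $O(|F|^3)$ and gives a quadratic rather than cubic diameter bound (like the paper, you implicitly need $\diam{F_n}\ge 1$ to absorb the dilations into the $\diam{F_n}$ term).
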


To prove the upper bound on the diameter, we will need the following claim.

\begin{lemma}\label{lem:diamsymcloner}
Let $F$ be a finite subset of $H$. Then there exist a constant $C>0$ such that, for every $\varphi\in\mathrm{FGL(F)}$, we have
\begin{equation*}
     \left|(\varphi,1_{H})\right|_{S_{\cloner{H}}}\le 2K+C|F|^3\diam{F}
\end{equation*}
where $K\defeq\max_{h\in F}{|h|_{S_{H}}}$.
\end{lemma}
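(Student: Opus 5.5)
Following the proof of Lemma~\ref{lem:diamsymjuggler}, we write $\varphi\in\mathrm{FGL}(F)$ as a product of elementary matrices supported in $F$, and control the word length of each factor by Lemma~\ref{lem:wordlengthinlampcloner}. By Gaussian elimination over $\field$, any invertible $|F|\times|F|$ matrix is a product of at most $C_0|F|^2$ factors. Each factor is either a transvection $\tau_{p,q}(\lambda)$ with $p,q\in F$, or a diagonal matrix $\delta_p(\lambda)$ with $p\in F$. Here $C_0$ is an absolute constant, for instance $C_0=3$: at most $|F|^2$ transvections to reach a diagonal matrix from each side, plus $|F|$ diagonal factors. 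We order the factors so that we sweep through the points $p_1,\dots,p_{|F|}$ of $F$, handling all elementary operations whose first index is $p_i$ before moving to $p_{i+1}$.

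Next we conjugate each factor back to $1_H$. As in the juggler case, $\tau_{p,q}(\lambda)=p\cdot\tau_{1_H,p^{-1}q}(\lambda)$ and $\delta_p(\lambda)=p\cdot\delta_{1_H}(\lambda)$. Hence
\begin{equation*}
(\tau_{p,q}(\lambda),1_H)=(\mathrm{id},p)(\tau_{1_H,p^{-1}q}(\lambda),1_H)(\mathrm{id},p^{-1}).
\end{equation*}
Since $|p^{-1}q|_{S_H}\le\diam{F}$, Lemma~\ref{lem:wordlengthinlampcloner} gives
\begin{equation*}
|(\tau_{1_H,p^{-1}q}(\lambda),1_H)|_{S_{\cloner{H}}}\le 14\diam{F},
\end{equation*}
and $(\delta_{1_H}(\lambda),1_H)$ is a generator.

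We then concatenate the factors, merging the consecutive cursor moves $(\mathrm{id},p^{-1})(\mathrm{id},p')=(\mathrm{id},p^{-1}p')$. Each merged move costs at most $\diam{F}$, except the initial move $(\mathrm{id},p_1)$ and the final move back, which cost at most $K$ each. The total length is then at most
\begin{equation*}
2K+C_0|F|^2\bigl(14\diam{F}+\diam{F}\bigr)+C_0|F|^2\diam{F}\le 2K+C|F|^3\diam{F},
\end{equation*}
which is the claimed bound. The $|F|^3$ is generous: even a naive count bounding the number of elementary factors by $|F|^3$, as when one naively implements row operations by entrywise transvections, still fits.

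The only delicate point is the elimination bound over a field. We must make sure that every row operation used stays inside $\mathrm{FGL}(F)$, meaning it only mixes basis vectors indexed by $F$ and fixes the rest. We must also make sure that the required row swaps are accounted for: each swap is a product of $O(1)$ transvections and a $\delta(-1)$, as in the proof of Lemma~\ref{lem:wordlengthinlampcloner}. The bookkeeping of the transvection convention $\tau_{pq}(\lambda)$ (adding $\lambda$ times coordinate $q$ to $p$) versus row or column operations needs care, but it only affects constants.
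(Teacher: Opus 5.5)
Your proposal is correct and follows essentially the same route as the paper: decompose $\varphi$ by Gaussian elimination into transvections and dilatations supported in $F$, conjugate each factor back to $1_H$ by $(\mathrm{id},p)$, bound each conjugated transvection by $14\diam{F}$ via Lemma~\ref{lem:wordlengthinlampcloner}, and merge consecutive cursor moves at cost at most $\diam{F}$ each, plus $2K$ for the first and last moves. The differences are minor:
\begin{itemize}
\item your count of $O(|F|^2)$ elementary factors is correct and sharper than the paper's $N=O(|F|^3)$;
\item the sweep ordering is unnecessary, and not freely available since elementary matrices do not commute, because the $\diam{F}$ bound on each merged move holds in any order;
\item like the paper, you absorb the cost $1$ of each dilatation generator into $\diam{F}$, which is harmless except when $|F|=1$: there $\diam{F}=0$, and the stated bound itself fails for a nontrivial dilatation.
\end{itemize}
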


\begin{proof}
By Gaussian elimination, we need $N$ basic operations to get $\varphi$ from the identity in $\mathrm{FGL(F)}$, with $N=O(|F|^3)$. By operations, we mean transvections and dilatations of support in $F$. Let us notice that we have 
\begin{equation*}
    \tau_{g,h}(\lambda)=g\cdot\tau_{1_{H},g^{-1}h}(\lambda)
\end{equation*}
for transvections, so that we get 
\begin{equation*}
    (\tau_{g,h}(\lambda),1_{H})=(\mathrm{id},g)(\tau_{1_H,g^{-1}h}(\lambda),1_{H})(\mathrm{id},g^{-1})
\end{equation*}
in $\cloner{H}$. Also, for dilatations, we have
\begin{equation*}
    \delta_g(\lambda)=g\cdot\delta_{1_{H}}(\lambda)
\end{equation*}
and this can be written in $\cloner{H}$ as 
\begin{equation*}
    (\delta_g(\lambda),1_{H})=(\mathrm{id},g)(\delta_{1_{H}}(\lambda),1_{H})(\mathrm{id},g^{-1}).
\end{equation*}
We thus get $(\varphi,1_{H})$ as a product of the following form
\begin{equation*}
    (\varphi,1_{H})=(\mathrm{id},h_{1})(\varphi_{1},1_{H})(\mathrm{id},h_{1}^{-1}h_{2})(\varphi_{2},1_{H})(\mathrm{id},h_{2}^{-1}h_{3})\dots (\mathrm{id},h_{N-1}^{-1}h_{N})(\varphi_{N},1_{H})(\mathrm{id},h_{N}^{-1})
\end{equation*}
with basic operations $\varphi_{i}$ supported in $\lbrace 1_{H},F^{-1}F\rbrace$ and elements $h_{i}\in F$. Now using Lemma~\ref{lem:wordlengthinlampcloner}, we get
\begin{equation*}
    |(\varphi,1_{H})|_{S_{\cloner{H}}}\le 2K+N\diam{F}+14N\diam{F}
\end{equation*}
and we are done.
\end{proof}

\begin{proof}[Proof of Theorem~\ref{thm:folnertilingcloner}]
For $n\in\N$, let us set
\begin{equation*}
L_{n}\defeq\left\lbrace(\sigma,h)\in \cloner{H} : \sigma\in \mathrm{FGL}(F_{n}), h\in F_{n}\right\rbrace.
\end{equation*}
Clearly,~\textit{\ref{item:1Folnertilingcloner}} holds by definition of the lamp growth sequence $\Lambda_{\cloner{H}}$, as well as~\textit{\ref{item:3Folnertilingcloner}} by Proposition~\ref{prop:quantitativeFolner}.

\smallskip

Given $(\sigma,h)$ and $(\sigma',h')$ in $L_n$, let us find an upper bound for $\left|(\sigma,h)^{-1}(\sigma',h')\right|_{S_{\cloner{H}}}$. We first have
\begin{align*}
    (\sigma,h)^{-1}(\sigma',h')&=(h^{-1}\cdot\sigma^{-1},h^{-1})(\sigma',h')  \\
    &=\left([h^{-1}\cdot\sigma^{-1}]\circ [h^{-1}\cdot\sigma'],h^{-1}h'\right)  \\
    &=\left([h^{-1}\cdot\sigma^{-1}]\circ [h^{-1}\cdot\sigma'],1_{H}\right)\left(\mathrm{id},h^{-1}h'\right)
\end{align*}
so that
\begin{align*}
    \left|(\sigma,h)^{-1}(\sigma',h')\right|_{S_{\cloner{H}}} &\le \left|([h^{-1}\cdot\sigma^{-1}]\circ [h^{-1}\cdot\sigma'],1_{H})\right|_{S_{\cloner{H}}}+\left|(\mathrm{id},h^{-1}h')\right|_{S_{\cloner{H}}}  \\
    &\le \left|([h^{-1}\cdot\sigma^{-1}]\circ [h^{-1}\cdot\sigma'],1_{H})\right|_{S_{\cloner{H}}}+\diam{F_{n}}
\end{align*}
and it remains to find an upper bound for $\left|([h^{-1}\cdot\sigma^{-1}]\circ [h^{-1}\cdot\sigma'],1_{H})\right|_{S_{\cloner{H}}}$. But the linear automorphism $[h^{-1}\cdot\sigma^{-1}]\circ [h^{-1}\cdot\sigma']$ lies in $\mathrm{FGL}(h^{-1}F_{n})$, so using Lemma~\ref{lem:diamsymcloner}, it follows that
\begin{align*}
    \left|([h^{-1}\cdot\sigma^{-1}]\circ [h^{-1}\cdot\sigma'],1_{H})\right|_{S_{\cloner{H}}}&\le 2K+C|h^{-1}F_{n}|^3\diam{h^{-1}F_{n}},
\end{align*}
with $K\defeq\max_{k\in F_{n}}{|h^{-1}k|_{S_H}}\le\diam{F_{n}}$ and for some constant $C>0$, so that we have
\begin{align*}
    \left|([h^{-1}\cdot\sigma^{-1}]\circ [h^{-1}\cdot\sigma'],1_{H})\right|_{S_{\cloner{H}}}&\le (C+2)|F_{n}|^3\diam{F_{n}}.
\end{align*}
Hence, we have 
\begin{equation*}
    \left|(\sigma,h)^{-1}(\sigma',h')\right|_{S_{\cloner{H}}}\le (C+3)|F_{n}|^3\diam{F_{n}}
\end{equation*}
and we are done for the proof of~\textit{\ref{item:2Folnertilingcloner}}.

\smallskip
    
Lastly, we show the tiling condition, using Proposition~\ref{prop:HowToBuildFolnerTiling}. Let $(\Sigma_{n})_{n\in\N}$ be a sequence of F\o lner shifts for $(F_{n})_{n\in\N}$, satisfying $F_{n+1}=\Sigma_{n} F_{n}$. Let us fix $h\in\Sigma_n$ and let us find a finite subset $\Sigma_{h,n}$ of $L(F_{n+1})$ such that we have the following disjoint union:
\begin{equation}\label{eq:DesiredDisjointUnioncloner}
    \mathrm{FGL}(F_{n+1})=\bigsqcup_{\sigma\in\Sigma_{h,n}}{\sigma\  \mathrm{FGL}(hF_n)}.
\end{equation}
For every subspace $A$ of $V_{F_{n+1}}$, of dimension $|F_{n}|$ and every family $\textbf{$x$}=(x_{v})_{v\in V_{F_{n+1}\setminus hF_{n}}}$ satisfying 
\begin{equation}\label{eq:CompatibleCloner}
    A\sqcup\lbrace x_{v} : v\in V_{F_{n+1}\setminus hF_n}\rbrace=V_{F_{n+1}},
\end{equation}
let us choose, if it exists, one linear automorphism $\tau^{A,\textbf{$x$}}\in\mathrm{FGL}(F_{n+1})$ satisfying
\begin{equation*}
        \tau^{A,\textbf{$x$}}(V_{hF_{n}})=A \;\text{and} \; \forall v\in V_{F_{n+1}\setminus hF_{n}}, \tau^{A,\textbf{$x$}}(v)=x_{v},
\end{equation*}
then the set
\begin{equation*}
    \left\lbrace\tau^{A,\textbf{$x$}}\circ\sigma : \sigma\in\mathrm{FGL}(hF_{n})\right\rbrace
\end{equation*}
describes all the linear automorphisms $\rho\in\mathrm{FGL}(F_{n+1})$ satisfying
\begin{equation*}
\rho(V_{hF_{n}})=A\;\text{and}\; \forall v\in V_{F_{n+1}\setminus hF_{n}}, \rho(v)=x_{v}.
\end{equation*}
It remains to define $\Sigma_{h,n}$ as the set of all the chosen linear automorphisms $\tau^{A,\textbf{$x$}}$ for every subspace $A$ of $V_{F_{n+1}}$, of dimension $|F_{n}|$ and every family $\textbf{$x$}=(x_{v})_{v\in V_{F_{n+1}\setminus hF_n}}$ such that~\eqref{eq:CompatibleCloner} holds and such that such a $\tau^{A,\textbf{$x$}}$ exists, and we get~\eqref{eq:DesiredDisjointUnioncloner}. Hence the F\o lner sequence $(L_{n})_{n\in\N}$ satisfies the tiling condition by Proposition~\ref{prop:HowToBuildFolnerTiling}.
\end{proof}

\subsection{Applications to quantitatively optimal orbit equivalence couplings}

\subsubsection{Lampshufflers and lampjugglers}\label{sec:folnertilingjugglers}

As a first application of our construction of F\o lner tiling sequences, we give an explicit orbit equivalence coupling between $\juggler{r}{\Z^{d_{1}}}$ and $\Z^{d_{2}}$.

\begin{theorem}\label{thm:couplingsZdandshufZk}
Let $d_{1},d_{2},r\ge 1$ be three integers. There exists an orbit equivalence coupling from $\juggler{r}{\Z^{d_{1}}}$ to $\Z^{d_{2}}$, which is $(\varphi_{d_{1},\varepsilon},\psi)$-integrable for every $\varepsilon>0$, where 
\begin{equation*}
    \varphi_{d_{1},\varepsilon}(x)\defeq\frac{\ln(x)^{\frac{1}{d_{1}}}}{\ln(\ln(x))^{1+\frac{1}{d_{1}}+\varepsilon}}\; \text{and} \; \psi(x)=x^{x^{\frac{d_{1}}{d_{1}+1}}}.
\end{equation*}
\end{theorem}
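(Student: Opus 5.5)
The plan is to apply Theorem~\ref{thm:TilingSufficientConditionQuantitative} to two F\o lner tiling sequences with matching cardinalities. On the side of $\juggler{r}{\Z^{d_{1}}}$ we take the tiling of Theorem~\ref{thm:folnertilingshuffler}, built from the cubes $F_{n}=\lbrace 0,\dots,m^{n}-1\rbrace^{d_{1}}$ of $\Z^{d_{1}}$ for some fixed $m\ge 2$. For these cubes $|F_{n}|=m^{nd_{1}}$, $\diam{F_{n}}\simeq m^{n}$ and $\varepsilon_{n}\simeq m^{-n}$. Hence the tiles $L_{n}$ satisfy
\begin{equation*}
|L_{n}|=(rm^{nd_{1}})!\,m^{nd_{1}},\qquad R_{n}\le 5m^{n(d_{1}+1)},\qquad \varepsilon_{n}\lesssim m^{-n}.
\end{equation*}
On the $\Z^{d_{2}}$ side we need a F\o lner tiling sequence $(F'_{n})$ with $|F'_{n}|=|L_{n}|$. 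The ratios $|L_{n+1}|/|L_{n}|$ are integers but are not $d_{2}$-th powers, so we use rectangular boxes $F'_{n}=\prod_{i=1}^{d_{2}}\lbrace 0,\dots,a_{n,i}-1\rbrace$. At each step we multiply the side lengths by integer factors $b_{n,i}$ with $\prod_i b_{n,i}=|L_{n+1}|/|L_{n}|$. To keep the boxes roughly cubical we distribute the prime factors of the ratio greedily. The ratio equals $m^{d_{1}}\prod_{k=rm^{nd_1}+1}^{rm^{(n+1)d_1}}k$, so it is a product of many factors, each of size at most $rm^{(n+1)d_{1}}$. Multiplying these factors one at a time into the currently shortest side keeps the aspect ratio of $F'_{n}$ bounded by roughly $rm^{(n+1)d_{1}}$.

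For this box we get $R'_{n}\lesssim|L_{n}|$ up to lower-order factors, and more precisely $R'_{n}\lesssim |L_{n}|^{1/d_{2}}\cdot\mathrm{poly}(m^{n})$. Also $\varepsilon'_{n}\lesssim 1/\min_i a_{n,i}\lesssim m^{n d_{1}}/|L_{n}|^{1/d_{2}}$. By Stirling, $\ln|L_{n}|\simeq m^{nd_{1}}\cdot n\ln m$.

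We then check the two series. For the first one, $\varphi(R'_{n+1})\varepsilon_{n}$ with $\varphi=\varphi_{d_{1},\varepsilon}$. We have $\ln R'_{n+1}\lesssim m^{(n+1)d_{1}}\,n$, so $\ln(R'_{n+1})^{1/d_{1}}\simeq m^{n}n^{1/d_{1}}$, and $\ln\ln R'_{n+1}\simeq n$. Therefore
\begin{equation*}
\varphi(R'_{n+1})\varepsilon_{n}\lesssim \frac{m^{n}n^{1/d_{1}}}{n^{1+1/d_{1}+\varepsilon}}\,m^{-n}=n^{-1-\varepsilon},
\end{equation*}
which is summable. This computation explains the exact exponent in $\varphi_{d_{1},\varepsilon}$.

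For the second series we need $\psi(R_{n+1})\varepsilon'_{n}$ with $R_{n+1}\simeq m^{(n+1)(d_{1}+1)}$. This gives
\begin{equation*}
\psi(R_{n+1})=\exp\bigl(R_{n+1}^{d_{1}/(d_{1}+1)}\ln R_{n+1}\bigr)\simeq\exp\bigl(m^{(n+1)d_{1}}\,n\bigr),
\end{equation*}
up to constants in the exponent. On the other side, $\varepsilon'_{n}\approx\exp(-\ln|L_{n}|/d_{2})=\exp(-c\,m^{nd_{1}}n)$. The orders match, since the exponent of $\psi$ is chosen precisely so that $R_{n+1}^{d_1/(d_1+1)}\approx|F_{n+1}|$.

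\textbf{Main obstacle.} The constants do not obviously cooperate: $m^{(n+1)d_{1}}$ exceeds $m^{nd_{1}}/d_{2}$ by a factor $m^{d_{1}}d_{2}$. So the second series is the step I expect to be hardest, and it may fail as set up. I would first try to repair it by exploiting the freedom in the definition of $\psi$-integrability, namely the constants $C_{k}$ in Definition~\ref{def:quantitativeCocycle}. Replacing $\psi(x)$ by $\psi(x/C)$ with $C$ large divides the exponent by $C^{d_{1}/(d_{1}+1)}$, and this can beat the factor $m^{d_{1}}d_{2}$. Concretely, Theorem~\ref{thm:TilingSufficientConditionQuantitative} is applied to $\psi(\cdot/C)$ (equivalently with $R_{n}$ replaced by $R_{n}/C$), and $\psi$-integrability follows from Remark~\ref{rm:checkongenerators}. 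If the gap persists, I would instead sharpen the diameter bound for $L_{n}$ or let the $\Z^{d_2}$-boxes grow more slowly. The remaining delicate point is verifying that the greedy box construction really has bounded eccentricity, which is what keeps $\varepsilon'_{n}$ of the claimed size.
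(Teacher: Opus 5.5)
Your proposal is correct and follows essentially the paper's proof. The paper also feeds a cube tiling of $\Z^{d_1}$ (with $m=d_2^{d_2}$) through Theorem~\ref{thm:folnertilingshuffler} and splits the factorial among the $d_2$ sides of a box in $\Z^{d_2}$; it uses the residue-class products $\ell_i$ instead of your greedy allocation, and both give the needed divisibility and eccentricity control. It resolves exactly the constant mismatch you flag in the second series by evaluating $\psi$ at $C$ times the diameter bound with $C$ small, which is your rescaling $\psi(\cdot/C)$ and is legitimate by the constants in Definition~\ref{def:quantitativeCocycle}. Your free choice of $m\ge 2$ is even slightly better, since the paper's $m=d_2^{d_2}$ degenerates when $d_2=1$.
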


\begin{remark}
Note that $\left(\frac{\ln(x)}{\ln(\ln(x))}\right)^{\frac{1}{d_1}}$ is (asymptotically) the isoperimetric profile of $\juggler{r}{\Z^{d_{1}}}$. Theorem~\ref{thm:couplingsZdandshufZk} implies in particular that, for every $\varepsilon>0$, there is a $\left(\frac{\prof{\juggler{r}{\Z^{d_1}}}(x)}{\ln(\ln(x))^{1+\varepsilon}},\ld^{<\infty}\right)$-integrable orbit equivalence coupling from $\juggler{r}{\Z^{d_1}}$ to $\Z^{d_{2}}$.
\end{remark}

\begin{proof}[Proof of Theorem~\ref{thm:couplingsZdandshufZk}]
Endow $\Z^{d_{1}}$ (resp. $\Z^{d_{2}}$) with its standard generating set $S_{\Z^{d_{1}}}$ (resp. $S_{\Z^{d_{2}}}$). Using Theorem~\ref{thm:folnertilingshuffler}, the F\o lner tiling sequence 
\begin{equation*}
    (F_{n})_{n\in\N}\defeq \big(\lbrace 0,\dots,d_{2}^{d_{2} n}-1\rbrace^{d_{1}}\big)_{n\in\N}
\end{equation*}
of $\Z^{d_{1}}$ provides a F\o lner tiling sequence $(L_{n})_{n\in\N}$ of $\juggler{r}{\Z^{d_{1}}}$ satisfying
\begin{itemize}
        \item $|L_{n}|=(rd_2^{n d_{1} d_{2}})!\cdot d_{2}^{n d_{1} d_{2}}$, for all $n\in\N$;
        \item $\diam{L_{n}}\le 5 d_{2}^{n(d_{1}+1)d_{2}+1}$ for all $n\in\N$;
        \item $\frac{|L_{n}s\setminus L_{n}|}{|L_{n}|}\le\frac{1}{d_{2}^{n d_{2}}}$, for all $s\in S_{\juggler{r}{\Z^{d_{1}}}}$.
\end{itemize}
Given a positive integer $n$, let us define, for every $i\in\lbrace 1,2,\dots, d\rbrace$, the integer
\begin{equation*}
    \ell_{i}(n)\defeq\prod_{\substack{j\in\N \\ d_{2}j+i\le n}}{(d_{2}j+i)}.
\end{equation*}
These quantities satisfy
\begin{equation*}
    \ell_{1}(n)\ell_{2}(n)\dots \ell_{d}(n)=n!,
\end{equation*}
\begin{equation*}
    \ell_{1}(n d_{2})\le \ell_{2}(n d_{2})\le\dots\le \ell_{d_{2}}(n d_{2})\le \ell_{1}((n+1) d_{2})
\end{equation*}
and
\begin{equation*}
    \ell_{d_{2}}(n d_{2})=d_{2}^n\cdot n!\sim \left(\frac{d_{2} n}{e}\right)^{n}\sqrt{2\pi n}
\end{equation*}
using Stirling's formula. Now, we find a F\o lner tiling sequence $(G_{n})_{n\in\N}$ of $\Z^{d_2}$ satisfying $|L_{n}|=|G_{n}|$, setting
\begin{equation*}
    G_{n}\defeq\prod_{1\le i\le d_{2}}{\lbrace 0,1,\dots,d_{2}^{d_{1} n}\ell_i(rd_{2}^{n d_{1} d_{2}})\rbrace}.
\end{equation*}
It satisfies
\begin{itemize}
    \item for all $n\in\N$, $\diam{G_{n}}\le d_{2}^{d_{1} n}(\ell_1(rd_{2}^{n d_{1} d_{2}}) +\ldots +\ell_{d_{2}}(rd_{2}^{n d_{1} d_{2}}))\le d_{2}^{d_{1} n+1}\ell_{d_{2}}(rd_{2}^{n d_{1} d_{2}})$;
    \item for all $s\in S_{\Z^{d_{2}}}$, $\frac{|(s+G_{n})\setminus G_{n}|}{|G_{n}|}\le\frac{1}{d_{2}^{d_{1} n}\min{(\ell_1(rd_{2}^{n d_{1} d_{2}}),\ldots ,\ell_d(rd_{2}^{n d_{1} d_{2}}))}}\le\frac{1}{d_{2}^{d_{1} n}\ell_{d_{2}}(rd_{2}^{n d_{1} d_{2}-1})}$.
\end{itemize}

\noindent Observe that we have
\begin{equation*}
    \ln(d_{2}^{d_{1} n+1}\ell_{d_{2}}(rd_2^{n d_1 d_2}))\sim rd_2^{n d_{1} d_{2}-1}\ln(rd_{2}^{n d_1 d_2})=rd_{2}^{n d_1 d_2-1}n d_1 d_2\ln(d_{2})
\end{equation*}
and
\begin{equation*}
    \ln\big(\ln(d_{2}^{d_{1} n+1}\ell_{d_2}(rd_2^{n d_1 d_2}))\big)\sim n d_{1} d_{2}\ln(d_2),
\end{equation*}
which in turn implies
\begin{equation*}
    \frac{\varphi_{d_{1},\varepsilon}(d_{2}^{d_1 n+1}\ell_{d_2}(d_{2}^{n d_1 d_2}))}{d_2^{d_2 n}}\sim\frac{K}{n^{1+\varepsilon}}
\end{equation*}
for some constant $K>0$. Hence the series $\displaystyle\sum_{n\ge 0}{\frac{\varphi_{d_1,\varepsilon}(d_2^{d_1 n+1}\ell_{d_2}(d_2^{n d_1 d_2}))}{d_2^{d_2 n}}}$ converges. Secondly, we get 
\begin{equation*}
    \frac{\psi(5Cd_2^{n(d_1+1)d_2+1})}{d_2^{d_1 n}\ell_{d_2}(rd_2^{n d_1 d_2-1})}=O\left(\left ((5Cd_2)^{(5Cd_2)^{\frac{d_1}{d_1+1}}}\left (\frac{e}{r}\right )^{\frac{r}{d_2^2}}d_2^{(5Cd_2)^{\frac{d_1}{d_1+1}}n(d_1+1)d_2-\frac{rd_1 n}{d_2}+\frac{r}{d_2^2}}\right )^{d_2^{n d_1 d_2}}\right)
\end{equation*}
for every $C>0$. Taking $C$ small enough, the quantity $(5Cd_{2})^{\frac{d_{1}}{d_1+1}}d_2(d_1+1)-\frac{rd_1}{d_2}$ is negative and the series $\sum{\frac{\psi(5C d_2^{n(d_1+1)d_2+1})}{d_2^{d_1 n}\ell_{d_2}(rd_2^{n d_1 d_2-1})}}$ converges. By Theorem~\ref{thm:TilingSufficientConditionQuantitative}, we get a $(\varphi_{d_1,\varepsilon},\psi)$-integrable orbit equivalence coupling from $\juggler{r}{\Z^{d_1}}$ to $\Z^{d_2}$.
\end{proof}

Next, we consider the case of iterated lampjugglers over free abelian groups. Using the notion of composition of couplings (see Theorem~\ref{thm:CompositionCouplings}), we deduce the following.

\begin{corollary}\label{cor:couplingsbetweeniteratedshufflers}
Let $m,n\ge 0$ be natural integers such that $m>n$. Let $d_1,d_2,r\ge 1$. Then there exists an orbit equivalence coupling from $\jugglern{m}{r}{\Z^{d_{1}}}$ to $\jugglern{n}{r}{\Z^{d_{2}}}$, which is $(\varphi_{m-n,d_1,\varepsilon}(x),\mathrm{L}^{<\infty})$-integrable for every $\varepsilon>0$, where 
\begin{equation*}
    \varphi_{i,d_{1},\varepsilon}(x)\defeq \frac{ \ln^{\circ i}(x)^{\frac{1}{d_1}}} {\left(\ln^{\circ (i+1)}(x)\right)^{1+\frac{1}{d_{1}}+\varepsilon}}.
\end{equation*}
\end{corollary}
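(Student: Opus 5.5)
The plan is to reduce to the case $n=0$ by iterating the stability theorem, exactly as announced in Remark~\ref{rem:LimitationsTechniques}. Set $i\defeq m-n\ge 1$.

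\emph{Step 1: the case $n=0$.} We build a coupling from $\jugglern{i}{r}{\Z^{d_1}}$ to $\Z^{d_2}$ that is $(\varphi_{i,d_1,\varepsilon},\ld^{<\infty})$-integrable. For $i=1$ this is Theorem~\ref{thm:couplingsZdandshufZk}, since $\psi(x)=x^{x^{d_1/(d_1+1)}}$ dominates every power $x^p$. For $i\ge 2$ we mimic its proof.

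\begin{itemize}
\item Iterate Theorem~\ref{thm:folnertilingshuffler} $i$ times, starting from the cubes $F_n=\{0,\dots,q^{n}-1\}^{d_1}$ with a suitable base $q$. This gives a F\o lner tiling sequence $(L^{(i)}_n)$ of $\jugglern{i}{r}{\Z^{d_1}}$ with the same $\varepsilon_n\simeq q^{-n}$.
\item The cardinalities satisfy $|L^{(j+1)}_n|=(r|L^{(j)}_n|)!\,|L^{(j)}_n|$.
\item The diameters satisfy $\diam L^{(j+1)}_n\le 5|L^{(j)}_n|\diam L^{(j)}_n$.
\item Hence $\ln^{\circ i}(\diam L^{(i)}_n)\approx \ln^{\circ i}|L^{(i)}_n| \approx q^{nd_1}\cdot(\text{logarithmic corrections})$.
\end{itemize}

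On the $\Z^{d_2}$ side, we choose boxes $G_n$ of cardinality $|L^{(i)}_n|$. We split the factorial $(r|L^{(i-1)}_n|)!$ into $d_2$ nearly balanced factors $\ell_1,\dots,\ell_{d_2}$ as in the proof of Theorem~\ref{thm:couplingsZdandshufZk}, and distribute the remaining factors. This makes every side of $G_n$ at least about $|L^{(i)}_n|^{1/d_2}$ up to subexponential factors.

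Then we check the two series of Theorem~\ref{thm:TilingSufficientConditionQuantitative}.

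\begin{itemize}
\item The first series is $\sum_n \varphi_{i,d_1,\varepsilon}(\diam G_{n+1})\,q^{-n}$. Since $\ln^{\circ i}(\diam G_{n+1})\approx q^{(n+1)d_1}\cdot n^{O(1)}$, the ratio $\ln^{\circ i}/\ln^{\circ(i+1)}$ raised to $1/d_1$ is about $q^{n}$ divided by $n^{1+\varepsilon'}$. This gives a convergent series, just as in the case $i=1$.
\item The second series is $\sum_n (\diam L^{(i)}_{n+1})^p/\min(\text{side of }G_n)$. The numerator is only a tower of height $i-1$ in $|L^{(i-1)}_{n+1}|$, while the denominator is a factorial of $|L^{(i-1)}_n|$. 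Choosing the growth rate of the tiles so that the factorial dominates, as in the choice of $C$ at the end of the proof of Theorem~\ref{thm:couplingsZdandshufZk}, yields $\ld^{p}$ integrability for all $p$.
\end{itemize}

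\emph{Step 2: lifting to $n\ge 1$.} Apply Theorem~\ref{thm:stabilityofcouplings+quantificationLampjugglers} with trivial subgroups $M,N$, $n$ times, to the coupling of Step~1. Since $\jugglern{m}{r}{\Z^{d_1}}=\jugglern{n}{r}{\jugglern{i}{r}{\Z^{d_1}}}$, this gives a $(\varphi_{i,d_1,\varepsilon},\ld^{<\infty})$-integrable coupling from $\jugglern{m}{r}{\Z^{d_1}}$ to $\jugglern{n}{r}{\Z^{d_2}}$. Strictly, the theorem takes a single pair $(\varphi,\psi)$, whereas $\ld^{<\infty}$ is a family of conditions. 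Because the construction does not depend on $p$, we apply it once to the Step~1 coupling and read off each $\ld^p$ bound from the cocycle identities on generators.

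\emph{Main obstacle.} Step~1 for $i\ge 2$ is where the work lies. The $\Z^{d_2}$-boxes must have exactly the prescribed huge cardinalities, be nearly balanced, and have a minimal side that still beats a polynomial in the diameter of the next tile. If the direct iteration is too lossy, the fallback is to compose couplings with Theorem~\ref{thm:CompositionCouplings}. The route would be a coupling from $\jugglern{i}{r}{\Z^{d_1}}$ to $\jugglern{(i-1)}{r}{\Z}$, obtained from the $i=1$ case lifted by stability, followed by an $\ld^{<\infty}$ coupling to $\Z^{d_2}$, checking that the functions involved satisfy hypothesis (i) there.
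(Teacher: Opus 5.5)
Your argument is correct in outline, but it follows a different route from the paper's proof. The paper never tiles $\jugglern{i}{r}{\Z^{d_1}}$ for $i\ge 2$. Instead it does three things:
\begin{enumerate}
\item It uses Theorem~\ref{thm:couplingsZdandshufZk} only at height one, with the pairs $(d_1,d_2)$ and $(d_2,d_2)$.
\item It lifts these by Theorem~\ref{thm:stabilityofcouplings+quantificationLampjugglers} to one-step couplings $\jugglern{m}{r}{\Z^{d_1}}\to\jugglern{(m-1)}{r}{\Z^{d_2}}$ and $\jugglern{(j+1)}{r}{\Z^{d_2}}\to\jugglern{j}{r}{\Z^{d_2}}$.
\item It chains these $m-n$ couplings with Theorem~\ref{thm:CompositionCouplings}, using $\varphi_{1,d_1,\varepsilon}\circ\varphi_{1,d_2,\varepsilon}^{\circ(m-n-1)}\simeq\varphi_{m-n,d_1,\varepsilon}$.
\end{enumerate}
This is modular and needs no new tile estimates; the cost is the subadditivity and concavity hypotheses of the composition theorem. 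Your route is the one sketched in Remark~\ref{rem:LimitationsTechniques}: tile $\jugglern{(m-n)}{r}{\Z^{d_1}}$ directly against $\Z^{d_2}$, then lift $n$ times. It avoids composition altogether. It also proves more, namely a height-$i$ version of Theorem~\ref{thm:couplingsZdandshufZk} whose second cocycle is far better than $\ld^{<\infty}$. The price is redoing the tile estimates, and two of them need to be made precise.

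\emph{The power of $n$ in the first series.} Writing ``$n^{O(1)}$'' is too weak here. The first series converges only because the power of $n$ is exactly one, and that factor is what the exponent $1+\frac{1}{d_1}+\varepsilon$ absorbs. Since $\ln^{\circ 2}\big((rN)!\,N\big)=\ln N\,(1+o(1))$, each extra lampjuggler level is absorbed by one extra logarithm. This gives
\[\ln^{\circ i}|L^{(i)}_n|\sim\ln|L^{(1)}_n|\sim r q^{nd_1}\,n d_1\ln q \quad\text{and}\quad \ln^{\circ(i+1)}|L^{(i)}_n|\sim n d_1\ln q.\]
Combined with the crude bound $\diam G_{n+1}\le d_2|L^{(i)}_{n+1}|$, the terms of the first series are of order $n^{-1-\varepsilon}$, and no balancing of the boxes is needed for this series.

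\emph{The tiling condition for the boxes.} The boxes $G_n$ must themselves form a F\o lner tiling sequence, so each side of $G_n$ must divide the corresponding side of $G_{n+1}$. One choice that works: take sides $\ell_j(r|L^{(i-1)}_n|)$ and multiply one of them by the leftover factor $|L^{(i-1)}_n|$. Divisibility then holds because $\ell_j(N)$ divides $\ell_j(N')$ for $N\le N'$, and $|L^{(i-1)}_n|$ divides $|L^{(i-1)}_{n+1}|$. With this choice the sides are balanced enough for the second series. There the factorial wins by a huge margin for every base $q$, so no tuning of the growth rate is required.

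\emph{Your fallback is misstated.} For $i\ge 2$, no coupling from $\jugglern{(i-1)}{r}{\Z}$ to $\Z^{d_2}$ can have an $\ld^{p}$ first cocycle, by Theorem~\ref{thm:dklmt}. The fallback that actually works is exactly the paper's chain of one-step couplings.
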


One remark is in order before the proof.

\begin{remark}\label{rem:couplingsbetweeniteratedshufflers}
Note that we have
\begin{equation}\label{eq:profileComposedInverseProfile}
    \prof{\jugglern{m}{r}{\Z^{d_{1}}}}\circ\prof{\jugglern{n}{r}{\Z^{d_{2}}}}^{-1}(x)\simeq\left(\frac{\ln^{\circ (m-n)}(x)}{\ln^{\circ (m-n+1)}(x)}\right)^{\frac{1}{d_{1}}}
\end{equation}
for every natural integers $m,n,d_{1},d_{2}$ satisfying $m>n$ and $d_{1},d_{2}\ge 1$. Indeed, if we denote by $f$ the inverse of the isoperimetric profile of $\jugglern{n}{r}{\Z^{d_{2}}}$, we get
\begin{equation*}
    \ln^{\circ (n+1)}(f(x))\sim d_{2}\ln{x}
\end{equation*}
using the equality $\prof{\jugglern{n}{r}{\Z^{d_{2}}}}(f(x))=x$. This implies
\begin{align*}
    \left(\prof{\jugglern{m}{r}{\Z^{d_{1}}}}\circ\prof{\jugglern{n}{r}{\Z^{d_{2}}}}^{-1}(x)\right)^{d_{1}}&=\frac{\ln^{\circ m}{(f(x))}}{\ln^{\circ (m+1)}{(f(x))}} \\
    &=\frac{\ln^{\circ (m-n-1)}({\ln^{\circ (n+1)}(f(x)))}}{\ln^{\circ (m-n)}(\ln^{\circ (n+1)}(f(x)))}\\
    &\simeq\frac{\ln^{\circ (m-n)}(x)}{\ln^{\circ (m-n+1)}(x)}
\end{align*}
and~\eqref{eq:profileComposedInverseProfile} follows.

\smallskip

In particular, Corollary~\ref{cor:couplingsbetweeniteratedshufflers} implies that, for every $\varepsilon>0$, there is a $\left(\varphi,\ld^{<\infty}\right)$-integrable orbit equivalence coupling from $\jugglern{m}{r}{\Z^{d_{1}}}$ to $\jugglern{n}{r}{\Z^{d_{2}}}$, with
\begin{equation*}
    \varphi(x)=\frac{\prof{\jugglern{m}{r}{\Z^{d_{1}}}}\circ\prof{\jugglern{n}{r}{\Z^{d_{2}}}}^{-1}(x)}{\left (\ln^{\circ (m-n+1)}{(x)}\right )^{1+\varepsilon}}.
\end{equation*}
\end{remark}

\begin{proof}[Proof of Corollary~\ref{cor:couplingsbetweeniteratedshufflers}]
\sloppy From Theorem~\ref{thm:couplingsZdandshufZk}, there is an orbit equivalence coupling from $\juggler{r}{\Z^{d_{2}}}$ to $\Z^{d_{2}}$ which is $(\varphi_{1,d_{2},\varepsilon},\ld^{<\infty})$-integrable. Applying $i\ge 1$ times Theorem~\ref{thm:stabilityofcouplings+quantificationLampjugglers}, it follows that there is an orbit equivalence coupling from $\jugglern{(i+1)}{r}{\Z^{d_{2}}}$ to $\jugglern{i}{r}{\Z^{d_{2}}}$, which is $(\varphi_{1,d_{2},\varepsilon},\ld^{<\infty})$-integrable. In particular, we have a coupling from $\jugglern{(m-1)}{r}{\Z^{d_{2}}}$ to $\jugglern{(m-2)}{r}{\Z^{d_{2}}}$, a coupling from $\jugglern{(m-2)}{r}{\Z^{d_{2}}}$ to $\jugglern{(m-3)}{r}{\Z^{d_{2}}}$, $\ldots$, and a coupling from $\jugglern{(n+1)}{r}{\Z^{d_{2}}}$ to $\jugglern{(n)}{r}{\Z^{d_{2}}}$, which are all $(\varphi_{1,d_{2},\varepsilon},\ld^{<\infty})$-integrable. By Theorem~\ref{thm:CompositionCouplings}, composing these successive couplings yields a coupling from $\jugglern{(m-1)}{r}{\Z^{d_{2}}}$ to $\jugglern{n}{r}{\Z^{d_{2}}}$, which is $(\varphi_{1,d_{2},\varepsilon}^{\circ (m-n-1)},\ld^{<\infty})$-integrable. Using the asymptotic equivalence 
\begin{equation*}
    \varphi_{1,d_{2},\varepsilon}^{\circ (m-n-1)}(x)\simeq\varphi_{m-n-1,d_{2},\varepsilon}(x)
\end{equation*}
and the second item of Remark~\ref{rm:checkongenerators}, this coupling is $(\varphi_{m-n-1,d_{2},\varepsilon},\ld^{<\infty})$-integrable.

\smallskip

By the same techniques, there is an orbit equivalence coupling from $\jugglern{m}{r}{\Z^{d_{1}}}$ to $\jugglern{(m-1)}{r}{\Z^{d_{2}}}$, which is $(\varphi_{1,d_{1},\varepsilon},\ld^{\infty})$-integrable, and composing this coupling with the one of the previous paragraph, we get a $(\varphi_{m-n,d_{1},\varepsilon},\ld^{<\infty})$-integrable orbit equivalence from $\jugglern{m}{r}{\Z^{d_{1}}}$ to $\jugglern{n}{r}{\Z^{d_{2}}}$. This proves the corollary.
\end{proof}

Hence, we can also prove that:

\begin{corollary}\label{cor:optimalityIteratedLampshuffler}
\sloppy Let $n,m,d_{1},d_{2}\ge 1$ be two integers such that $m>n$. There exists a $\left(\left(\frac{\ln^{\circ (m-n)}(x)}{\ln^{\circ (m-n+1)}(x)}\right)^{p},\mathrm{L}^{<\infty}\right)$-integrable orbit equivalence coupling from $\jugglern{m}{r}{\Z^{d_{1}}}$ to $\jugglern{n}{r}{\Z^{d_{2}}}$ if and only if $p<\frac{1}{d_{1}}$. 
\end{corollary}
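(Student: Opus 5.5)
The proof splits into an existence part and an obstruction part. Write $g(x)\defeq\frac{\ln^{\circ (m-n)}(x)}{\ln^{\circ (m-n+1)}(x)}$.

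\emph{Existence for $p<\frac{1}{d_1}$.} We use Corollary~\ref{cor:couplingsbetweeniteratedshufflers}, which gives, for every $\varepsilon>0$, a $(\varphi_{m-n,d_1,\varepsilon},\ld^{<\infty})$-integrable coupling from $\jugglern{m}{r}{\Z^{d_{1}}}$ to $\jugglern{n}{r}{\Z^{d_{2}}}$. Here
\begin{equation*}
\varphi_{m-n,d_1,\varepsilon}(x)=\frac{\left(\ln^{\circ (m-n)}(x)\right)^{\frac{1}{d_1}}}{\left(\ln^{\circ (m-n+1)}(x)\right)^{1+\frac{1}{d_1}+\varepsilon}}.
\end{equation*}
Since $p<\frac1{d_1}$, the power gain $\left(\ln^{\circ(m-n)}\right)^{\frac1{d_1}-p}$ eventually beats any power of $\ln^{\circ(m-n+1)}$. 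Hence $g(x)^p\le\varphi_{m-n,d_1,\varepsilon}(x)$ for $x$ large. Because $\varphi_{m-n,d_1,\varepsilon}$ and $g^p$ are non-decreasing for large $x$, $\varphi$-integrability implies $g^p$-integrability: the integrand is pointwise bounded up to a bounded region. This gives the "if" direction, with $\ld^{<\infty}$ on the other cocycle.

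\emph{Obstruction for $p\ge\frac{1}{d_1}$.} Suppose a $(g^p,\ld^0)$-integrable coupling exists. We check that $t\mapsto t/g(t)^p$ is eventually non-decreasing, which is clear since $g^p$ grows slower than $t$. We then apply Theorem~\ref{thm:dklmt} together with the profile computation of Remark~\ref{rem:couplingsbetweeniteratedshufflers}. This gives
\begin{equation*}
g^p\circ\prof{\jugglern{n}{r}{\Z^{d_2}}}\preccurlyeq\prof{\jugglern{m}{r}{\Z^{d_1}}}.
\end{equation*}
Precomposing with $f=\prof{\jugglern{n}{r}{\Z^{d_2}}}^{-1}$ (allowed by Remark~\ref{rem:InverseEachOther}) and using \eqref{eq:profileComposedInverseProfile}, we get $g(x)^p\preccurlyeq g(Cx)^{1/d_1}\simeq g(x)^{1/d_1}$. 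This forces $p\le\frac1{d_1}$. Here the profiles of iterated lampjugglers come from \cite[Theorem~C]{cordum25} together with \cite[Corollary~F]{cordum25}.

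\emph{Threshold case $p=\frac1{d_1}$.} We invoke Theorem~\ref{thm:threshold} with $f_H$ an increasing representative of the profile of $\jugglern{m}{r}{\Z^{d_1}}$ and $f_K$ that of $\jugglern{n}{r}{\Z^{d_2}}$. Then $f_H\circ f_K^{-1}\simeq g^{1/d_1}$, so a $(g^{1/d_1},\ld^0)$ coupling is exactly excluded, using the invariance of integrability under $\simeq$ from Remark~\ref{rm:checkongenerators}.

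The main obstacle is verifying the hypotheses of Theorem~\ref{thm:threshold}, which are all iterated-logarithm estimates:
\begin{itemize}
\item $f_H=o(f_K)$ holds since $m>n$.
\item Assumption~$(\star)$ for $f_H$, namely $f_H(Cx)=O(f_H(x))$, holds because iterated logarithms absorb multiplicative constants.
\item Assumption~$(\star)$ for $f_H\circ f_K^{-1}\simeq g^{1/d_1}$ holds for the same reason.
\end{itemize}
One must be careful that the asymptotic equivalence in \eqref{eq:profileComposedInverseProfile} transfers through the increasing representatives. This is where I would spend care, checking $\ln^{\circ(n+1)} f(x)\sim d_2\ln x$ as in the remark.
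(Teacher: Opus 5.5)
Your proposal is correct and follows the paper's proof. Existence comes from Corollary~\ref{cor:couplingsbetweeniteratedshufflers} via the domination $g^p=O(\varphi_{m-n,d_1,\varepsilon})$. The bound $p\le\frac{1}{d_1}$ comes from Theorem~\ref{thm:dklmt} and the iterated-lampjuggler profiles, and $p=\frac{1}{d_1}$ is excluded by Remark~\ref{rem:couplingsbetweeniteratedshufflers} and Theorem~\ref{thm:threshold}. The only differences are cosmetic: you precompose with the inverse profile and use \eqref{eq:profileComposedInverseProfile} where the paper substitutes the explicit profiles directly, and you spell out the hypotheses of Theorem~\ref{thm:threshold}, which the paper leaves implicit.
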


Once again, before the proof, let us recall that $\prof{\jugglern{m}{r}{\Z^{d_{1}}}}\circ\prof{\jugglern{n}{r}{\Z^{d_{2}}}}^{-1}(x)$ is asymptotically equivalent to $\left (\frac{\ln^{\circ (m-n)}(x)}{\ln^{\circ (m-n+1)}(x)}\right )^{\frac{1}{d_1}}$ (see Remark~\ref{rem:couplingsbetweeniteratedshufflers}).

\begin{proof}
Assume first that such an orbit equivalence exists. Then Theorem~\ref{thm:dklmt} implies that 
\begin{equation}\label{eq:ComparisonProfileIterated}
    \left (\frac{\ln^{\circ (m-n)}(\prof{\jugglern{n}{r}{\Z^{d_2}}}(x))}{\ln^{\circ (m-n+1)}(\prof{\jugglern{n}{r}{\Z^{d_2}}}(x))}\right )^p\preccurlyeq \prof{\jugglern{m}{r}{\Z^{d_1}}}(x).
\end{equation}
From \cite[Theorem~C]{cordum25}, we know that, for any integers $i,j\ge 1$, the isoperimetric profile of $\jugglern{j}{r}{\Z^{i}}$ is $\simeq \left(\frac{\ln^{\circ j}(x)}{\ln^{\circ (j+1)}(x)}\right)^{\frac{1}{i}}$, so that~\eqref{eq:ComparisonProfileIterated} implies 
\begin{equation*}
    \left(\frac{\ln^{\circ m}(x)}{\ln^{\circ (m+1)}(x)}\right)^{p} \preccurlyeq \left (\frac{\ln^{\circ m}(x)}{\ln^{\circ (m+1)}(x)}\right)^{\frac{1}{d_1}}.
\end{equation*}
This inequality now forces $p\le \frac{1}{d_1}$, and once again the fact that there is no $\left(\left(\frac{\ln^{\circ (m-n)}(x)}{\ln^{\circ (m-n+1)}(x)}\right)^{\frac{1}{d_1}}, \mathrm{L}^{0}\right)$-orbit equivalence coupling follows from Remark~\ref{rem:couplingsbetweeniteratedshufflers} and Theorem~\ref{thm:threshold}.

\smallskip

Conversely, assume that $p<\frac{1}{d_1}$. From Corollary~\ref{cor:couplingsbetweeniteratedshufflers}, we have an orbit equivalence coupling from $\jugglern{m}{r}{\Z^{d_{1}}}$ to $\jugglern{n}{r}{\Z^{d_{2}}}$ which is $(\varphi_{m-n,d_1,\varepsilon},\mathrm{L}^{<\infty})$-integrable for all $\varepsilon>0$. Since $p<\frac{1}{d_1}$, we have
\begin{equation*}
    \left(\frac{\ln^{\circ (m-n)}(x)}{\ln^{\circ (m-n+1)}(x)}\right )^{p}=O(\varphi_{m-n,d_{1},\varepsilon}(x))
\end{equation*}
and it follows that our coupling is also $\left (\left(\frac{\ln^{\circ (m-n)}(x)}{\ln^{\circ (m-n+1)}(x)}\right )^{p},\mathrm{L}^{<\infty}\right )$-integrable, as claimed. 
\end{proof}

About iterated lampjugglers with different number of iterations, let us point out the following result when the base groups are finitely generated and have slow profiles, which does not use F\o lner tiling sequences but our work on isoperimetric profiles of such groups~\cite{cordum25}.

\begin{corollary}\label{cor:boundOfIntegrability}
\sloppy Let $n,m\ge 1$ be two integers such that $m>n$. Let $H$ be a finitely generated amenable group with isoperimetric profile $\prof{H}(x)\simeq \left (\ln^{\circ k}(x)\right )^{\alpha}$, for some integer $k>0$ and $\alpha>0$. If there exists a $\left (\left(\ln^{\circ (m-n)}(x)\right )^{p},\mathrm{L}^0\right )$-integrable orbit equivalence coupling from $\jugglern{m}{r}{H}$ to $\jugglern{n}{r}{H}$, then $p<\alpha$. 
\end{corollary}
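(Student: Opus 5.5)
The plan is to follow the same pattern as the \enquote{only if} parts of Theorem~\ref{thm:optimalitypolynomialgrowthJugglers} and Corollary~\ref{cor:optimalityIteratedLampshuffler}. The first step is to compute the isoperimetric profiles of the iterated lampjugglers over $H$.

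Since $\prof{H}(x)\simeq(\ln^{\circ k}(x))^{\alpha}$ satisfies Assumption~$(\star)$, and since $\ln^{\circ k}(x/\ln x)\sim\ln^{\circ k}(x)$ for $k\ge 1$, the second item of~\cite[Theorem~C]{cordum25} applies. Combined with~\cite[Corollary~F]{cordum25}, which transfers lampshuffler profiles to lampjugglers, it gives
\begin{equation*}
\prof{\jugglern{j}{r}{H}}(x)\simeq \prof{H}(\ln^{\circ j}(x))\simeq \left(\ln^{\circ (j+k)}(x)\right)^{\alpha}
\end{equation*}
for every $j\ge 1$. Strictly, Corollary~F is stated for one iteration. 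I would iterate it by induction: the profile of $\jugglern{(j-1)}{r}{H}$ is again of the form $(\ln^{\circ k'}(x))^{\alpha}$, so it satisfies the same hypotheses, and the one-step statement (Theorem~\ref{thm:profileshufflercordum} together with Corollary~F) can be reapplied. This yields the displayed formula at each step.

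Next, suppose there is a $((\ln^{\circ(m-n)})^{p},\ld^0)$-integrable coupling from $\jugglern{m}{r}{H}$ to $\jugglern{n}{r}{H}$. The function $\varphi(t)=(\ln^{\circ(m-n)}(t))^{p}$ is non-decreasing for large $t$, and $t/\varphi(t)$ is non-decreasing eventually. Modifying $\varphi$ near $0$, which does not affect integrability up to $\simeq$, we may apply Theorem~\ref{thm:dklmt}. It gives
\begin{equation*}
\left(\ln^{\circ(m-n)}\big((\ln^{\circ(n+k)}(x))^{\alpha}\big)\right)^{p}\preccurlyeq (\ln^{\circ(m+k)}(x))^{\alpha}.
\end{equation*}
Since $m-n\ge 1$, the exponent $\alpha$ only contributes an additive constant after one logarithm: $\ln((\ln^{\circ(n+k)}x)^{\alpha})=\alpha\ln^{\circ(n+k+1)}x$. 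Hence the left side is $\simeq(\ln^{\circ(m+k)}(x))^{p}$, and the inequality forces $p\le\alpha$.

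To exclude $p=\alpha$, I would apply Theorem~\ref{thm:threshold} with $f_H=(\ln^{\circ(m+k)})^{\alpha}$ for the group $\jugglern{m}{r}{H}$ and $f_K=(\ln^{\circ(n+k)})^{\alpha}$, taking suitable increasing versions. Three hypotheses need checking:
\begin{itemize}
\item $f_H=o(f_K)$ holds because $m>n$.
\item Assumption~$(\star)$ for $f_H$ is clear for iterated logarithms.
\item Computing $f_H\circ f_K^{-1}(x)=(\ln^{\circ(m-n)}(e^{\circ(n+k)}(x^{1/\alpha})))^{\alpha}$, which equals $(\ln^{\circ(m-n-1)}(x^{1/\alpha}))^{\alpha}$ when $m-n\ge 1$, this is $\simeq(\ln^{\circ(m-n)}(x))^{\alpha}$ up to constants (and $\simeq x$ if $m-n=1$ — care is needed there). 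The doubling condition then holds.
\end{itemize}
Theorem~\ref{thm:threshold} then excludes a $(f_H\circ f_K^{-1},\ld^0)$ coupling. Since $f_H\circ f_K^{-1}\simeq(\ln^{\circ(m-n)})^{\alpha}$ or better, this excludes the case $p=\alpha$.

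The main obstacle is this bookkeeping in the last step, in particular the case $m-n=1$. There $f_H\circ f_K^{-1}$ must be compared carefully with $\ln^{\alpha}$ rather than treated as a power of $x$. One must also justify that integrability for $\varphi$ implies integrability for any $\varphi'\preccurlyeq\varphi$, so that the excluded threshold function dominates $(\ln^{\circ(m-n)})^{\alpha}$ up to $\simeq$. The profile induction via~\cite{cordum25} is the other point needing care.
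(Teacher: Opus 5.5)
Your route is the paper's: Theorem~\ref{thm:dklmt} gives $p\le\alpha$, and Theorem~\ref{thm:threshold} with $f_H\circ f_K^{-1}\simeq(\ln^{\circ(m-n)})^{\alpha}$ excludes $p=\alpha$. Your $p\le\alpha$ step is correct. Your induction through \cite[Theorem~C, Corollary~F]{cordum25} to get $\prof{\jugglern{j}{r}{H}}\simeq(\ln^{\circ(j+k)})^{\alpha}$ is a welcome, more explicit version of what the paper calls \enquote{similar techniques}.

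The one real flaw is the composite in the threshold step. The obstacle you flag at $m-n=1$ is an artifact of that miscomputation. With $f_H=(\ln^{\circ(m+k)})^{\alpha}$ and $f_K^{-1}(x)=\exp^{\circ(n+k)}(x^{1/\alpha})$ one gets
\begin{equation*}
f_H\circ f_K^{-1}(x)=\Big(\ln^{\circ(m+k)}\big(\exp^{\circ(n+k)}(x^{1/\alpha})\big)\Big)^{\alpha}=\big(\ln^{\circ(m-n)}(x^{1/\alpha})\big)^{\alpha}.
\end{equation*}
You wrote $\ln^{\circ(m-n)}\circ\exp^{\circ(n+k)}$ and then $(\ln^{\circ(m-n-1)}(x^{1/\alpha}))^{\alpha}$; both are wrong. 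Since $\ln(x^{1/\alpha})=\alpha^{-1}\ln x$, the correct composite is $\simeq(\ln^{\circ(m-n)}(x))^{\alpha}$ for every $m-n\ge 1$. For $m-n=1$ it is exactly $\alpha^{-\alpha}(\ln x)^{\alpha}$, so no special case arises.

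Your off-by-one version would actually break the argument. It is $\simeq x$ when $m-n=1$ and $\simeq(\ln^{\circ(m-n-1)})^{\alpha}$ otherwise. Either is a threshold strictly larger than $(\ln^{\circ(m-n)})^{\alpha}$. Excluding $(f_H\circ f_K^{-1},\ld^0)$-integrable couplings would then say nothing about the case $p=\alpha$.

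You also have the direction reversed in the final comparison. What you need is $f_H\circ f_K^{-1}\preccurlyeq(\ln^{\circ(m-n)})^{\alpha}$. Then a $((\ln^{\circ(m-n)})^{\alpha},\ld^0)$-integrable coupling is automatically $(f_H\circ f_K^{-1},\ld^0)$-integrable, by the constants $C_h$ in the definition (cf.\ Remark~\ref{rm:checkongenerators}). Your phrases \enquote{or better} and \enquote{the excluded threshold function dominates $(\ln^{\circ(m-n)})^{\alpha}$} point the wrong way. This is harmless here only because the two functions are $\simeq$.

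With the composite corrected, the three hypotheses of Theorem~\ref{thm:threshold} check immediately, and the proof is complete.
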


\begin{proof}
We immediately get $p\le\alpha$ by Theorem~\ref{thm:dklmt}. With similar techniques as in Remark~\ref{rem:couplingsbetweeniteratedshufflers}, we show that $\prof{\jugglern{m}{r}{H}}\circ\prof{\jugglern{n}{r}{H}}^{-1}$ is asymptotically equivalent to $\left (\ln^{\circ (m-n)}(x)\right)^{\alpha}$, so that Theorem~\ref{thm:threshold} implies $p\neq\alpha$.  
\end{proof}

\begin{remark}\label{rem:FolnerWreathProduct}
Following our methods, we can deduce much more examples of quantitatively optimal couplings. For instance, we get an optimal coupling between $\Z$ and $\juggler{r}{F\wr\Z}$ (with a non-trivial finite group $F$) by composing two couplings:
\begin{itemize}
    \item an optimal coupling between $\Z$ and $\juggler{r}{\Z}$;
    \item an optimal coupling between $\juggler{r}{\Z}$ and $\juggler{r}{F\wr\Z}$, coming from our stability result (Theorem~\ref{thm:stabilityofcouplings+quantificationLampjugglers}) and a coupling between $\Z$ and $F\wr\Z$, coming from~\cite[Proposition~6.20]{DKLMT22}.
\end{itemize}
\end{remark}

\subsubsection{Lampcloners}

Let us finally apply our construction of F\o lner tiling sequences for couplings between $\cloner{\Z^{d_1}}$ and $\Z^{d_2}$.

\begin{theorem}\label{thm:couplingsZdandclonerZk}
Let $d_1,d_2\ge 1$ be positive integers and let $\field$ be a finite field. There exists an orbit equivalence coupling from $\cloner{\Z^{d_1}}$ to $\Z^{d_2}$, which is $(\varphi_{d_1,\varepsilon},\psi)$-integrable for every $\varepsilon>0$, where
\begin{equation*}
    \varphi_{d_1,\varepsilon}(x)\defeq\frac{\ln(x)^{\frac{1}{2d_1}}}{\ln(\ln(x))^{1+\varepsilon}}\; \text{and} \; \psi(x)=x^{x^{\frac{d_1}{3d_1+1}}}.
\end{equation*}
\end{theorem}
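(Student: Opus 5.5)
The plan mirrors the proof of Theorem~\ref{thm:couplingsZdandshufZk}, with Theorem~\ref{thm:folnertilingcloner} in place of Theorem~\ref{thm:folnertilingshuffler}. It has three steps: build a quantified F\o lner tiling of $\cloner{\Z^{d_1}}$, build a matching tiling of $\Z^{d_2}$, then check the two series of Theorem~\ref{thm:TilingSufficientConditionQuantitative}.

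\textbf{Tiling of $\cloner{\Z^{d_1}}$.} Take $F_n=\{0,\dots,a^{n}-1\}^{d_1}$ for a suitable base $a\ge 2$, say $a=d_2^{d_2}$ or simply $a=2$. Theorem~\ref{thm:folnertilingcloner} then gives a F\o lner tiling $(L_n)$ of $\cloner{\Z^{d_1}}$ with
\begin{itemize}
\item $|L_n|=\La_{\cloner{\Z^{d_1}}}(a^{nd_1})\, a^{nd_1}$,
\item $\diam L_n\le C a^{3nd_1}\cdot d_1a^n$,
\item $\varepsilon_n\le a^{-n}$.
\end{itemize}
Since $\ln\La_{\cloner{\Z^{d_1}}}(m)\sim c\,m^2$ with $c=\ln|\field|$, we get $\ln|L_n|\sim c\,a^{2nd_1}$.

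\textbf{Tiling of $\Z^{d_2}$ with the same cardinalities.} Because $\La_{\cloner{H}}(m)=\prod_{i=0}^{m-1}|\field|^i(|\field|^{m-i}-1)$, the cardinality is a product of many smallish factors, and we can group them. The $|\field|$-power part is $|\field|^{m(m-1)/2}$, and the factors $(|\field|^{j}-1)$ for $j=1,\dots,m$ can be spread over coordinates. Set
\[
G_n=\prod_{i=1}^{d_2}\{0,\dots,N_{n,i}-1\},\qquad \prod_i N_{n,i}=|L_n|,
\]
with all $N_{n,i}$ roughly $|L_n|^{1/d_2}$ up to a bounded multiplicative factor, and with $N_{n,i}\mid N_{n+1,i}$ so that the tiling condition holds.

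Divisibility is automatic here, because $\La(a^{nd_1})$ divides $\La(a^{(n+1)d_1})$ in the appropriate coordinatewise way. Indeed, the factors $|\field|^j-1$ for $j\le a^{nd_1}$ reappear, as do the $|\field|$-powers and the $a^{nd_1}$ part. One distributes the $|\field|$-powers and the factors $(|\field|^{j}-1)$ cyclically over coordinates, as the $\ell_i$ did for factorials.

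This gives $\diam G_n\preccurlyeq |L_n|^{1/d_2}$ and an error term for $G_n$ of order $|L_n|^{-1/d_2}$, up to constants. Making this balanced, divisible distribution precise is the \textbf{main obstacle}. I would first try assigning each prime-power-like block $|\field|^{k}$ and each factor $|\field|^j-1$ to coordinate $j \bmod d_2$. The imbalance between coordinates is then at most one factor of size $|\field|^{a^{nd_1}}$. This is negligible at the $\ln$ scale relevant for $\varphi$, but it costs a factor $\exp(O(a^{nd_1}))$ in $\varepsilon'_n$, which still has to be checked in the $\psi$ series.

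\textbf{The $\varphi$ series.} Here
\[
\ln\diam G_n\approx \tfrac{c}{d_2}a^{2nd_1},\qquad \varphi_{d_1,\varepsilon}(\diam G_n)\asymp \frac{a^{n}}{n^{1+\varepsilon}}.
\]
Multiplied by $\varepsilon_n=a^{-n}$, this gives a summable series $\sum n^{-1-\varepsilon}$; the exponent $\frac{1}{2d_1}$ is exactly what makes this work. Take $R'_{n+1}$ for the indices; the shift only changes constants.

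\textbf{The $\psi$ series.} Here $R_{n+1}\asymp a^{(3d_1+1)n}$, so $\psi(R_{n+1})=\exp\bigl(a^{d_1 n}\cdot O(n)\bigr)$. Meanwhile $\varepsilon'_n\le \exp\bigl(-\tfrac{c}{d_2}a^{2nd_1}(1+o(1))\bigr)$, and this remains true even with the $\exp(O(a^{nd_1}))$ imbalance. So the terms decay doubly exponentially and the series converges. This is why the exponent $\frac{d_1}{3d_1+1}$ is used: it ensures that $\psi(R_{n+1})$ is only $\exp(a^{d_1n}n)$, which is much smaller than $\exp(a^{2d_1n})$. Theorem~\ref{thm:TilingSufficientConditionQuantitative} then concludes.
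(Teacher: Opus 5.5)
Your proposal is correct and is essentially the paper's proof. The paper takes $F_n=\{0,\dots,d_2^{nd_2}-1\}^{d_1}$ (your choice $a=d_2^{d_2}$) and applies Theorem~\ref{thm:folnertilingcloner}. It then splits $\Lambda_{\cloner{\Z^{d_1}}}(|F_n|)\,|F_n|$ over the $d_2$ coordinates of $\Z^{d_2}$ cyclically through the products $\ell_i$, grouping the factors $q^{m}-q^{m-k}$ with $q=|\field|$ according to $k \bmod d_2$, which yields the coordinatewise divisibility you need. Finally it checks the two series of Theorem~\ref{thm:TilingSufficientConditionQuantitative} exactly as you do. Your free base $a$ (e.g.\ $a=2$) with an explicit $q^{O(m)}$ imbalance bound is a harmless and slightly more careful variant; it also avoids the degenerate base $d_2^{d_2}=1$ in the paper's choice when $d_2=1$.
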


\begin{remark}
Theorem~\ref{thm:couplingsZdandshufZk} tells us that, for every $\varepsilon>0$, there is a $\left(\frac{j(x)}{\ln(\ln(x))^{1+\varepsilon}},\ld^{<\infty}\right)$-integrable orbit equivalence coupling from $\cloner{\Z^{d_1}}$ to $\Z^{d_2}$, where $j(x)\defeq \ln(x)^{\frac{1}{2d_1}}$ is a lower bound for $\prof{\cloner{\Z^{d_1}}}(x)$ that we find in~\cite[Theorem~G]{cordum25} (recall that we do not have precise estimates of the isoperimetric profile in this case).
\end{remark}

\begin{proof}[Proof of Theorem~\ref{thm:couplingsZdandclonerZk}]
Endow $\Z^{d_1}$ (resp. $\Z^{d_2}$) with its standard generating set $S_{\Z^{d_1}}$ (resp. $S_{\Z^{d_2}}$). Using Theorem~\ref{thm:folnertilingcloner}, the F\o lner tiling sequence 
\begin{equation*}
    (F_{n})_{n\in\N}\defeq \big(\lbrace 0,\dots,d_{2}^{n d_{2}}-1\rbrace^{d_1}\big)_{n\in\N}
\end{equation*}
of $\Z^{d_1}$ provides a F\o lner tiling sequence $(L_{n})_{n\in\N}$ of $\cloner{\Z^{d_1}}$ satisfying
\begin{itemize}
        \item $|L_{n}|=\Lambda_{\cloner{\Z^{d_1}}}(d_2^{n d_1 d_2})\cdot d_2^{n d_1 d_2}$, for all $n\in\N$;
        \item there exists a constant $C>0$ such that $\diam{L_{n}}\le C d_2^{n (3d_1+1) d_2+1}$ for all $n\in\N$;
        \item $\frac{|L_{n}s\setminus L_{n}|}{|L_{n}|}\le\frac{1}{d_2^{n d_2}}$, for all $s\in S_{\cloner{\Z^{d_1}}}$.
\end{itemize}
Let us recall that the lamp growth sequence of $\cloner{\Z^{d_1}}$ is given by
\begin{equation*}
    \forall n\ge 0,\; \Lambda_{\cloner{\Z^{d_1}}}(n)=\prod_{i=1}^{n}{(q^n-q^{n-i})}
\end{equation*}
where $q\defeq |\field|$. Given a positive integer $n$, let us define, for every $i\in\lbrace 1,2,\ldots d\rbrace$, the integer
\begin{equation*}
    \ell_{i}(n)\defeq\prod_{\substack{j\in\N \\ j d_2+i\le n}}{(q^n-q^{n-(j d_2+1)})}.
\end{equation*}
These quantities satisfy
\begin{equation*}
    \ell_{1}(n)\ell_{2}(n)\ldots \ell_{d_2}(n)=\Lambda_{\cloner{\Z^{d_1}}}(n),
\end{equation*}
\begin{equation*}
    \ell_{1}(n d_2)\le \ell_{2}(n d_2)\le\ldots\le \ell_{d_2}(n d_2)\le \ell_{1}((n+1) d_2)
\end{equation*}
and
\begin{equation*}
    \ell_{d_2}(n d_2)=\prod_{j=0}^{n-1}{(q^{n d_2}-q^{n d_2-(j+1) d_2})}=q^{n^2 d_2}\prod_{j=0}^{n-1}{(1-q^{-(j+1) d_2})}\sim C'q^{n^2 d_2}
\end{equation*}
where $C'$ is the limit of the limit of the convergent product $\prod_{j=0}^{\infty}{(1-q^{-(j+1) d_2})}$. Now, we find a F\o lner tiling sequence $(G_{n})_{n\in\N}$ of $\Z^{d_2}$ satisfying $|L_{n}|=|G_{n}|$, setting
\begin{equation*}
    G_{n}\defeq\prod_{1\le i\le d_2}{\lbrace 0,1,\dots,d_2^{n d_1}\ell_{i}(d_2^{n d_1 d_2})\rbrace}.
\end{equation*}
It satisfies
\begin{itemize}
    \item for all $n\in\N$, $\diam{G_{n}}\le d_2^{n d_1}(\ell_{1}(d_2^{n d_1 d_2}) +\dots +\ell_{d_2}(d_2^{n d_1 d_2}))\le d_2^{n d_1+1}\ell_{d_2}(d_2^{n d_1 d_2})$;
    \item for all $s\in S_{\Z^{d_2}}$, $\frac{|(s+G_{n})\setminus G_{n}|}{|G_{n}|}\le\frac{1}{d_2^{n d_1}\min{\left (\ell_{1}(d_2^{n d_1 d_2}),\ldots ,\ell_{d_2}(d_2^{n d_1 d_2})\right )}}\le\frac{1}{d_2^{n d_1}\ell_{d_2}(d_2^{n d_1 d_2-1})}$.
\end{itemize}

\noindent Observe that we have
\begin{equation*}
    \ln{(d_2^{n d_1+1}\ell_{d_2}(d_2^{n d_1 d_2}))}\sim d_2^{2n d_1 d_2 -1}\ln{q}
\end{equation*}
and
\begin{equation*}
    \ln(\ln(d_2^{n d_1+1}\ell_{d_2}(rd_2^{n d_1 d_2})))\sim 2n d_1 d_2\ln(d_2),
\end{equation*}
which in turn implies
\begin{equation*}
    \frac{\varphi_{k,\varepsilon}(d_2^{n d_1+1}\ell_{d_2}(d_2^{n d_1 d_2}))}{d_2^{n d_2}}\sim\frac{K}{n^{1+\varepsilon}}
\end{equation*}
for some constant $K>0$. Hence the series $\displaystyle\sum_{n\ge 0}{\frac{\varphi_{k,\varepsilon}(d_2^{n d_1+1}\ell_{d_2}(d_2^{n d_1 d_2}))}{d_2^{n d_2}}}$ converges. Secondly, we get 
\begin{equation*}
    \frac{\psi(C d_2^{n (3 d_1+1) d_2+1})}{d_2^{n d_1}\ell_{d_2}(rd_2^{n d_1 d_2-1})}=O\left(\left((Cd_2)^{(Cd_2)^{\frac{d_1}{3d_1+1}}}d_2^{(Cd_2)^{\frac{d_1}{3d_1+1}}n (3 d_1+1) d_2}q^{-d_2^{n d_1 d_2}}\right)^{d_2^{n d_1 d_2}}\right)
\end{equation*}
so the series $\sum{\frac{\psi(C d_2^{n (3 d_1+1) d_2+1})}{d_2^{n d_1}\ell_{d_2}(d_2^{n d_1 d_2-1})}}$ converges. By Theorem~\ref{thm:TilingSufficientConditionQuantitative}, we get a $(\varphi_{d_{1},\varepsilon},\psi)$-integrable orbit equivalence coupling from $\cloner{\Z^{d_{1}}}$ to $\Z^{d_{2}}$.
\end{proof}

Hence, analogously to Corollary~\ref{cor:optimalityIteratedLampshuffler}, we can deduce the following.

\begin{corollary}\label{cor:couplingsbetweeniteratedjugglers}
Let $\field$ be a finite field. Let $m,n\ge 0$ be natural integers such that $m>n$. Let $d_1,d_2\ge 1$ be positive integers and let $\field$ be a finite field.
\begin{itemize}
    \item For every $p<\frac{1}{2d_1}$, there exists an orbit equivalence coupling from $\clonern{m}{\Z^{d_1}}$ to $\clonern{n}{\Z^{d_2}}$, which is $((\ln^{\circ (m-n)})^{p},\mathrm{L}^{<\infty})$-integrable.
    \item If there exists an orbit equivalence coupling from $\clonern{m}{\Z^{d_1}}$ to $\clonern{n}{\Z^{d_2}}$, which is $((\ln^{\circ (m-n)})^{p},\mathrm{L}^{<\infty})$-integrable, then $p<\frac{1}{d_1}$.
\end{itemize}
\end{corollary}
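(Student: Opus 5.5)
For the first item, I will mirror the proof of Corollary~\ref{cor:couplingsbetweeniteratedshufflers}, with Theorem~\ref{thm:couplingsZdandclonerZk} in place of Theorem~\ref{thm:couplingsZdandshufZk} and Theorem~\ref{thm:stabilityofcouplings+quantificationLampcloners} in place of Theorem~\ref{thm:stabilityofcouplings+quantificationLampjugglers}.

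Theorem~\ref{thm:couplingsZdandclonerZk} applied with $d_2=d_1$ and with $d_2$ gives two couplings from $\cloner{\Z^{d}}$ to $\Z^{d_2}$, for $d=d_1$ and $d=d_2$. They are $(\varphi_{d,\varepsilon},\psi)$-integrable, and $\psi$ dominates every power, so they are $(\varphi_{d,\varepsilon},\ld^{<\infty})$-integrable. Applying Theorem~\ref{thm:stabilityofcouplings+quantificationLampcloners} $n$ times, and also $i$ times for intermediate levels, produces a chain of couplings:
\begin{itemize}
    \item from $\clonern{m}{\Z^{d_1}}$ to $\clonern{(m-1)}{\Z^{d_2}}$, which is $(\varphi_{d_1,\varepsilon},\ld^{<\infty})$-integrable;
    \item from $\clonern{(i+1)}{\Z^{d_2}}$ to $\clonern{i}{\Z^{d_2}}$ for $n\le i\le m-2$, each $(\varphi_{d_2,\varepsilon},\ld^{<\infty})$-integrable.
\end{itemize}
Note that $\varphi_{d,\varepsilon}(x)\le \ln(x)^{\frac{1}{2d}}$ dominates $\ln(x)^{p'}$ for every $p'<\frac{1}{2d}$. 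To compose cleanly with Theorem~\ref{thm:CompositionCouplings}\ref{item:CompositionHyp1}, I will work with the weaker integrability functions $\ln(1+x)^{p'}$ instead; these are subadditive and concave. On the reverse side I need $\ld^{q}$ with $q<1$ in order to stay in case~\ref{item:CompositionHyp1}. Since this holds for all $q$, this still gives $\ld^{<\infty}$, because case~\ref{item:CompositionHyp2} handles $q\ge 1$.

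Composing the chain, the forward function becomes $\ln(1+x)^{p}$ composed with $(m-n-1)$ functions of the form $\ln(1+x)^{p_j}$. This is asymptotically equivalent to $(\ln^{\circ (m-n)}x)^{p}$, because the inner exponents only contribute multiplicative constants inside logarithms. Here $p<\frac{1}{2d_1}$ is the exponent at the outermost level, coming from the first coupling, and the inner $p_j<\frac{1}{2d_2}$ can be chosen arbitrarily. By the second item of Remark~\ref{rm:checkongenerators}, this yields the first item.

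\emph{Main obstacle.} The delicate points are checking the composition hypotheses and the order of composition, so that the outermost function comes from the $\clonern{m}{\Z^{d_1}}$ coupling.

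For the second item, I will apply Theorem~\ref{thm:dklmt} with $\varphi=(\ln^{\circ(m-n)})^p$. This gives
\[
  \bigl(\ln^{\circ(m-n)}\prof{\clonern{n}{\Z^{d_2}}}(x)\bigr)^p\preccurlyeq\prof{\clonern{m}{\Z^{d_1}}}(x).
\]
By \cite[Theorem~G]{cordum25}, $\prof{\clonern{n}{\Z^{d_2}}}$ lies between two powers of $\ln^{\circ n}$. After taking $m-n\ge 1$ further logarithms, the left side is therefore $\simeq(\ln^{\circ m}x)^p$, exactly as in the proof of Theorem~\ref{th:ShufflerZdLamplighter}. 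The right side is $\preccurlyeq(\ln^{\circ m}x)^{1/d_1}$, which forces $p\le\frac{1}{d_1}$.

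To exclude $p=\frac{1}{d_1}$ I would like to apply Theorem~\ref{thm:threshold}. However, only the lower bound $(\ln^{\circ m})^{1/(2d_1)}$ on $\prof{\clonern{m}{\Z^{d_1}}}$ is known, so $\prof{H}\circ\prof{K}^{-1}$ is not pinned down. If the threshold argument cannot be made to work, I would settle for $p\le\frac{1}{d_1}$. The strict inequality is the step I expect to be genuinely problematic.
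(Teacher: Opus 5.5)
Your first item is correct and follows the paper's route. You build the chain $\clonern{m}{\Z^{d_1}}\to\clonern{(m-1)}{\Z^{d_2}}\to\dots\to\clonern{n}{\Z^{d_2}}$ from Theorem~\ref{thm:couplingsZdandclonerZk} and Theorem~\ref{thm:stabilityofcouplings+quantificationLampcloners}, then compose with Theorem~\ref{thm:CompositionCouplings}. Replacing $\varphi_{d,\varepsilon}$ by $\ln(1+x)^{p'}$, so that hypothesis (i) of the composition theorem genuinely holds, is a good tightening. One slip: the stability theorem must be applied $m-1$ times, not $n$ times, to reach $\clonern{m}{\Z^{d_1}}\to\clonern{(m-1)}{\Z^{d_2}}$.

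The genuine gap is in the second item. The statement claims $p<\frac{1}{d_1}$, and you only reach $p\le\frac{1}{d_1}$. You also misplace the obstacle. Excluding the threshold does not require knowing $\prof{\clonern{m}{\Z^{d_1}}}\circ\prof{\clonern{n}{\Z^{d_2}}}^{-1}$ up to $\simeq$. The argument behind Theorem~\ref{thm:threshold} only uses the inequality of Theorem~\ref{thm:dklmt}. It therefore needs only an upper bound on the source profile and a lower bound on the target profile, which are exactly the bounds from \cite[Theorem~G]{cordum25} you already used.

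The missing step is this one-sided threshold argument, which is the \enquote{slight improvement} the paper invokes.
\begin{itemize}
\item Suppose the coupling is $(\varphi,\ld^0)$-integrable with $\varphi=(\ln^{\circ(m-n)})^{1/d_1}$.
\item As in \cite{Corr24}, a de la Vall\'ee Poussin type argument, applied to the finitely many functions $x\mapsto\varphi(|c(s,x)|/C_s)$ with $s$ a generator (cf.\ Remark~\ref{rm:checkongenerators}), produces a non-decreasing $\psi$ such that $t\mapsto t/\psi(t)$ is non-decreasing, $\psi/\varphi\to\infty$, and the same coupling is still $\psi$-integrable.
\item Since $\varphi(Cx)=O(\varphi(x))$ for all $C>0$, this gives $\psi\not\preccurlyeq\varphi$.
\item On the other hand, Theorem~\ref{thm:dklmt} applied to $\psi$ gives $\psi\circ\prof{\clonern{n}{\Z^{d_2}}}\preccurlyeq\prof{\clonern{m}{\Z^{d_1}}}$.
\item Plug in $\prof{\clonern{n}{\Z^{d_2}}}\succcurlyeq(\ln^{\circ n})^{1/(2d_2)}$. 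Constants inside $\psi$ are harmless, since $\psi(At)\le A\psi(t)$ for $A\ge 1$.
\item Plug in $\prof{\clonern{m}{\Z^{d_1}}}\preccurlyeq(\ln^{\circ m})^{1/d_1}$ and substitute $y=(\ln^{\circ n}x)^{1/(2d_2)}$. You get $\psi(y)\preccurlyeq\bigl(\ln^{\circ(m-n)}(y^{2d_2})\bigr)^{1/d_1}\simeq\varphi(y)$; the condition $m-n\ge 1$ is what absorbs the exponent $2d_2$.
\end{itemize}
This contradicts $\psi\not\preccurlyeq\varphi$, so $p=\frac{1}{d_1}$ is excluded. Equivalently, Theorem~\ref{thm:threshold} stays valid with $f_H\simeq\prof{H}$, $f_K\simeq\prof{K}$ weakened to $\prof{H}\preccurlyeq f_H$, $f_K\preccurlyeq\prof{K}$. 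Here one can take $f_H=(\ln^{\circ m})^{1/d_1}$ and $f_K=(\ln^{\circ n})^{1/(2d_2)}$.
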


\begin{proof}
    We follow the same ideas as in the proofs of Corollaries~\ref{cor:couplingsbetweeniteratedshufflers} and~\ref{cor:optimalityIteratedLampshuffler}. The only difference is the fact that we do not have a precise estimate on $\prof{\clonern{m}{\Z^{d_{1}}}}\circ\prof{\clonern{n}{\Z^{d_{2}}}}^{-1}(x)$, but we know that it is dominated by $(\ln^{\circ (m-n)}x)^{\frac{1}{d_1}}$. This shows that, in the second point of the statement, $p$ must be less than or equal to $\frac{1}{d_1}$. To prove that $p\neq\frac{1}{d_1}$, we apply a slight improvement of Theorem~\ref{thm:threshold}, using exactly the strategy of the first-named author in his proof~\cite{Corr24}. Indeed, if $p$ were equal to $\frac{1}{d_1}$, then there would exist a non-decreasing map $\psi\colon\R_+\to\R_+$ such that
    \begin{itemize}
        \item $t\mapsto \frac{t}{\psi(t)}$ is non-decreasing;
        \item there exists an orbit equivalence coupling from $\clonern{m}{\Z^{d_1}}$ to $\clonern{n}{\Z^{d_2}}$, which is $(\psi,\mathrm{L}^{<\infty})$-integrable;
        \item $\psi(x)\not\preccurlyeq (\ln^{\circ (m-n)}x)^{\frac{1}{d_1}}$.
    \end{itemize}
    The last point implies that $\psi(x)$ is not dominated by $\prof{\clonern{m}{\Z^{d_{1}}}}\circ\prof{\clonern{n}{\Z^{d_{2}}}}^{-1}(x)$, this is in contradiction with Theorem~\ref{thm:dklmt}.
\end{proof}

\bigskip
\section{Comments and questions}\label{sec:commentsandquestions}

In this final section, we record some questions that naturally arise from what has been done in the article.

\medskip

One of our main results is the construction of an explicit orbit equivalence coupling between the lampjugglers $\juggler{r}{H}$ and $\juggler{r}{K}$, provided a coupling between $H$ and $K$. Additionally, our construction preserves quantification. Hence:

\begin{question}
Let $r,s\ge 1$, $r\neq s$. How can one construct an orbit equivalence coupling between $\juggler{s}{H}$ and $\juggler{r}{K}$ from a coupling between $H$ and $K$?
\end{question}

As pointed out in Remark~\ref{rem:LimitationsTechniques}, there are reasons to think that a $(\varphi,\psi)$-integrable orbit equivalence between $H$ and $K$ does not provide the same for $\juggler{s}{H}$ and $\juggler{r}{K}$, if $r\neq s$. This kind of stability result could also gives stability of bi-Lipschitz equivalence, which would contradicts the main results in~\cite{GT24a}. Replacing orbit equivalence by measure equivalence would maybe provide a better framework when considering \enquote{similar}, but different, halo products, for instance $\juggler{s}{H}$ and $\juggler{r}{K}$ when $r\neq s$, or $\cloner{H}$ and $\mathsf{Cloner}_{\mathbb{l}}(K)$ when $|\field|\neq |\mathbb{l}|$.

\smallskip

Beyond the quasi-isometric classification of lampshufflers established in~\cite{GT24a}, another important direction of the latter is to compare geometrically different classes of halo products to rule out, or to establish, the existence of a quasi-isometry (resp. quasi-isometric/coarse/regular embedding) between, for instance, a lampshuffler and a lamplighter, or a lampdesigner and a lamplighter. Analoguous questions are also relevant from the measured point of view. For example, note that the isoperimetric profiles of $\shuf{\Z}$ and $\Z/2\Z \wr\Z$ are asymptotically equivalent up to a logarithmic factor, so we may wonder:

\begin{question}
Are $\shuf{\Z}$ and $\Z/2\Z \wr\Z$ $\ld^{p}$-orbit equivalent for $p\in \left]0,1\right[$?
\end{question}

Moreover, there are pairs of amenable groups with asymptotically equivalent isoperimetric profiles, such as
\begin{itemize}
    \item $\shuf{\Z}$ and $\Z\wr\Z$;
    \item $\shuf{H}$ and $\Z/2\Z\wr H$, with $H$ such that Assumption~$(\star)$ holds and $\prof{H}\left(\frac{\ln(x)}{\ln(\ln(x))}\right) \simeq \prof{H}(\ln(x))$,
\end{itemize}
and thus, we may wonder whether these pairs of groups are $\ld^{p}$ orbit equivalent for every $p>0$.

\smallskip

More generally, even without quantifications, it would be interesting to have an explicit description of actions from a wreath product and a lampshuffler on a common probability space sharing the same orbits.

\begin{question}
What could be an explicit orbit equivalence coupling between a lampshuffler and a wreath product?
\end{question}

Alternatively, another useful technique to produce orbit equivalences is the one of F\o lner tiling sequences, recalled and implemented in Section~\ref{sec:folnerTiling} in the case of lampshufflers. More precisely, Theorem~\ref{thm:folnertilingshuffler} provides an explicit description of a F\o lner tiling sequence for $\juggler{r}{H}$ from such a sequence for $H$. However, this technique remains hard to apply for the comparison between lamplighters and lampjugglers, and we did not manage to find F\o lner tiling sequences of both groups whose tiles have same cardinalities. Also for lampcloners, we have a explicit description of F\o lner tiling sequences (Theorem~\ref{thm:folnertilingcloner}) which does not enable us to get a coupling between a lampcloner and a lamplighter, or between a lampcloner and a lampshuffler.

\smallskip

We end this section by dealing with families of groups similar to halo products: Houghton groups $H_n$, $n\geq 2$. Note that $H_2$ is exactly the lampshuffler $\shuf{\Z}$.

\begin{question}
    For any $n\neq m$, what could be an explicit orbit equivalence coupling between $H_n$ and $H_m$? Which quantitative forms of orbit equivalence can we obtain between such groups?
\end{question}

\printbibliography

{\bigskip
		\footnotesize
		
		\noindent C.~Correia, \textsc{Université Paris Cité, Institut de Mathématiques de Jussieu-Paris Rive Gauche, 75013 Paris, France}\par\nopagebreak\noindent
		\textit{E-mail address: }\texttt{corentin.correia@imj-prg.fr}}
{\bigskip
		\footnotesize
		
		\noindent V.~Dumoncel, \textsc{Université Paris Cité, Institut de Mathématiques de Jussieu-Paris Rive Gauche, 75013 Paris, France}\par\nopagebreak\noindent
		\textit{E-mail address: }\texttt{vincent.dumoncel@imj-prg.fr}}

\end{titlepage}
\end{document}